\documentclass[12pt]{amsart}
\usepackage[utf8]{inputenc}
\usepackage{amssymb}
\usepackage{hyperref}
\usepackage[final]{showkeys}           

\input xy
\xyoption{all}

\theoremstyle{definition}
\newtheorem{mydef}{Definition}[section]
\newtheorem{lem}[mydef]{Lemma}
\newtheorem{thm}[mydef]{Theorem}
\newtheorem{cor}[mydef]{Corollary}
\newtheorem{claim}[mydef]{Claim}
\newtheorem{question}[mydef]{Question}
\newtheorem{hypothesis}[mydef]{Hypothesis}
\newtheorem{prop}[mydef]{Proposition}
\newtheorem{defin}[mydef]{Definition}
\newtheorem{example}[mydef]{Example}
\newtheorem{remark}[mydef]{Remark}
\newtheorem{notation}[mydef]{Notation}
\newtheorem{fact}[mydef]{Fact}
\newtheorem{conjecture}[mydef]{Conjecture}

\newcommand{\fct}[2]{{}^{#1}#2}



\newcommand{\ba}{\bar{a}}
\newcommand{\bb}{\bar{b}}
\newcommand{\bc}{\bar{c}}

\newcommand{\bigN}{\widehat{N}}


\newcommand{\sea}{\mathfrak{C}}

\newcommand{\cf}[1]{\text{cf} (#1)}
\newcommand{\seq}[1]{\langle #1 \rangle}
\newcommand{\rest}{\upharpoonright}
\newcommand{\bkappa}{\bar \kappa}
\newcommand{\s}{\mathfrak{s}}
\newcommand{\snsp}[1]{\s_{#1\text{-ns}}}
\newcommand{\sns}{\s_{\text{ns}}}
\newcommand{\insp}[1]{\is_{#1\text{-ns}}}
\newcommand{\ins}{\is_{\text{ns}}}
\newcommand{\is}{\mathfrak{i}}
\newcommand{\isch}[1]{\is_{#1\text{-ch}}}
\newcommand{\ts}{\mathfrak{t}}

\newcommand{\id}{\text{id}}


\def\lta{<}
\def\lea{\le}

\def\gea{\ge}

\def\ltu{\lta_{\text{univ}}}
\def\leu{\lea_{\text{univ}}}

\newcommand{\ltl}[2]{\lta_{#1,#2}}
\newcommand{\lel}[2]{\lea_{#1,#2}}

\def\ltg{\vartriangleleft}
\def\leg{\trianglelefteq}



\newbox\noforkbox \newdimen\forklinewidth
\forklinewidth=0.3pt \setbox0\hbox{$\textstyle\smile$}
\setbox1\hbox to \wd0{\hfil\vrule width \forklinewidth depth-2pt
 height 10pt \hfil}
\wd1=0 cm \setbox\noforkbox\hbox{\lower 2pt\box1\lower
2pt\box0\relax}
\def\unionstick{\mathop{\copy\noforkbox}\limits}

\setbox0\hbox{$\textstyle\smile$}
\setbox1\hbox to \wd0{\hfil{\sl /\/}\hfil} \setbox2\hbox to
\wd0{\hfil\vrule height 10pt depth -2pt width
               \forklinewidth\hfil}
\wd1=0 cm \wd2=0 cm
\newbox\doesforkbox
\setbox\doesforkbox\hbox{\lower 0pt\box1 \lower
2pt\box2\lower2pt\box0\relax}

\def\nunionstick{\mathop{\copy\doesforkbox}\limits}

\newcommand{\nf}{\unionstick}
\newcommand{\fork}{\nunionstick}

\newcommand{\nfs}[4]{#2 \nf_{#1}^{#4} #3}
\newcommand{\nnfs}[4]{#2 \fork_{#1}^{#4} #3}




\def\1nf{\unionstick^{(1)}}
\def\2nf{\unionstick^{(2)}}


\newcommand{\gtp}{\text{gtp}}

\newcommand{\gS}{\text{gS}}

\newcommand{\Sbs}{S^\text{bs}}

\newcommand{\hanf}[1]{h (#1)}

\newcommand{\Ll}{\mathbb{L}}

\newcommand{\cl}{\text{cl}}
\newcommand{\pre}{\text{pre}}
\newcommand{\F}{\mathcal{F}}

\newcommand{\LS}{\text{LS}}

\newcommand{\plus}[1]{{#1}_r}
\newcommand{\kappap}{\plus{\kappa}}

\newcommand{\Ksatpp}[2]{{#1}^{#2\text{-sat}}}
\newcommand{\Ksatp}[1]{\Ksatpp{K}{#1}}
\newcommand{\Ksat}{\Ksatp{\lambda}}

\newcommand{\Kmhp}[1]{\Ksatp{#1}}
\newcommand{\Kmh}[1]{\Kmhp{\lambda}}
\newcommand{\Ktuqp}[1]{K_{#1}^{3, \text{uq}}}

\newcommand{\K}{K}
\newcommand{\NF}{\text{NF}}

\newcommand{\Kupp}[1]{{#1}^{\text{up}}}
\newcommand{\Kup}{\Kupp{K}}

\newcommand{\islongp}[1]{{#1}^{\text{long}}}
\newcommand{\islong}{\islongp{\is}}

\newcommand{\slc}[1]{\bkappa_{#1}}

\newcommand{\clc}[1]{\kappa_{#1}}


\newcommand{\ssp}{\text{superstable}^+}

\title[Building independence relations in AECs]{Building independence relations in abstract elementary classes}
\parindent 0pt
\parskip 5pt

\setcounter{tocdepth}{1}

\makeindex

\author{Sebastien Vasey}
\thanks{This material is based upon work done while the author was supported by the Swiss National Science Foundation under Grant No.\ 155136.}
\email{sebv@cmu.edu}
\urladdr{http://math.cmu.edu/\textasciitilde svasey/}
\address{Department of Mathematical Sciences, Carnegie Mellon University, Pittsburgh, Pennsylvania, USA}

\date{\today \\
AMS 2010 Subject Classification: Primary 03C48. Secondary: 03C45, 03C52, 03C55, 03C75, 03E55.}

\keywords{Abstract elementary classes; Forking; Independence; Classification theory; Stability; Good frames; Categoricity; Superstability; Tameness}

\begin{document}

\begin{abstract} 
We study general methods to build forking-like notions in the framework of tame abstract elementary classes (AECs) with amalgamation. We show that whenever such classes are categorical in a high-enough cardinal, they admit a good frame: a forking-like notion for types of singleton elements. 

\begin{thm}[Superstability from categoricity]
  Let $K$ be a $(<\kappa)$-tame AEC with amalgamation. If $\kappa = \beth_\kappa > \LS (K)$ and $K$ is categorical in a $\lambda > \kappa$, then:

  \begin{itemize}
    \item $K$ is stable in any cardinal $\mu$ with $\mu \ge \kappa$.
    \item $K$ is categorical in $\kappa$.
    \item There is a type-full good $\lambda$-frame with underlying class $K_\lambda$.
  \end{itemize}
\end{thm}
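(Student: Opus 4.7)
The plan attacks the three conclusions in the order they are stated, with $(<\kappa)$-nonsplitting serving throughout as the underlying candidate independence notion. First I would show that categoricity in $\lambda$ together with the fixed-point hypothesis $\kappa = \beth_\kappa > \LS(K)$ forces the unique model in $K_\lambda$ to be $\kappa$-model-homogeneous: a Morley-style omitting-types argument applied via the presentation theorem shows that any Galois type over a $(<\kappa)$-size submodel that were unrealized would let us omit it and build a model outside $K$, a contradiction. Model-homogeneity bounds the length of the Galois order property by $\kappa$, so $K$ is stable in $\lambda$. Now $(<\kappa)$-tameness transfers stability downward and upward: a failure of stability in some $\mu \ge \kappa$ would produce many distinct Galois types, which by tameness are already distinguished over $(<\kappa)$-size domains, and a counting argument contradicts stability in $\lambda$.

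For categoricity in $\kappa$, stability in $\kappa$ together with amalgamation yields a saturated $M^* \in K_\kappa$. Any $M \in K_\kappa$ embeds into the model in $K_\lambda$; using that the latter is $\kappa$-model-homogeneous and that $\kappa = \beth_\kappa$ is a fixed point of the beth function (so all $(<\kappa)$-sized type spaces have size $<\kappa$), I would locate inside that model a saturated $K$-substructure of size $\kappa$ containing a copy of $M$, then invoke uniqueness of saturated models to conclude $M \cong M^*$.

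Finally, for the good $\lambda$-frame, I take the basic types to be the nonalgebraic Galois types over models in $K_\lambda$ and nonforking to be $(<\kappa)$-nonsplitting. Invariance, monotonicity, density of basic types, and transitivity are routine. Local character follows from stability in all cardinals $<\lambda$: a type over $M \in K_\lambda$ splitting over every $(<\lambda)$-size submodel would produce, by tracking splitting witnesses along a resolution of $M$, a failure of stability below. Uniqueness of nonforking extensions is exactly $(<\kappa)$-tameness applied to two nonsplitting extensions of a given type, and extension combines amalgamation with tameness to transport a nonsplitting representative across an enlargement. The main obstacle is \emph{symmetry}: my plan is to assume a witness to asymmetry, build a long Morley sequence from stability and amalgamation, and extract from this sequence a Galois order property of length $\kappa$, contradicting the bound on the order property established in the stability step.
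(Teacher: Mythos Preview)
Your plan has a genuine gap at the very first step, and this step is precisely the crux of the theorem. You assert that Morley's omitting-types theorem shows the model of size $\lambda$ is $\kappa$-model-homogeneous. But Morley's theorem runs in the opposite direction: \emph{if} the model of size $\lambda$ is $\mu$-saturated, \emph{then} every model of size at least $\min(\lambda, \sup_{\mu_0 < \mu} \hanf{\mu_0})$ is $\mu$-saturated (this is Fact~\ref{categ-facts}.(\ref{sat-transfer}) in the paper). It does not produce saturation in the categorical cardinal out of nothing. When $\cf(\lambda) \ge \kappa$ one can get $\kappa$-saturation cheaply (build a $\kappa$-saturated model of size $\lambda$ using stability below $\lambda$ and invoke categoricity), but the whole point of the theorem is that $\lambda$ may have small cofinality. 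The paper is explicit about this: the new content is showing the model of size $\lambda$ is saturated even when $\cf(\lambda) < \kappa$.

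The paper's route is essentially the reverse of yours. It first uses a result of Boney--Grossberg (Fact~\ref{categ-facts}.(\ref{categ-facts-bg})): from categoricity in $\lambda$ and the failure of the weak $\kappa$-order property, $(<\kappa)$-coheir satisfies $\clc{1} = \aleph_0$ over $\leu$-chains in every $\mu \in [\kappa,\lambda)$. This yields $\kappa$-strong superstability below $\lambda$, and then the generator machinery (Theorem~\ref{good-up}/\ref{superstab-frame}) pushes stability to all $\mu \ge \kappa$ and produces an almost-good frame on sufficiently saturated models. Only \emph{after} stability in $\lambda$ is in hand does one conclude the model of size $\lambda$ is saturated, and only then does Morley's omitting-types argument transfer saturation down to $\kappa$, giving categoricity in $\kappa$. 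The good $\lambda$-frame is then $(<\kappa)$-coheir restricted to $K_\lambda$; this works because every model in $K_\lambda$ is $\kappa^+$-saturated, which is exactly what makes coheir well-behaved there.

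Two further issues: your stability transfer (``tameness distinguishes types over small domains, contradicting stability in $\lambda$'') is not a valid counting argument --- unstability in $\mu$ gives $>\mu$ types over a size-$\mu$ model, and tameness localizes distinctions to size-$<\kappa$ pieces, but there is no bound forcing a single such piece to carry too many types. And for the frame, raw $(<\kappa)$-nonsplitting does not have uniqueness; one only gets weak uniqueness over models that are universal over the base, which is why the paper's frame lives on saturated models (or, after categoricity in $\kappa$ is known, on all of $K_\lambda$).
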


Under more locality conditions, we prove that the frame extends to a global independence notion (for types of arbitrary length). 

\begin{thm}[A global independence notion from categoricity]\label{abstract-global}
  Let $K$ be a densely type-local, fully tame and type short AEC with amalgamation. If $K$ is categorical in unboundedly many cardinals, then there exists $\lambda \ge \LS (K)$ such that $K_{\ge \lambda}$ admits a global independence relation with the properties of forking in a superstable first-order theory.
\end{thm}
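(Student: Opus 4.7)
The plan is to bootstrap the first (``Superstability from categoricity'') theorem into a global independence relation. Full tameness gives us $(<\kappa_0)$-tameness for some $\kappa_0$, and the categoricity-in-unboundedly-many-cardinals hypothesis lets us pick $\kappa = \beth_\kappa > \max(\kappa_0, \LS (K))$ together with a categoricity cardinal $\lambda > \kappa$. Applying the first theorem then yields a type-full good $\lambda$-frame on $K_\lambda$, categoricity in $\kappa$, and stability of $K$ in every $\mu \ge \kappa$. In particular, $K_{\ge \kappa}$ is stable on the tail, which is the first half of superstability.

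The first step is to transfer the good $\lambda$-frame upward to a good $(\ge \lambda)$-frame on $K_{\ge \lambda}$ for types of singletons. Tameness is exactly the tool needed to lift the uniqueness and extension properties above $\lambda$, while stability at every high cardinal supplies the local-character and symmetry bookkeeping at those larger sizes. The second step is to extend this singleton frame to types of arbitrary length by defining independence of a long tuple from an arbitrary set through the independence of its small sub-tuples from small sub-sets, in the sense of the singleton frame. \emph{Type-shortness} is what makes this definition well-founded, since a long type is determined by its restrictions to short sub-tuples; \emph{full tameness} then transports monotonicity, symmetry, uniqueness, extension, transitivity, and base monotonicity from the singleton case to the long-type case. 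This stage is by now a routine exercise in the Boney--Vasey style of building long-type forking from a short one.

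The main obstacle is the superstable local character for long types over arbitrary base sets: every such type should not fork over a sub-model whose size is bounded uniformly in both the length of the type and the ambient model. This is where \emph{dense type-locality} is essential. The plan is to approximate a long type $p$ by a cofinal sequence of restrictions to small sub-sequences, use the length-$1$ local character of the good $(\ge \lambda)$-frame to get for each approximation a small sub-model over which that approximation does not fork, and take the union along the approximating chain. Dense type-locality is used twice here: first to guarantee that the cofinal sequence of approximating short types exists and is of bounded length, and second to keep the approximations coherent so that independence at the limit lifts to independence of the full long type $p$. Combined with the superstable local character at length one, this keeps each stage of bounded size and keeps the chain short, so the final union is a bounded sub-model over which $p$ does not fork.

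After assembling these steps, the resulting relation $\nf$ on $K_{\ge \lambda}$ satisfies monotonicity, symmetry, uniqueness, extension, transitivity, and bounded-size local character for types of arbitrary length over arbitrary sets, which is exactly the package of properties of forking in a superstable first-order theory.
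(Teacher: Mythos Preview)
Your outline has two genuine gaps.

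\textbf{First, the passage from a singleton frame to a type-full long-type frame is not routine.} You write that extending the good $(\ge\lambda)$-frame for singletons to types of arbitrary length ``is by now a routine exercise in the Boney--Vasey style.'' It is not. The Boney--Vasey transfer from a $(\le 1)$-frame to longer types (as in \cite{tame-frames-revisited-v5}) only produces a frame whose basic types are types of \emph{independent sequences}; it does not yield a \emph{type-full} frame in which every type of length $\le\lambda$ has a nonforking extension. The paper handles this by a substantial detour: it first shows (Section~\ref{domin-sec}) that the singleton frame is \emph{weakly successful} via a domination argument using coheir as an ambient independence notion, and in fact $\omega$-successful. Only then can Shelah's $\NF$ relation (Fact~\ref{nf-existence}) be invoked to produce a genuine type-full good $(\le\lambda,\lambda)$-frame (Theorem~\ref{good-long-frame}). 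Your proposal skips this entire step, and without it the extension property for long types has no proof: simply declaring a long tuple independent when all its short sub-tuples are does not automatically give you a nonforking extension of an arbitrary long type.

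\textbf{Second, you have misidentified the role of dense type-locality.} You invoke it to obtain superstable local character for long types. In the paper, local character for long types (Lemma~\ref{lc-long}) is obtained from full model continuity together with the $(<\kappa)$-model-witness property --- dense type-locality plays no role there. Where dense type-locality is actually used is in Lemma~\ref{ext-all}, the proof of the \emph{extension} property for types of length $>\lambda$: at limit stages of the inductive construction of a nonforking extension, one needs the direct limit of the approximating types to agree with the intended restriction, and type-locality of a carefully chosen resolution is exactly what forces this. Your sketch of the local-character argument (approximate, take unions) would work without type-locality once full model continuity is available; conversely, your outline gives no mechanism for extension of long types, which is precisely the place where the hypothesis is needed.
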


As an application, we deduce (modulo  an unproven claim of Shelah) that Shelah's eventual categoricity conjecture for AECs (without assuming categoricity in a successor cardinal) follows from the weak generalized continuum hypothesis and a large cardinal axiom.

\begin{cor}
  Assume $2^{\lambda} < 2^{\lambda^+}$ for all cardinals $\lambda$, as well as an unpublished claim of Shelah. If there exists a proper class of strongly compact cardinals, then any AEC categorical in \emph{some} high-enough cardinal is categorical in \emph{all} high-enough cardinals.
\end{cor}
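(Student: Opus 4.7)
The plan is to chain the two theorems stated above, using large cardinals to supply the tameness and type-shortness hypotheses they require. By Boney's theorem, every strongly compact cardinal $\kappa$ with $\LS(K) < \kappa$ makes $K$ fully $(<\kappa)$-tame and type-short; a proper class of strongly compacts therefore delivers full tameness and type-shortness above every cardinal, and in particular one can find such a $\kappa$ with $\kappa = \beth_\kappa$ (the beth-fixed points form a club, so there are arbitrarily large strongly compacts with this property). Assume $K$ is categorical in some high-enough cardinal $\mu$, and fix a strongly compact $\kappa$ with $\LS (K) < \kappa < \mu$, $\kappa = \beth_\kappa$. The Superstability from Categoricity theorem then applies and yields that $K$ is stable in all $\mu' \ge \kappa$, categorical in $\kappa$, and carries a type-full good $\mu$-frame on $K_\mu$.

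The next step is to upgrade this to categoricity in unboundedly many cardinals, which is where weak GCH and Shelah's unpublished claim enter. The expected pattern, following Shelah's work on good frames in his AEC book, is that under $2^\lambda < 2^{\lambda^+}$ a good $\lambda$-frame enjoys strong transfer properties (models at $\lambda^+$ being controlled by invariants, categoricity at successors propagating, and so on), and the unpublished Shelah claim should provide the remaining step that lets one pass from the good frame produced in the first step to categoricity in a cofinal class of cardinals, even \emph{without} assuming that the starting categoricity cardinal is a successor. With full tameness and type-shortness already in hand from the large cardinals, density type-locality should also be available (full tameness at all lengths yields very strong locality for types). One can then apply Theorem \ref{abstract-global} to conclude that $K_{\ge \lambda_0}$ admits a global, superstable-like independence notion for some $\lambda_0$.

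From such a global forking notion, eventual categoricity follows much as in the first-order case: a superstable-like independence relation gives a dimension theory, models in each sufficiently large cardinal are controlled by their dimension, and categoricity in one sufficiently large cardinal propagates to all sufficiently large cardinals. \textbf{The main obstacle} is the middle step: isolating precisely which form of Shelah's unpublished claim is invoked, and checking that, combined with the good frame produced by the first theorem and with weak GCH, it actually yields categoricity in unboundedly many cardinals (since this is the hypothesis of Theorem \ref{abstract-global}, and the delicate point here is the absence of an a priori successor-categoricity assumption). Once that bridge is in place, verifying density type-locality and running the final categoricity transfer through the global independence relation should be comparatively routine consequences of the superstability-like properties of that relation.
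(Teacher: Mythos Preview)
Your proposal has the overall shape wrong, and in particular misidentifies both the input to Shelah's claim and what comes after it.

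First, the frame produced by the ``Superstability from categoricity'' theorem is only a good $\lambda$-frame; Shelah's Claim \ref{shelah-claim} requires an \emph{$\omega$-successful} good $\lambda$-frame. The paper does not obtain $\omega$-successfulness from the first theorem in the abstract but from Theorem \ref{good-frame-succ}, which needs the full $(<\kappa)$-tameness and shortness (supplied by the strongly compact) together with amalgamation (supplied by Makkai--Shelah via the strongly compact, a point you omit). You would need to route through that construction, not just through the first abstract theorem.

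Second, and more seriously, your detour through Theorem \ref{abstract-global} and ``dimension theory'' is both unnecessary and unsupported. Shelah's Claim \ref{shelah-claim} already gives, directly, that $\Ksatp{\lambda^{+\omega}}$ is categorical in \emph{all} $\mu > \lambda^{+\omega}$; this is not merely ``unboundedly many'' categoricity cardinals to feed into Theorem \ref{abstract-global}. From there, a single application of Morley's omitting type theorem for AECs pushes categoricity of $\Ksatp{\lambda^{+\omega}}$ down to categoricity of $K$ on a tail. No global independence relation and no dimension-theoretic argument is invoked (and indeed the paper contains no theorem deducing eventual categoricity from the existence of a global independence relation). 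Your final step, ``from such a global forking notion, eventual categoricity follows much as in the first-order case,'' is exactly the kind of statement that is \emph{not} available here; if it were, Shelah's unpublished claim would be redundant. Finally, your claim that dense type-locality follows from full tameness and shortness is unjustified: the paper treats this as an open hypothesis and explicitly works around it.
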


\end{abstract}

\maketitle

\tableofcontents

\section{Introduction}

Independence (or forking) is a central notion of model theory. In the first-order setup, it was introduced by Shelah \cite{shelahfobook78} and is one of the main devices of his book. One can ask whether there is such a notion in the nonelementary context. In homogeneous model theory, this was investigated in \cite{hyttinen-lessmann} for the superstable case and \cite{buechler-lessmann} for the simple and stable cases. Some of their results were later generalized by Hyttinen and Kesälä \cite{finitary-aec} to tame and $\aleph_0$-stable finitary abstract elementary classes (AECs). For general\footnote{For a discussion of how the framework of tame AECs compare to other non first-order frameworks, see the introduction of \cite{sv-infinitary-stability-v6-toappear}.} AECs, the answer is still a work in progress. 

In \cite[Remark 4.9.1]{sh394} it was asked whether there is such a notion as forking in AECs.  In his book on AECs \cite{shelahaecbook}, Shelah introduced the concept of good $\lambda$-frames (a local independence notion for types of singletons) and some conditions are given for their existence. Shelah's main construction (see \cite[Theorem II.3.7]{shelahaecbook}) uses model-theoretic and set-theoretic assumptions: categoricity in two successive cardinals and principles like the weak diamond\footnote{Shelah claims to construct a good frame in ZFC in \cite[Theorem IV.4.10]{shelahaecbook} but he has to change the class and still uses the weak diamond to show his frame is $\omega$-successful.}. It has been suggested\footnote{The program of using tameness and amalgamation to prove Shelah's results in ZFC is due to Rami Grossberg and dates back to at least \cite{tamenessone}, see the introduction there.} that replacing Shelah's strong local model-theoretic hypotheses by the global hypotheses of amalgamation and tameness (a locality property for types introduced by Grossberg and VanDieren \cite{tamenessone}) should lead to better results with simpler proofs. Furthermore, one can argue that any ``reasonable'' AEC should be tame and have amalgamation, see for example the discussion in Section 5 of \cite{bg-v9}, and the introductions of \cite{tamelc-jsl} or \cite{tamenessone}. In particular, they follow from a large cardinal axiom and categoricity:

\begin{fact}\label{boney-lc}
  Let $\K$ be an AEC and let $\kappa > \LS (\K)$ be a strongly compact cardinal. Then:

  \begin{enumerate}
  \item \cite{tamelc-jsl} $\K$ is $(<\kappa)$-tame (in fact fully $(<\kappa)$-tame and short).
  \item \cite[Proposition 1.13]{makkaishelah}\footnote{This is stated there for the class of models of an $\Ll_{\kappa, \omega}$ theory but Boney \cite{tamelc-jsl} argues that the argument generalizes to any AEC $\K$ with $\LS (\K) < \kappa$.} If $\lambda > \beth_{\kappa + 1}$ is such that $\K$ is categorical in $\lambda$, then $\K_{\ge \kappa}$ has amalgamation.
  \end{enumerate}
\end{fact}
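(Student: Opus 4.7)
The plan is to derive both parts from the ultrafilter characterization of strong compactness: every $\kappa$-complete filter extends to a $\kappa$-complete ultrafilter, and equivalently every set $X$ carries a $\kappa$-complete fine ultrafilter on $P_\kappa(X)$. The central technical tool, needed for both parts, is a \L{}o\'{s}-style theorem for AECs with respect to $\kappa$-complete ultraproducts.

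For part (1), I would first invoke Shelah's presentation theorem to write $\K$ as the class of $L$-reducts of models of some first-order theory $T'$ in an expansion $L'$ with $|L'| \le \LS(\K) < \kappa$ that omit a specified set $\Gamma$ of types. Given a $\kappa$-complete ultrafilter $U$ on an index set $I$ and a family $\langle M_i : i \in I \rangle$ in $\K$, classical \L{}o\'{s} yields a model of $T'$, and $\kappa$-completeness of $U$ together with $|\Gamma| < \kappa$ preserves the omission of $\Gamma$; the same argument shows that diagonal and ultrapower maps are $\K$-embeddings, and that $\K$-substructure passes to the ultraproduct. To conclude $(<\kappa)$-tameness, fix $A$ and suppose $p \ne q$ in $\gS(A)$ with witnesses $a, b$ in some ambient $N \in \K$, but $p \rest A_0 = q \rest A_0$ for every $A_0 \subseteq A$ with $|A_0| < \kappa$. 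Take a $\kappa$-complete fine ultrafilter $U$ on $P_\kappa(A)$ and form the ultrapower $N^{P_\kappa(A)}/U$; fineness places the diagonal copy of each $A_0$ below the image of $A$, and the small-type-agreement hypothesis then forces the images of $a$ and $b$ to satisfy the same type over the image of $A$, which reflects back through the ultrapower embedding to give $p = q$. Fully tame and short is the analogous argument applied to tuples of arbitrary length.

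For part (2), given $N_0 \leq N_\ell$ ($\ell = 1,2$) in $\K_{\ge \kappa}$, I would apply an elementary embedding $j : V \to V^*$ with critical point $\kappa$, coming from a $\kappa$-complete fine ultrafilter on $P_\kappa(\lambda)$, and lift the amalgamation problem to $V^*$, where $j(\K)$ remains categorical in $j(\lambda)$. Using Löwenheim--Skolem upward and categoricity, each $j(N_\ell)$ embeds into the (up to isomorphism unique) model of size $j(\lambda)$; the task is then to arrange these embeddings to agree on $j(N_0)$ and reflect the resulting amalgam back to $V$. The cardinal arithmetic hypothesis $\lambda > \beth_{\kappa+1}$ enters here: it provides enough room, via Hanf-number calculations for the Ehrenfeucht--Mostowski functors given by the presentation theorem, to guarantee that the model in size $\lambda$ is $\kappa$-saturated, which is what allows a common amalgam to be extracted from the reflected picture.

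The main obstacle will be (2): part (1) is essentially a clean adaptation of first-order \L{}o\'{s}, with $\kappa$-completeness matched precisely to the size of $\Gamma$ and the chain axioms of an AEC, whereas (2) demands a careful interaction between the large-cardinal embedding, the saturation consequences of categoricity, and the Hanf-number threshold $\beth_{\kappa+1}$ to produce the common amalgam inside $V$ rather than merely in $V^*$.
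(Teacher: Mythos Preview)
The paper does not give its own proof of this statement: it is recorded as a \emph{Fact} with citations to \cite{tamelc-jsl} for part (1) and to \cite[Proposition 1.13]{makkaishelah} (with Boney's generalization) for part (2), and no argument is supplied in the body of the paper. So there is nothing to compare your proposal against within this paper.

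That said, your sketch is a reasonable outline of the arguments in the cited sources. For (1), the presentation theorem plus $\kappa$-complete ultraproducts and a fine ultrafilter on $P_\kappa(A)$ is exactly Boney's method. For (2), your description is roughly right in spirit, but the actual Makkai--Shelah argument is somewhat more delicate than you indicate: the key point is not so much reflecting an amalgam back from $V^\ast$ as using the strongly compact ultrafilter to show that any two models of size $\ge\kappa$ embed into a sufficiently saturated model, with the categoricity hypothesis and the $\beth_{\kappa+1}$ bound ensuring the categorical model is saturated enough (via the omitting types theorem for AECs). Your phrasing ``reflect the resulting amalgam back to $V$'' is vague and not quite how the argument runs; if you were to write this out in full you would need to be more careful there.
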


Examples of the use of tameness and amalgamation include \cite{b-k-vd-spectrum} (an upward stability transfer), \cite{lieberman2011} (showing that tameness is equivalent to a natural topology on Galois types being Hausdorff), \cite{tamenesstwo} (an upward categoricity transfer theorem, which can be combined with Fact \ref{boney-lc} and the downward transfer of Shelah \cite{sh394} to prove that Shelah's eventual categoricity conjecture for a successor follows from the existence of a proper class of strongly compact cardinals) and \cite{ext-frame-jml, tame-frames-revisited-v5, jarden-tameness-apal}, showing that good frames behave well in tame classes.

\cite{ss-tame-jsl} constructed good frames in ZFC using global model-theoretic hypotheses: tameness, amalgamation, and categoricity in a cardinal of high-enough cofinality. However we were unable to remove the assumption on the cofinality of the cardinal or to show that the frame was \emph{$\omega$-successful}, a key technical property of frames. Both in Shelah's book and in \cite{ss-tame-jsl}, the question of whether there exists a \emph{global} independence notion (for longer types) was left open. In this paper, we continue working in ZFC with tameness and amalgamation, and make progress toward these problems. Regarding the cofinality of the categoricity cardinal, we show that it is possible to take the categoricity cardinal to be high-enough: (Theorem \ref{categ-frame}):

\textbf{Theorem \ref{categ-frame}.}
  Let $K$ be a $(<\kappa)$-tame AEC with amalgamation. If $\kappa = \beth_\kappa > \LS (K)$ and $K$ is categorical in a $\lambda > \kappa$, then there is a type-full good $\lambda$-frame with underlying class $K_\lambda$.

As a consequence, the class $K$ above has several superstable-like properties: for all $\mu \ge \lambda$, $K$ is stable\footnote{The downward stability transfer from categoricity is an early result of Shelah \cite[Claim 1.7]{sh394}, but the upward transfer is new and improves on \cite[Theorem 7.5]{ss-tame-jsl}. In fact, the proof here is new even when $K$ is the class of models of a first-order theory.} in $\mu$ (this is also part of Theorem \ref{categ-frame}) and has a unique limit model of cardinality $\mu$ (by e.g.\ \cite[Corollary 6.9]{tame-frames-revisited-v5} and Remark \ref{lim-uq-rmk}). Since $K$ is stable in $\lambda$, the model of size $\lambda$ is saturated. Hence using Morley's omitting type theorem for AECs (see the proof of Theorem \ref{categ-frame} for the details), we deduce a downward categoricity transfer\footnote{\cite[Conclusion 5.1]{makkaishelah} proves a stronger conclusion under stronger assumptions (namely that $K$ is the class of models of an $\Ll_{\kappa, \omega}$ sentence, $\kappa$ a strongly compact cardinal).}:

\begin{cor}\label{downward-categ-transfer}
  Let $K$ be a $(<\kappa)$-tame AEC with amalgamation. If $\kappa = \beth_\kappa > \LS (K)$ and $K$ is categorical in a $\lambda > \kappa$, then $K$ is categorical in $\kappa$.
\end{cor}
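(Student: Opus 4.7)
The plan is to extract the superstable-like consequences of the hypothesis from Theorem~\ref{categ-frame}, and then perform a downward transfer of saturation of Morley type.

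First, invoke Theorem~\ref{categ-frame}: under these hypotheses $K$ is stable in every $\mu \ge \kappa$ and admits a type-full good $\lambda$-frame on $K_\lambda$, which together with the uniqueness of limit models noted in the paragraph preceding the corollary implies existence and uniqueness of a saturated model of each size $\mu \ge \kappa$. In particular, stability in $\lambda$ produces a saturated model of size $\lambda$; by categoricity in $\lambda$, the unique model of that size is therefore saturated.

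Next, apply the version of Morley's omitting types theorem for AECs adapted to tame classes with amalgamation. The hypothesis $\kappa = \beth_\kappa > \LS(K)$ places $\kappa$ above the relevant Hanf number, so the downward transfer of saturation applies throughout the interval $(\LS(K), \lambda)$: from saturation of the model in $\lambda$ one deduces that every $M \in K_\mu$ with $\LS(K) < \mu < \lambda$ is $\mu$-saturated. Taking $\mu = \kappa$, and using that $\kappa = \beth_\kappa$ is a limit cardinal so that $\kappa$-saturation coincides with saturation, every model in $K_\kappa$ is saturated. Stability in $\kappa$ (again from Theorem~\ref{categ-frame}) then gives uniqueness of the saturated model, so any two models of cardinality $\kappa$ are isomorphic; i.e., $K$ is categorical in $\kappa$.

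The main obstacle is simply identifying the correct version of Morley's omitting types theorem to invoke. Everything else is a direct translation of the outputs of Theorem~\ref{categ-frame}. In this tame, amalgamation-rich setting such downward saturation transfers are by now standard (and the author's remark preceding the corollary signals this as the intended route), so we anticipate no genuine difficulty beyond pinning down the appropriate reference.
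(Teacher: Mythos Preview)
Your proposal is correct and follows exactly the paper's route (the argument is embedded in the proof of Theorem~\ref{categ-frame}): the model of size $\lambda$ is saturated, and Fact~\ref{categ-facts}.(\ref{sat-transfer}) with $\mu=\kappa$ transfers $\kappa$-saturation down to all models of size at least $\sup_{\kappa_0<\kappa} h(\kappa_0)=\kappa$, whence categoricity in $\kappa$ by uniqueness of saturated models. One correction: the Morley transfer does \emph{not} give $\mu$-saturation for arbitrary $\mu\in(\LS(K),\lambda)$ as you assert---it only reaches down to $\sup_{\mu_0<\mu}h(\mu_0)$, which is why the hypothesis $\kappa=\beth_\kappa$ is precisely what is needed at $\mu=\kappa$; your general claim is false, but the only instance you use is correct (also, uniqueness of saturated models comes from amalgamation rather than stability, and the detour through the good frame and limit models is unnecessary).
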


We emphasize that already \cite{sh394} deduced such results assuming that the model of size $\lambda$ is saturated (or just $\kappa$-saturated so when $\cf{\lambda} \ge \kappa$ this follows). The new part here is showing that it \emph{is} saturated, even when $\cf{\lambda} < \kappa$.

The construction of the good frame in the proof of Theorem \ref{categ-frame} is similar to that in \cite{ss-tame-jsl} but uses local character of coheir (or $(<\kappa)$-satisfiability) rather than splitting. A milestone study of coheir in the nonelementary context is \cite{makkaishelah}, working in classes of models of an $\Ll_{\kappa, \omega}$-sentence, $\kappa$ a strongly compact cardinal. Makkai and Shelah's work was generalized to fully tame and short AECs in \cite{bg-v9}, and some results were improved in \cite{sv-infinitary-stability-v6-toappear}. Building on these works, we are able to show that under the assumptions above, coheir has enough superstability-like properties to apply the arguments of \cite{ss-tame-jsl}, and obtain that coheir restricted to types of length one in fact induces a good frame.

Note that coheir is a candidate for a global independence relation. In fact, one of the main result of \cite{bgkv-v3-toappear} is that it is canonical: if there is a global forking-like notion, it must be coheir. The paper assumes additionally that coheir has the extension property. Here, we prove that coheir is canonical without this assumption (Theorem \ref{canon-coheir}). We also obtain results on the canonicity of good frames. For example, any two type-full good $\lambda$-frames with the same \emph{categorical} underlying AEC must be the same (Theorem \ref{good-frame-cor}). This answers several questions from \cite{bgkv-v3-toappear}.

Using that coheir is global and (under categoricity) induces a good frame, we can use more locality assumptions to get that the good frame is $\omega$-successful:

\textbf{Theorem \ref{good-frame-succ}.}
  Let $K$ be a fully $(<\kappa)$-tame and short AEC. If $\LS (K) < \kappa = \beth_\kappa < \lambda = \beth_\lambda$, $\cf{\lambda} \ge \kappa$, and $K$ is categorical in a $\mu \ge \lambda$, then there exists an \emph{$\omega$-successful} type-full good $\lambda$-frame with underlying class $K_\lambda$.

We believe that the locality hypotheses in Theorem \ref{good-frame-succ} are reasonable: they follow from large cardinals (Fact \ref{boney-lc}) and slightly weaker assumptions can be derived from the existence of a global forking-like notion, see the discussion in Section \ref{main-thm-sec}.

Theorem \ref{good-frame-succ} can be used to build a global independence notion (Theorem \ref{main-thm} formalizes Theorem \ref{abstract-global} from the abstract). We assume one more locality hypothesis (dense type-locality) there. We suspect it can be removed, see the discussion in Section \ref{main-thm-sec}. Without dense type-locality, one still obtains an independence relation for types of length less than or equal to $\lambda$ (see Theorem \ref{good-frame-succ}). This improves several results from \cite{bg-v9} (see Section \ref{examples-sec} for a more thorough comparison).

These results bring us closer to solving one of the main test questions in the classification theory of abstract elementary classes\footnote{A version of Shelah's categoricity conjecture already appears as \cite[Open problem D.3(a)]{shelahfobook} and the statement here appears in \cite[Conjecture N.4.2]{shelahaecbook}, see \cite{grossberg2002} or the introduction to \cite{shelahaecbook} for history and motivation.}:

\begin{conjecture}[Shelah's eventual categoricity conjecture]
  An AEC that is categorical in a high-enough cardinal is categorical on a tail of cardinals.
\end{conjecture}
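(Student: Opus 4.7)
The final statement is a conjecture, so there is no actual proof to anticipate; the plan is therefore to prove the conditional version stated in the Corollary following Theorem \ref{abstract-global}: assuming a proper class of strongly compact cardinals, the weak generalized continuum hypothesis, and an unpublished claim of Shelah, any AEC categorical in some high-enough cardinal is categorical on a tail. I would organize the argument in three stages, moving from soft reductions to the deep structural input.

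First, I would use the large cardinal assumption to reduce to a well-behaved AEC. Suppose $K$ is categorical in some $\mu$ larger than $\LS(K)$. Pick a strongly compact $\kappa$ with $\LS(K) < \kappa < \mu$ and $\beth_{\kappa+1} < \mu$; pushing $\kappa$ up through the proper class of strongly compacts, one may further arrange $\kappa = \beth_\kappa$. By Fact \ref{boney-lc}, $K$ is then fully $(<\kappa)$-tame and short, and $K_{\ge \kappa}$ has amalgamation. Finally, choose a $\beth$-fixed $\lambda$ with $\kappa < \lambda \le \mu$ and $\cf \lambda \ge \kappa$.

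Second, I would apply the main technical theorems of this paper. Theorem \ref{good-frame-succ} directly produces an $\omega$-successful, type-full good $\lambda$-frame whose underlying class is $K_\lambda$. Corollary \ref{downward-categ-transfer} (iterated and combined with Morley's omitting-types theorem in the AEC setting) transfers categoricity downward from $\mu$ to $\lambda$, so the underlying class of the good frame is itself categorical. At this point, one has reduced to the canonical setup of Shelah's book: a categorical AEC carrying an $\omega$-successful good frame concentrated at a $\beth$-fixed $\lambda$.

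Third and crucially, I would invoke Shelah's unpublished claim from \cite{shelahaecbook}: a categorical AEC equipped with an $\omega$-successful good $\lambda$-frame is categorical in every $\lambda^{+n}$, and indeed on a tail of cardinals. This is where the weak GCH assumption $2^\lambda < 2^{\lambda^+}$ enters, via the non-structure half of Shelah's Chapter III analysis of successful frames and weak diamond combinatorics. This last step is by far the main obstacle: verifying Shelah's claim in a form directly applicable to the type-full, tame-driven frame constructed here is a substantial project, and eliminating either the weak GCH input or the large cardinal hypothesis appears to require a genuinely new method, perhaps a direct exploitation of the global independence relation of Theorem \ref{abstract-global} to replace the inductive successor step frame-by-frame with a single global uniqueness argument.
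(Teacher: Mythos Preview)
Your outline matches the paper's proof of Theorem \ref{shelah-categ-cor}: reduce via Fact \ref{boney-lc} to a fully $(<\kappa)$-tame and short AEC with amalgamation, build the $\omega$-successful frame via Theorem \ref{good-frame-succ}, and invoke Shelah's Claim \ref{shelah-claim} under WGCH. Two minor corrections are worth noting. First, the downward categoricity transfer in your stage two is unnecessary: Theorem \ref{good-frame-succ} builds the frame on $K_\lambda$ directly from categoricity in any $\mu \ge \lambda$, and Claim \ref{shelah-claim} as stated does not require $K_\lambda$ itself to be categorical. Second, you slightly misremember the form of Claim \ref{shelah-claim}: it does not conclude that $K$ is categorical on a tail, but only that $\Ksatp{\lambda^{+\omega}}$ is categorical in every cardinal above $\lambda^{+\omega}$, given that it is categorical in one such cardinal (which holds since the model of size $\mu$ is saturated by Theorem \ref{categ-frame}). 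The paper then invokes Morley's omitting type theorem for AECs (\cite[II.1.10]{sh394}) to pass from categoricity of $\Ksatp{\lambda^{+\omega}}$ to categoricity of $K$ itself on a tail; so Morley's theorem enters at the \emph{end} of the argument, not in your stage two. (Also, any strongly compact $\kappa$ is strongly inaccessible, so $\kappa = \beth_\kappa$ is automatic and needs no further arrangement.)
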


The power of $\omega$-successful frames comes from Shelah's analysis in Chapter III of his book. Unfortunately, Shelah could not prove the stronger results he had hoped for. Still, in \cite[Discussion III.12.40]{shelahaecbook}, he claims the following (a proof should appear in a future publication \cite{sh842}):

\begin{claim}\label{shelah-claim}
  Assume the weak generalized continuum hypothesis\footnote{$2^{\lambda} < 2^{\lambda^+}$ for all cardinals $\lambda$.} (WGCH). Let $K$ be an AEC such that there is an $\omega$-successful good $\lambda$-frame with underlying class $K_\lambda$. Write $\Ksatp{\lambda^{+\omega}}$ for the class of $\lambda^{+\omega}$-saturated models in $\K$. Then $\Ksatp{\lambda^{+\omega}}$ is categorical in some $\mu > \lambda^{+\omega}$ if and only if it is categorical in all $\mu > \lambda^{+\omega}$.
\end{claim}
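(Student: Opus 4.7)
The plan is to follow Shelah's analysis of $\omega$-successful good frames in Chapter III of \cite{shelahaecbook}. The backward direction of the biconditional is trivial, so the content is in showing that categoricity in \emph{some} $\mu > \lambda^{+\omega}$ implies categoricity in \emph{all} such $\mu$. From the $\omega$-successful good $\lambda$-frame $\s$, iterate Shelah's ``successor of a good frame'' construction from II.\S8 of \cite{shelahaecbook} to obtain a tower of good $\lambda^{+n}$-frames $\s_n$ for each $n < \omega$. WGCH enters at every level because the successor construction relies on the weak diamond $\Phi_{\lambda^{+n+1}}$, which follows from $2^{\lambda^{+n}} < 2^{\lambda^{+n+1}}$.

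I would next construct a global independence relation $\NF$ on $\Ksatp{\lambda^{+\omega}}$. Each $M \in \Ksatp{\lambda^{+\omega}}$ admits a continuous filtration by $\lambda^{+n}$-saturated submodels, and the nonforking relations $\nf$ of the frames $\s_n$ glue coherently into a single $\NF$ on $\Ksatp{\lambda^{+\omega}}$; the coherence between levels is where the $\omega$-successful hypothesis, and not merely the bare existence of each $\s_n$, is needed. Verification of invariance, monotonicity, symmetry, transitivity, extension, local character, and uniqueness reduces, at each step, to the frame axioms of $\s_n$, as in III.\S1--\S5 of \cite{shelahaecbook}. I would then show that $(\Ksatp{\lambda^{+\omega}}, \NF)$ admits \emph{primes over $\NF$-independent configurations}: given $M_0 \le M_1, M_2$ in $\Ksatp{\lambda^{+\omega}}$ with $M_1 \nf_{M_0} M_2$, there is an amalgam $M_3$ embedding over $M_0 \cup M_1 \cup M_2$ into every other amalgam. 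Such an $M_3$ would be built as a direct limit of prime amalgams at each finite level $n$, with the uniqueness triples supplied by $\omega$-successfulness ensuring that the levels cohere.

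With primes in hand, the categoricity transfer becomes tractable. In the categoricity cardinal $\mu$, the unique model is saturated; I would assign to each basic type a \emph{dimension} given by the supremum of sizes of $\NF$-independent sets of realizations in that model, and use the uniqueness axiom of $\NF$ to show the dimension is well-defined. A Morley-style iterated prime extension argument then shows that the dimension function determines the model up to isomorphism, so categoricity in $\mu$ forces all models in $\Ksatp{\lambda^{+\omega}}$ of any common cardinality above $\lambda^{+\omega}$ to realize the same dimensions, hence to be isomorphic.

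The main obstacle is securing the primes in the second step. This is the technical heart of Chapter III of \cite{shelahaecbook} and is where the unpublished manuscript \cite{sh842} is needed: iterating the successful-frame machinery $\omega$ times while tracking how uniqueness triples at level $n$ interact with amalgamation at level $n+1$ involves delicate combinatorics of towers of models, and the passage to the limit producing prime amalgams is precisely the gap Shelah has not yet published in full. Once primes are available, the third step follows the template of Shelah's proof of the Morley theorem for AECs and is comparatively routine.
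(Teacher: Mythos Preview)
The paper does not prove this statement. Claim~\ref{shelah-claim} is explicitly presented as an \emph{unproven} announcement of Shelah: the text immediately preceding it says ``he claims the following (a proof should appear in a future publication \cite{sh842})'', and the claim is then used only as a black-box hypothesis in Theorem~\ref{shelah-categ-cor} and its corollary in the abstract. There is therefore no proof in the paper to compare your proposal against.

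Your outline is a reasonable reconstruction of what Shelah's strategy in Chapter III of \cite{shelahaecbook} is understood to be, and you correctly identify the existence of primes over independent amalgams as the crux and the point where \cite{sh842} is needed. But you should be aware that you are not filling a gap the paper left for the reader; you are sketching a proof the paper deliberately does not attempt and flags as open. If your intent was to supply the missing argument, note that your third step (the dimension/Morley-style transfer) is itself nontrivial in this setting: without an ambient first-order compactness, showing that dimensions are cardinal invariants and that they determine the model up to isomorphism requires substantially more than the prime-model machinery alone, and this too is part of what \cite{sh842} is meant to supply.
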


Modulo this claim, we obtain the consistency of Shelah's eventual categoricity conjecture from large cardinals. This partially answers \cite[Question 6.14]{sh702}:

\begin{thm}\label{shelah-categ-cor}
  Assume Claim \ref{shelah-claim} and WGCH. 
  \begin{enumerate}
    \item Shelah's categoricity conjecture holds in fully tame and short AECs with amalgamation.
    \item If there exists a proper class of strongly compact cardinals, then Shelah's categoricity conjecture holds.
  \end{enumerate}
\end{thm}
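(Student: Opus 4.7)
The plan is to combine Theorem \ref{good-frame-succ} with Claim \ref{shelah-claim} to obtain categoricity of the class of $\lambda^{+\omega}$-saturated models on a tail, and then transfer this back to categoricity of $K$ itself.

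For (1), suppose $K$ is fully tame and short with amalgamation, and categorical in some sufficiently large $\mu_0$. First I choose a pair $\kappa$, $\lambda$ meeting the hypotheses of Theorem \ref{good-frame-succ}: picking $\kappa = \beth_\kappa > \LS (K)$ such that $K$ is fully $(<\kappa)$-tame and short, then $\lambda = \beth_\lambda$ with $\cf{\lambda} \ge \kappa$ and $\lambda^{+\omega} < \mu_0$ (possible since ``high-enough'' is at my disposal). Theorem \ref{good-frame-succ} then supplies an $\omega$-successful type-full good $\lambda$-frame with underlying class $K_\lambda$, while Theorem \ref{categ-frame} applied in the same setup yields stability of $K$ in every $\mu \ge \kappa$ and, together with Remark \ref{lim-uq-rmk}, uniqueness of limit models. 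By stability together with limit-model uniqueness, the unique model of $K$ of size $\mu_0$ is saturated, hence $\lambda^{+\omega}$-saturated, so $\Ksatp{\lambda^{+\omega}}$ is itself categorical in $\mu_0 > \lambda^{+\omega}$. Invoking Claim \ref{shelah-claim} (with WGCH) now gives that $\Ksatp{\lambda^{+\omega}}$ is categorical in \emph{all} $\mu > \lambda^{+\omega}$.

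The main obstacle is the transfer of this categoricity from $\Ksatp{\lambda^{+\omega}}$ back to $K$. I would show that every $M \in K_\mu$ for $\mu > \lambda^{+\omega}$ is already $\lambda^{+\omega}$-saturated; since $\Ksatp{\lambda^{+\omega}}$ is categorical in $\mu$, this forces $K$ itself to be categorical in $\mu$. The key input is a superstability-type property of the $\omega$-successful frame: unions of increasing chains of $\lambda^{+\omega}$-saturated models are again $\lambda^{+\omega}$-saturated (derivable from uniqueness of limit models). Combined with stability in every $\mu \ge \kappa$ and a resolution of an arbitrary $M \in K_\mu$ as such a union, this yields saturation of every sufficiently large model, giving the desired categoricity of $K$ on a tail of cardinals.

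For (2), Fact \ref{boney-lc}(1) produces full $(<\kappa)$-tameness and shortness whenever $\kappa > \LS (K)$ is strongly compact, and Fact \ref{boney-lc}(2) gives amalgamation in $K_{\ge \kappa}$ provided $K$ is categorical in some $\lambda > \beth_{\kappa + 1}$. The proper class of strongly compacts lets me choose such a $\kappa$ below any given high-enough categoricity cardinal of $K$; the restricted class $K_{\ge \kappa}$ (which is an AEC with $\LS(K_{\ge \kappa}) = \kappa$) then meets the hypotheses of part (1), and applying (1) to $K_{\ge \kappa}$ completes the argument.
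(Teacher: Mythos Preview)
Your approach matches the paper's up to and including the invocation of Claim \ref{shelah-claim}; in fact you fill in a detail the paper leaves implicit (that $\Ksatp{\lambda^{+\omega}}$ is categorical in $\mu_0$, via saturation of the categoricity model). The divergence is in the transfer of categoricity back from $\Ksatp{\lambda^{+\omega}}$ to $K$, and there your argument has a genuine gap.

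You propose to resolve an arbitrary $M \in K_\mu$ as an increasing union of $\lambda^{+\omega}$-saturated submodels. But there is no reason such a resolution exists: if $M$ is not $\lambda^{+\omega}$-saturated, nothing guarantees it contains a cofinal chain of $\lambda^{+\omega}$-saturated submodels. Your argument can be salvaged for $\mu \ge \mu_0$ (take a resolution by submodels of size $\mu_0$, each saturated by categoricity in $\mu_0$, then use closure of $\Ksatp{\lambda^{+\omega}}$ under chains), but it gives nothing for $\lambda^{+\omega} < \mu < \mu_0$. That leaves the categoricity threshold depending on $\mu_0$, which is weaker than the uniform version of the conjecture.

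The paper avoids this by invoking Morley's omitting type theorem for AECs (Fact \ref{categ-facts}.(\ref{sat-transfer})): since $K$ is categorical in $\mu_0$ and the model there is $\lambda^{+\omega}$-saturated, \emph{every} member of $K_{\ge \chi}$ is $\lambda^{+\omega}$-saturated, where $\chi = \min(\mu_0, \sup_{n<\omega} \hanf{\lambda^{+n}}) \le \beth_{(2^{\lambda^{+\omega}})^+}$. This gives $K_\mu = \Ksatp{\lambda^{+\omega}}_\mu$ for all $\mu \ge \beth_{(2^{\lambda^{+\omega}})^+}$, hence categoricity on that tail with a threshold depending only on $\LS(K)$. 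Your part (2) is fine and agrees with the paper.
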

\begin{proof}
  Let $K$ be an AEC.
  
  \begin{enumerate}
    \item Assume $K$ is fully $\LS (K)$-tame and short and has amalgamation. Pick $\kappa$ and $\lambda$ such that $\LS (K) < \kappa = \beth_\kappa < \lambda = \beth_\lambda$ and $\cf{\lambda} \ge \kappa$. By Theorem \ref{good-frame-succ}, there is an $\omega$-successful good $\lambda$-frame on $K_\lambda$. By Claim \ref{shelah-claim}, $\Ksatp{\lambda^{+\omega}}$ is categorical in all $\mu > \lambda^{+\omega}$. By Morley's omitting type theorem for AECs (see \cite[II.1.10]{sh394}), $\K$ is categorical in all $\mu \ge \beth_{\left(2^{\lambda^{+\omega}}\right)^+}$.
    \item Let $\kappa > \LS (K)$ be strongly compact. By \cite{tamelc-jsl}, $K$ is fully $(<\kappa)$-tame and short. By the methods of \cite[Proposition 1.13]{makkaishelah}, $K_{\ge \kappa}$ has amalgamation. Now apply the previous part to $K_{\ge \kappa}$.
  \end{enumerate}
\end{proof}
\begin{remark}
  Previous works (e.g.\ \cite{makkaishelah, sh394, tamenesstwo, tamelc-jsl}) all assume categoricity in a successor cardinal, and this was thought to be hard to remove. Here, we do \emph{not} need to assume categoricity in a successor.
\end{remark}

Note that \cite[Theorem IV.7.12]{shelahaecbook} is stronger than Theorem \ref{shelah-categ-cor} (since Shelah assumes only Claim \ref{shelah-claim}, WGCH, and amalgamation): unfortunately we were unable to verify Shelah's proof. The statement contains an error as it contradicts Morley's categoricity theorem.

This paper is organized as follows. In Section \ref{prelim-sec} we review some of the background. In Sections \ref{indep-rel-sec}-\ref{indep-calc-sec}, we introduce the framework with which we will study independence. In Sections \ref{dense-subac-sec}-\ref{good-sec}, we introduce the definition of a \emph{generator} for an independence relation and show how to use it to build good frames. In Section \ref{canon-sec}, we use the theory of generators to prove results on the canonicity of coheir and good frames. In Section \ref{sec-ss}, we use generators to study the definition of superstability implicit in \cite{shvi635} (and further studied in \cite{gvv-toappear-v1_2, ss-tame-jsl}). We derive superstability from categoricity and use it to construct good frames. In Section \ref{domin-sec}, we show how to prove a good frame is $\omega$-successful provided it is induced by coheir. In Sections \ref{long-frame-sec}-\ref{long-transfer-sec}, we show how to extend such a frame to a global independence relation. In Section \ref{main-thm-sec}, some of the main theorems are established. In Section \ref{examples-sec}, we give examples (existence of large cardinals, totally categorical classes, and fully $(<\aleph_0)$-tame and short AECs) where Theorem \ref{abstract-global} can be applied to derive the existence of a global independence relation.

Since this paper was first circulated (in December 2014), several improvements and applications were discovered. Threshold cardinals for the construction of a good frame are improved in \cite{vv-symmetry-transfer-v3}. Global independence relations are studied in the framework of universal classes in \cite{ap-universal-v9} and a categoricity transfer is obtained there (later improved to the full eventual categoricity conjecture in \cite{categ-universal-2-v1}). Global independence can also be used to build prime models over sets of the form $Ma$, for $M$ a saturated models \cite{prime-categ-v5}. Several of the results of this paper are exposed in \cite{bv-survey-v3}.

This paper was written while working on a Ph.D.\ thesis under the direction of Rami Grossberg at Carnegie Mellon University and I would like to thank Professor Grossberg for his guidance and assistance in my research in general and in this work specifically. 

I also thank Andrés Villaveces for sending his thoughts on my results and Will Boney for carefully reading this paper and giving invaluable feedback. I thank the referee for a thorough report that greatly helped to improve the presentation of this paper.

\section{Preliminaries}\label{prelim-sec}

We review some of the basics of abstract elementary classes and set some notation. The reader is advised to skim through this section quickly and go back to it as needed. We refer the reader to the preliminaries of \cite{sv-infinitary-stability-v6-toappear} for more motivation on some of the definitions below.

\subsection{Set-theoretic terminology}

\begin{notation}\index{interval of cardinals} \index{$\F$|see {interval of cardinals}}
  When we say that \emph{$\F$ is an interval of cardinals}, we mean that $\F = [\lambda, \theta)$ is the set of cardinals $\mu$ such that $\lambda \le \mu < \theta$. Here, $\lambda \le \theta$ are (possibly finite) cardinals except we also allow $\theta = \infty$.
\end{notation}

We will often use the following function:

\begin{defin}[Hanf function]\label{hanf-def}\index{Hanf function} \index{$\hanf{\lambda}$|see {Hanf function}}
  For $\lambda$ an infinite cardinal, define $\hanf{\lambda} := \beth_{(2^{\lambda})^+}$.
\end{defin}

Note that for $\lambda$ infinite, $\lambda = \beth_\lambda$ if and only if for all $\mu < \lambda$, $h (\mu) < \lambda$.

\begin{defin}\label{kappa-r-def}\index{$\kappap$}
  For $\kappa$ an infinite cardinal, let $\kappap$ be the least regular cardinal which is at least $\kappa$. That is, $\kappap$ is $\kappa^+$ if $\kappa$ is singular and $\kappa$ otherwise.
\end{defin}

\subsection{Abstract classes}

An abstract class\index{abstract class} \index{AC|see {abstract class}} (AC for short) is a pair $(K, \lea)$, where $K$ is a class of structures of the same (possibly infinitary) language and $\lea$ is an ordering on $K$ extending substructure and respecting isomorphisms. The definition is due to Rami Grossberg and appears in \cite{grossbergbook}. It is replicated in \cite[Definition 2.7]{sv-infinitary-stability-v6-toappear}. We use the same notation as in \cite{sv-infinitary-stability-v6-toappear}; for example $M \lta N$ \index{$\lta$} means $M \lea N$ and $M \neq N$. 

\begin{defin}\label{r-increasing-def}\index{$R$-increasing} \index{continuous} \index{increasing|see {$R$-increasing}} \index{strictly increasing|see {$R$-increasing}}
  Let $K$ be an abstract class and let $R$ be a binary relation on $K$. A sequence $\seq{M_i : i < \delta}$ of elements of $K$ is \emph{$R$-increasing} if for all $i < j < \delta$, $M_i R M_j$. When $R = \lea$, we omit it. \emph{Strictly increasing} means $\lta$-increasing. $\seq{M_i : i < \delta}$ is \emph{continuous} if for all limit $i < \delta$, $M_i = \bigcup_{j < i} M_j$.
\end{defin}

\begin{notation}\index{$K_{\F}$}\index{$K_\lambda$|see {$K_{\F}$}}\index{$K_{\ge \lambda}$|see {$K_{\F}$}}\index{$K_{<\lambda}$|see {$K_{\F}$}}
  For $K$ an abstract class, $\F$ an interval of cardinals, we write $K_{\F} := \{M \in K \mid \|M\| \in \F\}$. When $\F = \{\lambda\}$, we write $K_\lambda$ for $K_{\{\lambda\}}$. We also use notation like $K_{\ge \lambda}$, $K_{<\lambda}$, etc.
\end{notation}
\begin{defin}\index{in $\F$}
  An abstract class $K$ is \emph{in $\F$} if $K_{\F} = K$.
\end{defin}

We now recall the definition of an abstract elementary class (AEC) in $\F$, for $\F$ an interval of cardinal. Localizing to an interval is convenient when dealing with good frames and appears already (for $\F = \{\lambda\}$) in \cite[Definition 1.0.3.2]{jrsh875}. Confusingly, Shelah earlier on called an AEC in $\lambda$ a $\lambda$-AEC (in \cite[Definition II.1.18]{shelahaecbook}).

\begin{defin}\label{mu-aec-def}\index{abstract elementary class} \index{AEC|see {abstract elementary class}} \index{abstract elementary class in $\F$|see {abstract elementary class}} \index{AEC in $\F$|see {abstract elementary class}} \index{AEC in $\lambda$|see {abstract elementary class}} \index{abstract elementary class in $\lambda$|see {abstract elementary class}} \index{coherence|see {abstract elementary class}} \index{Tarski-Vaught axioms|see {abstract elementary class}} \index{Löwenheim-Skolem-Tarski axiom|see {abstract elementary class}} \index{Löwenheim-Skolem-Tarski number|see {abstract elementary class}} \index{$\LS (K)$|see {abstract elementary class}}
  For $\F = [\lambda, \theta)$ an interval of cardinals, we say an abstract class $K$ in $\F$ is an \emph{abstract elementary class} (AEC for short) \emph{in $\F$} if it satisfies:

  \begin{enumerate}
    \item Coherence: If $M_0, M_1, M_2$ are in $K$, $M_0 \lea M_2$, $M_1 \lea M_2$, and $|M_0| \subseteq |M_1|$, then $M_0 \lea M_1$.
    \item $L (K)$ is finitary.
    \item Tarski-Vaught axioms: If $\seq{M_i : i < \delta}$ is an increasing chain in $K$ and $\delta < \theta$, then $M_\delta := \bigcup_{i < \delta} M_i$ is such that:
      \begin{enumerate}
        \item $M_\delta \in K$.
        \item $M_0 \lea M_\delta$.
        \item\label{tv-last} If $M_i \lea N$ for all $i < \delta$, then $M_\delta \lea N$.
      \end{enumerate}
    \item Löwenheim-Skolem-Tarski axiom: There exists a cardinal $\mu \ge |L (K)| + \aleph_0$ such that for any $M \in K$ and any $A \subseteq |M|$, there exists $M_0 \lea M$ containing $A$ with $\|M_0\| \le |A| + \mu$. We write $\LS (K)$ (the \emph{Löwenheim-Skolem-Tarski number of $K$}) for the least such cardinal.
  \end{enumerate}

  When $\F = [0,\infty)$, we omit it. We say $K$ is an \emph{AEC in $\lambda$} if it is an AEC in $\{\lambda\}$. 
\end{defin}

Recall that an AEC in $\F$ can be made into an AEC: 

\begin{fact}[Lemma II.1.23 in \cite{shelahaecbook}]\label{kup-fact}
  If $K$ is an AEC in $\lambda := \LS (K)$, then there exists a unique AEC $K'$ such that $(K')_{\lambda} = K$ and $\LS (K') = \lambda$. The same holds if $K$ is an AEC in $\F$, $\F = [\lambda, \theta)$ (apply the previous sentence to $K_\lambda$).
\end{fact}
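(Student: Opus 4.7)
The plan is to explicitly construct $K'$ from $K$ by adding all models that arise as directed colimits of $\lea$-increasing systems in $K$, and to check uniqueness by showing that any extension satisfying the axioms must coincide with this construction. For the construction, I would declare $M \in K'$ when there is a \emph{$K$-resolution} of $M$: a directed system $\seq{M_s : s \in I}$ of models in $K$ indexed by a directed poset of $\lambda$-sized subsets of $|M|$, where $s \subseteq t$ implies $M_s \lea M_t$, and such that $|M| = \bigcup_{s \in I} |M_s|$. Equivalently (and more concretely for chains), every $M$ of cardinality $\mu \ge \lambda$ in $K'$ can be written as a continuous $\lea$-increasing union $\bigcup_{i < \mu} M_i$ with each $M_i \in K$. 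I would then define $M \lea' N$ to mean $M \subseteq N$ together with the existence of $K$-resolutions $\seq{M_s : s \in I}$ of $M$ and $\seq{N_t : t \in J}$ of $N$ satisfying $I \subseteq J$ and $M_s = N_s$ for all $s \in I$.

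Next I would verify that this definition does not secretly depend on the choice of resolutions, by proving the \emph{amalgamation of resolutions} lemma: any two $K$-resolutions of the same $M \in K'$ admit a common refinement (this is where the directedness of the index set and the coherence axiom of $K$ as an AEC in $\lambda$ are used). Once one has this, reflexivity and transitivity of $\lea'$ are immediate, and the fact that $\lea'$ extends $\lea$ on $K_\lambda$ follows by taking trivial resolutions. For the Tarski–Vaught axioms, given a $\lea'$-increasing chain $\seq{M_i : i < \delta}$ with union $M$, I would concatenate chosen resolutions of the $M_i$ (using the compatibility coming from $M_i \lea' M_{i+1}$) to produce a resolution of $M$, and similarly combine with a resolution of an upper bound $N$ to witness $M \lea' N$. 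For Löwenheim–Skolem–Tarski, given $A \subseteq |M|$ with $|A| + \lambda \le \mu$, I would pick $\mu$-many elements of a resolution of $M$ that cover $A$ and close them under the directedness of the index set to produce the required $M_0 \lea' M$ of size $\mu$.

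For the uniqueness part, suppose $K''$ is another AEC with $(K'')_\lambda = K$ and $\LS(K'') = \lambda$. For the class equality, by the $\LS$ axiom and the chain axioms of $K''$, every model of $K''$ is a union of a $\lea_{K''}$-increasing continuous chain of submodels of size $\lambda$, each of which lies in $(K'')_\lambda = K$; conversely, any such union is in $K''$ by Tarski–Vaught. For the equality of orderings on a pair $M \subseteq N$ in $K''$: one direction produces a $\lea_{K''}$-chain inside $M$ realizing a resolution, which can be extended inside $N$ using LST and chain closure to yield compatible resolutions witnessing $M \lea' N$; the other direction uses that a compatible pair of resolutions consists of $\lea_{K''}$-chains in $K$, so the Tarski–Vaught axiom of $K''$ gives $M \lea_{K''} N$.

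I expect the main obstacle to be the verification that $\lea'$ is well defined independently of the choice of resolution (equivalently, proving the amalgamation-of-resolutions lemma), because this is where one actually needs the coherence axiom of $K$ together with the directedness of resolutions; once this is in hand, the remaining axioms of $K'$ and the uniqueness argument essentially follow by bookkeeping with chains and directed unions. A minor care point will be handling the degenerate case $\|M\| = \lambda$ uniformly (by using the trivial one-element resolution $\{M\}$) so that $K' \supseteq K$ holds without exception, and checking that the resulting $\LS(K')$ is exactly $\lambda$ rather than something larger.
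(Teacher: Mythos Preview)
The paper does not give its own proof of this statement: it is stated as a \emph{Fact} with a citation to \cite[Lemma II.1.23]{shelahaecbook}, so there is no in-paper argument to compare against. Your proposal is essentially the standard construction carried out in Shelah's book (and reproduced in other sources such as Baldwin's or Grossberg's notes): define $K'$ as the class of directed unions of $\lea$-systems from $K$, prove that the ordering is independent of the chosen resolution via a refinement/amalgamation-of-resolutions argument, and deduce uniqueness from the Löwenheim--Skolem--Tarski and chain axioms of any competing extension. So your approach matches the cited source rather than diverging from it.

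Two small points worth tightening. First, you do not mention verifying the \emph{coherence} axiom for $(K', \lea')$; this does require a short argument (given $M_0 \lea' M_2$, $M_1 \lea' M_2$, $|M_0| \subseteq |M_1|$, intersect a resolution of $M_0$ with one of $M_1$ inside a common refinement in $M_2$ and use coherence in $K$ level-by-level). Second, your parenthetical ``equivalently, every $M$ of cardinality $\mu$ is a continuous increasing union of length $\mu$ of models in $K$'' is not literally equivalent to the directed-system definition without further work (one direction needs an inductive construction using the already-established LST axiom for $K'$), so it is cleaner to take the directed-system version as the definition and derive the chain description afterward.
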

\begin{notation}\label{kup-def}\index{$\Kup$}
  Let $K$ be an AEC in $\F$ with $\F = [\lambda, \theta)$, $\lambda = \LS (K)$. Write $\Kup$ for the unique AEC $K'$ described by Fact \ref{kup-fact}.
\end{notation}

When studying independence, the following definition will be useful:

\begin{defin}\label{infty-aec-def}\index{coherent abstract class}\index{coherent abstract class in $\F$|see {coherent abstract class}}
  A \emph{coherent abstract class} in $\F$ is an abstract class in $\F$ satisfying the coherence property (see Definition \ref{mu-aec-def}).
\end{defin}

We also define the following weakening of the existence of a Löwenheim-Skolem-Tarski number:

\begin{defin}\label{lambda-closed-def}\index{closed}\index{$(<\lambda)$-closed|see {closed}}\index{$\lambda$-closed|see {closed}}\index{$(<\lambda)$-closed|see {closed}}
An abstract class $K$ is \emph{$(<\lambda)$-closed} if for any $M \in K$ and $A \subseteq |M|$ with $|A| < \lambda$, there exists $M_0 \lea M$ which contains $A$ and has size less than $\lambda$. $\lambda$-closed means $(<\lambda^+)$-closed.
\end{defin}
\begin{remark}
  An AEC $K$ is $(<\lambda)$-closed in every $\lambda > \LS (K)$.
\end{remark}

We will sometimes use the following consequence of Shelah's presentation theorem:

\begin{fact}[Conclusion I.1.11 in \cite{shelahaecbook}]\label{hanf-existence}
  Let $K$ be an AEC. If $K_{\ge \lambda} \neq \emptyset$ for every $\lambda < \hanf{\LS (K)}$, then $K$ has arbitrarily large models.
\end{fact}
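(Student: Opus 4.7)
The plan is to deduce this from Shelah's presentation theorem together with the classical Morley/Hanf computation for omitting types. Recall that Shelah's presentation theorem expresses any AEC $K$ as a PC$_{\LS(K), 2^{\LS(K)}}$-class: there is a language $L_1 \supseteq L(K)$ with $|L_1| = \LS(K)$, a first-order $L_1$-theory $T_1$, and a set $\Gamma$ of $L_1$-types with $|\Gamma| \le 2^{\LS(K)}$ such that $K$ is exactly the class of $L(K)$-reducts of models of $T_1$ omitting every type in $\Gamma$. The hypothesis that $K_{\ge \lambda} \neq \emptyset$ for all $\lambda < \hanf{\LS(K)} = \beth_{(2^{\LS(K)})^+}$ translates into: for every $\lambda < \beth_{(2^{\LS(K)})^+}$, there is a model of $T_1$ of size $\ge \lambda$ omitting all types in $\Gamma$.

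So the task reduces to showing: if $T_1$ has models of size $\ge \lambda$ omitting $\Gamma$ for every $\lambda < \beth_{(2^{\LS(K)})^+}$, then it has arbitrarily large such models. This is the standard Hanf number for omitting types in first-order logic, and I would prove it in the usual way. First I would fix an arbitrary target cardinal $\mu$ and an arbitrary linearly ordered index set $(I, <)$ of size $\mu$. The goal is to build an Ehrenfeucht--Mostowski blueprint: a Skolemized first-order theory $T^\ast$ in a language $L^\ast$ extending $L_1$ (still of size $\LS(K)$), together with an infinite set of $L^\ast$-indiscernibles $\seq{a_i : i \in I}$, such that the Skolem hull of these indiscernibles is a model of $T_1$ that omits every type in $\Gamma$. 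Stretching the indiscernible sequence to $(I, <)$ then produces a model of size $\ge \mu$, which gives an $L(K)$-reduct in $K$ of size $\ge \mu$, as desired.

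The core combinatorial step, and the main obstacle, is producing those indiscernibles together with the omitting-types condition on the Skolem hull. Here I would iterate the Erdős--Rado theorem $\omega$ times. Starting from a model of size $\beth_{(2^{\LS(K)})^+}$ (which exists by hypothesis), one uses Erdős--Rado in the form $\beth_n(\kappa)^+ \to (\kappa^+)^{n+1}_\kappa$ with $\kappa = 2^{\LS(K)}$: coloring finite increasing tuples by their Skolemized quantifier-free types (of which there are at most $2^{\LS(K)}$) yields a large monochromatic set at each stage, and iterating and intersecting produces an infinite set of $L^\ast$-indiscernibles. Crucially, for each $p \in \Gamma$ and each Skolem term $\tau$, the $L_1$-type realized in the Skolem hull by $\tau(a_{i_1}, \ldots, a_{i_n})$ depends only on the indiscernible type and is already realized in the large original model, where it omits $p$; thus the Skolem hull of the indiscernibles omits $\Gamma$. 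The threshold $\beth_{(2^{\LS(K)})^+}$ is exactly what is needed to run the $\omega$-many Erdős--Rado applications with color set of size $2^{\LS(K)}$.

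Finally, stretching the indiscernibles to any index set $(I, <)$ preserves indiscernibility, hence preserves the realized $L_1$-types of Skolem terms, hence preserves omission of $\Gamma$; the $L(K)$-reduct of the stretched Skolem hull is the desired arbitrarily large member of $K$. The only delicate bookkeeping is tracking that the number of relevant colors (Skolemized q.f.\ types) stays bounded by $2^{\LS(K)}$ through each Erdős--Rado step, which is why the $\beth_{(2^{\LS(K)})^+}$ threshold appears; everything else is routine.
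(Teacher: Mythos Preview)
Your proposal is correct and follows the standard route via Shelah's presentation theorem and the Morley/Hanf omitting-types computation. Note, however, that the paper does not give its own proof of this statement: it is stated as a \emph{Fact} with a citation to \cite[Conclusion I.1.11]{shelahaecbook}, and the surrounding text explicitly calls it a ``consequence of Shelah's presentation theorem,'' which is exactly the route you outlined.
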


As in the preliminaries of \cite{sv-infinitary-stability-v6-toappear}, we can define a notion of embedding for abstract classes and go on to define amalgamation\index{amalgamation}, joint embedding\index{joint embedding}, no maximal models\index{no maximal models}, Galois types\index{Galois type}, tameness, and type-shortness (that we will just call shortness). We give the definition of the last two (recall also Fact \ref{boney-lc} which says that under a large cardinal axiom any AEC is fully tame and short). 

\begin{defin}[Tameness and shortness]\index{tameness}\index{shortness}\index{tame|see {tameness}}\index{short|see {shortness}}\index{type-short|see {shortness}}\index{$(<\kappa)$-tame|see {tameness}}\index{fully $(<\kappa)$-tame and short|see {shortness}}
  Let $\K$ be an abstract class and let $\kappa$ be an infinite cardinal (most of the time $\kappa > \LS (\K)$).

  \begin{enumerate}
  \item \cite[Definition 3.2]{tamenessone} $\K$ is \emph{$(<\kappa)$-tame} if for any $M \in \K$ and any \emph{distinct} types $p, q \in \gS (M)$, there exists $A \subseteq |M|$ with $|A| < \kappa$ such that $p \rest A \neq q \rest A$.
  \item \cite[Definition 3.1]{tamelc-jsl} $\K$ is \emph{fully $(<\kappa)$-tame} if for any $M \in \K$, any ordinal $\alpha$, and any \emph{distinct} types $p, q \in \gS^{\alpha} (M)$ (so $p$ and $q$ can have \emph{any, possibly infinite, length}), there exists $A \subseteq |M|$ with $|A| < \kappa$ such that $p \rest A \neq q \rest A$.
  \item \cite[Definition 3.2]{tamelc-jsl} $\K$ is \emph{fully $(<\kappa)$-short} if for any $M \in \K$, any ordinal $\alpha$, and any \emph{distinct} types $p, q \in \gS^{\alpha} (M)$ , there exists $I \subseteq \alpha$ with $|I| < \kappa$ such that $p^I \neq q^I$.
  \end{enumerate}

  We say that $\K$ is \emph{fully $(<\kappa)$-tame and short} if it is fully $(<\kappa)$-tame and fully $(<\kappa)$-short. $\kappa$-tame means $(<\kappa^+)$-tame, and similarly for $\kappa$-short. We define local variations such as ``$(<\kappa)$-tame for types of length $\alpha$'' in a similar manner, see \cite[Definitions 3.1,3.2]{tamelc-jsl} or \cite[Definition 2.22]{sv-infinitary-stability-v6-toappear}. When we omit the parameter $\kappa$, we mean that there exists $\kappa$ such that the property holds.
\end{defin}

The following fact tells us that an AEC with amalgamation is a union of AECs with amalgamation and joint embedding. This a trivial observation from the definition of the \emph{diagram} of an AEC \cite[Definition I.2.2]{shelahaecbook}.

\begin{fact}[Lemma 16.14 in \cite{baldwinbook09}]\label{jep-partition}
    Let $K$ be an AEC with amalgamation. Then we can write $K = \bigcup_{i \in I} K^i$ where the $K^i$'s are disjoint AECs with $\LS (K^i) = \LS (K)$ and each $K^i$ has joint embedding and amalgamation.
\end{fact}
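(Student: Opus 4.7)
The plan is to partition $K$ by the joint-embedding equivalence relation and verify that each equivalence class inherits the AEC axioms from $K$. Concretely, I define $M \sim N$ iff there exist a $K$-extension $N^* \in K$ and $K$-embeddings $M \to N^*$ and $N \to N^*$. Reflexivity and symmetry are obvious; transitivity is where amalgamation is needed. If $N_1$ witnesses $M \sim N$ and $N_2$ witnesses $N \sim P$, then amalgamating $N_1$ and $N_2$ over the common $K$-submodel $N$ (using amalgamation in $K$) produces an $N^* \in K$ into which copies of $M$ and $P$ both embed. Let $\{K^i : i \in I\}$ enumerate the $\sim$-classes, each equipped with the restriction of $\lea$; they partition $K$ and each has joint embedding by construction.

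Next, I verify each $K^i$ is an AEC in the sense of Definition \ref{mu-aec-def} with $\LS(K^i) = \LS(K)$. Coherence and finiteness of $L(K^i) = L(K)$ transfer trivially. For the Tarski-Vaught axioms, if $\seq{M_j : j < \delta}$ is a $\lea$-increasing chain in $K^i$, then its union $M_\delta$ is in $K$ and $M_0 \lea M_\delta$, so $M_0 \sim M_\delta$ and $M_\delta \in K^i$; the other TV clauses transfer directly since $\lea$ on $K^i$ is just the restriction of $\lea$. For the Löwenheim-Skolem-Tarski axiom, given $M \in K^i$ and $A \subseteq |M|$, the LST axiom in $K$ produces $M_0 \lea M$ containing $A$ with $\|M_0\| \le |A| + \LS(K)$; then $M_0 \lea M \in K^i$ gives $M_0 \sim M$, hence $M_0 \in K^i$. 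Thus $\LS(K)$ is a valid Löwenheim-Skolem-Tarski number for $K^i$.

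Finally, amalgamation in each $K^i$ is inherited from $K$: given $M \lea N_1, N_2$ all in $K^i$, an amalgam $N^* \in K$ provided by amalgamation in $K$ satisfies $N_1 \leq_{K} N^*$ and $N_1 \in K^i$, so $N^* \sim N_1$ and $N^* \in K^i$.

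There is no real obstacle. The only nontrivial ingredient is amalgamation in $K$, invoked once for transitivity of $\sim$ and once to transfer amalgamation to each $K^i$; every other clause is a direct unwinding of the definitions, which is why the fact can reasonably be described as trivial once one has identified $\sim$ as the correct equivalence relation.
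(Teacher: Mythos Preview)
Your proof is correct and follows the standard approach: the paper does not give its own proof of this fact but cites it from Baldwin's book, describing it as ``a trivial observation from the definition of the \emph{diagram} of an AEC,'' and your argument is precisely that observation. One very minor point: you only verify $\LS(K^i) \le \LS(K)$, not equality, but this is all that is ever used in the paper.
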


Using Galois types, a natural notion of saturation can be defined (see \cite[Definition 2.25]{sv-infinitary-stability-v6-toappear} for more explanation on the definition): 

\begin{defin}\label{sat-def}\index{saturated}\index{$\mu$-saturated|see {saturated}}\index{$\Ksatp{\mu}$|see {saturated}}
  Let $K$ be an abstract class and $\mu$ be an infinite cardinal. 
  \begin{enumerate}
    \item A model $M \in K$ is \emph{$\mu$-saturated} if for all $N \gea M$ and all $A_0 \subseteq |M|$ of size less than $\mu$, any $p \in \gS^{<\mu} (A_0; N)$ is realized inside $M$. When $\mu = \|M\|$, we omit it.
    \item We write $\Ksatp{\mu}$ for the class of $\mu$-saturated models of $K_{\ge \mu}$ (ordered by the strong substructure relation of $K$).
  \end{enumerate}
\end{defin}

\begin{remark}\label{sat-rmk}
By \cite[Lemma II.1.14]{shelahaecbook}, if $K$ is an AEC with amalgamation and $\mu > \LS (K)$, $M \in K$ is $\mu$-saturated if and only if for all $N \gea M$ and all $A_0 \subseteq |M|$ with $|A_0| < \mu$, any $p \in \gS (A_0; N)$ is realized in $M$. That is, it is enough to consider types of length 1 in the definition. We will use this fact freely.
\end{remark}

Finally, we recall there is a natural notion of stability in this context. This paper's definition follows \cite[Definition 2.23]{sv-infinitary-stability-v6-toappear} by defining what it means for a model to be stable and then specializing to the full class.

\begin{defin}[Stability]\label{stab-def}\index{stability} \index{stable|see {stability}} \index{stable in $\mu$|see {stability}} \index{$(<\alpha)$-stable in $\mu$|see {stability}} \index{$\alpha$-stable in $\mu$|see {stability}}
  Let $\alpha$ be a cardinal, $\mu$ be a cardinal. A model $N \in K$ is $(<\alpha)$-\emph{stable in $\mu$} if for all $A \subseteq |N|$ of size $\le \mu$, $|\gS^{<\alpha} (A; N)| \le \mu$. Here and below, $\alpha$-stable means $(< (\alpha^+))$-stable. We say ``stable'' instead of ``1-stable''.

  $K$ is \emph{$(<\alpha)$-stable in $\mu$} if every $N \in K$ is $(<\alpha)$-stable in $\mu$. $K$ is \emph{$(<\alpha)$-stable} if it is $(<\alpha)$-stable in unboundedly\footnote{Note (\cite[Corollary 6.4]{tamenessone}) that in a $\LS (K)$-tame AEC with amalgamation, this is equivalent to stability in \emph{some} cardinal.} many cardinals.
\end{defin}

A corresponding definition of the order property in AECs appears in \cite[Definition 4.3]{sh394}. For simplicity, we have removed one parameter from the definition.

\begin{defin}\label{def-op}\index{order property} \index{$\alpha$-order property|see {order property}} \index{$\alpha$-order property of length $\mu$|see {order property}} \index{order property of length $\mu$|see {order property}} \index{$(<\alpha)$-order property of length $\mu$|see {order property}}
  
  Let $\alpha$ and $\mu$ be cardinals and let $K$ be an abstract class. A model $M \in K$ has the \emph{$\alpha$-order property of length $\mu$} if there exists distinct $\seq{\ba_i : i < \mu}$ inside $M$ with $\ell (\ba_i) = \alpha$ for all $i < \mu$, such that for any $i_0 < j_0 < \mu$ and $i_1 < j_1 < \mu$, $\gtp (\ba_{i_0} \ba_{j_0} / \emptyset; M) \neq \gtp (\ba_{j_1} \ba_{i_1} / \emptyset; M)$.

  $M$ has the \emph{$(<\alpha)$-order property of length $\mu$} if it has the $\beta$-order property of length $\mu$ for some $\beta < \alpha$. $M$ has the \emph{order property of length $\mu$} if it has the $\alpha$-order property of length $\mu$ for some $\alpha$.

  \emph{$K$ has the $\alpha$-order property of length $\mu$} if some $M \in K$ has it, and similarly for other variations such as ``$K$ has the order property of length $\mu$''. \emph{$K$ has the  order property} if it has the order property for every length.
\end{defin}

When studying coheir, we will be interested in the $(<\kappa)$-order property of length $\kappa$, where $\kappa$ is a ``big'' cardinal (typically $\kappa = \beth_\kappa > \LS (\K)$). For completeness, we recall the definition of the following variation on the $(<\kappa)$-order property of length $\kappa$ that appears in \cite[Definition 4.2]{bg-v9} (but is adapted from a previous definition of Shelah, see there for more background):

\begin{defin}\label{def-weak-op}\index{weak order property}\index{weak $\kappa$-order property|see {weak order property}}
  Let $K$ be an AEC. For $\kappa > \LS (K)$, $K$ has the \emph{weak $\kappa$-order property} if there are $\alpha, \beta < \kappa$, $M \in K_{<\kappa}$, $N \gea M$, types $p \neq q \in \gS^{\alpha + \beta} (M)$, and sequences $\seq{\ba_i : i < \kappa}$, $\seq{\bb_i : i < \kappa}$ of distinct elements from $N$ so that for all $i, j < \kappa$:

  \begin{enumerate}
  \item $i \le j$ implies $\gtp (\ba_i \bb_j / M; N) = p$.
  \item $i > j$ implies $\gtp (\ba_i \bb_j / M; N) = q$.
  \end{enumerate}
\end{defin}

The following sums up all the results we will use about stability and the order property:

\begin{fact}\label{op-facts}
  Let $K$ be an AEC.

  \begin{enumerate}
    \item \cite[Lemma 4.8]{sv-infinitary-stability-v6-toappear} Let $\kappa = \beth_\kappa > \LS (K)$. The following are equivalent:
      \begin{enumerate}
        \item $K$ has the weak $\kappa$-order property.
        \item $K$ has the $(<\kappa)$-order property of length $\kappa$.
        \item $K$ has the $(<\kappa)$-order property.
      \end{enumerate}
    \item \cite[Theorem 4.13]{sv-infinitary-stability-v6-toappear} Assume $K$ is $(<\kappa)$-tame and has amalgamation. The following are equivalent:
      \begin{enumerate}
        \item $K$ is stable in some $\lambda \ge \kappa + \LS (K)$.
        \item There exists $\mu \le \lambda_0 < \hanf{\kappa + \LS (K)}$ such that $K$ is stable in any $\lambda \ge \lambda_0$ with $\lambda = \lambda^{<\mu}$.
        \item $K$ does not have the order property.
        \item $K$ does not have the $(<\kappa)$-order property.
      \end{enumerate}
    \item \cite[Theorem 4.5]{b-k-vd-spectrum} If $K$ is $\LS (K)$-tame, has amalgamation, and is stable in $\LS (K)$, then it is stable in $\LS (K)^+$.
  \end{enumerate}
\end{fact}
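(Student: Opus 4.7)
Since Fact \ref{op-facts} is a compilation of three results cited from other papers, I will sketch a unified plan rather than treat each part in isolation.

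For part (1), the plan is to prove three directions. The implication ``weak $\kappa$-order property $\Rightarrow$ $(<\kappa)$-order property of length $\kappa$'' is the easiest: given the witnessing $\seq{\ba_i : i < \kappa}$, $\seq{\bb_i : i < \kappa}$ with two types $p \neq q$ over $M$, form the concatenated sequence $\seq{\ba_i \bb_i : i < \kappa}$ and use that for $i < j$ one has $\gtp(\ba_i\bb_j/M) = p$ while $\gtp(\ba_j\bb_i/M) = q$. The implication to the unrestricted $(<\kappa)$-order property is trivial. The genuinely hard converse is to go from the $(<\kappa)$-order property (even of length $\kappa$) to the weak $\kappa$-order property. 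Here the hypothesis $\kappa = \beth_\kappa$ is essential: given a $\beta$-indexed sequence $\seq{\ba_i : i < \kappa}$ witnessing the $\beta$-order property, I would, for each $\alpha < \kappa$, apply an Erdős-Rado argument (using $\beth_{(2^{\alpha + \beta})^+} \le \kappa$) over a base set of size $<\kappa$ to extract a sub-sequence on which the 2-types of pairs depend only on the order. Two 2-types must appear on the order relation (otherwise there would be no order property at all), and this yields precisely the data needed for the weak $\kappa$-order property.

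For part (2), the strategy is the classical cycle: (c) $\Rightarrow$ (d) is immediate; (d) $\Rightarrow$ (b) proceeds by contrapositive, assuming instability in some $\lambda \ge \kappa + \LS(K)$ with $\lambda^{<\mu} = \lambda$ and constructing a long order property sequence by a standard Galvin-like tree argument on types; (b) $\Rightarrow$ (a) is trivial; and (a) $\Rightarrow$ (c) is the usual ``unstability implies the order property,'' adapted to the AEC setting by writing down a type-tree and using amalgamation to realize incompatible extensions. Tameness enters when one transfers no-order-property at the $(<\kappa)$-level up to no-order-property globally (this is exactly where part (1) gets invoked inside this argument). The bound $\hanf{\kappa + \LS(K)}$ for the threshold $\lambda_0$ comes from Shelah's presentation theorem (Fact \ref{hanf-existence}) applied to a reduct of $K$ encoding a potential order-property witness.

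For part (3), the plan is direct. Suppose for contradiction that $K$ is stable in $\LS(K)$ but not in $\LS(K)^+$. Then one can find $M \in K_{\LS(K)^+}$ with $|\gS(M)| > \LS(K)^+$. Enumerate $|M| = \bigcup_{i < \LS(K)^+} A_i$ as an increasing union of sets of size $\LS(K)$, each contained in some $M_i \lea M$ of size $\LS(K)$, and count how many types over each $M_i$ the $\LS(K)^+$-many distinct types of $\gS(M)$ project to; by the pigeonhole principle, some $M_i$ must carry more than $\LS(K)$ distinct restrictions, contradicting stability in $\LS(K)$. Tameness at $\LS(K)$ is what guarantees that two global types differing must already differ on some $M_i$, making the projection injective after pigeonholing.

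The main obstacle is the extraction of the weak order property from the bare $(<\kappa)$-order property of length $\kappa$ in part (1); this is the combinatorial heart of the fact and is where the regularity-like condition $\kappa = \beth_\kappa$ and the iterated Erdős-Rado step really do work. The other parts are either standard AEC stability theory or straightforward pigeonhole applications of tameness.
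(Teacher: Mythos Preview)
The paper does not prove this fact; it is a compilation of results quoted from \cite{sv-infinitary-stability-v6-toappear} and \cite{b-k-vd-spectrum}. So the comparison is between your sketches and the standard proofs in those references. Two of your sketches have genuine gaps.

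\textbf{Part (1).} Your cycle is (a) $\Rightarrow$ (b) $\Rightarrow$ (c) $\Rightarrow$ (a), and you call (b) $\Rightarrow$ (c) ``trivial''. It is not. Statement (b) gives an order-property witness of length exactly $\kappa$; statement (c) demands witnesses of \emph{every} length, including lengths strictly greater than $\kappa$. Passing to longer sequences requires a stretching argument: via Shelah's presentation theorem (or the Galois Morleyization of \cite{sv-infinitary-stability-v6-toappear}), the length-$\kappa$ witness lives in a $\text{PC}$-class over a language of size less than $\kappa$, and since $\kappa = \beth_\kappa > \LS(K)$ one has $\kappa \ge \hanf{\mu}$ for every $\mu < \kappa$; hence Morley's method produces Ehrenfeucht--Mostowski models with order-indiscernibles of arbitrary length. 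This is where the hypothesis $\kappa = \beth_\kappa$ does real work, not only in the Erd\H{o}s--Rado extraction you describe for (b) $\Rightarrow$ (a). Your treatment of (a) $\Rightarrow$ (b) and of (b) $\Rightarrow$ (a) via Erd\H{o}s--Rado is on the right track.

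\textbf{Part (3).} Your pigeonhole argument does not work. Tameness gives an injection $\gS(M) \hookrightarrow \prod_{i < \LS(K)^+} \gS(M_i)$, but the right-hand side has size $\LS(K)^{\LS(K)^+} = 2^{\LS(K)^+}$, so having $\LS(K)^{++}$ many types over $M$ does \emph{not} force any single $M_i$ to carry more than $\LS(K)$ restrictions. The actual argument in \cite{b-k-vd-spectrum} goes through non-splitting: stability in $\LS(K)$ gives that every $p \in \gS(M)$ does not $\LS(K)$-split over some $M_i \lea M$ of size $\LS(K)$ (this is Fact \ref{splitting-basics}.(\ref{splitting-basics-25})). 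Take a resolution with $M_i \ltu M_{i+1}$; by pigeonhole, more than $\LS(K)^+$ types do not split over a fixed $M_{i_0}$, and by stability in $\LS(K)$ two of them agree on $M_{i_0+1}$. Then weak uniqueness of non-splitting (Fact \ref{splitting-basics}.(\ref{splitting-weak-props})) together with $\LS(K)$-tameness forces them to be equal, a contradiction. The missing idea is the local character of non-splitting; raw pigeonhole on restrictions is not enough.

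Your outline for part (2) is broadly in the right spirit, though the proof in \cite{sv-infinitary-stability-v6-toappear} is organized around the Galois Morleyization to reduce to syntactic arguments rather than the tree construction you suggest.
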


\subsection{Universal and limit extensions}

\begin{defin}\label{univ-def} \
  Let $K$ be an abstract class, $\lambda$ be a cardinal.
  \begin{enumerate}
    \item \index{universal over} \index{$\ltu$|see {universal over}}For $M, N \in K$, say $M \ltu N$ ($N$ is \emph{universal over $M$}) if and only if $M \lta N$ and whenever we have $M' \gea M$ such that $\|M'\| \le \|N\|$, then there exists $f: M' \xrightarrow[M]{} N$. Say $M \leu N$ if and only if $M = N$ or $M \ltu N$.
    \item \index{limit over} \index{$\ltl{\lambda}{\delta}$|see {limit over}} \index{$\lel{\lambda}{\delta}$|see {limit over}} \index{$(\lambda, \delta)$-limit|see {limit over}} \index{$\lambda$-limit|see {limit over}} For $M, N \in K$, $\lambda$ a cardinal and $\delta \le \lambda^+$, say $M \ltl{\lambda}{\delta} N$ ($N$ is \emph{($\lambda, \delta)$-limit over $M$}) if and only if $M \in K_\lambda$, $N \in K_{\lambda + |\delta|}$, $M \lta N$, and there exists $\seq{M_i : i \le \delta}$ increasing continuous such that $M_0 = M$, $M_i \ltu M_{i + 1}$ for all $i < \delta$, and $M_\delta = N$ if $\delta > 0$. Say $M \lel{\lambda}{\delta}$ if $M = N$ or $M \ltl{\lambda}{\delta} N$. We say $N \in K$ is a \emph{$(\lambda, \delta)$-limit model} if $M \ltl{\lambda}{\delta} N$ for some $M$. We say $N$ is \emph{$\lambda$-limit} if it is $(\lambda, \delta)$-limit for some limit $\delta < \lambda^+$. When $\lambda$ is clear from context, we omit it.
  \end{enumerate}
\end{defin}
\begin{remark}
  So for $M, N \in K_\lambda$, $M \ltl{\lambda}{0} N$ if and only if $M \lta N$, while $M \ltl{\lambda}{1}$ if and only if $M \ltu N$.
\end{remark}
\begin{remark}
  Variations on $\ltl{\lambda}{\delta}$ already appear as \cite[Definition 2.1]{sh394}. This paper's definition of being universal is different from the usual one (see e.g.\ \cite[Definition I.2.1.2]{vandierennomax}) because we ask only for $\|M'\| \le \|N\|$ rather than $\|M'\| = \|M\|$.
\end{remark}

The next fact is folklore.

\begin{fact}\label{ltl-basic-props}
 Let $K$ be an AC with amalgamation, $\lambda$ be an infinite cardinal, and $\delta \le \lambda^+$. Then:
  \begin{enumerate}
  \item \label{univ-trans} $M_0 \ltu M_1 \lea M_2$ and $\|M_1\| = \|M_2\|$ imply $M_0 \ltu M_2$.
  \item\label{univ-trans-2} $M_0 \lea M_1 \ltu M_2$ implies $M_0 \ltu M_2$.
  \item If $M_0 \in K_\lambda$, then $M_0 \lea M_1 \ltl{\lambda}{\delta} M_2$ implies $M_0 \ltl{\lambda}{\delta} M_2$.
  \item If $\delta < \lambda^+$, $K$ is an AEC in $\lambda = \LS (K)$ with no maximal models and stability in $\lambda$, then for any $M_0 \in K$ there exists $M_0'$ such that $M_0 \ltl{\lambda}{\delta} M_0'$.
  \end{enumerate}
\end{fact}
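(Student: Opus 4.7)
I would prove the four items in order, bootstrapping each from the previous; the first three are routine unpackings of the definition of ``universal over'', and the only real content lives in (4), where stability in $\lambda$ is used to produce a universal extension.

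For (1), given $M' \gea M_0$ with $\|M'\| \le \|M_1\| = \|M_2\|$, the hypothesis $M_0 \ltu M_1$ yields $f \colon M' \to M_1$ fixing $M_0$; composing with the inclusion $M_1 \lea M_2$ suffices. For (2), given $M' \gea M_0$ of size $\le \|M_2\|$, amalgamate $M'$ and $M_1$ over $M_0$ to obtain $M_1 \lea M^*$ and $g \colon M' \to M^*$ fixing $M_0$, with $\|M^*\| \le \|M'\| + \|M_1\| \le \|M_2\|$ (using L\"owenheim--Skolem to control the amalgam size in the intended AEC applications). Then $M_1 \ltu M_2$ gives $h \colon M^* \to M_2$ fixing $M_1$, and $h \circ g$ is the required embedding. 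For (3), let $\seq{N_i : i \le \delta}$ witness $M_1 \ltl{\lambda}{\delta} M_2$; the degenerate case $\delta = 0$ reduces to observing that $M_0 \lea M_1 \lta M_2$ forces $M_0 \lta M_2$, so assume $\delta \ge 1$ and set $N_0' := M_0$, $N_i' := N_i$ for $1 \le i \le \delta$. The only nonroutine point, $N_0' \ltu N_1'$, is exactly (2) applied to $M_0 \lea M_1 = N_0 \ltu N_1$; continuity at any limit $j \ge 2$ is automatic since $M_0 \lea N_1$, so $\bigcup_{i < j} N_i' = \bigcup_{1 \le i < j} N_i = N_j$.

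For (4), I would build an increasing continuous $\lea$-chain $\seq{N_i : i \le \delta}$ in $K_\lambda$ by recursion with $N_0 := M_0$ and $N_i \ltu N_{i+1}$ at each successor, and set $M_0' := N_\delta$. At a limit $j \le \delta$, $|j| \le |\delta| < \lambda^+$ and $K$ is an AEC in $\lambda$, so the union stays in $K_\lambda$. The successor step is the main obstacle: given $N_i \in K_\lambda$, produce $N_{i+1} \in K_\lambda$ with $N_i \ltu N_{i+1}$. Here I would invoke the classical construction of a universal extension in a stable AEC with amalgamation and no maximal models: since $|\gS(N_i)| \le \lambda$ by stability in $\lambda$, iterate inside $K_\lambda$ along a chain of length $\lambda$ realizing all Galois types over the successive models (using no maximal models to find realizations and the fact that $K$ is an AEC in $\lambda$ to keep everything in $K_\lambda$), then verify by a back-and-forth enumeration that the resulting model is genuinely universal over $N_i$ rather than merely type-saturated---this verification of universality, rather than of mere type-realization, is the substantive piece, and it is standard from, e.g., Shelah's book or Grossberg--VanDieren.
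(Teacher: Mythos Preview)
Your proposal is correct and matches the paper's treatment: the paper itself dispenses with (1)--(3) as ``straightforward'' and attributes (4) to Shelah, deferring to \cite[Proposition 2.12]{ss-tame-jsl} for details, so you have in fact written out more than the paper does. Your caveat about needing L\"owenheim--Skolem to control the amalgam size in (2) is apt---the statement is phrased for an abstract class, but the argument really wants $K$ to be an AEC (or at least $\lambda$-closed), which is the only setting in which the paper ever invokes it.
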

\begin{proof}
  All are straightforward, except perhaps the last which is due to Shelah. For proofs and references see \cite[Proposition 2.12]{ss-tame-jsl}.
\end{proof}

By a routine back and forth argument, we have:

\begin{fact}[Fact 1.3.6 in \cite{shvi635}]\label{lim-uq}
  Let $K$ be an AEC in $\lambda := \LS (K)$ with amalgamation. Let $\delta \le \lambda^+$ be a limit ordinal and for $\ell = 1,2$, let $\seq{M_i^\ell : i \le \delta}$ be increasing continuous with $M_0 := M_0^1 = M_0^2$ and $M_i^\ell \ltu M_{i + 1}^\ell$ for all $i < \delta$ (so they witness $M_0^\ell \ltl{\lambda}{\delta} M_\delta^\ell$).

  Then there exists $f: M_\delta^1 \cong_{M_0} M_\delta^2$ such that for all $i < \delta$, there exists $j < \delta$ such that $f[M_i^1] \lea M_j^2$ and $f^{-1}[M_i^2] \lea M_j^1$.
\end{fact}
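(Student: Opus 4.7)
\emph{Proof plan.} I proceed by a back-and-forth along the two towers, constructing by induction on $l \le \delta$ a $\lea$-increasing continuous chain of $K$-isomorphisms $f_l : D_l \to I_l$ fixing $M_0$, with $D_l \lea M_\delta^1$, $I_l \lea M_\delta^2$, and the invariant
\[
M_l^1 \lea D_l \lea M_{l+1}^1, \qquad M_l^2 \lea I_l \lea M_{l+1}^2 \qquad (l < \delta).
\]
Start with $f_0 = \id_{M_0}$, $D_0 = I_0 = M_0$. At a limit $l \le \delta$ take unions: the Tarski-Vaught chain axioms and continuity of the $M^\ell$-chains force $D_l = M_l^1$ and $I_l = M_l^2$, preserving the invariant. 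At $l = \delta$ this yields the desired isomorphism $f := f_\delta : M_\delta^1 \cong_{M_0} M_\delta^2$. The cofinality conclusion is then immediate: for $i < \delta$ take $l := i + 1 < \delta$; then $f[M_i^1] \lea I_l \lea M_{i+2}^2$ and symmetrically $f^{-1}[M_i^2] \lea M_{i+2}^1$, so $j := i + 2$ witnesses the conclusion.

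The content of the argument is the successor step $l \to l+1$, done in two sub-steps. \emph{First}, enlarge the domain to cover $M_{l+1}^1$. Since $I_l \lea M_{l+1}^2 \ltu M_{l+2}^2$, Fact \ref{ltl-basic-props}(\ref{univ-trans-2}) gives $I_l \ltu M_{l+2}^2$. Transporting the extension $D_l \lea M_{l+1}^1$ along $f_l$ to an abstract extension $N^* \gea I_l$ of size $\lambda$, and invoking universality of $M_{l+2}^2$ over $I_l$, I extend $f_l$ to $f^* : M_{l+1}^1 \to M_{l+2}^2$ with $f^*[M_{l+1}^1] \lea M_{l+2}^2$. \emph{Second}, symmetrically enlarge the image to cover $M_{l+1}^2$: pick $N \lea M_{l+2}^2$ of size $\lambda$ containing $|f^*[M_{l+1}^1]| \cup |M_{l+1}^2|$ (by coherence, both sets sit in $N$ as $\lea$-submodels). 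Transport $N$ along $(f^*)^{-1}$ to an abstract extension of $M_{l+1}^1$, then use universality of $M_{l+2}^1$ over $M_{l+1}^1$ to embed that extension into $M_{l+2}^1$ via a map $\phi$ extending $(f^*)^{-1}$. Set $f_{l+1} := \phi^{-1} : \phi[N] \to N$. Then $D_{l+1} := \phi[N]$ satisfies $M_{l+1}^1 \lea D_{l+1} \lea M_{l+2}^1$ (the left inclusion because $\phi$ extends $(f^*)^{-1}$, whose range is $M_{l+1}^1$), and $I_{l+1} := N$ satisfies $M_{l+1}^2 \lea I_{l+1} \lea M_{l+2}^2$ by choice of $N$.

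The main subtlety is bookkeeping: neither $f^*[M_{l+1}^1]$ nor $D_{l+1} = \phi[N]$ is necessarily a chain member $M_i^\ell$, so any more rigid invariant pinning $D_l, I_l$ to specific chain entries would fail. Allowing $D_l, I_l$ to be arbitrary $\lea$-submodels sandwiched between consecutive chain members supplies exactly the flexibility needed to carry out both sub-steps while keeping the invariant inductive through limits. Beyond this, the argument is routine: the universalities used are those provided by the hypothesis on $\seq{M_i^\ell}$, possibly transported via Fact \ref{ltl-basic-props}(\ref{univ-trans-2}).
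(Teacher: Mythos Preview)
Your proposal is correct and is exactly the ``routine back and forth argument'' the paper alludes to (the paper states this as a Fact with a citation and gives no further details). Your sandwiching invariant $M_l^\ell \lea D_l, I_l \lea M_{l+1}^\ell$ is a clean way to organize the bookkeeping; note that in the final cofinality claim you can in fact take $j = i+1$, since $M_i^1 \lea D_i$ already gives $f[M_i^1] \lea f_i[D_i] = I_i \lea M_{i+1}^2$.
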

\begin{remark}\label{lim-uq-rmk}
  Uniqueness of limit models that are \emph{not} of the same cofinality (i.e.\ the statement $M_0 \ltl{\lambda}{\delta} M_1$, $M_0 \ltl{\lambda}{\delta'} M_2$ implies $M_1 \cong_{M_0} M_2$ for any limit $\delta, \delta' < \lambda^+$) has been argued to be an important dividing line, akin to superstability in the first-order theory. See for example \cite{shvi635, vandierennomax, nomaxerrata, gvv-toappear-v1_2}. It is known to follow from the existence of a good $\lambda$-frame (see \cite[Lemma II.4.8]{shelahaecbook}, or \cite[Theorem 9.2]{ext-frame-jml} for a detailed proof).
\end{remark}

We could not find a proof of the next result in the literature, so we included one here.

\begin{lem}\label{lem-univ} 
  Let $K$ be an AEC with amalgamation. Let $\delta$ be a (not necessarily limit) ordinal and assume $(M_i)_{i \le \delta}$ is increasing continuous with $M_i \ltu M_{i + 1}$ for all $i < \delta$. Then $M_i \ltu M_\delta$ for all $i < \delta$.
\end{lem}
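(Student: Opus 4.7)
The plan is to prove the stronger statement: for every $i < k \le \delta$, $M_i \ltu M_k$, and to establish it by induction on $k$. When $k = k_0 + 1$ is a successor, $M_i \lea M_{k_0} \ltu M_{k_0+1} = M_k$, so Fact~\ref{ltl-basic-props}(\ref{univ-trans-2}) immediately yields $M_i \ltu M_k$. Suppose now $k$ is a limit ordinal, fix $i < k$, and take some $M' \gea M_i$ with $\|M'\| \le \|M_k\|$; the goal is to exhibit an embedding $f \colon M' \to M_k$ fixing $M_i$. If there exists $j$ with $i < j < k$ and $\|M'\| \le \|M_j\|$, the induction hypothesis gives $M_i \ltu M_j$ and composition with $M_j \lea M_k$ closes the case.

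The delicate subcase is when $\|M_j\| < \|M'\|$ for every $j \in (i, k)$. Since $\|M_k\| = \sup_{j < k} \|M_j\|$, this forces $\|M'\| = \|M_k\|$ and shows the sup is unattained. Set $\kappa := \cf{k}$ and pick a strictly increasing $\seq{j_\alpha : \alpha < \kappa}$ cofinal in $k$ with $j_0 > i$, arranged so that $\lambda_\alpha := \|M_{j_\alpha}\|$ is strictly increasing and cofinal in $\|M'\|$. Using the Löwenheim-Skolem axiom together with coherence, I would then build an increasing continuous chain $\seq{N_\alpha : \alpha < \kappa}$ satisfying $M_i \lea N_\alpha \lea M'$, $\|N_\alpha\| \le \lambda_\alpha$, and $\bigcup_{\alpha < \kappa} N_\alpha = M'$.

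The embedding $f$ is assembled as the union of an increasing continuous sequence $\seq{f_\alpha : \alpha < \kappa}$ of embeddings $f_\alpha \colon N_\alpha \to M_{j_\alpha}$ over $M_i$, defined by recursion on $\alpha$. At $\alpha = 0$, the induction hypothesis supplies $M_i \ltu M_{j_0}$, embedding $N_0$ into $M_{j_0}$ over $M_i$. At $\alpha = \beta + 1$, I amalgamate $N_\alpha \gea N_\beta$ with $f_\beta \colon N_\beta \to M_{j_\beta}$ and use the Löwenheim-Skolem axiom to obtain $M^*$ of size $\le \lambda_\alpha$ together with compatible embeddings $p \colon N_\alpha \to M^*$ and $q \colon M_{j_\beta} \to M^*$; the induction hypothesis then gives $M_{j_\beta} \ltu M_{j_\alpha}$, which applied to (an isomorphic copy of) $M^*$ viewed as an extension of $M_{j_\beta}$ via $q$ yields $r \colon M^* \to M_{j_\alpha}$ fixing $M_{j_\beta}$; setting $f_\alpha := r \circ p$ extends $f_\beta$. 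At limit $\alpha$, the union $\bigcup_{\beta < \alpha} f_\beta$ works because the chain $\seq{f_\beta[N_\beta] : \beta < \alpha}$ in $M_{j_\alpha}$ is increasing continuous and each term is $\lea M_{j_\alpha}$, so Tarski-Vaught gives $f_\alpha[N_\alpha] \lea M_{j_\alpha}$. The hard part is precisely this second subcase of the limit step: when no single earlier model is large enough to absorb $M'$, one must carefully match a cofinal sequence in $k$ whose associated cardinalities are cofinal in $\|M'\|$ with a Löwenheim-Skolem decomposition of $M'$ of compatible cardinalities, so that the universal extensions $M_{j_\beta} \ltu M_{j_\alpha}$ provided by the induction hypothesis can drive the recursion at successor stages.
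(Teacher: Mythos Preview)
Your overall architecture matches the paper's: induct on the top index, dispose of successors via Fact~\ref{ltl-basic-props}(\ref{univ-trans-2}), and in the limit case decompose $M'$ into small pieces and thread embeddings through the chain. There is, however, a genuine gap in your hard subcase.

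You assert that $\|M_k\| = \sup_{j < k} \|M_j\|$, and from this conclude that the supremum is unattained and that a cofinal sequence $\seq{j_\alpha : \alpha < \cf k}$ can be chosen with $\lambda_\alpha := \|M_{j_\alpha}\|$ strictly increasing and cofinal in $\|M'\|$. The first assertion is false in general. Take $k = \omega_1$ with every $M_j$ (for $j < \omega_1$) countable and $M_j \ltu M_{j+1}$; then $\|M_{\omega_1}\| = \aleph_1$ while $\sup_{j < \omega_1} \|M_j\| = \aleph_0$. Here the supremum is attained, every $\lambda_\alpha$ equals $\aleph_0$, and no sequence of the form you describe exists. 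Your chain $\seq{N_\alpha}$ with $\|N_\alpha\| \le \lambda_\alpha$ and $\bigcup_\alpha N_\alpha = M'$ cannot be built from your stated hypotheses, so the recursion never gets off the ground in this case.

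The paper circumvents this in two ways. First, it reduces at the outset to $\delta$ regular (if $\delta$ is singular, a continuous cofinal subchain of length $\cf\delta < \delta$ already falls under the induction hypothesis). Second, once $\mu := \|N\| = \|M_\delta\|$ and $\mu > \|M_i\|$ for all $i$, the resolution $\seq{N_i : i \le \gamma}$ of $N$ is taken of length $\gamma := \cf\mu$, not $\cf\delta$; one checks that $\gamma \le \delta$, and regularity of $\delta$ then guarantees that at limit stages $\sup_{j<i} k_j < \delta$. At successor stages the paper does not amalgamate: it simply picks $k' > k_j$ with $\|N_{j+1}\| \le \|M_{k'}\|$, uses the induction hypothesis to get $f_j[N_j] \ltu M_{k'}$, and extends $f_j$ by renaming. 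This avoids the need to bound the cardinality of an amalgam by $\lambda_\alpha$, and with it the $\LS(K)$-issues implicit in your version of the successor step.
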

\begin{proof} \
By induction on $\delta$. If $\delta = 0$, there is nothing to do. If $\delta = \alpha + 1$ is a successor, let $i < \delta$. We know $M_i \lea M_\alpha$. By hypothesis, $M_\alpha \ltu M_\delta$. By Fact \ref{ltl-basic-props}.(\ref{univ-trans-2}), $M_i \ltu M_\delta$. Assume now $\delta$ is limit. In that case it is enough to show $M_0 \ltu M_\delta$. By the induction hypothesis, we can further assume that $\delta = \cf{\delta}$. Let $N \gea M_0$ be given such that $\mu := \|N\| \le \|M_\delta\|$, and $N$, $M_\delta$ are inside a common model $\bigN$. If $\mu < \|M_\delta\|$, then there exists $i < \delta$ such that $\mu \le \|M_i\|$, and we can use the induction hypothesis, so assume $\mu = \|M_\delta\|$. We can further assume $\mu > \|M_0\|$, for otherwise $N$ directly embeds into $M_1$ over $M_0$. The $M_i$s show that $\gamma := \cf{\mu} \le \delta$.   Let $\seq{N_i : i \le \gamma}$ be increasing continuous such that for all $i < \gamma$.

  \begin{enumerate}
    \item $N_0 = M_0$.
    \item $N_\gamma = N$.
    \item $\|N_i\| < \mu$.
  \end{enumerate}

  This exists since $\gamma = \cf{\mu}$.

  Build $\seq{f_i : i \le \gamma}$, increasing continuous such that for all $i < \gamma$, $f_i : N_i \xrightarrow[M_0]{} M_{k_i}$ for some $k_i < \delta$. This is enough, since then $f_\gamma$ will be the desired embedding. This is possible: For $i = 0$, take $f_0 := \id_{M_0}$. At limits, take unions: since $\delta$ is regular and $\gamma \le \delta$, $k_j < \delta$ for all $j < i < \gamma$ implies $k_i := \sup_{j < i} k_j < \delta$.

  Now given $i = j + 1$, first pick $k = k_j < \delta$ such that $f_j[N_j] \lea M_k$. Such a $k$ exists by the induction hypothesis. Find $k' > k$ such that $\|N_i\| \le \|M_{k'}\|$. This exists since $\|N_i\| < \mu = \|M_\delta\|$. Now by the induction hypothesis, $M_k \ltu M_{k'}$, so by Fact \ref{ltl-basic-props}.(\ref{univ-trans-2}), $f_j[N_j] \ltu M_{k'}$. Hence by some renaming, we can extend $f_i$ as desired.
\end{proof}

\begin{remark}
  $(K, \leu)$ is in general not an AEC as it may fail the Löwenheim-Skolem-Tarski axiom, the coherence axiom, and (\ref{tv-last}) in the Tarski-Vaught axioms of Definition \ref{mu-aec-def}.
\end{remark}

\section{Independence relations}\label{indep-rel-sec}

Since this section mostly lists definitions, the reader already familiar with independence (in the first-order context) may want to skip it and refer to it as needed. We would like a general framework in which to study independence in abstract elementary classes. One such framework is Shelah's good $\lambda$-frames \cite[Section II.6]{shelahaecbook}. Another is given by the definition of independence relation in \cite[Definition 3.1]{bgkv-v3-toappear} (itself adapted from \cite[Definition 3.3]{bg-v9} which can be traced back to the work of Makkai and Shelah \cite{makkaishelah}). Both definitions describe a relation ``$p$ does not fork over $M$'' for $p$ a Galois type over $N$ and $M \lea N$ and require it to satisfy some properties. 

In \cite{bgkv-v3-toappear}, it is also shown how to ``close'' such a relation to obtain a relation ``$p$ does not fork over $M$'' when $p$ is a type over an arbitrary set. We find that starting with such a relation makes the statement of symmetry transparent, and hence makes several proofs easier. Perhaps even more importantly, we can be very precise\footnote{Assume for example that $\s$ is a good-frame on a class of saturated models of an AEC $K$. Let $\seq{M_i : i < \delta}$ be an increasing chain of saturated models. Let $M_\delta := \bigcup_{i < \delta} M_i$ and let $p \in \gS (M_\delta)$. We would like to say that there is $i < \delta$ such that $p$ does not fork over $M_i$ but we may not know that $M_\delta$ is saturated, so maybe forking is not even defined for types over $M_\delta$. However if the forking relation were defined for types over sets, there would be no problem.}  when dealing with chain local character properties (see Definition \ref{loc-card-def}).

The definition in \cite{bgkv-v3-toappear} is not completely adequate for our purpose, however. There it is assumed that everything is contained inside a big homogeneous monster model. While we will always assume amalgamation, assuming the existence of a monster model is still problematic when for example we want to study independence over models of size $\lambda$ only (the motivation for good $\lambda$-frames, note that Shelah's definition does not assume the existence of a monster model). We also allow working inside more general classes than AECs: coherent abstract classes (recall Definition \ref{infty-aec-def}). This is convenient when working with classes of saturated models (see for example the study of weakly good independence relation in Section \ref{ns-canon}), but note that in general we may not be able to build a monster model there.

We also give a more general definition than \cite{bgkv-v3-toappear}, as we do not assume that everything happens in a big homogeneous monster model, and we allow working inside coherent abstract classes (recall Definition \ref{infty-aec-def}) rather than only abstract elementary classes. The later feature is convenient when working with classes of saturated models.

This means that we always have to carry over an ambient model $N$ that may shrink or be extended as needed. Although this makes the notation slightly heavier, it does not cause any serious technical difficulties. At first reading, the reader may simply want to ignore $N$ and assume everything takes place inside a monster model. 

Because we quote extensively from \cite{shelahaecbook}, which deals with frames, and also because it is sometimes convenient to ``forget'' the extension of the relation to arbitrary sets, we will still define frames and recall their relationship with independence relations over sets.

\subsection{Frames}
Shelah's definition of a pre-frame appears in \cite[Definition III.0.2.1]{shelahaecbook} and is meant to axiomatize the bare minimum of properties a relation must satisfy in order to be a meaningful independence notions. 

We make several changes: we do not mention basic types (we have no use for them), so in Shelah's terminology our pre-frames will be \emph{type-full}\index{type-full}. In fact, it is notationally convenient for us to define our frame on every type, not just the nonalgebraic ones. The \emph{disjointness} property (see Definition \ref{indep-props-def}) tells us that the frame behaves trivially on the algebraic types. We do not require it (as it is not required in \cite[Definition 3.1]{bgkv-v3-toappear}) but it will hold of all frames we consider. 

We require that the class on which the independence relation operates has amalgamation\footnote{This is required in Shelah's definition of good frames, but not in his definition of pre-frames.}, and we do not require that the base monotonicity property holds (this is to preserve the symmetry between right and left properties in the definition. All the frames we consider will have base monotonicity). Finally, we allow the size of the models to lie in an interval rather than just be restricted to a single cardinal as Shelah does. We also parametrize on the length of the types. This allows more flexibility and was already the approach favored in \cite{ss-tame-jsl, tame-frames-revisited-v5}.

\begin{defin}\label{pre-frame-def}\index{frame} \index{pre-frame|see {frame}} \index{pre-$(<\alpha, \F)$-frame|see {frame}} \index{$\s$|see {frame}} \index{invariance} \index{monotonicity} \index{ambient monotonicity} \index{normality}\index{type-full}
  Let $\F = [\lambda, \theta)$ be an interval of cardinals with $\aleph_0 \le \lambda < \theta$, $\alpha \le \theta$ be a cardinal or $\infty$.

  A \emph{type-full pre-$(<\alpha, \F)$-frame} is a pair $\s = (K, \nf)$, where:

  \begin{enumerate}
  \item $K$ is a coherent abstract class in $\F$ (see Definition \ref{infty-aec-def}) with amalgamation.
  \item $\nf$ is a relation on quadruples of the form $(M_0, A, M, N)$, where $M_0 \lea M \lea N$ are all in $K$, $A \subseteq |N|$ is such that $|A \backslash |M_0|| < \alpha$. We write $\nf(M_0, A, M, N)$ or $\nfs{M_0}{A}{M}{N}$ instead of $(M_0, A, M, N) \in \nf$.

  \item The following properties hold:
    \begin{enumerate}
    \item \underline{Invariance}: If $f: N \cong N'$ and $\nfs{M_0}{A}{M}{N}$, then $\nfs{f[M_0]}{f[A]}{f[M]}{N'}$.
    \item \underline{Monotonicity}: Assume $\nfs{M_0}{A}{M}{N}$. Then:
      \begin{enumerate}
      \item Ambient monotonicity: If $N' \gea N$, then $\nfs{M_0}{A}{M}{N'}$. If $M \lea N_0 \lea N$ and $A \subseteq |N_0|$, then $\nfs{M_0}{A}{M}{N_0}$.
      \item Left and right monotonicity: If $A_0 \subseteq A$, $M_0 \lea M' \lea M$, then $\nfs{M_0}{A_0}{M'}{N}$.
      \end{enumerate}
    \item \underline{Left normality}: If $\nfs{M_0}{A}{M}{N}$, then\footnote{For sets $A$ and $B$, we sometimes write $AB$ instead of $A \cup B$.} $\nfs{M_0}{AM_0}{M}{N}$.
    \end{enumerate}
  \end{enumerate}
  
  When $\alpha$ or $\F$ are clear from context or irrelevant, we omit them and just say that $\s$ is a pre-frame (or just a frame). We may omit the ``type-full''. A $(\le \alpha)$-frame is just a $(<\alpha^+)$-frame. We might omit $\alpha$ when $\alpha = 2$ (i.e.\ $\s$ is a $(\le 1)$-frame) and we might talk of a $\lambda$-frame or a $(\ge \lambda)$-frame instead of a $\{\lambda\}$-frame or a $[\lambda, \infty)$-frame.
\end{defin}
\begin{notation}\label{frame-objects-notation}\index{$K_{\s}$}\index{$\nf_{\s}$}\index{$\nf$} \index{$\alpha_{\s}$} \index{$\F_{\s}$} \index{$\lambda_{\s}$} \index{$\theta_{\s}$}
  For $\s = (K, \nf)$ a pre-$(<\alpha, \F)$-frame with $\F = [\lambda, \theta)$, write $K_{\s} := K$, $\nf_{\s} := \nf$, $\alpha_{\s} := \alpha$, $\F_{\s} = \F$, $\lambda_{\s} := \lambda$, $\theta_{\s} := \theta$. Note that pedantically, $\alpha$, $\F$, and $\theta$ should be part of the data of the frame in order for this notation to be well-defined but we ignore this detail.
\end{notation}

\begin{notation}\label{nf-notation}\index{$\nf (M_0, \ba, M, N)$} \index{$\nfs{M_0}{\ba}{M}{N}$}\index{$\s$-fork}\index{does not fork}\index{forking}
  For $\s = (K, \nf)$ a pre-frame, we write $\nf (M_0, \ba, M, N)$ or $\nfs{M_0}{\ba}{M}{N}$ for $\nfs{M_0}{\text{ran} (\ba)}{M}{N}$ (similarly when other parameters are sequences). When $p \in \gS^{<\infty} (M)$, we say $p$ \emph{does not $\s$-fork over $M_0$} (or just \emph{does not fork over $M_0$} if $\s$ is clear from context) if $\nfs{M_0}{\ba}{M}{N}$ whenever $p = \gtp (\ba / M; N)$ (using monotonicity and invariance, it is easy to check that this does not depend on the choice of representatives).
\end{notation}
\begin{remark}
  In the definition of a pre-frame given in \cite[Definition 3.1]{tame-frames-revisited-v5}, the left hand side of the relation $\nf$ is a sequence, not just a set. Here, we simply assume outright that the relation is defined so that order does not matter.
\end{remark}
\begin{remark}
  We can go back and forth from this paper's definition of pre-frame to Shelah's. We sketch how. From a pre-frame $\s$ in our sense (with $K_{\s}$ an AEC), we can let $\Sbs (M)$ be the set of nonalgebraic $p \in \gS (M)$ that do not $\s$-fork over $M$. Then restricting $\nf_{\s}$ to the basic types we obtain (assuming that $\s$ has base monotonicity, see Definition \ref{indep-props-def}) a pre-frame in Shelah's sense. From a pre-frame $(K, \nf, \Sbs)$ in Shelah's sense (where $K$ has amalgamation), we can extend $\nf$ by specifying that algebraic and basic types do not fork over their domains. We then get a pre-frame $\s$ in our sense with base monotonicity and disjointness.
\end{remark}

\subsection{Independence relations}

We now give a definition for an independence notion that also takes sets on the right hand side.

\begin{defin}[Independence relation]\label{indep-rel-def}\index{independence relation}\index{$(<\alpha, \F, <\beta)$-independence relation|see {independence relation}} \index{$(<\alpha, \F)$-independence relation|see {independence relation}} \index{$\is$|see {independence relation}} \index{invariance} \index{monotonicity} \index{ambient monotonicity} \index{normality}
  Let $\F = [\lambda, \theta)$ be an interval of cardinals with $\aleph_0 \le \lambda < \theta$, $\alpha, \beta \le \theta$ be cardinals or $\infty$.
    A \emph{$(<\alpha, \F, <\beta)$-independence relation} is a pair $\is = (K, \nf)$, where:

    \begin{enumerate}
      \item $K$ is a coherent abstract class in $\F$ with amalgamation.
      \item $\nf$ is a relation on quadruples of the form $(M, A, B, N)$, where $M \lea N$ are all in $K$, $A \subseteq |N|$ is such that $|A \backslash |M|| < \alpha$ and $B \subseteq |N|$ is such that $|B \backslash |M|| < \beta$. We write $\nf(M, A, B, N)$ or $\nfs{M}{A}{B}{N}$ instead of $(M, A, B, N) \in \nf$.
      \item The following properties hold:

        \begin{enumerate}
        \item \underline{Invariance}: If $f: N \cong N'$ and $\nfs{M}{A}{B}{N}$, then $\nfs{f[M]}{f[A]}{f[B]}{N'}$.
        \item \underline{Monotonicity}: Assume $\nfs{M}{A}{B}{N}$. Then:
          \begin{enumerate}
          \item Ambient monotonicity: If $N' \gea N$, then $\nfs{M}{A}{B}{N'}$. If $M \lea N_0 \lea N$ and $A \cup B \subseteq |N_0|$, then $\nfs{M}{A}{B}{N_0}$.
          \item Left and right monotonicity: If $A_0 \subseteq A$, $B_0 \subseteq B$, then $\nfs{M}{A_0}{B_0}{N}$.
          \end{enumerate}
        \item \underline{Left and right normality}: If $\nfs{M}{A}{B}{N}$, then $\nfs{M}{AM}{BM}{N}$.
        \end{enumerate}
    \end{enumerate}

    We adopt the conventions described at the end of Definition \ref{pre-frame-def}. For example, a $(\le \alpha, \F, <\beta)$-independence relation is just a $(<\alpha^+, \F, <\beta)$-independence relation.

    When $\beta = \theta$, we omit it. More generally, when $\alpha$, $\beta$ are clear from context or irrelevant, we omit them and just say that $\is$ is an independence relation.
\end{defin}

\begin{notation}\index{$\K_{\is}$}\index{$\nf_{\is}$}\index{$\alpha_{\is}$}\index{$\beta_{\is}$}\index{$\F_{\is}$}\index{$\lambda_{\is}$}\index{$\theta_{\is}$}\index{$\is$-fork}
  We adopt the same notational conventions as for pre-frames: $\K_{\is}$, $\nf_{\is}$, $\alpha_{\is}$, $\beta_{\is}$, $\F_{\is}$, $\lambda_{\is}$, $\theta_{\is}$ are defined as in Notation \ref{frame-objects-notation} and $p$ does not $\is$-fork over $M_0$ is defined as in \ref{nf-notation}.
\end{notation}

\begin{remark}
  It seems that in every case of interest $\beta = \theta$ (this will always be the case in the next sections of this paper). We did not make it part of the definition to avoid breaking the symmetry between $\alpha$ and $\beta$ (and hence make it possible to define the dual independence relation and the left version of a property, see Definitions \ref{dual-def} and \ref{left-p-def}). Note also that the case $\alpha = \theta = \infty$ is of particular interest in Section \ref{long-transfer-sec}.
\end{remark}

Before listing the properties independence relations and frames could have, we discuss how to go from one to the other. The $\cl$ operation is called the \emph{minimal closure} in \cite[Definition 3.4]{bgkv-v3-toappear}.

\begin{defin}\label{cl-def}\index{minimal closure}\index{$\cl (\s)$|see {minimal closure}} \index{$\pre (\is)$} \
  \begin{enumerate}
    \item Given a pre-frame $\s := (K, \nf)$, let $\cl (\s) := (K, \nf^{\cl})$, where $\nf^{\cl} (M, A, B, N)$ if and only if $M \lea N$, $|B| < \theta_{\s}$, and there exists $N' \gea N$, $M' \gea M$ containing $B$ such that $\nf (M, A, M', N')$.
    \item Given a $(<\alpha, \F)$-independence relation $\is = (K, \nf)$ let $\pre (\is) := (K, \nf^{\pre})$, where $\nf^{\pre} (M, A, M', N)$ if and only if $M \lea M' \lea N$ and $\nf (M, A, M', N)$.
  \end{enumerate}
\end{defin}
\begin{remark} \
  \begin{enumerate}
    \item If $\is$ is a $(<\alpha, \F)$-independence relation, then $\pre (\is)$ is a pre-$(<\alpha, \F)$-frame.
    \item If $\s$ is a pre-$(<\alpha, \F)$-frame, then $\cl (\s)$ is a $(<\alpha, \F)$-independence relation and $\pre (\cl (\s)) = \s$.
  \end{enumerate}
\end{remark}

Other properties of $\cl$ and $\pre$ are given by Proposition \ref{cl-basics}.

\begin{remark}
  The reader may wonder why we do not assume that every independence relation is the closure of a pre-frame, i.e.\ why we do not assume that for any independence relation $\is = (\K, \nf)$, if $\nfs{M}{A}{B}{N}$, there exists $N' \gea N$ and $M' \lea N'$ with $M \lea M'$ such that $B \subseteq |M'|$ and $\nfs{M}{A}{M'}{N'}$ (this can be written abstractly as $\is = \cl (\pre (\is)$)? This would allow us to avoid the redundancies between the definition of an independence relation and that of a pre-frame. However, several interesting independence notions do not satisfy that property (see \cite[Section 3.2]{bgkv-v3-toappear}). Further, it is not clear that the property $\is = \cl (\pre (\is))$ transfers upward (see Definition \ref{is-up-def}). Therefore we prefer to be agnostic and not require it.
\end{remark}

Next, we give a long list of properties that an independence relation may or may not have. Most are classical and already appear for example in \cite{bgkv-v3-toappear}. We give them here again both for the convenience of the reader and because their definition is sometimes slightly modified compared to \cite{bgkv-v3-toappear} (for example, symmetry there is called right full symmetry here, and some properties like uniqueness and extensions are complicated by the fact we do not work in a monster model). They will be used throughout this paper (for example, Section \ref{indep-calc-sec} discusses implications between the properties).

\begin{defin}[Properties of independence relations]\label{indep-props-def}
  Let $\is := (K, \nf)$ be a $(<\alpha, \F, <\beta)$-independence relation.
  \begin{enumerate}
    \item\index{disjointness} $\is$ has \emph{disjointness} if $\nfs{M}{A}{B}{N}$ implies $A \cap B \subseteq |M|$.
    \item\index{symmetry} $\is$ has \emph{symmetry} if $\nfs{M}{A}{B}{N}$ implies that for all\footnote{Why not just take $B_0 = B$? Because the definition of $\nf$ requires that the left hand side has size less than $\alpha$. Similarly for right full symmetry.} $B_0 \subseteq B$ of size less than $\alpha$ and all $A_0 \subseteq A$ of size less than $\beta$, $\nfs{M}{B_0}{A_0}{N}$.
    \item\index{full symmetry} $\is$ has \emph{right full symmetry} if $\nfs{M}{A}{B}{N}$ implies that for all $B_0 \subseteq B$ of size less than $\alpha$ and all $A_0 \subseteq A$ of size less than $\beta$, there exists $N' \gea N$, $M' \gea M$ containing $A_0$ such that $\nfs{M}{B_0}{M'}{N'}$.
    \item\index{monotonicity}\index{base monotonicity}\label{base-monot-def} $\is$ has \emph{right base monotonicity} if $\nfs{M}{A}{B}{N}$ and $M \lea M' \lea N$, $|M'| \subseteq B \cup |M|$ implies $\nfs{M'}{A}{B}{N}$.
    \item\index{existence} $\is$ has \emph{right existence} if $\nfs{M}{A}{M}{N}$ for any $A \subseteq |N|$ with $|A| < \alpha$.
    \item\index{uniqueness} $\is$ has \emph{right uniqueness} if whenever $M_0 \lea M \lea N_\ell$, $\ell = 1,2$, $|M_0| \subseteq B  \subseteq |M|$, $q_\ell \in \gS^{<\alpha} (B; N_\ell)$, $q_1 \rest M_0 = q_2 \rest M_0$, and $q_\ell$ does not fork over $M_0$, then $q_1 = q_2$.
    \item\index{extension} $\is$ has \emph{right extension} if whenever $p \in \gS^{<\alpha} (M B; N)$ does not fork over $M$ and $B \subseteq C \subseteq |N|$ with $|C| < \beta$, there exists $N' \gea N$ and $q \in \gS^{<\alpha} (M C; N')$ extending $p$ such that $q$ does not fork over $M$.
    \item\index{independent amalgamation} $\is$ has \emph{right independent amalgamation} if $\alpha > \lambda$, $\beta = \theta$, and\footnote{Note that even though the next condition is symmetric, the condition on $\alpha$ and $\beta$ make the left version of the property different from the right.} whenever $M_0 \lea M_\ell$ are in $K$, $\ell = 1,2$, there exists $N \in K$ and $f_\ell : M_\ell \xrightarrow[M_0]{} N$ such that $\nfs{M_0}{f_1[M_1]}{f_2[M_2]}{N}$.
    \item\index{witness property}\index{model witness property|see {witness property}}\index{$(<\kappa)$-witness property|see {witness property}}\index{$\lambda$-witness property|see {witness property}}  $\is$ has the \emph{right $(<\kappa)$-model-witness property} if whenever $M \lea M' \lea N$, $||M'| \backslash |M|| < \beta$, $A \subseteq |N|$, and $\nfs{M}{A}{B_0}{N}$ for all $B_0 \subseteq |M'|$ of size less than $\kappa$, then $\nfs{M}{A}{M'}{N}$. $\is$ has the \emph{right $(<\kappa)$-witness property} if this is true when $M'$ is allowed to be an arbitrary set. The \emph{$\lambda$-[model-]witness property} is the $(<\lambda^+)$-[model-]witness property.
    \item\index{transitivity}\index{strong transitivity|see {transitivity}} $\is$ has \emph{right transitivity} if whenever $M_0 \lea M_1 \lea N$, $\nfs{M_0}{A}{M_1}{N}$ and $\nfs{M_1}{A}{B}{N}$ implies $\nfs{M_0}{A}{B}{N}$. \emph{Strong right transitivity} is the same property when we do not require $M_0 \lea M_1$.
    \item\index{full model continuity}\index{model continuity|see {full model continuity}} $\is$ has \emph{right full model-continuity} if $K$ is an AEC in $\F$, $\alpha > \lambda$, $\beta = \theta$, and whenever $\seq{M_i^\ell : i \le \delta}$ is increasing continuous with $\delta$ limit, $\ell \le 3$, for all $i < \delta$, $M_i^0 \lea M_i^\ell \lea M_i^3$, $\ell = 1,2$, $\|M_\delta^1\| < \alpha$, and $\nfs{M_i^0}{M_i^1}{M_i^2}{M_i^3}$ for all $i < \delta$, then $\nfs{M_\delta^0}{M_\delta^1}{M_\delta^2}{M_\delta^3}$.
    \item\index{weak chain local character} \emph{Weak chain local character} is a technical property used to generate weakly good independence relations, see Definition \ref{weak-chain-lc-def}.
  \end{enumerate}

  Whenever this makes sense, we similarly define the same properties for pre-frames.
\end{defin}

Note that we have defined the right version of the asymmetric properties. One can define a left version by looking at the \emph{dual independence relation.}

\begin{defin}\label{dual-def}\index{dual independence relation}\index{$\is^d$|see {dual independence relation}}
  Let $\is := (K, \nf)$ be a $(<\alpha, \F, <\beta)$-independence relation. Define the \emph{dual independence relation} $\is^d := (K, \nf^d)$ by $\nf^d (M, A, B, N)$ if and only if $\nf (M, B, A, N)$.
\end{defin}

\begin{remark} \
  \begin{enumerate}
    \item If $\is$ is a $(<\alpha, \F, <\beta)$-independence relation, then $\is^d$ is a $(<\beta, \F, <\alpha)$-independence relation and $\left(\is^d\right)^d = \is$.
    \item Let $\is$ be a $(<\alpha, \F, <\alpha)$-independence relation. Then $\is$ has symmetry if and only if $\is = \is^d$.
  \end{enumerate}
\end{remark}

\begin{defin}\label{left-p-def}\index{left $P$ ($P$ a property of an independence relation)}\index{right $P$ ($P$ a property of an independence relation)}
  For $P$ a property, we will say \emph{$\is$ has left $P$} if $\is^d$ has right $P$. When we omit left or right, we mean the right version of the property.
\end{defin}

\begin{defin}[Locality cardinals]\label{loc-card-def}\index{locality cardinals}\index{chain local character|see {locality cardinals}}
  Let $\is = (K, \nf)$ be a $(<\alpha, \F)$-independence relation, $\F = [\lambda, \theta)$. Let $\alpha_0 < \alpha$ be such that $|\alpha_0|^+ < \theta$.

  \begin{enumerate}
    \item \index{$\slc{<\alpha} (\is)$|see {locality cardinals}} \index{$\slc{\alpha} (\is)$|see {locality cardinals}} Let $\slc{\alpha_0} (\is)$ be the minimal cardinal $\mu \ge |\alpha_0|^+ + \lambda^+$ such that for any $M \lea N$ in $K$, any $A \subseteq |N|$ with $|A| \le \alpha_0$, there exists $M_0 \lea M$ in $K_{<\mu}$ with $\nfs{M_0}{A}{M}{N}$. When $\mu$ does not exist, we set $\slc{\alpha_0} (\is) = \infty$.
      \item \index{$\clc{<\alpha} (\is, R)$|see {locality cardinals}} \index{$\clc{<\alpha} (\is)$|see {locality cardinals}} \index{$\clc{\alpha} (\is, R)$|see {locality cardinals}} \index{$\clc{\alpha} (\is)$|see {locality cardinals}} For $R$ a binary relation on $K$, Let $\clc{\alpha_0} (\is, R)$ be the minimal cardinal $\mu \ge |\alpha_0|^+ + \aleph_0$ such that for any regular $\delta \ge \mu$, any $R$-increasing (recall Definition \ref{r-increasing-def}) $\seq{M_i : i < \delta}$ in $K$, any $N \in \K$ extending all the $M_i$'s, and any $A \subseteq |N|$ of size $\le \alpha_0$, there exists $i < \delta$ such that $\nfs{M_i}{A}{M_\delta}{N}$. Here, we have set\footnote{Recall that $K$ is only a coherent abstract class, so may not be closed under unions of chains of length $\delta$. Thus we think of $M_\delta$ as a set.} $M_\delta := \bigcup_{i < \delta} M_i$. When $R = \lea$, we omit it. When $\mu$ does not exist or $\plus{\mu} \ge \theta$, we set $\clc{\alpha_0} (\is) = \infty$.
  \end{enumerate}
  \index{$\slc{\alpha} (\nf)$|see {locality cardinals}}
  
  When $K$ is clear from context, we may write $\slc{\alpha_0} (\nf)$. For $\alpha_0 \le \alpha$, we also let $\slc{<\alpha_0} (\is) := \sup_{\alpha_0' < \alpha_0} \slc{\alpha_0'} (\is)$. Similarly define $\clc{<\alpha_0}$. 

  \index{$\slc{\alpha} (\s)$|see {locality cardinals}} \index{$\clc{\alpha} (\s)$|see {locality cardinals}}
  
  We similarly define $\slc{\alpha_0} (\s)$ and $\clc{\alpha_0} (\s)$ for $\s$ a pre-frame (in the definition of $\clc{\alpha_0} (\s)$, we require in addition that $M_\delta$ be a member of $K$).
\end{defin}

We will use the following notation to restrict independence relations to smaller domains:

\begin{notation}\label{restr-not}\index{restriction (of an independence relation)} \index{$\is^{<\alpha}$|see {restriction (of an independence relation)}} \index{$\is \rest K'$|see {restriction (of an independence relation)}}\index{$\s^{<\alpha}$|see {restriction (of an independence relation)}}\index{$\s \rest K'$|see {restriction (of an independence relation)}}
  Let $\is$ be a $(<\alpha, \F, <\beta)$-independence relation. 
  \begin{enumerate}
    \item For $\alpha_0 \le \alpha$, $\beta_0 \le \beta$, let $\is^{<\alpha_0, <\beta_0}$ denotes the $(<\alpha_0, \F, <\beta_0)$-independence relation obtained by restricting the types to have length less than $\alpha_0$ and the right hand side to have size less than $\beta_0$ (in the natural way). When $\beta_0 = \beta$, we omit it.
    \item For $K'$ a coherent sub-AC of $K_{\is}$, let $\is \rest K'$ be the $(<\alpha, \F, <\beta)$-independence relation obtained by restricting the underlying class to $K'$. When $\is$ is a $(<\alpha, \F)$-independence relation and $\F_0 \subseteq \F$ is an interval of cardinals, $\F_0 = [\lambda_{\is}, \theta_0)$, we let $\is_{\F_0} := \is^{<\min (\alpha, \theta_0)} \rest (K_{\is})_{\F_0}$ be the restriction of $\is$ to models of size in $\F_0$ and types of appropriate length.
  \end{enumerate}
\end{notation}

We end this section with two examples of independence relations. The first is coheir. In first-order logic, coheir was first defined in \cite{lascar-poizat}\footnote{The equivalence of nonforking with coheir (for stable theories) was already established by Shelah in the early seventies and appears in Section III.4 of \cite{shelahfobook78}, see also \cite[Corollary III.4.10]{shelahfobook}.}. A definition of coheir for classes of models of an $\Ll_{\kappa, \omega}$ sentence appears in \cite{makkaishelah} and was later adapted to general AECs in \cite{bg-v9}. In \cite{sv-infinitary-stability-v6-toappear}, we gave a more conceptual (but equivalent) definition and improved some of the results of Boney and Grossberg. Here, we use Boney and Grossberg's definition but rely on \cite{sv-infinitary-stability-v6-toappear}.

\begin{defin}[Coheir] \label{coheir-def}\index{coheir}\index{$\isch{\kappa} (K)$|see {coheir}}
  Let $K$ be an AEC with amalgamation and let $\kappa > \LS (K)$.

  Define $\isch{\kappa} (K) := (\Ksatp{\kappa}, \nf)$ by $\nf (M, A, B, N)$ if and only if $M \lea N$ are in $\Ksatp{\kappa}$, $A \cup B \subseteq |N|$, and for any $\ba \in \fct{<\kappa}{A}$ and $B_0 \subseteq |M| \cup B$ of size less than $\kappa$, there exists $\ba' \in \fct{<\kappa}{|M|}$ such that $\gtp (\ba / B_0; N) = \gtp (\ba' / B_0; M)$.
\end{defin}

\begin{fact}[Theorem 5.15 in \cite{sv-infinitary-stability-v6-toappear}]\label{coheir-syn} 
  Let $K$ be an AEC with amalgamation and let $\kappa > \LS (K)$. Let $\is := \isch{\kappa} (K)$.
  \begin{enumerate}
    \item\label{coheir-1} $\is$ is a $(<\infty, [\kappa, \infty))$-independence relation with disjointness, base monotonicity, left and right existence, left and right $(<\kappa)$-witness property, and strong left transitivity.
    \item\label{coheir-2} If $K$ does not have the $(<\kappa)$-order property of length $\kappa$, then:
      \begin{enumerate}
        \item $\is$ has symmetry and strong right transitivity.
        \item For all $\alpha$, $\slc{\alpha} (\is) \le \left(\left(\alpha + 2\right)^{<\kappap}\right)^+$.
        \item\label{coheir-uq} If $M_0 \lea M \lea N_\ell$ for $\ell = 1,2$, $|M_0| \subseteq B \subseteq |M|$. $q_\ell \in \gS^{<\infty} (B; N_\ell)$, $q_1 \rest M_0 = q_2 \rest M_0$, $q_\ell$ does not $\is$-fork over $M_0$ for $\ell = 1,2$, and $K$ is $(<\kappa)$-tame and short for $\{q_1, q_2\}$, then $q_1 = q_2$.
        \item If $K$ is $(<\kappa)$-tame and short for types of length less than $\alpha$, then $\pre (\is^{<\alpha})$ has uniqueness. Moreover\footnote{Of course, this is only interesting if $\alpha \le \kappa$.} $\is_{[\kappa, \alpha)}^{<\alpha}$ has uniqueness.

    \end{enumerate}
  \end{enumerate}
\end{fact}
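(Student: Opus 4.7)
My plan is to verify the properties one cluster at a time, pushing everything in part 1 directly through the definition and invoking failure of the $(<\kappa)$-order property of length $\kappa$ only where part 2 demands it. The structural clauses in part 1 unwind immediately: invariance, ambient and left/right monotonicity, and left/right normality are built into the quantification pattern of the coheir clause (small tuples on the left, small subsets of $|M| \cup B$ on the right). Disjointness: if $a \in (A \cap B) \setminus |M|$, applying the clause to the length-one tuple $a$ and $B_0 = \{a\}$ yields $a' \in |M|$ with $\gtp(a/\{a\}; N) = \gtp(a'/\{a\}; M)$, forcing $a' = a$, a contradiction. Left and right existence $\nfs{M}{A}{M}{N}$ reduce to realizing Galois types of $(<\kappa)$-tuples over $(<\kappa)$-sized subsets of $|M|$ inside $M$, which is exactly $\kappa$-saturation of $M \in \Ksatp{\kappa}$. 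For base monotonicity, enlarging $M_0$ to $M'$ with $|M'| \subseteq |M_0| \cup B$ does not enlarge the family of test subsets, and an $M_0$-witness is automatically an $M'$-witness. The two $(<\kappa)$-(model-)witness properties are literally the definition.

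Strong left transitivity is a two-step diagram chase requiring no containment between $M_0$ and $M_1$. Given $\nfs{M_0}{M_1}{A}{N}$ and $\nfs{M_1}{B}{A}{N}$, fix $\bb \in \fct{<\kappa}{B}$ and $A_0 \subseteq |M_0| \cup A$ of size $<\kappa$. The second hypothesis yields $\bb' \in \fct{<\kappa}{|M_1|}$ with $\gtp(\bb/A_0; N) = \gtp(\bb'/A_0; N)$, and the first applied to $\bb'$ over $A_0$ yields $\bb'' \in \fct{<\kappa}{|M_0|}$ realizing the same type over $A_0$. Composing furnishes an $M_0$-witness.

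For part 2, the main work is symmetry. Assuming failure, I would extract $A_0, B_0$ of size $<\kappa$ with $\nfs{M}{A_0}{B_0}{N}$ but $\nfs{M}{B_0}{A_0}{N}$ failing, then construct in $\kappa$ steps a continuous chain of $\kappa$-saturated models together with sequences $\langle \ba_i : i < \kappa \rangle$ and $\langle \bb_i : i < \kappa \rangle$ in which each $\ba_i$ continues to satisfy a coheir condition over the accumulated data while each $\bb_j$ is chosen to witness the failure of symmetry of $\ba_j$'s type over the initial segment. Comparing $\gtp(\ba_i \bb_j / M)$ on the two sides of $i = j$ then yields the $(<\kappa)$-order property of length $\kappa$, contradicting the hypothesis. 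Strong right transitivity follows by combining this with strong left transitivity under dualization. For $\slc{\alpha}(\is) \le ((\alpha+2)^{<\kappap})^+$, I would close an initial $M_0 \lea M$ under realizations of the relevant Galois types of $(<\kappa)$-tuples; under stability derived from the failed order property (Fact \ref{op-facts}) the number of such types is bounded, and iterating $<\kappap$ many times delivers $M_0$ of the target size.

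For uniqueness (coheir-uq), suppose $q_1 \neq q_2$; by tameness and shortness for $\{q_1, q_2\}$ they already differ on some $(<\kappa)$-subtuple over some $(<\kappa)$-sized $B_0 \subseteq B$, so it suffices to handle small types. At that scale the coheir clause realizes each $q_\ell$ over $B_0$ via a tuple from $M_0$ whose type over $B_0$ agrees with $q_\ell$; since $q_1 \rest M_0 = q_2 \rest M_0$, the two $M_0$-witnesses share the same type over $B_0 \cup |M_0|$, giving $q_1 \rest B_0 = q_2 \rest B_0$, a contradiction. The final clause on $\pre(\is^{<\alpha})$ and $\is^{<\alpha}_{[\kappa, \alpha)}$ is the same argument packaged for the pre-frame and for the small-size restriction, where the required tameness and shortness are automatic from the ambient size bound. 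Throughout, the main obstacle is the symmetry proof: producing a long order-property sequence from an asymmetric coheir configuration requires a careful induction sustained by $\kappa$-saturation at each limit; the remaining clauses are essentially bookkeeping around the definition.
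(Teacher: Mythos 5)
This statement is quoted by the paper as an external result (it is Theorem 5.15 of \cite{sv-infinitary-stability-v6-toappear}); the paper itself gives no proof, so there is nothing internal to compare your write-up against, and any verification has to be made against the cited paper. Judged on its own terms, your treatment of the routine clauses of part (1) is fine: invariance, the monotonicities, normality, disjointness, existence via $\kappa$-saturation, base monotonicity and the two $(<\kappa)$-witness properties really are direct unwindings of Definition \ref{coheir-def}.

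The gaps are exactly at the clauses carrying real content. For strong left transitivity, your two-step chase applies the hypothesis $\nfs{M_1}{B}{A}{N}$ to a test set $A_0 \subseteq |M_0| \cup A$, which is only a legitimate test set when $|M_0| \subseteq |M_1|$; so as written you prove left transitivity with $M_0 \lea M_1$, not the ``strong'' base-incomparable form, and that form cannot be obtained by the same composition. For the local character bound, closing an initial $M_0 \lea M$ under realizations of types cannot work, because $\nfs{M_0}{A}{M}{N}$ quantifies over $(<\kappa)$-sized test sets drawn from all of $|M|$, not from $|M_0|$; the known argument instead assumes failure and builds a chain of length $\left(\left(\alpha+2\right)^{<\kappap}\right)^+$ whose failure points yield the weak $\kappa$-order property. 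Most seriously, your uniqueness step is a non sequitur: from $q_1 \rest M_0 = q_2 \rest M_0$ and witnesses $\bc_\ell \in \fct{<\kappa}{|M_0|}$ with $\gtp(\bc_\ell / B_0; N_\ell) = \gtp(\ba_\ell / B_0; N_\ell)$ you cannot conclude that $\bc_1$ and $\bc_2$ have the same type over $B_0$ --- the types over $B_0$ record precisely the behaviour over the new parameters, which is what may differ. Already in first-order logic a type over a model has many coheir extensions to a larger set in general (e.g.\ the random graph), so any correct proof must invoke the no-$(<\kappa)$-order-property hypothesis, which your argument never does; the standard proof converts a failure of uniqueness into an alternating sequence witnessing the (weak) order property, i.e.\ the same engine as your symmetry sketch. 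So symmetry is not the only hard point: uniqueness and local character are driven by the same order-property construction and cannot be dismissed as bookkeeping.
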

\begin{remark}
  The extension property\footnote{A word of caution: In \cite[Section 4]{hyttinen-lessmann}, the authors give Shelah's example of an $\omega$-stable class that does not have extension. However, the extension property they consider is \emph{over all sets}, not only over models.} seems to be more problematic. In \cite{bg-v9}, Boney and Grossberg simply assumed it (they also showed that it followed from $\kappa$ being strongly compact \cite[Theorem 8.2.(1)]{bg-v9}). From superstability-like hypotheses, we will obtain more results on it (see Theorem \ref{categ-frame}, Theorem \ref{main-thm}, and Theorem \ref{good-frame-succ}).
\end{remark}

We now consider another independence notion: splitting. This was first defined for AECs in \cite[Definition 3.2]{sh394}. Here we define the negative property (nonsplitting), as it is the one we use the most.

\begin{defin}[$\lambda$-nonsplitting]\index{splitting}\index{nonsplitting|see {splitting}}\index{$\lambda$-splitting|see {splitting}}\index{$\lambda$-nonsplitting|see {splitting}}\index{$\snsp{\lambda} (K)$|see {splitting}}\index{$\sns (K)$|see {splitting}}\index{$\insp{\lambda} (K)$|see {splitting}} \index{$\ins (K)$|see {splitting}}
  Let $K$ be a coherent abstract class with amalgamation.
  
  \begin{enumerate}
  \item For $\lambda$ an infinite cardinal, define $\snsp{\lambda} (K) := (K, \nf)$ by $\nfs{M_0}{\ba}{M}{N}$ if and only if $M_0 \lea M \lea N$, $A \subseteq |N|$, and whenever $M_0 \lea N_\ell \lea M$, $N_\ell \in K_{\le \lambda}$, $\ell = 1,2$, and $f: N_1 \cong_{M_0} N_2$, then $f (\gtp (\ba / N_1; N)) = \gtp (\ba / N_2; N)$.
    \item Define $\sns (K)$ to have underlying AEC $K$ and forking relation defined such that $p \in \gS^{<\infty} (M)$ does not $\sns (K)$-fork over $M_0 \lea M$ if and only if $p$ does not $\snsp{\lambda} (K)$-fork over $M_0$ for all infinite  $\lambda$.
    \item Let $\insp{\lambda} (K) := \cl (\snsp{\lambda} (K))$, $\ins (K) := \cl (\sns (K))$.
  \end{enumerate}
\end{defin}

\begin{fact}\label{splitting-basics}
  Assume $K$ is a coherent AC in $\F = [\lambda, \theta)$ with amalgamation. Let $\s := \sns (K)$, $\s' := \snsp{\lambda} (K)$.
  \begin{enumerate}
    \item $\s$ and $\s'$ are pre-$(<\infty, \F)$-frame with base monotonicity, left and right existence. If $K$ is $\lambda$-closed, $\s'$ has the right $\lambda$-model-witness property.
    \item\label{splitting-basics-25} If $K$ is an AEC in $\F$ and is stable in $\lambda$, then $\slc{<\omega} (\s') = \lambda^+$.
    \item\label{splitting-basics-3} If $\ts$ is a pre-$(<\infty, \F)$-frame with uniqueness and $K_{\ts} = K$, then $\nf_{\ts} \subseteq \nf_{\s}$.
    \item\label{splitting-basics-tameness} Always, $\nf_{\s} \subseteq \nf_{\s'}$. Moreover if $K$ is $\lambda$-tame for types of length less than $\alpha$, then $\s^{<\alpha} = (\s')^{<\alpha}$.
    \item\label{splitting-weak-props} Let $M_0 \ltu M \lea N$ with $\|M\| = \|N\|$. 
      \begin{enumerate}
        \item Weak uniqueness: If $p_\ell \in \gS^{\alpha} (N)$, $\ell = 1,2$, do not $\s$-fork over $M_0$ and $p_1 \rest M = p_2 \rest M$, then $p_1 = p_2$.
        \item Weak extension: If $p \in \gS^{<\infty} (M)$ does not $\s$-fork over $M_0$ and $f: N \xrightarrow[M_0]{} M$, then $q := f^{-1} (p)$ is an extension of $p$ to $\gS^{<\infty}(N)$ that does not $\s$-fork over $M_0$. Moreover $q$ is algebraic if and only if $p$ is algebraic.
      \end{enumerate}
  \end{enumerate}
\end{fact}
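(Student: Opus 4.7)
The statement bundles together several standard properties of splitting, so the plan is to handle each item separately, with the main ideas being straightforward unpacking of the definitions.

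For (1), I would check the pre-frame axioms directly from the definition of $\snsp{\lambda}$. Invariance is immediate (Galois types transfer under isomorphism). Ambient and right monotonicity are clear: if $M \lea N_0 \lea N$ then restricting witnesses $(N_1,N_2)$ to $N_0$ only shrinks the class of pairs one must check, and similarly for $A_0 \subseteq A$. Left normality holds since any $f : N_1 \cong_{M_0} N_2$ fixes $M_0$, so on $\ba M_0$ it acts as on $\ba$. Base monotonicity: if $M_0 \lea M_0' \lea M$ and $f : N_1 \cong_{M_0'} N_2$, then $f$ also fixes $M_0$, so one reduces to nonsplitting over $M_0$. Right (and left) existence: if $\nfs{M_0}{A}{M_0}{N}$ is tested with $N_1,N_2 \lea M_0$ and $f : N_1 \cong_{M_0} N_2$, then $f$ is forced to be the identity on $|M_0| \supseteq |N_1| \cup |N_2|$, so $N_1 = N_2$ and the type equality is trivial. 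For the $\lambda$-model witness property under $\lambda$-closedness: given $M \lea M' \lea N$ with $\nfs{M}{A}{B_0}{N}$ for all $B_0 \subseteq |M'|$ of size $\le \lambda$, take any $N_1, N_2 \lea M'$ of size $\le \lambda$ with $M \lea N_\ell$ and a witness $f$; use $\lambda$-closedness to find $M_0'' \lea M'$ of size $\le \lambda$ containing $|N_1| \cup |N_2|$; coherence then gives $N_1, N_2 \lea M_0''$, and nonsplitting over $M$ with right side $M_0''$ supplies the required type equality.

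For (2), I would invoke the classical local character argument for splitting going back to \cite{sh394}: if no $M_0 \lea M$ of size $\le \lambda$ witnesses non-$\lambda$-splitting of a finite $\ba$, build by recursion a continuous chain $\seq{M_i : i < \lambda^+}$ inside $M$ and isomorphisms witnessing splitting at each step, yielding $\lambda^+$ many distinct types over a model of size $\lambda$, contradicting stability in $\lambda$. This is well-documented and I would just cite it. Part (3) follows by a one-step amalgamation argument: given $\nf_{\ts}(M_0, A, M, N)$, any witness pair $N_1, N_2 \lea M$ extending $M_0$ with $f : N_1 \cong_{M_0} N_2$ lets one apply right monotonicity of $\ts$ to get that $p_\ell := \gtp(\ba/N_\ell; N)$ does not $\ts$-fork over $M_0$; amalgamate along $f$ to transport $p_1$ to a type $f(p_1)$ over $N_2$, then invariance gives that $f(p_1)$ also does not $\ts$-fork over $M_0$, and both $f(p_1)$ and $p_2$ restrict to $\gtp(\ba/M_0;N)$, so uniqueness of $\ts$ forces $f(p_1) = p_2$.

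Part (4): the inclusion $\nf_{\s} \subseteq \nf_{\s'}$ is immediate since $\s$ is defined as the intersection of all $\snsp{\mu}$. For the reverse direction under $\lambda$-tameness for types of length $<\alpha$, suppose $p$ does not $\lambda$-split over $M_0$ but some pair $(N_1, N_2)$ of size $\mu > \lambda$ and isomorphism $f$ witness $\mu$-splitting. By tameness, the inequality $f(\gtp(\ba/N_1;N)) \neq \gtp(\ba/N_2;N)$ is witnessed on some $A_0 \subseteq |N_2|$ of size $\le \lambda$; pulling back and closing under $\lea$-submodels of size $\le \lambda$ inside $N_1$ and $N_2$ produces a $\lambda$-sized witness to $\lambda$-splitting, a contradiction. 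Finally, for (5), the universality $M_0 \ltu M$ together with $\|N\| = \|M\|$ gives $h : N \xrightarrow[M_0]{} M$; set $N' := h[N] \lea M$. For weak uniqueness, apply nonsplitting of $p_\ell$ at level $\|N\|$ to the pair $(N', N)$ (both $\lea N$) with isomorphism $h^{-1}$ to get $p_\ell = h^{-1}(p_\ell \rest N')$; since $N' \lea M$ and $p_1 \rest M = p_2 \rest M$, we have $p_1 \rest N' = p_2 \rest N'$ and hence $p_1 = p_2$. Weak extension is the dual construction: define $q \in \gS(N)$ by pulling back $p \rest f[N]$ along $f$; a realization $\bar c$ of $q$ can be amalgamated so that $f^{-1}$ extends to an embedding sending $\bar c$ to a realization of $p$, and nonsplitting of $p$ over $M_0$ forces $q \rest f[N]$ to determine $q \rest M$ consistently, giving that $q$ extends $p$ without $\s$-forking over $M_0$; algebraicity is preserved since $f$ is an isomorphism on $N$. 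The main delicacy throughout is bookkeeping which models are $\lea$-related after amalgamation; once those are pinned down, every step reduces to invariance plus the defining property of (non)splitting.
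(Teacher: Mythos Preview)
Your proposal is correct and matches the approach of the paper, which simply cites the relevant references (Shelah for (2), \cite{bgkv-v3-toappear} for (3)--(4), VanDieren for (5)); you have in fact supplied more detail than the paper itself. Two minor points worth tightening: in (4) your closing-down step implicitly needs $\lambda$-closedness (or the Löwenheim--Skolem axiom) to produce the small submodels $N_1',N_2'$ containing $M_0$, and in (5)(b) the cleanest way to see $q \rest M = p$ is to apply nonsplitting of the transported type $g(p)$ over $M_0$ to the pair $(M, g[M])$ with the isomorphism $g \rest M$, rather than reasoning about $q \rest f[N]$.
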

\begin{proof} \
  \begin{enumerate}
    \item Easy.
    \item By \cite[Claim 3.3.1]{sh394} (see also \cite[Fact 4.6]{tamenessone}).
    \item By \cite[Lemma 4.2]{bgkv-v3-toappear}.
    \item By \cite[Proposition 3.12]{bgkv-v3-toappear}).
    \item By \cite[Theorem I.4.10, Theorem I.4.12]{vandierennomax} (the moreover part is easy to see from the definition of $q$).
  \end{enumerate}
\end{proof}

\begin{remark}
Fact \ref{splitting-basics}.(\ref{splitting-basics-3}) tells us that any reasonable independence relation will be extended by nonsplitting. In this sense, nonsplitting is a \emph{maximal} candidate for an independence relation\footnote{Moreover, $(<\kappa)$-coheir is a \emph{minimal} candidate in the following sense: Let us say an independence relation $\is = (K, \nf)$ has the \index{strong witness property}\index{strong $(<\kappa)$-witness property|see {strong witness property}} \emph{strong $(<\kappa)$-witness property} if whenever $\nnfs{M}{A}{B}{N}$, there exists $\ba_0 \in \fct{<\kappa}{A}$ and $B_0 \subseteq |M| \cup B$ of size less than $\kappa$ such that $\gtp (\ba_0' / B_0; N) = \gtp (\ba_0 / B_0; N)$ implies $\nnfs{M}{\ba_0}{B}{N}$. Intuitively, this says that forking is witnessed by a formula (and this could be made precise using the notion of Galois Morleyization, see \cite{sv-infinitary-stability-v6-toappear}). It is easy to check that $(<\kappa)$-coheir has this property, and any independence relation with strong $(<\kappa)$-witness and left existence must extend $(<\kappa)$-coheir.}. 
\end{remark}

\section{Some independence calculus}\label{indep-calc-sec}

We investigate relationships between properties and how to go from a frame to an independence relation. Most of it appears already in \cite{bgkv-v3-toappear} and has a much longer history, described there. The following are new: Lemma \ref{cont-lem} gives a way to get the witness properties from tameness, partially answering \cite[Question 5.5]{bgkv-v3-toappear}. Lemmas \ref{lc-cont} and \ref{lc-monot} are technical results used in the last sections.

The following proposition investigates what properties are preserved by the operations $\cl$ and $\pre$ (recall Definition \ref{cl-def}). This was done already in \cite[Section 5.1]{bgkv-v3-toappear}, so we cite from there.

\begin{prop}\label{cl-basics} \
  Let $\s$ be a pre-$(<\alpha, \F)$-frame and let $\is$ be a $(<\alpha, \F)$-independence relation.

  \begin{enumerate}
  \item For $P$ in the list of properties of Definition \ref{indep-props-def}, if $\is$ has $P$, then $\pre (\is)$ has $P$.
    \item For $P$ a property in the following list, $\is$ has $P$ if (and only if) $\pre (\is)$ has $P$: existence, independent amalgamation, full model-continuity.
    \item For $P$ a property in the following list, $\cl (\s)$ has $P$ if (and only if) $\s$ has $P$: disjointness, full symmetry, base monotonicity, extension, transitivity.
    \item\label{cl-basics-1} If $\pre (\is)$ has extension, then $\cl (\pre (\is)) = \is$ if and only if $\is$ has extension.
    \item The following are equivalent:
      \begin{enumerate}
        \item $\s$ has full symmetry.
        \item $\cl (\s)$ has symmetry.
        \item $\cl (\s)$ has full symmetry.
      \end{enumerate}
    \item If $\pre (\is)$ has uniqueness and $\is$ has extension, then $\is$ has uniqueness.
    \item\label{cl-basics-ext} If $\pre (\is)$ has extension and $\is$ has uniqueness, then $\is$ has extension.
    \item \footnote{Note that maybe $\alpha = \infty$. However we can always apply the proposition to $\s^{<\alpha_0}$ for an appropriate $\alpha_0 \le \alpha$.} $\slc{<\alpha} (\is) = \slc{<\alpha} (\pre (\is))$.
    \item $\clc{<\alpha} (\pre (\is)) \le \clc{<\alpha} (\is)$. If $K_{\is}$ is an AEC, then this is an equality.
  \end{enumerate}
\end{prop}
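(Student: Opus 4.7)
The plan is to verify each part by direct unfolding of the definitions of $\cl$ and $\pre$, grouping them by difficulty. Parts (1), (2), (8), and (9) are essentially tautological. Every quadruple of $\pre(\is)$ is also a quadruple of $\is$, so any property of $\is$ of the form ``for all quadruples, \ldots'' restricts to $\pre(\is)$, giving (1). For (2), the listed properties (existence, independent amalgamation, full model-continuity) have conclusions involving only models on the right-hand side, so the converse implications reduce to the pre-frame setting. The same model-based nature underlies (8), where $\slc{\alpha_0}$ concerns only types over models and is unchanged by closure. For (9), the two notions of $\clc$ differ only in the additional requirement, for $\pre(\is)$, that the union $M_\delta$ lie in $K_{\is}$, which holds automatically when $K_{\is}$ is an AEC.

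For (3), given $\nf^{\cl}(M, A, B, N)$, fix a witness $M' \gea M$ with $B \subseteq |M'|$ and $N' \gea N$ satisfying $\nfs{M}{A}{M'}{N'}$ in $\s$, and apply the pre-frame version of each listed property. Disjointness descends via $A \cap B \subseteq A \cap |M'| \subseteq |M|$; full symmetry and transitivity are witnessed directly; base monotonicity uses that any model between $M$ and $M'$ still fits the hypothesis of $\s$-base monotonicity; for extension, enlarge $M'$ to contain the larger set $C$ using $\s$-extension. Part (5) follows by a short cycle: $\s$ full symmetry implies $\cl(\s)$ full symmetry by (3); $\cl(\s)$ full symmetry implies $\cl(\s)$ symmetry by right monotonicity of $\cl(\s)$ applied to the witnessing model; and $\cl(\s)$ symmetry implies $\s$ full symmetry by unfolding the symmetric $\nf^{\cl}$ quadruple to produce the model required by $\s$-full symmetry.

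Parts (4), (6), and (7) form the substantive core, each requiring a bridge between set-based and model-based forking. For (4), the inclusion $\cl(\pre(\is)) \subseteq \is$ is immediate by right monotonicity. Assuming $\pre(\is)$ has extension, if $\cl(\pre(\is)) = \is$ then $\is$ inherits extension from $\pre(\is)$ via (3); conversely, if $\is$ has extension, then for any $\nfs{M}{A}{B}{N}$ in $\is$ I apply $\is$-extension to enlarge $B$ to the universe of a model $M' \gea M$ (in a suitable ambient extension $N'$), giving the witness required by $\cl(\pre(\is))$. For (6), two types $q_\ell \in \gS^{<\alpha}(B; N_\ell)$ over a set $B \subseteq |M|$ not $\is$-forking over $M_0$ and agreeing on $M_0$ are extended via $\is$-extension to types $q_\ell^* \in \gS^{<\alpha}(M; N_\ell^*)$ not forking over $M_0$; $\pre(\is)$-uniqueness, applied after amalgamating the $N_\ell^*$, forces $q_1^* = q_2^*$, which restricts to $q_1 = q_2$. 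For (7), given $p \in \gS^{<\alpha}(MB; N)$ not $\is$-forking over $M$ and $B \subseteq C$, $\pre(\is)$-extension produces a model $M^* \gea M$ containing $MC$ and a type $q \in \gS^{<\alpha}(M^*; N^*)$ extending $p \rest M$ without $\pre(\is)$-forking over $M$; then $\is$-uniqueness applied on the subset $MB$ of $M^*$ yields $q \rest MB = p$, so $q \rest MC$ is the desired extension of $p$. The key obstacle throughout is precisely this reduction from arbitrary sets to models on the right, resolved consistently by combining the relevant extension property with either invariance or uniqueness.
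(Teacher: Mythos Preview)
Your proposal is correct and proceeds by the same direct approach the paper intends: the paper's own proof simply says ``All are straightforward'' and defers to \cite[Lemmas 5.3, 5.4]{bgkv-v3-toappear}, so you have essentially filled in the details the paper omits. One minor point worth making explicit in part (4): when you ``enlarge $B$ to the universe of a model $M' \gea M$,'' the cleanest choice is $M' = N$ itself (which is legal since $\beta = \theta$ and $\|N\| < \theta$), after which a short Galois-type/invariance argument is needed to replace the new realization by the original $A$; similarly in part (7), taking $M^\ast = N$ makes the application of $\pre(\is)$-extension precise.
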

\begin{proof} 
  All are straightforward. See \cite[Lemmas 5.3, 5.4]{bgkv-v3-toappear}.
\end{proof}

To what extent is an independence relation determined by its corresponding frame? There is an easy answer:

\begin{lem}\label{pre-to-cl}
  Let $\is$ and $\is'$ be independence relations with $\pre (\is) = \pre (\is')$. If $\is$ and $\is'$ both have extension, then $\is = \is'$.
\end{lem}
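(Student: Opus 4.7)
The plan is to show that extension lets us reduce the general independence statement (with a set on the right) to the case where the right-hand side is a model, and in that case the two relations already agree by the hypothesis $\pre(\is) = \pre(\is')$.

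First I would prove $\nf_{\is} \subseteq \nf_{\is'}$; the reverse inclusion then follows by symmetry in $\is, \is'$. So assume $\nfs{M}{\ba}{B}{N}$ holds in $\is$, with $\ba$ enumerating a set of length less than $\alpha_{\is}$. Applying right extension of $\is$ with $C := B \cup |M'|$, where $M' \gea M$ is chosen to contain $B$ (using $(<\lambda)$-closure in the relevant case, or otherwise the coherent abstract class structure to enlarge inside a larger ambient model), I obtain $N' \gea N$ and $\ba' \in \fct{<\alpha_{\is}}{N'}$ such that $\gtp(\ba'/MB; N') = \gtp(\ba/MB; N)$ and $\nfs{M}{\ba'}{M'}{N'}$ in $\is$. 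Since $M \lea M' \lea N'$, this is a $\pre(\is)$-statement, so by the hypothesis it also holds in $\pre(\is')$, giving $\nfs{M}{\ba'}{M'}{N'}$ in $\is'$. Right monotonicity of $\is'$ then yields $\nfs{M}{\ba'}{B}{N'}$ in $\is'$.

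To finish, I need to transport this back to $\ba$ and $N$. The type equality $\gtp(\ba'/MB; N') = \gtp(\ba/MB; N)$ gives an amalgam: some $N^* \in K$ with maps $f : N \to N^*$ and $g : N' \to N^*$ fixing $M \cup B$ pointwise and satisfying $f(\ba) = g(\ba')$. Invariance of $\is'$ applied via $g$ to $\nfs{M}{\ba'}{B}{N'}$ gives $\nfs{M}{f(\ba)}{B}{N^*}$; ambient monotonicity restricts this to $f[N]$; and a final application of invariance via $f^{-1}$ yields $\nfs{M}{\ba}{B}{N}$ in $\is'$. Swapping the roles of $\is$ and $\is'$ gives the other inclusion.

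The only real obstacle is the routine bookkeeping in the last paragraph (choosing a big enough $M'$ so that extension applies, and juggling the amalgamation of $N$ and $N'$ over $MB$); none of this is difficult, but it must be done carefully because we are working inside a coherent abstract class rather than a monster model, so one cannot just assume all objects sit inside a common ambient structure.
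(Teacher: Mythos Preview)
Your argument is correct and rests on the same idea as the paper's: use extension to replace the set $B$ on the right by a model, reducing to a $\pre$-statement where the two relations coincide by hypothesis. The paper, however, packages this reduction as the identity $\is = \cl(\pre(\is))$ already established in Proposition~\ref{cl-basics}.(\ref{cl-basics-1}), yielding a one-line proof: since $\is$ has extension, $\is = \cl(\pre(\is)) = \cl(\pre(\is')) = \is'$. Your version unpacks this by hand, which is fine but longer.

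One simplification worth noting: you do not need to search for a small $M'$ containing $B$ via $(<\lambda)$-closure or enlargement of the ambient model. Simply take $M' = N$. Then $C = |N| \subseteq |N|$ trivially, and since $\pre$ is only defined for $(<\alpha, \F)$-independence relations we have $\beta_{\is} = \theta_{\is}$, so $|C| = \|N\| < \theta_{\is} = \beta_{\is}$ and the size constraint in the extension axiom is met. Extension then directly produces $N' \gea N$ and $\ba'$ with $\nfs{M}{\ba'}{N}{N'}$, already a $\pre$-statement, and the remainder of your transport argument goes through unchanged. The aside about $(<\lambda)$-closure is a red herring here.
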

\begin{proof}
  By Proposition \ref{cl-basics}.(\ref{cl-basics-1}), $\is = \cl (\pre (\is))$ and $\is' = \cl (\pre (\is')) = \cl (\pre (\is)) = \is$.
\end{proof}

The next proposition gives relationships between the properties. We state most results for frames, but they usually have an analog for independence relations that can be obtained using Proposition \ref{cl-basics}.

\begin{prop}\label{indep-props}
  Let $\is$ be a $(<\alpha, \F)$-independence relation with base monotonicity. Let $\s$ be a pre-$(<\alpha, \F)$-frame with base monotonicity.
  \begin{enumerate} 
    \item If $\is$ has full symmetry, then it has symmetry. If $\is$ has the $(<\kappa)$-witness property, then it has the $(<\kappa)$-model-witness property. If $\is$ [$\s$] has strong transitivity, then it has transitivity.
    \item\label{indep-props-4} If $\s$ has uniqueness and extension, then it has transitivity.
    \item\label{indep-props-7} For $\alpha > \lambda$, if $\s$ has extension and existence, then $\s$ has independent amalgamation. Conversely, if $\s$ has transitivity and independent amalgamation, then $\s$ has extension and existence. Moreover if $\s$ has uniqueness and independent amalgamation, then it has transitivity.
    \item\label{indep-props-exi} If $\min(\clc{<\alpha} (\s), \slc{<\alpha} (\s)) < \infty$, then $\s$ has existence.
    \item\label{indep-props-lc} $\clc{<\alpha} (\s) \le \slc{<\alpha} (\s)$.
    \item\label{indep-props-wit} If $K_{\s}$ is $\lambda_{\s}$-closed, $\slc{<\alpha} (\s) = \lambda_\s^+$ and $\s$ has transitivity, then $\s$ has the right $\lambda$-model-witness property.
    \item\label{indep-props-sym} If $K_{\s}$ does not have the order property (Definition \ref{def-op}), any chain in $K_{\s}$ has an upper bound, $\theta = \infty$, and $\s$ has uniqueness, existence, and extension, then $\s$ has full symmetry.
  \end{enumerate}
\end{prop}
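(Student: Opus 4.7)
The plan is to verify each of the seven items in turn. Items (1)-(2) are short and definitional; items (3)-(5) use local character together with the existence/extension/amalgamation apparatus; items (6)-(7) are the main technical content.

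For (1), full symmetry applied with $A_0 = A$ and $B_0 = B$ (respecting the size constraints) yields some $N' \gea N$, $M' \gea M$ containing $A_0$ with $\nfs{M}{B_0}{M'}{N'}$; right and ambient monotonicity then collapse this to $\nfs{M}{B_0}{A_0}{N}$, which is symmetry. The witness property quantifies over all subsets of $M'$, so in particular it gives the model-witness version, and strong transitivity is transitivity without the $M_0 \le M_1$ constraint, hence implies it. For (2), given $\nfs{M_0}{\ba}{M_1}{N}$ and $\nfs{M_1}{\ba}{B}{N}$ with $M_0 \lea M_1$: use extension at $\gtp(\ba/M_0;N)$ to produce $q \in \gS^{<\alpha}(M_1 B; N')$ not forking over $M_0$; monotonicity gives that $q\rest M_1$ does not fork over $M_0$, and the hypothesis says the same of $\gtp(\ba/M_1;N)$. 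Both extend $\gtp(\ba/M_0)$, so uniqueness at base $M_0$ forces $q\rest M_1 = \gtp(\ba/M_1;N)$. A second uniqueness application at base $M_1$ (using base monotonicity to lift "$q$ does not fork over $M_0$" to "$q$ does not fork over $M_1$") gives $q = \gtp(\ba/M_1 B; N)$, and the latter type accordingly does not fork over $M_0$.

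For (3), existence applied at $M_0$ with $A = M_1$ gives $\nfs{M_0}{M_1}{M_0}{M_1}$, and extending the base to $M_2$ by extension produces an embedding $f_2: M_2 \to N$ realizing the independent amalgam. Conversely, independent amalgamation plus transitivity delivers existence (self-amalgamate $M_0$) and extension (amalgamate a model containing $C$ independently over $M$ and apply transitivity); the moreover variant trades transitivity for uniqueness. For (4), if $\slc{<\alpha}(\s) < \infty$, slc applied to $M \lea N$ and $A$ gives $M_0 \lea M$ with $\nfs{M_0}{A}{M}{N}$, and base monotonicity with $M' := M$ (whose universe is trivially in $|M| \cup |M_0|$) yields $\nfs{M}{A}{M}{N}$, i.e.\ existence; the $\clc{<\alpha}$ variant is handled by a constant chain. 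For (5), given $\mu := \slc{<\alpha}(\s)$, a regular $\delta \ge \mu$, an increasing chain $\langle M_i : i<\delta\rangle$, $N \in K$ extending them, and small $A \subseteq |N|$: apply slc to $N \lea N$ to obtain $M_* \lea N$ in $K_{<\mu}$ with $\nfs{M_*}{A}{N}{N}$, then right monotonicity gives $\nfs{M_*}{A}{M_\delta}{N}$. Since $\|M_*\| < \mu \le \delta = \cf{\delta}$ and $\langle M_i\rangle$ is increasing, some $i<\delta$ has $|M_*| \subseteq |M_i|$; coherence upgrades this to $M_* \lea M_i$, and base monotonicity finally yields $\nfs{M_i}{A}{M_\delta}{N}$.

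For (6), apply slc to $M' \lea N$ and $A$ to produce $M_0 \lea M'$ in $K_{\le \lambda}$ with $\nfs{M_0}{A}{M'}{N}$. The hypothesis (taking $B_0 := |M_0|$) gives $\nfs{M}{A}{M_0}{N}$. To apply transitivity we need $M \lea M_0$; arrange this by absorbing $M$ into a chain of $\le \lambda$-sized extensions of $M_0$ inside $M'$, built via $\lambda$-closedness. Concretely, construct an increasing chain $\langle M_0^i : i \le \gamma\rangle \lea M'$ with $M_0^0 := M_0$ and each successor $M_0^{i+1}$ adjoining to $M_0^i$ one $\le \lambda$-chunk of new elements from $M$, maintaining $\nfs{M_0^i}{A}{M'}{N}$ at each stage using the hypothesis on the $\le \lambda$-sized increment and transitivity; at the end $M \lea M_0^\gamma$, and transitivity from $\nfs{M}{A}{M_0^\gamma}{N}$ (hypothesis) and $\nfs{M_0^\gamma}{A}{M'}{N}$ produces $\nfs{M}{A}{M'}{N}$. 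The delicate chain-and-bookkeeping argument for (6), and in particular the verification that each stage preserves independence, is the main technical obstacle.

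For (7), derive binary symmetry first and then promote it to full symmetry by extension. Suppose symmetry fails: there exist $M \lea N$ and tuples $\ba, \bb$ in $N$ with $\nfs{M}{\ba}{\bb}{N}$ but $\nnfs{M}{\bb}{\ba}{N}$. Iteratively build a $\lea$-increasing sequence $\langle M_i : i < \kappa\rangle$ in $K$ (with $\kappa$ arbitrarily large, using $\theta = \infty$ and the chain-upper-bound hypothesis to take limits) together with tuples $\ba_i, \bb_i$: at each stage, use existence to place a non-forking copy of $\ba$ over $M_i$, extension plus the failure of symmetry to place a copy of $\bb$ exhibiting the forking asymmetry, and uniqueness to pin down the relevant types over the earlier $M_j$'s. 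A direct check then shows $\gtp(\ba_i \bb_j / M; N) \ne \gtp(\ba_j \bb_i / M; N)$ for all $i<j<\kappa$, witnessing the order property of length $\kappa$, contradicting the hypothesis. Finally, full symmetry follows from symmetry via extension: given $\nfs{M}{A}{B}{N}$ and small $A_0 \subseteq A$, $B_0 \subseteq B$, symmetry gives $\nfs{M}{B_0}{A_0}{N}$, and extension applied to $\gtp(B_0/M A_0; N)$ produces a non-forking extension to any model $M' \gea M$ containing $A_0$, realized in some $N' \gea N$, which is precisely $\nfs{M}{B_0}{M'}{N'}$.
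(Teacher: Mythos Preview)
Your argument for item (5) contains a genuine error. You apply $\slc{<\alpha}$ to the pair $N \lea N$ to obtain $M_* \lea N$ of size less than $\mu$, and then assert that $|M_*| \subseteq |M_i|$ for some $i < \delta$. But $M_*$ is only a submodel of $N$, not of $M_\delta = \bigcup_{i<\delta} M_i$; elements of $M_*$ may lie in $|N| \setminus |M_\delta|$ (for instance, elements of $A$ itself), so no such $i$ need exist. The paper instead applies $\slc{<\alpha}$ to the pair $M_\delta \lea N$ (recall that for pre-frames, $\clc{<\alpha}(\s)$ is only computed over chains with $M_\delta \in K$), obtaining $M_* \lea M_\delta$ of small size. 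Then regularity of $\delta$ genuinely gives $M_* \lea M_i$ for some $i$, and base monotonicity finishes. Your fix is the same one-line change.

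Your argument for item (6) is both unnecessarily complicated and, as written, incomplete. You build a long chain $\langle M_0^i : i \le \gamma\rangle$ absorbing the base $M$ into successively larger models, but at the end you invoke the hypothesis to get $\nfs{M}{A}{M_0^\gamma}{N}$; the hypothesis only applies to right-hand sides of size at most $\lambda$, and $M_0^\gamma$ need not be that small once it contains $M$. The paper's proof is a single step: by $\slc{<\alpha}(\s) = \lambda^+$ find $M_0' \lea M'$ of size $\lambda$ with $\nfs{M_0'}{A}{M'}{N}$; using $\lambda$-closure and base monotonicity, arrange $M \lea M_0'$ (enlarging $M_0'$ inside $M'$); the hypothesis then directly gives $\nfs{M}{A}{M_0'}{N}$, and one application of transitivity yields $\nfs{M}{A}{M'}{N}$. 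No chain is needed.

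The remaining items (1)--(4) and (7) are handled essentially as in the paper, with (7) being the standard order-property argument cited from \cite{bgkv-v3-toappear}.
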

\begin{proof} \
  \begin{enumerate}
    \item Easy.
    \item As in the proof of \cite[Claim II.2.18]{shelahaecbook}.
    \item The first sentence is easy, since independent amalgamation is a particular case of extension and existence. Moreover to show existence it is enough by monotonicity to show it for types of models. The proof of transitivity from uniqueness and independent amalgamation is as in (\ref{indep-props-4}).
    \item By definition of the local character cardinals.
    \item Let $\delta = \cf{\delta} \ge \slc{<\alpha} (\s)$ and $\seq{M_i : i < \delta}$ be increasing in $K$, $N \gea M_i$ for all $i < \delta$ and $A \subseteq |N|$ with $|A| < \alpha$. Assume $M_\delta := \bigcup_{i < \delta} M_i$ is in $K$. By definition of $\slc{<\alpha}$ there exists $N \lea M_\delta$ of size less than $\slc{\alpha_0} (\is)$ such that $p$ does not fork over $N$. Now use regularity of $\delta$ to find $i < \delta$ with $N \lea M_i$.
    \item Let $\lambda := \lambda_{\s}$, say $\s = (K, \nf)$. Let $M_0 \lea M \lea N$ and assume $\nfs{M_0}{A}{B}{N}$ for all $B \subseteq |M|$ with $|B| \le \lambda$. By definition of $\slc{<\alpha} (\s)$, there exists $M_0' \lea M$ of size $\lambda$ such that $\nfs{M_0'}{A}{M}{N}$. By $\lambda$-closure and base monotonicity, we can assume without loss of generality that $M_0 \lea M_0'$. By assumption, $\nfs{M_0}{A}{M_0'}{N}$, so by transitivity, $\nfs{M_0}{A}{M}{N}$.
    \item As in \cite[Corollary 5.16]{bgkv-v3-toappear}.
  \end{enumerate}
\end{proof}
\begin{remark}\label{indep-props-sym-rmk}
  The precise statement of \cite[Corollary 5.16]{bgkv-v3-toappear} shows that Proposition \ref{indep-props}.(\ref{indep-props-sym}) is local in the sense that to prove symmetry over the base model $M$, it is enough to require uniqueness and extension over this base model (i.e.\ any two types that do not fork over $M$, have the same domain, and are equal over $M$ are equal over their domain, and any type over $M$ can be extended to an arbitrary domain so that it does not fork over $M$).
\end{remark}

\begin{lem}\label{cont-lem}
  Let $\is = (K, \nf)$ be a $(<\alpha, \F)$-independence relation. If $\is$ has extension and uniqueness, then:
  
      \begin{enumerate}
        \item If $K$ is $(<\kappa)$-tame for types of length less than $\alpha$, then $K$ has the right $(<\kappa)$-model-witness property.
        \item If $K$ is $(<\kappa)$-tame and short for types of length less than $\theta_{\is}$, then $K$ has the right $(<\kappa)$-witness property.
        \item If $K$ is $(<\kappa)$-tame and short for types of length less than $\kappa + \alpha$ and $\is$ has symmetry, then $K$ has the left $(<\kappa)$-witness property.
      \end{enumerate}
\end{lem}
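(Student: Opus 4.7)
The three parts share a common strategy: given the witness-style hypothesis, use right extension to produce a \emph{doppelg\"anger} $\bb$ of the enumerating tuple $\ba$ of $A$ whose non-forking over the full right-hand side is guaranteed; then invoke the appropriate locality assumption together with uniqueness to force $\gtp(\ba/\cdot) = \gtp(\bb/\cdot)$ over the full set; finally transport non-forking back through the resulting amalgam via invariance and ambient monotonicity. The main technical obstacle is the tameness-over-sets step in (2), which requires extracting a small right-hand witness from an index-witness produced by shortness.

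For (1), taking $B_0 = \emptyset$ in the hypothesis and invoking right normality makes $p := \gtp(\ba/M;N)$ a non-forking type over $M$; right extension yields $N^\ast \gea N$ and $\bb \in N^\ast$ realizing some $q \in \gS^{<\alpha}(M';N^\ast)$ extending $p$ and not forking over $M$. To show $\gtp(\ba/M';N) = \gtp(\bb/M';N^\ast)$, suppose they differ: $(<\kappa)$-tameness over the model $M'$ supplies $B_0 \subseteq |M'|$ with $|B_0| < \kappa$ distinguishing them, yet by the hypothesis plus right normality both $\gtp(\ba/MB_0;N)$ and $\gtp(\bb/MB_0;N^\ast)$ are non-forking extensions of $p$ agreeing on $M$, so uniqueness (applied over the ambient model $N \supseteq MB_0$) forces them equal over $MB_0$, contradicting the tameness witness. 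Equality of Galois types produces an amalgam fixing $M'$ and identifying $\bb$ with $\ba$, through which $\nfs{M}{\bb}{M'}{N^\ast}$ transports via invariance and ambient monotonicity to $\nfs{M}{\ba}{M'}{N}$.

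For (2), the scheme of (1) adapts with $M'$ replaced by an arbitrary set $C$; the only new ingredient is ``tameness over the set $C$''. If $\gtp(\ba/C;N) \neq \gtp(\bb/C;N^\ast)$, enumerate $C$ as $\bc$; the concatenated types $\gtp(\ba\bc/M;N)$ and $\gtp(\bb\bc/M;N^\ast)$, both of length less than $\theta_{\is}$, are distinct over the model $M$ (an amalgam witnessing equality would fix $\bc$, hence $C$, and identify $\ba$ with $\bb$ over $C$). Full $(<\kappa)$-shortness then produces a separating $I = J \sqcup J' \subseteq \ell(\ba) + \ell(\bc)$ with $|I| < \kappa$; setting $C_0 := \{c_j : j \in J'\}$ a standard amalgamation shuffle yields $\gtp(\ba/MC_0;N) \neq \gtp(\bb/MC_0;N^\ast)$ with $|C_0| < \kappa$, and the uniqueness argument of (1) applied over $MC_0$ delivers the contradiction.

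For (3), first observe that if $\kappa > \alpha$ the hypothesis applied at $A_0 = A$ is already the conclusion, so assume $\kappa \le \alpha$. One may reduce to right witness by symmetrizing, or apply the template directly: right extension yields $\bb \in N^\ast \gea N$ realizing $q \in \gS^{<\alpha}(MB;N^\ast)$ extending $p := \gtp(\ba/M;N)$. For every $A_0 \subseteq A$ with $|A_0| < \kappa$, the hypothesis plus right normality gives $\nfs{M}{\ba|_{A_0}}{MB}{N}$, and left monotonicity on $\nfs{M}{\bb}{MB}{N^\ast}$ gives $\nfs{M}{\bb|_{A_0}}{MB}{N^\ast}$; both types agree on $M$, so uniqueness (over the ambient $N \supseteq MB$) forces $\gtp(\ba|_{A_0}/MB;N) = \gtp(\bb|_{A_0}/MB;N^\ast)$. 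Upgrading to $\gtp(\ba/MB;N) = \gtp(\bb/MB;N^\ast)$: if they differed, then $\gtp(\ba/N;N^\ast) \neq \gtp(\bb/N;N^\ast)$ as types over the model $N$, of length $|A| < \alpha \le \kappa + \alpha$; full $(<\kappa)$-shortness produces $I \subseteq \ell(\ba)$ with $|I| < \kappa$ on which the restrictions differ, and restricting to $MB \subseteq |N|$ contradicts the uniqueness step taken at $A_0 = $ the image of $I$. Invariance and ambient monotonicity then yield $\nfs{M}{\ba}{MB}{N}$, and right monotonicity gives $\nfs{M}{A}{B}{N}$. The symmetry hypothesis feeds into the bookkeeping of either route, and the bound $\kappa + \alpha$ on the length of tame-and-short types is calibrated precisely so that shortness applies to the types appearing above.
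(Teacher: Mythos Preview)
Parts (1) and (2) are correct and follow essentially the paper's argument: build a nonforking extension $q$ via right extension, then use the appropriate locality hypothesis together with uniqueness to force $q$ to coincide with the actual type of $\ba$, and transport. Your concatenation argument in (2) is exactly the content of the paper's remark that $\gtp(\ba/M\bc;N) = \gtp(\ba'/M\bc;N)$ iff $\gtp(\ba\bc/M;N) = \gtp(\ba'\bc/M;N)$.

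For (3), your one-line symmetrizing route is correct and is what the paper does: by symmetry it suffices to show that for each $B_0 \subseteq B$ of size less than $\alpha$, $\nfs{M}{B_0}{A}{N}$ follows from $\nfs{M}{B_0}{A_0}{N}$ for all $A_0 \subseteq A$ of size $<\kappa$; this is the right witness argument of (2) with right-hand side $A$ of size $<\alpha$, so the concatenated type $\bb_0\ba$ has length $< \alpha + \alpha \le \kappa + \alpha$, which is exactly why that bound appears in the hypothesis.

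Your alternative ``direct'' route, however, has a genuine gap. From $\gtp(\ba^I/N;N^\ast) \neq \gtp(\bb^I/N;N^\ast)$ you write ``restricting to $MB \subseteq |N|$'' to get a contradiction with the uniqueness step. But restriction of the domain preserves \emph{equality} of Galois types, not inequality: two types over $N$ can differ while their restrictions to $MB$ agree. What you would need is $(<\kappa)$-shortness for types over the \emph{set} $MB$, but shortness is only assumed over models. The detour through $N$ cannot be reversed. (The concatenation trick from (2) does not help either, since $\ba\bc$ with $\bc$ enumerating $B$ has length $< \alpha + \theta_{\is}$, outside the range $< \kappa + \alpha$ of the hypothesis when $B$ is large.) So the direct route should be dropped; the symmetry reduction is the argument.
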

\begin{proof} \
\begin{enumerate}
      \item Let $M \lea M' \lea N$ be in $K$, $A \subseteq |N|$ have size less than $\alpha$. Assume $\nfs{M}{A}{B_0}{N}$ for all $B_0 \subseteq |M'|$ of size less than $\kappa$. We want to show that $\nfs{M}{A}{M'}{N}$. Let $\ba$ be an enumeration of $A$, $p := \gtp (\ba / M; N)$. Note that (taking $B_0 = \emptyset$ above) normality implies $p$ does not fork over $M$. By extension, let $q \in \gS^{<\alpha} (M')$ be an extension of $p$ that does not fork over $M$. Using amalgamation and some renaming, we can assume without loss of generality that $q$ is realized in $N$. Let $p' := \gtp (\ba / M'; N)$. We claim that $p' = q$, which is enough by invariance. By the tameness assumption, it is enough to check that $p' \rest B_0 = q \rest B_0$ for all $B_0 \subseteq |M'|$ of size less than $\kappa$. Fix such a $B_0$. By assumption, $p' \rest B_0$ does not fork over $M$. By monotonicity, $q \rest B_0$ does not fork over $M$. By uniqueness, $p' \rest B_0 = q \rest B_0$, as desired.
      \item Similar to before, noting that for $M \lea N$, $\gtp (\ba / M\bb; N) = \gtp (\ba' / M\bb; N)$ if and only if $\gtp (\ba \bb / M; N) = \gtp (\ba' \bb / M; N)$.
      \item Observe that in the proof of the previous part, if the set on the right hand side has size less than $\kappa$, it is enough to require $(<\kappa)$-tameness and shortness for types of length less than $(\alpha + \kappa)$. Now use symmetry.
\end{enumerate}
\end{proof}

Having a nice independence relation makes the class nice. The results below are folklore:

\begin{prop}\label{class-props}
  Let $\is = (K, \nf)$ be a $(<\alpha, \F)$-independence relation with base monotonicity. Assume $K$ is an AEC in $\F$ with $\LS (K) = \lambda_{\is}$.
  \begin{enumerate}
    \item If $\is$ has uniqueness, and $\kappa := \slc{<\alpha} (\is) < \infty$, then $K$ is $(<\kappa)$-tame for types of length less than $\alpha$.
    \item If $\is$ has uniqueness and $\kappa := \slc{<\alpha} (\is) < \infty$, then $K$ is $(<\alpha)$-stable in any infinite $\mu$ such that $\mu = \mu^{<\kappa}$.
    \item If $\is$ has uniqueness, $\mu > \LS (K)$, $K$ is $(<\alpha)$-stable in unboundedly many $\mu_0 < \mu$, and $\cf{\mu} \ge \clc{<\alpha} (\is)$, then $K$ is $(<\alpha)$-stable in $\mu$.
  \end{enumerate}
\end{prop}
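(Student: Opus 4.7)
The plan is to exploit uniqueness and local character throughout: local character produces small submodels over which a type does not fork, and uniqueness then pins down each type from its restriction to such a small witness. As a preliminary observation valid for all three parts, since $\slc{<\alpha}(\is) = \kappa < \infty$, the defining inequality $|\alpha_0|^+ + \lambda_{\is}^+ \le \slc{\alpha_0}(\is) \le \kappa$ for all $\alpha_0 < \alpha$ forces $\alpha \le \kappa$ and $\LS(K) = \lambda_{\is} < \kappa$. In particular $K$ admits Löwenheim-Skolem hulls of size strictly less than $\kappa$.

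For (1), given distinct $p, q \in \gS^{<\alpha}(M)$, I pick by local character $M_p, M_q \lea M$ of size less than $\kappa$ over which $p$ and $q$ respectively do not fork, and take a Löwenheim-Skolem hull $M_0 \lea M$ of $|M_p| \cup |M_q|$ of size less than $\kappa$. Base monotonicity propagates non-forking to $M_0$, and if $p \rest M_0 = q \rest M_0$ then uniqueness gives $p = q$, contradicting $p \neq q$. Hence $|M_0|$ witnesses $(<\kappa)$-tameness at $M$.

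For (2), given $N \in K$ and $A \subseteq |N|$ with $|A| \le \mu$, extend $A$ to a Löwenheim-Skolem hull $M \lea N$ of size at most $\mu$, so it suffices to bound $|\gS^{<\alpha}(M)|$. For each $p \in \gS^{<\alpha}(M)$, choose some $M_p \lea M$ of size less than $\kappa$ over which $p$ does not fork. Uniqueness makes the assignment $p \mapsto (M_p, p \rest M_p)$ injective, and there are at most $\mu^{<\kappa} = \mu$ possible choices of $M_p$. A routine Galois type count (using $\alpha \le \kappa$ together with $\|M_p\|, \LS(K) < \kappa$) yields $|\gS^{<\alpha}(M_p)| \le 2^{<\kappa} \le \mu$, so altogether $|\gS^{<\alpha}(M)| \le \mu$.

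For (3), pick $M \in K$ with $\|M\| = \mu$ and write $M = \bigcup_{i < \cf{\mu}} M_i$ as a continuous chain where each $\|M_i\| < \mu$ is a stability cardinal (possible since such cardinals are unbounded below $\mu$ by hypothesis). Since $\cf{\mu} \ge \clc{<\alpha}(\is)$, chain local character gives, for each $p \in \gS^{<\alpha}(M)$, an $i < \cf{\mu}$ such that $p$ does not fork over $M_i$; uniqueness then identifies $p$ with the pair $(i, p \rest M_i)$. Stability in $\|M_i\|$ gives $|\gS^{<\alpha}(M_i)| \le \|M_i\| < \mu$, so summing yields $|\gS^{<\alpha}(M)| \le \cf{\mu} \cdot \mu = \mu$; the extension from models to arbitrary subsets of $|N|$ of size at most $\mu$ proceeds as in (2). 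The main technical point is the Galois type count in (2); the rest is bookkeeping around the two slogans that \emph{non-forking witnesses come from small models} and \emph{non-forking extensions are unique}.
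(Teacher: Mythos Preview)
Your proof is correct and follows essentially the same approach as the paper, which for parts (1) and (3) merely cites external references and for part (2) runs the same pigeonhole/counting argument you give (the paper phrases it as: among $\mu^+$ many types, two share the same small non-forking base and the same restriction to it, hence coincide by uniqueness). One minor imprecision in your part (3): in a \emph{continuous} chain you cannot force $\|M_i\|$ to be a stability cardinal at limit $i$, but this is harmless --- either drop continuity (the definition of $\clc{<\alpha}(\is)$ does not require it) or simply observe that any $\|M_i\| < \mu$ is bounded above by some stability cardinal $\mu_0 < \mu$, whence $|\gS^{<\alpha}(M_i)| \le \mu_0 < \mu$ anyway.
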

\begin{proof} \
  \begin{enumerate}
    \item See \cite[p.~15]{superior-aec}, or the proof of \cite[Theorem 3.2]{ext-frame-jml}.
    \item Let $\mu = \mu^{<\kappa}$ be infinite. Let $M \in K_{\le \mu}$, $\seq{p_i : i < \mu^+}$ be elements in $\gS^{<\alpha} (M)$. It is enough to show that for some $i < j$, $p_i = p_j$. For each $i < \lambda^+$, there exists $M_i \lea M$ in $K_{<\kappa}$ such that $p_i$ does not fork over $M_i$. Since $\mu = \mu^{<\kappa}$, we can assume without loss of generality that $M_i = M_0$ for all $i < \mu^+$. Also, $|\gS^{<\alpha} (M_0)| \le 2^{<\kappa} \le \mu^{<\kappa} = \mu$ so there exists $i < j < \lambda^+$ such that $p_i \rest M_0 = p_j \rest M_0$. By uniqueness, $p_i = p_j$, as needed.
    \item As in the proof of \cite[Lemma 5.5]{ss-tame-jsl}.
  \end{enumerate}
\end{proof}

The following technical result is also used in the last sections. Roughly, it gives conditions under which we can take the base model given by local character to be contained in both the left and right hand side.

\begin{lem}\label{lc-monot}
  Let $\is = (K, \nf)$ be a $(<\alpha, \F)$-independence relation, $\F = [\lambda, \theta)$, with $\alpha > \lambda$. Assume:
    \begin{enumerate}
      \item $K$ is an AEC with $\LS (K) = \lambda$.
      \item $\is$ has base monotonicity and transitivity.
      \item $\mu$ is a cardinal, $\lambda \le \mu < \theta$.
      \item $\is$ has the left $(<\kappa)$-model-witness property for some regular $\kappa \le \mu$.
      \item $\slc{\mu} (\is) = \mu^+$. 
    \end{enumerate}

    Let $M^0 \lea M^\ell \lea N$ be in $K$, $\ell = 1,2$ and assume $\nfs{M^0}{M^1}{M^2}{N}$. Let $A \subseteq |M^1|$, be such that $|A| \le \mu$. Then there exists $N^1 \lea M^1$ and $N^0 \lea M^0$ such that:

  \begin{enumerate}
    \item $A \subseteq |N^1|$, $A \cap |M^0| \subseteq |N^0|$.
    \item $N^0 \lea N^1$ are in $K_{\le \mu}$.
    \item $\nfs{N^0}{N^1}{M^2}{N}$.
  \end{enumerate}
\end{lem}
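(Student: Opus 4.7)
The plan is to build increasing continuous chains $\seq{N^0_i : i \le \kappa}$ and $\seq{N^1_i : i \le \kappa}$ with $N^0_i \lea M^0$, $N^1_i \lea M^1$ both of size $\le \mu$, $N^0_i \lea N^1_i$, and the initial containments $A \subseteq |N^1_0|$, $A \cap |M^0| \subseteq |N^0_0|$. The design goal at each successor stage will be $\nfs{N^0_{j+1}}{N^1_j}{M^2}{N}$. Because $\kappa \le \mu$, the terminal models $N^0 := N^0_\kappa$ and $N^1 := N^1_\kappa$ will automatically have size $\le \mu$, giving the first two bullets of the conclusion; the third bullet $\nfs{N^0}{N^1}{M^2}{N}$ will be extracted from the successor property via the left $(<\kappa)$-model-witness property.

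At stage zero, first pick $N^0_0 \lea M^0$ of size $\le \mu$ containing $A \cap |M^0|$ using the L\"owenheim-Skolem-Tarski axiom, then $N^1_0 \lea M^1$ of size $\le \mu$ containing $A \cup |N^0_0|$; coherence gives $N^0_0 \lea N^1_0$. Limit stages will be routine unions, with the AEC axioms ensuring $N^0_\delta \lea M^0$, $N^1_\delta \lea M^1$, and $N^0_\delta \lea N^1_\delta$. The central move at a successor stage $j+1$ is to apply the local character hypothesis $\slc{\mu}(\is) = \mu^+$ with left hand side $N^1_j$ and, crucially, with $M^0$ (not the more obvious $M^2$) as the right hand side, producing $P \lea M^0$ of size $\le \mu$ with $\nfs{P}{N^1_j}{M^0}{N}$. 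Choose $N^0_{j+1} \lea M^0$ of size $\le \mu$ containing $|P| \cup |N^0_j|$; base monotonicity then upgrades the previous to $\nfs{N^0_{j+1}}{N^1_j}{M^0}{N}$. Since right monotonicity applied to the hypothesis gives $\nfs{M^0}{N^1_j}{M^2}{N}$, transitivity (via $N^0_{j+1} \lea M^0$) now delivers the successor goal $\nfs{N^0_{j+1}}{N^1_j}{M^2}{N}$. To close the stage, pick $N^1_{j+1} \lea M^1$ of size $\le \mu$ containing $|N^1_j| \cup |N^0_{j+1}|$; coherence yields $N^0_{j+1} \lea N^1_{j+1}$.

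For the final verification, base monotonicity (using $N^0_{j+1} \lea N^0 \lea M^0 \lea M^2$) will promote each $\nfs{N^0_{j+1}}{N^1_j}{M^2}{N}$ to $\nfs{N^0}{N^1_j}{M^2}{N}$. Given any $A_0 \subseteq |N^1|$ of size $< \kappa$, regularity of $\kappa$ yields some $j < \kappa$ with $A_0 \subseteq |N^1_j|$, and right monotonicity produces $\nfs{N^0}{A_0}{M^2}{N}$. Since $|N^1| \le \mu < \alpha$ (which is forced by the fact that $N^1$ appears on the left side of a $(<\alpha)$-frame), the left $(<\kappa)$-model-witness property applies and concludes $\nfs{N^0}{N^1}{M^2}{N}$. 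The main conceptual obstacle is spotting that local character should be invoked with $M^0$, rather than $M^2$, as the right-hand side: this is the move that lets transitivity reintroduce $M^2$ while forcing the base to live inside $M^0$. Matching the chain length to the witness parameter $\kappa$ so the final closure goes through is the complementary bookkeeping insight.
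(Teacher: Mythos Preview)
Your proof is correct and follows essentially the same approach as the paper's: build length-$\kappa$ chains with the successor goal $\nfs{N^0_{j+1}}{N^1_j}{M^2}{N}$, obtained by applying $\slc{\mu}(\is)=\mu^+$ against $M^0$ and then invoking transitivity with the hypothesis $\nfs{M^0}{M^1}{M^2}{N}$, and finish with the left $(<\kappa)$-model-witness property. The paper compresses your successor step into the single phrase ``use right transitivity and $\slc{\mu}(\is)=\mu^+$,'' but the content is identical; your explicit identification of $M^0$ (rather than $M^2$) as the target of local character is exactly the point.
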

\begin{proof}
  For $\ell = 0,1$, we build $\seq{N_i^\ell : i \le \kappa}$ increasing continuous in $K_{\le \mu}$ such that for all $i < \kappa$ and $\ell = 0, 1$:

  \begin{enumerate}
    \item $A \subseteq |N_0^1|$, $A \cap |M^0| \subseteq |N_0^0|$.
    \item $N_i^\ell \lea M^\ell$.
    \item $N_i^0 \lea N_i^1$.
    \item $\nfs{N_{i + 1}^0}{N_i^1}{M^2}{N}$.
  \end{enumerate}

  \paragraph{\underline{This is possible}} Pick any $N_0^0 \lea M^0$ in $K_{\le \mu}$ containing $A \cap |M^0|$. Now fix $i < \kappa$ and assume inductively that $\seq{N_j^0 : j \le i}$, $\seq{N_j^1 : j < i}$ have been built. If $i$ is a limit, we take unions. Otherwise, pick any $N_i^1 \lea M^1$ in $K_{\le \mu}$ that contains $A$, $N_j^1$ for all $j < i$ and $N_i^0$. Now use right transitivity and $\slc{\mu} (\is) = \mu^+$ to find $N_{i + 1}^0 \lea M^0$ such that $\nfs{N_{i + 1}^0}{N_i^1}{M^2}{N}$. By base monotonicity, we can assume without loss of generality that $N_i^0 \lea N_{i + 1}^0$.

  \paragraph{\underline{This is enough}} We claim that $N^\ell := N_\kappa^\ell$ are as required. By coherence, $N^0 \lea N^1$ and since $\kappa \le \mu$ they are in $K_{\le \mu}$. Since $A \subseteq |N_0^1|$, $A \subseteq |N^1|$. It remains to see $\nfs{N^0}{N^1}{M^2}{N}$. By the left witness property\footnote{Note that we do \emph{not} need to use full model continuity, as we only care about chains of cofinality $\ge \kappa$.}, it is enough to check it for every $B \subseteq |N^1|$ of size less than $\kappa$. Fix such a $B$. Since $\kappa$ is regular, there exists $i < \kappa$ such that $B \subseteq |N_i^1|$. By assumption and monotonicity, $\nfs{N_{i + 1}^0}{B}{M^2}{N}$. By base monotonicity, $\nfs{N_{\kappa}^0}{B}{M^2}{N}$, as needed.
\end{proof}

With a similar proof, we can clarify the relationship between full model continuity and local character. Essentially, the next lemma says that local character for types up to a certain length plus full model-continuity implies local character for all lengths. It will be used in Section \ref{long-transfer-sec}.

\begin{lem}\label{lc-cont}
  Let $\is = (K, \nf)$ be a $(<\theta_{\is}, \F)$-independence relation, $\F = [\lambda, \theta)$. Assume:

    \begin{enumerate}
      \item $K$ is an AEC with $\LS (K) = \lambda$.
      \item $\is$ has base monotonicity, transitivity, and full model continuity.
      \item\label{lc-cont-3} $\is$ has the left $(<\kappa)$-model-witness property for some regular $\kappa \le \lambda$.
      \item\label{lc-cont-4} For all cardinals $\mu \le \lambda$, $\slc{\mu} (\is) = \lambda^+$.
    \end{enumerate}

    Then for all cardinals $\mu < \theta$, $\slc{\mu} (\is) = \lambda^+ + \mu^+$.
\end{lem}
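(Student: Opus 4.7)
The plan is to induct on the cardinal $\mu$, with the base case $\mu \le \lambda$ being exactly hypothesis (\ref{lc-cont-4}). For $\mu > \lambda$, given $M \lea N$ in $K$ and $A \subseteq |N|$ with $|A| \le \mu$ (reducing to $|A| = \mu$ via the induction hypothesis), I would enumerate $A = \{a_i : i < \mu\}$ and build an increasing continuous sequence $\seq{(N_i^0, N_i^1) : i \le \mu}$ such that $N_i^0 \lea M$, $N_i^0 \lea N_i^1 \lea N$, $\|N_i^\ell\| \le \lambda + |i|$ for $i < \mu$, $\{a_j : j < i\} \subseteq |N_i^1|$, and crucially $\nfs{N_i^0}{N_i^1}{M}{N}$. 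Then $N^0 := N_\mu^0$ has $\|N^0\| \le \mu$ and right monotonicity yields $\nfs{N^0}{A}{M}{N}$, which is the conclusion sought.

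The stage $i = 0$ is obtained by applying hypothesis (\ref{lc-cont-4}) to $\emptyset$ to produce $N_0^0 \lea M$ of size $\le \lambda$ with $\nfs{N_0^0}{\emptyset}{M}{N}$, then using left normality to upgrade this to $\nfs{N_0^0}{N_0^0}{M}{N}$, and setting $N_0^1 := N_0^0$. Every limit stage $i \le \mu$ is handled by full model continuity applied to the chains $\seq{N_j^0}$ and $\seq{N_j^1}$ together with the constant chains $M$ and $N$; the size condition $\|N_i^1\| < \theta_{\is}$ is immediate from $\|N_i^1\| \le \mu < \theta$, and the compatibility $N_j^0 \lea N_j^1$ is preserved inductively at each prior stage.

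The main obstacle is the successor stage $i = j + 1$, where one must add $a_j$ to the right-hand side, extend the base, and maintain $N_{j+1}^0 \lea N_{j+1}^1$ together with nonforking simultaneously. Picking $N_{j+1}^1$ first and applying the induction hypothesis yields a base that need not sit inside $N_{j+1}^1$, while enlarging $N_{j+1}^1$ to swallow the base threatens the nonforking just established. Following the pattern of Lemma \ref{lc-monot}, the resolution is an internal iteration of length $\kappa$: build $\seq{(P_n, Q_n) : n \le \kappa}$ increasing continuous with $P_0 := N_j^0$, $Q_0 := N_j^1$, each of size $\le \lambda + |j+1|$, such that at inner step $n + 1$ one applies the induction hypothesis to $Q_n$ (valid since $\|Q_n\| \le \lambda + |j+1| < \mu$) to obtain $S_{n+1} \lea M$ with $\nfs{S_{n+1}}{Q_n}{M}{N}$, takes $P_{n+1} \lea M$ to be a common extension of $P_n$ and $S_{n+1}$ (so that base monotonicity yields $\nfs{P_{n+1}}{Q_n}{M}{N}$), and picks $Q_{n+1} \lea N$ extending $Q_n$, containing $P_{n+1}$, and (when $n = 0$) containing $a_j$. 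At the inner $\kappa$-limit, base monotonicity upgrades to $\nfs{P_\kappa}{Q_n}{M}{N}$ for every $n < \kappa$; regularity of $\kappa$ places every $A_0 \subseteq |Q_\kappa|$ of size $<\kappa$ inside some $Q_n$, so left monotonicity gives $\nfs{P_\kappa}{A_0}{M}{N}$, and the left $(<\kappa)$-model-witness hypothesis upgrades this to $\nfs{P_\kappa}{Q_\kappa}{M}{N}$. Each $P_n \subseteq Q_{n+1}$ forces $P_\kappa \subseteq Q_\kappa$, coherence yields $P_\kappa \lea Q_\kappa$, and setting $N_{j+1}^0 := P_\kappa$, $N_{j+1}^1 := Q_\kappa$ closes the successor step within the size bound $\lambda + |j+1|$.
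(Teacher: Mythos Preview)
Your proof is correct, but it is organized differently from the paper's. The paper also inducts on $\mu$, but for $\mu > \lambda$ it runs a single chain of length $\delta := \cf{\mu}$ (not $\mu$), resolving $A$ as $\bigcup_{i<\delta} A_i$ with $|A_i| < \mu$. It builds $\seq{N_i^0, N_i^1 : i \le \delta}$ with the \emph{staggered} condition $\nfs{N_{i+1}^0}{N_i^1}{M^0}{M^1}$: the successor step is then a single application of the induction hypothesis (no inner iteration), exactly as in Lemma~\ref{lc-monot}. The witness property is invoked only once, at the end, to show that $\nfs{N_i^0}{N_i^1}{M^0}{M^1}$ holds at every $i < \delta$ of cofinality $\ge \kappa$; full model continuity is then applied to the cofinal subsequence of such $i$ to reach $\nfs{N_\delta^0}{N_\delta^1}{M^0}{M^1}$.

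By contrast, you run a chain of length $\mu$, enforce the un-staggered condition $\nfs{N_i^0}{N_i^1}{M}{N}$ at every stage, and pay for this at successors with an inner $\kappa$-iteration (effectively redoing the Lemma~\ref{lc-monot} argument at each step). Your limits are then handled by a direct appeal to full model continuity. Both routes are valid; the paper's is shorter (one chain, one witness argument, no nested iteration), while yours keeps the invariant cleaner at each stage at the cost of more work per successor. One small slip: in your last line, ``right monotonicity'' should read ``left monotonicity'' (you are shrinking the left-hand side from $N_\mu^1$ to $A$).
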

\begin{proof}
  By induction on $\mu$. If $\mu \le \lambda$, this holds by hypothesis, so assume $\mu > \lambda$. Let $\delta := \cf{\mu}$.

  Let $M^0 \lea M^1$ be in $K$ and let $A \subseteq |M^1|$ have size $\mu$. We want to find $M \lea M^0$ such that $\nfs{M}{A}{M^0}{N}$ and $\|M\| \le \mu$. Let $\seq{A_i : i \le \delta}$ be increasing continuous such that $A = A_\delta$ and $|A_i| < \mu$ for all $i < \delta$. 

  For $\ell = 0,1$, we build $\seq{N_i^\ell : i \le \delta}$ increasing continuous such that for all $i < \delta$ and $\ell = 0, 1$:

  \begin{enumerate}
    \item $N_i \in K_{<\mu}$.
    \item $A_i \subseteq |N_i^1|$, $A_i \cap |M^0| \subseteq |N_i^0|$.
    \item $N_i^\ell \lea M^\ell$.
    \item $N_i^0 \lea N_i^1$.
    \item $\nfs{N_{i + 1}^0}{N_i^1}{M^0}{M^1}$.
  \end{enumerate}

  \paragraph{\underline{This is possible}} 
By (\ref{lc-cont-3}) and (\ref{lc-cont-4}), we have $\nfs{M^0}{M^1}{M^0}{M^1}$. Now proceed as in the proof of Lemma \ref{lc-monot}.

\paragraph{\underline{This is enough}}

As in the proof of Lemma \ref{lc-monot}, for any $i < \delta$ of cofinality at least $\kappa$ we have $\nfs{N_i^0}{N_i^1}{M^0}{M^1}$. Thus by full model continuity (applied to the sequences $\seq{N_i^\ell : i < \delta, \cf{i} \ge \kappa}$), $\nfs{N_\delta^0}{N_\delta^1}{M^0}{M^1}$. Since $A = A_\delta \subseteq |N_\delta^1|$, $M := N_\delta^0$ is as needed.
\end{proof}

\section{Skeletons}\label{dense-subac-sec}

We define what it means for an abstract class $K'$ to be a \emph{skeleton} of an abstract class $K$. The main examples are classes of saturated models with the usual ordering (or even universal or limit extension). Except perhaps for Lemma \ref{proper-ext-skel}, the results of this section are either easy or well known, we simply put them in the general language of this paper.

We will use skeletons to generalize various statements of chain local character (for example in \cite{gvv-toappear-v1_2, ss-tame-jsl}) that only ask that if $\seq{M_i : i < \delta}$ is an increasing chain with respect to \emph{some restriction of the} ordering of $K$ (usually being universal over) and the $M_i$s are inside some subclass of $K$ (usually some class of saturated models), then any $p \in \gS (\bigcup_{i < \delta} M_i)$ does not fork over some $M_i$. Lemma \ref{up-indep-prop}, is they key upward transfer of that property. Note that Lemma \ref{univ-lc} shows that one can actually assume that skeletons have a particular form. However the generality is still useful when one wants to prove the local character statement.

\begin{defin}\label{sub-ac}\index{sub-AC}\index{sub-abstract class| see {sub-AC}} \index{$\leg$}
  For $(K, \lea)$ an abstract class, we say $(K', \leg)$ is a \emph{sub-AC} of $K$ if $K' \subseteq K$, $(K', \leg)$ is an AC, and $M \leg N$ implies $M \lea N$. We similarly define sub-AEC, etc. When $\leg = \lea \rest K'$, we omit it (or may abuse notation and write $(K', \lea)$).
\end{defin}

\begin{defin}\label{dense-ac}\index{dense (subset of an abstract class)}
  For $(K, \lea)$ an abstract class, we say a set $S \subseteq K$ is \emph{dense} in $(K, \lea)$ if for any $M \in K$ there exists $M' \in S$ with $M \lea M'$.
\end{defin}

\begin{defin}\label{sub-skeletal-ac}\index{skeleton}
  An abstract class $(K', \leg)$ is a \emph{skeleton} of $(K, \lea)$ if:

  \begin{enumerate}
    \item\label{skel-1} $(K', \leg)$ is a sub-AC of $(K, \lea)$.
    \item\label{skel-2} $K'$ is dense in $(K, \lea)$.
    \item\label{skel-3} If $\seq{M_i : i < \alpha}$ is a $\lea$-increasing chain in $K'$ ($\alpha$ not necessarily limit) and there exists $N \in K'$ such that $M_i \lta N$ for all $i < \alpha$, then we can choose such an $N$ with $M_i \ltg N$ for all $i < \alpha$.
  \end{enumerate}
\end{defin}
\begin{remark}\index{skeletal}
  The term ``skeleton'' is inspired from the term ``skeletal'' in \cite{ss-tame-jsl}, although there ``skeletal'' is applied to frames. The intended philosophical meaning is the same: $K'$ has enough information about $K$ so that for several purposes we can work with $K'$ rather than $K$.
\end{remark}
\begin{remark}\label{abstract-univ-rmk}
  Let $(K, \lea)$ be an abstract class. Assume $(K', \leg)$ is a dense sub-AC of $(K, \lea)$ with no maximal models satisfying in addition: If $M_0 \lea M_1 \ltg M_2$ are in $K'$, then $M_0 \ltg M_2$. Then $(K', \leg)$ is a skeleton of $(K, \lea)$. This property of the ordering already appears in the definition of an abstract universal ordering in \cite[Definition 2.13]{ss-tame-jsl}. In the terminology there, if $(K, \lea)$ is an AEC and $\ltg$ is an (invariant) universal ordering on $K_\lambda$, then $(K_\lambda, \leg)$ is a skeleton of $(K_\lambda, \lea)$.
\end{remark}

\begin{example}\label{skel-examples}
  Let $K$ be an AEC. Let $\lambda \ge \LS (K)$. Assume that $K_\lambda$ has amalgamation, no maximal models and is stable in $\lambda$. Let $K'$ be dense in $K_\lambda$ and let $\delta < \lambda^+$. Then $(K', \lel{\lambda}{\delta})$ (recall Definition \ref{univ-def}) is a skeleton of $(K_\lambda, \lea)$ (use Fact \ref{ltl-basic-props} and Remark \ref{abstract-univ-rmk}).
\end{example}

The next lemma is a useful tool to find extensions in the skeleton of an AEC with amalgamation:

\begin{lem}\label{proper-ext-skel}
  Let $(K', \leg)$ be a skeleton of $(K, \lea)$. Assume $K$ is an AEC in $\F := [\lambda, \theta)$ with amalgamation. If $M \lea N$ are in $K'$, then there exists $N' \in K'$ such that $M \leg N'$ and $N \leg N'$.
\end{lem}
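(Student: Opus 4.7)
If $M = N$, take $N' := M$; henceforth assume $M \lta N$. The plan is to apply axiom~(\ref{skel-3}) of Definition~\ref{sub-skeletal-ac} to the $\lea$-increasing two-element chain $\langle M, N\rangle$ in $K'$. Since $M \lea N$, producing a witness $P \in K'$ with $M \lta P$ and $N \lta P$ reduces to producing some $P \in K'$ with $N \lta P$; once such a $P$ is in hand, axiom~(\ref{skel-3}) immediately supplies the required $N' \in K'$ with $M \ltg N'$ and $N \ltg N'$, and hence $M \leg N'$, $N \leg N'$.

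To produce $P$, I would first invoke axiom~(\ref{skel-3}) on the singleton chain $\langle M \rangle$ --- with $N$ itself serving as witness that $M$ has a strict $\lea$-extension in $K'$ --- to obtain $Q \in K'$ with $M \ltg Q$, hence also $M \lta Q$. Next, amalgamate $N$ and $Q$ over $M$ in the AEC $K$ (using amalgamation) to obtain $R \in K$ together with $K$-embeddings $f\colon N \to R$ and $g\colon Q \to R$ fixing $M$; after renaming along $f$, we may assume $N \lea R$. Density of $K'$ in $(K, \lea)$ then furnishes $P \in K'$ with $R \lea P$, whence $N \lea P$ and $g[Q] \lea P$.

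The main obstacle is arranging the strict inequality $N \lta P$ in place of $P = N$: if $P$ were to coincide with $N$, then $g[Q] \lea N$ and $Q$ would $\lea$-embed into $N$ over $M$. Ruling this out is the key technical point, and can be handled either by iterating axiom~(\ref{skel-3}) to enlarge $Q$ suitably before amalgamating, or by choosing $Q$ of cardinality exceeding $\|N\|$ when available in the ambient interval $\F$ (so that $g[Q]$ cannot fit set-theoretically inside $N$). With $P \in K'$ satisfying $N \lta P$ secured, the final application of axiom~(\ref{skel-3}) to the chain $\langle M, N\rangle$ with witness $P$ completes the proof.
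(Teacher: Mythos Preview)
Your plan has a genuine gap: it does not handle the case where $N$ is maximal in $K$ (equivalently, in $K'$). In that case there is no $P \in K'$ with $N \lta P$, so axiom~(\ref{skel-3}) cannot be applied to the chain $\langle M, N\rangle$ at all. Neither of your proposed workarounds succeeds here. If $N$ is maximal and $M \lea Q$, then amalgamating $N$ and $Q$ over $M$ yields some $R$ with $N \lea R$; maximality forces $R = N$, so $Q$ embeds into $N$ over $M$ and in particular $\|Q\| \le \|N\|$. Thus you cannot choose $Q$ of cardinality exceeding $\|N\|$, and iterating axiom~(\ref{skel-3}) only produces more models $Q$ with $M \ltg Q$, each of which still embeds into $N$. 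The obstacle you identified as ``the key technical point'' is exactly the maximal case, and it cannot be resolved by enlarging $Q$.

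The paper treats the maximal case by a separate argument: it shows that if $N$ is maximal then necessarily $M \leg N$, so $N' := N$ works. Supposing $M \not\leg N$, one builds a strictly $\ltg$-increasing chain $\langle M_i : i < \mu^+\rangle$ in $K'$ (where $\mu := \|N\|$) starting at $M_0 = M$, together with an increasing chain of $K$-embeddings $f_i : M_i \to N$ over $M$. At successors, one extends $M_j$ to some $M_{j+1}$ using non-maximality of $M_j$ (if $M_j$ were maximal, amalgamation would force $M_j \cong_M N$, hence $M \leg N$, contradiction), and extends $f_j$ using amalgamation plus maximality of $N$. At limits, one takes unions and uses density plus axiom~(\ref{skel-3}) to find an upper bound in $K'$. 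The union $\bigcup_i f_i$ then injects a set of size $\mu^+$ into $|N|$, which is impossible. This cardinality-counting argument is the missing ingredient in your proposal.
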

\begin{proof}
  If $N$ is not maximal (with respect to either of the orderings, it does not matter by definition of a skeleton), then using the definition of a skeleton with $\alpha = 2$ and the chain $\seq{M, N}$, we can find $N' \in K'$ such that $N \ltg N'$ and $M \ltg N'$, as needed.
  
  Now assume $N$ is maximal. We claim that $M \leg N$, so $N' := N$ is as desired. Suppose not. Let $\mu := \|N\|$. 

  We build $\seq{M_i : i < \mu^+}$ and $\seq{f_i : M_i \xrightarrow[M]{} N : i < \mu^+}$ such that:

  \begin{enumerate}
    \item $\seq{M_i : i < \mu^+}$ is a strictly increasing chain in $(K', \leg)$ with $M_0 = M$.
    \item $\seq{f_i : i < \mu^+}$ is a strictly increasing chain of $K$-embeddings.
  \end{enumerate}

  \paragraph{\underline{This is enough}} Let $B_{\mu^+} := \bigcup_{i < \mu^+} |M_i|$ and $f_{\mu^+} := \bigcup_{i < \mu^+} f_i$ (Note that it could be that $\mu^+ = \theta$, so $B_{\mu^+}$ is just a set and we do not claim that $f_{\mu^+}$ is a $K$-embedding). Then $f_{\mu^+}$ is an injection from $B_{\mu^+}$ into $|N|$. This is impossible because $|B_{\mu^+}| \ge \mu^+ > \mu = \|N\|$.

  \paragraph{\underline{This is possible}} Set $M_0 := M$, $f_0 := \id_{M}$. If $i < \mu^+$ is limit, let $M_i' := \bigcup_{j < i} M_j \in K$. By density, find $M_i'' \in K'$ such that $M_i' \lea M_i''$. We have that $M_j \lta M_i''$ for all $j < i$. By definition of a skeleton, this means we can find $M_i \in K'$ with $M_j \ltg M_i$ for all $j < i$. Let $f_i' := \bigcup_{j < i} f_j$. Using amalgamation and the fact that $N$ is maximal, we can extend it to $f_i : M_i \xrightarrow[M]{} N$. If $i = j + 1$ is successor, we consider two cases:

  \begin{itemize}
    \item If $M_j$ is not maximal, let $M_i \in K'$ be a $\ltg$-extension of $M_j$. Using amalgamation and the fact $N$ is maximal, pick $f_i: M_i \xrightarrow[M]{} N$ an extension of $f_j$.
    \item If $M_j$ is maximal, then by amalgamation and the fact both $N$ and $M_j$ are maximal, we must have $N \cong_{M} M_j$. However by assumption $M_0 \leg M_j$ so $M = M_0 \leg N$, a contradiction.
  \end{itemize}
\end{proof}

Thus we get that several properties of a class transfer to its skeletons.

\begin{prop}\label{skel-transfer}
  Let $(K, \lea)$ be an AEC in $\F$ and let $(K', \leg)$ be a skeleton of $K$.

  \begin{enumerate}
    \item $(K, \lea)$ has no maximal models if and only if $(K', \leg)$ has no maximal models.
    \item If $(K, \lea)$ has amalgamation, then:
      \begin{enumerate}
      \item $(K', \leg)$ has amalgamation.
      \item $(K, \lea)$ has joint embedding if and only if $(K', \leg)$ has joint embedding.
      \item Galois types are the same in $(K, \lea)$ and $(K', \leg)$: For any $N \in K'$, $A \subseteq |N|$, $\bb, \bc \in \fct{\alpha}{|N|}$, $\gtp_K (\bb / A; N) = \gtp_K (\bc / A; N)$ if and only if $\gtp_{K'} (\bb / A; N) = \gtp_{K'} (\bc / A; N)$. Here, by $\gtp_K$ we denote the Galois type computed in $(K, \lea)$ and by $\gtp_{K'}$ the Galois type computed in $(K', \leg)$.
      \item $(K, \lea)$ is $\alpha$-stable in $\lambda$ if and only if $(K', \leg)$ is $\alpha$-stable in $\lambda$.
      \end{enumerate}
  \end{enumerate}
\end{prop}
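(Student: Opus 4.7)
The plan is to handle each claim in turn, with Lemma \ref{proper-ext-skel} as the main workhorse once amalgamation is assumed. For part (1), no-maximal-models in $K'$ follows from no-maximal-models in $K$ by taking $M \in K'$, extending to some $N \in K$ with $M \lta N$, densifying $N$ to an $N'' \in K'$, and then invoking condition (\ref{skel-3}) of the skeleton definition on the one-element chain $\seq{M}$ and the witness $N''$ to obtain an $N' \in K'$ with $M \ltg N'$. The converse direction is a short density argument together with the fact that $\ltg$ implies $\lta$.

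For parts (2a)--(2c), the recurring trick is the following absorption argument: given a $K$-amalgam $N^*$ of two $K'$-models $P_1, P_2$ over a common base, densify $N^*$ to $\tilde N \in K'$, then apply Lemma \ref{proper-ext-skel} first to $P_1 \lea \tilde N$ inside $K'$ (noting that $K'$ is closed under isomorphism, so isomorphic images of $P_\ell$ lie in $K'$) to produce $N_1 \in K'$ with $P_1 \leg N_1$ and $\tilde N \leg N_1$, then apply it again to $P_2 \lea N_1$ to produce $N' \in K'$ with $P_2 \leg N'$ and $N_1 \leg N'$. By transitivity of $\leg$, both $P_\ell \leg N'$. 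This immediately yields amalgamation of $K'$ from that of $K$ (part 2a), and the analogous move with free-standing $K$-embeddings replacing amalgams handles joint embedding (part 2b); the reverse direction in JEP is a straightforward density-plus-composition argument. For Galois types (part 2c), the nontrivial direction, $\gtp_K$-equality implies $\gtp_{K'}$-equality, is obtained by applying the absorption to the ambient $K$-amalgam witnessing $\gtp_K (\bb / A; N) = \gtp_K (\bc / A; N)$ with $P_1$ and $P_2$ taken to be the two embedded copies of $N$; the resulting $K'$-amalgam $N'$ then witnesses $\gtp_{K'}$-equality of $\bb$ and $\bc$. The other direction is immediate since every $K'$-amalgam is a $K$-amalgam.

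Stability (part 2d) is a corollary of (2c): for $N \in K'$, the sets $\gS^{<\alpha} (A; N)$ computed in $K$ and in $K'$ coincide by (2c), so $\alpha$-stability of $K$ in $\lambda$ restricts to $K'$; conversely, for arbitrary $N \in K$ and $A \subseteq |N|$ with $|A| \le \lambda$, use density to pick $N' \in K'$ with $N \lea N'$, and observe that distinct elements of $\gS_K^{<\alpha} (A; N)$ induce distinct elements of $\gS_K^{<\alpha} (A; N') = \gS_{K'}^{<\alpha} (A; N')$ (by restricting any $K$-amalgam over $A$ witnessing equality at the level of $N'$ to one at the level of $N$), bounding the former by $\lambda$. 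The main bookkeeping obstacle throughout is ensuring that $K$-embeddings produced by amalgamation in $K$ are upgraded to genuine $K'$-embeddings, and this is exactly what the double-application of Lemma \ref{proper-ext-skel} accomplishes in each case.
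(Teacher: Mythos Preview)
Your proposal is correct and follows essentially the same route as the paper's proof: both use density together with the double application of Lemma \ref{proper-ext-skel} to upgrade $K$-amalgams to $K'$-amalgams, which is the crux of parts (2a)--(2c). Your treatment of parts (1) and (2d) is more explicit than the paper's (which says only ``directly from the definition'' and ``because Galois types are the same''), but the underlying arguments are the same.
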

\begin{proof} \
  \begin{enumerate}
    \item Directly from the definition.
    \item \begin{enumerate}
    \item Let $M_0 \leg M_\ell$ be in $K'$, $\ell = 1, 2$. By density, find $N \in K'$ and $f_\ell : M_\ell \xrightarrow[M_0]{} N$ $K$-embeddings. By Lemma \ref{proper-ext-skel}, there exists $N_1 \in K'$ such that $N \leg N_1$, $f_1[M_1] \leg N_1$. By Lemma \ref{proper-ext-skel} again, there exists $N_2 \in K'$ such that $N_1 \leg N_2$, $f_2[M_2] \leg N_2$. Thus we also have $f_1[M_1] \leg N_2$. It follows that $f_\ell: M_\ell \xrightarrow[M]{} N_2$ is a $\leg$-embedding.
    \item If $(K', \leg)$ has joint embedding, then by density $(K, \lea)$ has joint embedding. The converse is similar to the proof of amalgamation above.
    \item Note that by density any Galois type (in $K$) is realized in an element of $K'$. Since $(K', \leg)$ is a sub-AC of $(K, \lea)$, equality of the types in $K'$ implies equality in $K$ (this doesn't use amalgamation). Conversely, assume $\gtp_K (\bb / A; N) = \gtp_K (\bc / A; N)$. Fix $N' \gea N$ in $K$ and a $K$-embedding $f: N \xrightarrow[A]{} N'$ such that $f (\bb) = \bc$. By density, we can assume without loss of generality that $N' \in K'$. By Lemma \ref{proper-ext-skel}, find $N'' \in K'$ such that $N \leg N''$, $N' \leg N''$. By Lemma \ref{proper-ext-skel} again, find $N''' \in K'$ such that $f[N] \leg N'''$, $N'' \leg N'''$. By transitivity, $N \leg N'''$ and $f: N \xrightarrow[A]{} N'''$ witnesses equality of the Galois types in $(K', \leg)$.
    \item Because Galois types are the same in $K$ and $K'$.
    \end{enumerate}
  \end{enumerate}
\end{proof}

We end with an observation concerning universal extensions that will be used in the proof of Lemma \ref{univ-lc}.

\begin{lem}\label{univ-skel-lem}
  Let $K$ be an AEC in $\lambda := \LS (K)$. Assume $K$ has amalgamation, no maximal models, and is stable in $\lambda$. Let $(K', \leg)$ be a skeleton of $K$. For any $M \in K'$, there exists $N \in K'$ such that both $M \ltg N$ and $M \ltu N$. Thus $(K', \leg \cap \leu)$ is a skeleton of $K$.
\end{lem}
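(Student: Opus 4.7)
The plan is to first use stability in $\lambda$ together with the absence of maximal models to find a $\ltu$-extension of $M$ inside $K$, then absorb it into the skeleton $K'$, and finally upgrade the inclusion to a proper $\ltg$-extension using Lemma \ref{proper-ext-skel}. Concretely, I would apply Fact \ref{ltl-basic-props}(\ref{univ-trans-2}) and Fact \ref{ltl-basic-props}(4) with $\delta = 1$ to produce $N_0 \in K$ with $M \ltu N_0$, then use density of $K'$ in $K$ to obtain $N_1 \in K'$ with $N_0 \lea N_1$. Since $K$ is an AEC in $\lambda$, both $N_0$ and $N_1$ have cardinality $\lambda$, so Fact \ref{ltl-basic-props}(\ref{univ-trans}) yields $M \ltu N_1$.

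Now $M \lea N_1$ both lie in $K'$, so Lemma \ref{proper-ext-skel} (which uses amalgamation in $K$) produces $N \in K'$ with $M \leg N$ and $N_1 \leg N$. The inclusion $M \leg N$ is strict since $M \neq N_1 \lea N$, hence $M \ltg N$. Combining $M \ltu N_1 \lea N$ with $\|N_1\| = \|N\| = \lambda$, a second application of Fact \ref{ltl-basic-props}(\ref{univ-trans}) gives $M \ltu N$, which proves the first assertion.

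For the moreover part, I would check the three clauses of Definition \ref{sub-skeletal-ac} for $(K', \leg \cap \leu)$. Clause (\ref{skel-1}) is automatic because both $\leg$ and $\leu$ are orderings on $K$ extending substructure and respecting isomorphisms, hence so is their intersection. Density (clause (\ref{skel-2})) is inherited from $(K', \leg)$ since $\leg \subseteq \lea$. For clause (\ref{skel-3}), suppose $\seq{M_i : i < \alpha}$ is a $\lea$-increasing chain in $K'$ and that some $N' \in K'$ satisfies $M_i \lta N'$ for all $i < \alpha$; I would first apply clause (\ref{skel-3}) for $(K', \leg)$ to obtain $N_0 \in K'$ with $M_i \ltg N_0$ for every $i < \alpha$, and then apply the first part of the lemma to $N_0$ to produce $N \in K'$ with $N_0 \ltg N$ and $N_0 \ltu N$. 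Transitivity of $\ltg$ then gives $M_i \ltg N$, while $M_i \lea N_0 \ltu N$ together with Fact \ref{ltl-basic-props}(\ref{univ-trans-2}) gives $M_i \ltu N$. The only real obstacle is bookkeeping between the three orderings $\lea$, $\leg$, $\leu$ and making sure each application of Fact \ref{ltl-basic-props} has its cardinality hypothesis verified; there is no conceptual difficulty once the sequence of extensions is set up correctly.
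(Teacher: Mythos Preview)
Your proof is correct and follows essentially the same approach as the paper: build a $\ltu$-extension using stability, push it into $K'$ by density, then apply Lemma \ref{proper-ext-skel} to get a common $\leg$-upper bound; for the ``thus'' clause, both you and the paper first invoke clause (\ref{skel-3}) for $(K', \leg)$ and then the first part of the lemma, concluding via transitivity of $\ltg$ and the appropriate transitivity property of $\ltu$ (you cite Fact \ref{ltl-basic-props}(\ref{univ-trans-2}), the paper cites Lemma \ref{lem-univ}, and either works).
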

\begin{proof}
  For the last sentence, let $\leg' := \leg \cap \leu$. Note that if $\seq{M_i : i < \alpha}$ is a $\leg'$-increasing chain in $K'$ and $M \in K'$ is such that $M_i \lta M$ for all $i < \alpha$, then by definition of a skeleton we can take $M$ so that $M_i \ltg M$ for all $i < \alpha$. If we know that there exists $N \in K'$ with $M \ltg N$ and $M \ltu N$, then for all $i < \alpha$, $M_i \ltg N$ by transitivity, and $M_i \ltu N$ by Lemma \ref{lem-univ}.

  Now let $M \in K'$. By Fact \ref{ltl-basic-props}, there exists $N \in K$ with $M \ltu N$. By density (note that if $N' \gea N$ is in $K$, then $M \ltu N'$) we can take $N \in K'$. By Lemma \ref{proper-ext-skel}, there exists $N' \in K'$ such that $M \leg N'$ and $N \leg N'$. Thus (using Fact \ref{ltl-basic-props} again) $M \ltu N'$, as desired.
\end{proof}

\section{Generating an independence relation}\label{sec-transfer-up}

In \cite[Section II.2]{shelahaecbook}, Shelah showed how to extend a good $\lambda$-frame to a pre-$(\ge \lambda)$-frame. Later, \cite{ext-frame-jml} (with improvements in \cite{tame-frames-revisited-v5}) gave conditions under which all the properties transferred. Similar ideas are used in \cite{ss-tame-jsl} to directly build a good frame. In this section we adapt Shelah's definition to this paper's more general setup. It is useful to think of the initial $\lambda$-frame as a \emph{generator}\footnote{In \cite{ss-tame-jsl}, we called a generator a skeletal frame (and in earlier version a poor man's frame) but never defined it precisely.} for a $(\ge \lambda)$-frame, since in case the frame is not good we usually can only get a nice independence relation on $\lambda^+$-saturated models (and thus cannot really \emph{extend} the good $\lambda$-frame to a good $(\ge \lambda)$-frame). Moreover, it is often useful to work with the independence relation being only defined on a dense sub-AC of the original AEC.

\begin{defin}\label{generator-def}\index{generator}\index{$\lambda$-generator (for an independence relation) |see {generator}}
    $(K, \is)$ is a \emph{$\lambda$-generator for a $(<\alpha)$-independence relation} if:

    \begin{enumerate}
      \item $\alpha$ is a cardinal with $2 \le \alpha \le \lambda^+$. $\lambda$ is an infinite cardinal.
      \item $K$ is an AEC in $\lambda = \LS (K)$
      \item $\is$ is a $(<\alpha, \lambda)$-independence relation.
      \item $(K_{\is}, \lea)$ is a dense sub-AC (recall Definitions \ref{sub-ac}, \ref{dense-ac}) of\footnote{Why not be more general and require only $(K_{\is}, \leg)$ to be a skeleton of $K$ (for some ordering $\leg$)? Because some examples of skeletons do not satisfy the coherence axiom which is required by the definition of an independence relation.} $(K, \lea)$.

      \item $\Kup$ (recall Definition \ref{kup-def}) has amalgamation.
    \end{enumerate}  
\end{defin}
\begin{remark}
  We could similarly define a $\lambda$-generator for a $(<\alpha)$-independence relation below $\theta$, where we require $\theta \ge \lambda^{++}$ and only $\Kup_{\F}$ has amalgamation (so when $\theta = \infty$ we recover the above definition). We will not adopt this approach as we have no use for the extra generality and do not want to complicate the notation further. We could also have required less than ``$K$ is an AEC in $\lambda$'' but again we have no use for it.
\end{remark}

\begin{defin}\label{is-up-def}\index{$\Kupp{(K, \is)}$} \index{$\Kupp{\nf}$}\index{$\Kupp{\is}$}
  Let $(K, \is)$ be a $\lambda$-generator for a $(<\alpha)$-independence relation. Define $\Kupp{(K, \is)} := (\Kup, \Kupp{\nf})$ by $\Kupp{\nf} (M, A, B, N)$ if and only if $M \lea N$ are in $\Kup$ and there exists $M_0 \lea M$ in $K_{\is}$ such that for all $B_0 \subseteq B$ with $|B_0| \le \lambda$ and all $N_0 \lea N$ in $K_{\is}$ with $A \cup B_0 \subseteq |N_0|$, $M_0 \lea N_0$, we have $\nf_{\is} (M_0, A, B_0, N_0)$.

  When $K = K_{\is}$, we write $\Kupp{\is}$ for $\Kupp{(K, \is)}$.
\end{defin}
\begin{remark}
  In general, we do not claim that $\Kupp{(K, \is)}$ is even an independence relation (the problem is that given $A \subseteq |N|$ with $N \in \Kup$ and $|A| \lea \lambda$, there might not be any $M \in K_{\is}$ with $M \lea N$ and $A \subseteq |M|$ so the monotonicity properties can fail). Nevertheless, we will abuse notation and use the restriction operations on it.
\end{remark}

\begin{lem}\label{up-indep-rel}
  Let $(K, \is)$ be a $\lambda$-generator for a $(<\alpha)$-independence relation. Then:

  \begin{enumerate}
    \item\label{up-indep-rel-1} If $K = K_{\is}$, then $\Kupp{\is} := \Kupp{(K, \is)}$ is an independence relation.
    \item\label{up-indep-rel-2} $\Kupp{(K, \is)} \rest \Ksatpp{(\Kup)}{\lambda^+}$ is an independence relation.
  \end{enumerate}

\end{lem}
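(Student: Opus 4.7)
The plan rests on one key technical observation that I would establish first: for every model $M$ in the underlying class of each part and every $A \subseteq |M|$ with $|A| \leq \lambda$, there exists $M_0 \in K_{\is}$ with $M_0 \lea M$ and $A \subseteq |M_0|$. In part (\ref{up-indep-rel-1}), where $K = K_{\is}$, this is immediate from the Löwenheim-Skolem-Tarski axiom applied to $\Kup$. In part (\ref{up-indep-rel-2}), where $M$ is $\lambda^+$-saturated, I would first use LST in $\Kup$ to produce $M_0' \lea M$ in $K$ containing $A$, then apply density of $K_{\is}$ in $K$ to obtain $M_0'' \in K_{\is}$ with $M_0' \lea M_0''$, and finally realize a copy of $M_0''$ inside $M$ over $M_0'$ using amalgamation in $\Kup$ together with the $\lambda^+$-saturation of $M$ (the Galois type of $M_0''$ over $M_0'$ has length $\lambda$ and so is realized in $M$).

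With this tool in hand, the axioms of Definition \ref{indep-rel-def} follow routinely. The underlying class is a coherent AC in $[\lambda, \infty)$ with amalgamation (in part (\ref{up-indep-rel-2}), amalgamation of the $\lambda^+$-saturated models is obtained by amalgamating in $\Kup$ and then taking a $\lambda^+$-saturated extension). Invariance is immediate. Ambient monotonicity transfers because any witnessing $N_0 \lea N$ in $K_{\is}$ remains $\lea N'$ when $N \lea N'$, and restricts cleanly when passing to $M \lea N^* \lea N$ containing $A \cup B$. For the left/right monotonicity that shrinks $A$ or $B$, given a witness pair $(B_0, N_0)$ for the weakened statement, I would extend $N_0$ to some $N_0' \in K_{\is}$ with $N_0 \lea N_0' \lea N$ containing the original $A$, applying the key lemma inside $N$ exactly as above (in part (\ref{up-indep-rel-2}) using $\lambda^+$-saturation of $N$, in part (\ref{up-indep-rel-1}) directly via LST in $\Kup$). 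Invoking the original $\Kupp{\nf}$ condition at $(B_0, N_0')$ and then using monotonicity of $\nf_{\is}$ yields the desired nonforking statement at $(B_0, N_0)$.

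The step I expect to be the main obstacle is left and right normality, which asserts $\Kupp{\nf}(M, A \cup M, B \cup M, N)$. Here the crucial observation is that the definition of $\Kupp{\nf}$ forces every witnessing $N_0 \in K_{\is}$ to contain all of $|M|$; since $\|N_0\| = \lambda$ this is vacuously satisfied whenever $\|M\| > \lambda$. In part (\ref{up-indep-rel-2}) every $\lambda^+$-saturated $M$ has $\|M\| \ge \lambda^+ > \lambda$, so normality is vacuous. In part (\ref{up-indep-rel-1}) the remaining case $\|M\| = \lambda$ forces $M \in K = K_{\is}$, so one may take $M_0 := M$, and left and right normality of $\Kupp{\nf}$ descends directly from the corresponding property of $\nf_{\is}$. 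The entire difficulty of the normality verification thus reduces to the observation that the nontrivial configurations degenerate — an observation that depends essentially on having insisted that the witnessing $N_0$ lie in $K_{\is}$ and so have size exactly $\lambda$.
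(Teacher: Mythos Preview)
Your proposal is essentially correct and matches the paper's approach: the paper's one-line proof cites Shelah's Claim II.2.11 and adds the hint ``using density and homogeneity in the second case,'' which is precisely your key technical observation about realizing $K_{\is}$-submodels inside $\lambda^+$-saturated models via amalgamation and saturation. The routine verifications of invariance, ambient monotonicity, and left/right monotonicity are as you describe.

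There is, however, a small gap in your normality argument for part~(\ref{up-indep-rel-1}) when $\|M\| = \lambda$. You write that ``one may take $M_0 := M$,'' but for $M$ to serve as a witness to $\Kupp{\nf}(M, AM, BM, N)$ you must verify $\nf_{\is}(M, A, B_0', N_0)$ for the relevant $B_0', N_0$, and the hypothesis $\Kupp{\nf}(M, A, B, N)$ only supplies a (possibly smaller) witness $M_0' \lea M$ with $\nf_{\is}(M_0', A, B_0', N_0)$. Passing from base $M_0'$ to base $M$ is exactly base monotonicity of $\is$, which is \emph{not} part of the bare generator axioms (Definition~\ref{generator-def}). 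Keeping the original witness $M_0'$ does not help either: one would then need $\nf_{\is}(M_0', AM, B_0, N_0)$, but normality of $\is$ only yields $\nf_{\is}(M_0', AM_0', B_0'M_0', N_0)$, and when $\alpha \le \lambda$ the tuple $(M_0', AM, B_0, N_0)$ may not even lie in the domain of $\nf_{\is}$. Since every concrete generator used later in the paper (for weakly good or good independence relations) does carry base monotonicity, this is harmless in context, but the step does not ``descend directly'' from normality of $\nf_{\is}$ alone.
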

\begin{proof}
  As in \cite[Claim II.2.11]{shelahaecbook}, using density and homogeneity in the second case.
\end{proof}

The case (\ref{up-indep-rel-1}) of Lemma \ref{up-indep-rel} has been well studied (at least when $\alpha = 2$): see \cite[Section II.2]{shelahaecbook} and \cite{ext-frame-jml, tame-frames-revisited-v5}. We will further look at it in the last sections. We will focus on case (\ref{up-indep-rel-2}) for now. It has been studied (implicitly) in \cite{ss-tame-jsl} when $\is$ is nonsplitting and satisfies some superstability-like assumptions. We will use the same arguments as there to obtain more general results. The generality will be used, since for example we also care about what happens when $\is$ is coheir. 

The following property of a generator will be very useful in the next section. The point is that $\bigcup_{i < \lambda^+} M_i$ below is usually not a member of $K_{\is}$ so forking is not defined on it.

\begin{defin}\label{weak-chain-lc-def}\index{weak chain local character}
  Let $(K, \is)$ be a $\lambda$-generator for a $(<\alpha)$-independence relation.

  $(K, \is)$ has \emph{weak chain local character} if there exists $\leg$ such that $(K_{\is}, \leg)$ is a skeleton of $K$ and whenever $\seq{M_i : i < \lambda^+}$ is $\leg$-increasing in $K_\is$ and $p \in \gS^{<\alpha} (\bigcup_{i < \lambda^+} M_i)$, there exists $i < \lambda^+$ such that $p \rest M_{i + 1}$ does not fork over $M_i$.
\end{defin}

The following technical lemma shows that local character in a skeleton implies local character in a bigger class with the universal ordering: 

\begin{lem}\label{univ-lc}
  Let $(K, \is)$ be a $\lambda$-generator for a $(<\alpha)$-independence relation. 

  Assume that $K$ has amalgamation, no maximal models, and is stable in $\lambda$. Assume $\is$ has base monotonicity. Let $(K', \leg)$ be a skeleton of $(K_{\is}, \lea)$ and let $\is' := \is \rest (K', \lea)$. Then:

  \begin{enumerate}
    \item $\clc{<\alpha} (\is, \leu) \le \clc{<\alpha} (\is', \leg)$.
    \item If $(K, \is')$ has weak chain local character, then $(K, \is)$ has it and it is witnessed by $\ltu$.
  \end{enumerate}
\end{lem}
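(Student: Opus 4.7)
The strategy for both parts is to reduce to the hypothesis by constructing, from a given $\leu$-increasing chain $\seq{M_i}$ in $K_{\is}$, a corresponding increasing chain $\seq{M_i'}$ in $K'$ (under the relevant ordering $\leg$) that is interlaced between the $M_i$'s, and then transferring the non-forking information back using ambient, right, and base monotonicity of $\is$. The interlacing is built by transfinite induction using three main tools: density of $K'$ in $K_{\is}$ (to land in $K'$), Lemma \ref{proper-ext-skel} applied to $K'$ viewed as a skeleton of $K$ (to enforce $\leg$-increasingness of the primed chain), and universality $M_i \ltu M_{i+1}$ (to embed each candidate for $M_i'$ back into a model of the original chain via an $M_i$-embedding). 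At limit stages one further needs amalgamation in $K_{\is}$ together with Tarski--Vaught in the ambient AEC $K$ to handle the accumulated union $\bigcup_{j<i}M_j'$.

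For part (1), set $\mu := \clc{<\alpha}(\is', \leg)$; assume $\mu < \infty$. Fix a regular $\delta \ge \mu$, a $\leu$-increasing chain $\seq{M_i : i < \delta}$ in $K_{\is}$, an ambient $N \in K_{\is}$, and $A \subseteq |N|$ with $|A| < \alpha$. I build a $\leg$-increasing chain $\seq{M_i' : i < \delta}$ in $K'$ satisfying $M_i \lea M_i' \lea M_{i+1}$, which forces $\bigcup_i M_i' = \bigcup_i M_i =: M_\delta$. Extending $N$ to some $N' \in K'$ by density, the hypothesis on $\is'$ gives an $i < \delta$ with $\nf_{\is'}(M_i', A, M_\delta, N')$, hence $\nf_{\is}(M_i', A, M_\delta, N)$ by ambient monotonicity, and then $\nf_{\is}(M_{i+1}, A, M_\delta, N)$ by base monotonicity (applied via $M_i' \lea M_{i+1} \lea M_\delta$). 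Since $\delta$ is regular we have $i+1 < \delta$, witnessing $\clc{<\alpha}(\is, \leu) \le \mu$.

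For part (2), first verify that $(K_{\is}, \leu)$ itself is a skeleton of $K$, using density of $K_{\is}$, the transitivity properties of $\leu$ among $\lambda$-sized models (Fact \ref{ltl-basic-props}), and the existence of $\ltu$-extensions produced by stability, amalgamation, and no maximal models. Given a $\leu$-chain $\seq{M_i : i < \lambda^+}$ in $K_{\is}$ and $p \in \gS^{<\alpha}(\bigcup_i M_i)$, I build the interlaced chain with \emph{doubled} indexing $M_{2i} \lea M_i' \lea M_{2i+1}$, increasing in the ordering witnessing weak chain local character for $(K, \is')$; again $\bigcup_i M_i' = \bigcup_i M_i$. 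Applying weak chain local character to $\seq{M_i'}$ and $p$ gives $i < \lambda^+$ with $p \rest M_{i+1}'$ not $\is$-forking over $M_i'$. Right monotonicity (using $M_{2i+2} \lea M_{i+1}'$) gives that $p \rest M_{2i+2}$ does not $\is$-fork over $M_i'$; base monotonicity (using $M_i' \lea M_{2i+1}$ and $|M_{2i+1}| \subseteq |M_{2i+2}|$) then gives that $p \rest M_{2i+2}$ does not $\is$-fork over $M_{2i+1}$. Setting $k := 2i+1$ witnesses weak chain local character for $(K, \is)$ via $\leu$.

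The main obstacle is the limit stage of the interlacing construction, where one must simultaneously ensure a $\leg$-extension of the accumulated primed chain (via Lemma \ref{proper-ext-skel} after amalgamating the union with a density extension of the next original model) and land back inside that original model (via universality of $M_i \ltu M_{i+1}$, available because all models have size $\lambda$). The second critical subtlety is the doubled indexing in part (2): it inserts the original-chain step $M_{2i+1} \lea M_{2i+2}$ between the consecutive primed models $M_i'$ and $M_{i+1}'$, and this gap is precisely what allows right plus base monotonicity to upgrade the non-forking statement at the primed level into the consecutive-index form required by weak chain local character at the original level.
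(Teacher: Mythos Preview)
Your proof is correct, but takes a genuinely different route from the paper's. The paper's argument is shorter: it first invokes Lemma \ref{univ-skel-lem} to assume without loss of generality that $\leg$ refines $\leu$, then builds a $\ltg$-increasing chain $\seq{M_i' : i < \delta}$ in $K'$ \emph{independently} of the given chain (only requiring $M_0 \ltu M_0'$), and finally applies the back-and-forth uniqueness of limit models (Fact \ref{lim-uq}) to obtain an isomorphism $f: M_\delta' \cong_{M_0} M_\delta$ that automatically interlaces the two chains. Nonforking is then transferred via invariance and base monotonicity. Part (2) is declared ``Similar.''

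Your approach instead constructs the interlacing $M_i \lea M_i' \lea M_{i+1}$ explicitly by transfinite induction, using density, the skeleton axiom (through Lemma \ref{proper-ext-skel}), and universality of $M_i \ltu M_{i+1}$ to embed each candidate back into the next original model. This is more hands-on and avoids both Lemma \ref{univ-skel-lem} and Fact \ref{lim-uq}, at the cost of a more delicate limit step and the doubled indexing in part (2). The paper's route is slicker because it packages the interlacing into a single off-the-shelf isomorphism; yours is more elementary and makes the mechanism fully explicit. Both reach the same conclusion via the same final monotonicity moves.
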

\begin{proof} \
  \begin{enumerate}
    \item By Lemma \ref{univ-skel-lem}, we can (replacing $\leg$ by $\leg \cap \leu$) assume without loss of generality that $\leg$ is extended by $\leu$. Let $\seq{M_i : i < \delta}$ be $\leu$-increasing in $K_{\is}$, $\delta = \cf{\delta} \ge \clc{<\alpha} (\is', \leg)$, $\delta < \lambda^+$. Without loss of generality, $\seq{M_i : i < \delta}$ is $\ltu$-increasing. Let $M_\delta := \bigcup_{i < \delta} M_i$ and let $p \in \gS^{<\alpha} (M_\delta)$. 

      By density, pick $M_0' \in K'$ such that $M_0 \ltu M_0'$. Now build $\seq{M_i' : i < \delta}$ $\ltg$-increasing in $K'$. Let $M_\delta' := \bigcup_{i < \delta} M_i'$. By Fact \ref{lim-uq}, there exists $f: M_\delta' \cong_{M_0} M_\delta$ such that for every $i < \delta$ there exists $j < \delta$ with $f[M_i'] \lea M_j$, $f^{-1}[M_i] \lea M_j'$. By definition of $\clc{<\alpha} (\is', \leg)$, there exists $i < \delta$ such that $f^{-1}(p)$ does not $\is'$-fork over $M_i'$. Let $j < \delta$ be such that $f[M_i'] \lea M_j$. By invariance, $p$ does not $\is'$-fork over $f[M_i]$, so does not $\is$-fork over $f[M_i]$. By base monotonicity, $p$ does not $\is$-fork over $M_j$, as desired.
    \item Similar.
  \end{enumerate}
\end{proof}

The last lemma of this section investigates what properties directly transfer up. 

\begin{lem}\label{up-indep-prop}
  Let $(K, \is)$ be a $\lambda$-generator for a $(<\alpha)$-independence relation. Let $\is' := \Kupp{(K, \is)} \rest \Ksatpp{(\Kup)}{\lambda^+}$.

  \begin{enumerate}
    \item If $\is$ has base monotonicity, then $\is'$ has base monotonicity.
    \item Assume $\is$ has base monotonicity and $(K, \is)$ has weak chain local character. Then:

      \begin{enumerate}
      \item\label{77-a} $\slc{<\alpha} (\is') = \lambda^{++}$.
      \item\label{77-b} If $\leg$ is an ordering such that $(K_{\is}, \leg)$ is a skeleton of $K$, then for any $\alpha_0 < \alpha$, $\clc{\alpha_0} (\is') \le \clc{\alpha_0} (\is, \leg)$.
      \end{enumerate}
  \end{enumerate}
\end{lem}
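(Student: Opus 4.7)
Part 1 is immediate: if $M_0 \lea M$ in $K_\is$ witnesses $\Kupp{\nf}(M, A, B, N)$ and $M \lea M' \lea N$, then $M_0 \lea M'$, so the same $M_0$ witnesses $\Kupp{\nf}(M', A, B, N)$; the existential quantifier over $M_0$ in Definition \ref{is-up-def} absorbs the change of left argument without actually calling on base monotonicity of $\is$ itself.

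For part 2(a), fix $M \lea N$ in $\Ksatpp{(\Kup)}{\lambda^+}$ and $A \subseteq |N|$ with $|A| \le \lambda$. It is enough to produce a single $M_0^* \lea M$ in $K_\is$ such that for every $B_0 \subseteq |M|$ of size $\le \lambda$ and every $N_0 \lea N$ in $K_\is$ with $A \cup B_0 \cup M_0^* \subseteq |N_0|$, $\nf_{\is}(M_0^*, A, B_0, N_0)$: a $\lambda^+$-saturated $M_0 \lea M$ of size $\lambda^+$ containing $M_0^*$, built inside $M$ by a standard chain argument using $\lambda^+$-saturation of $M$, then witnesses $\nf_{\is'}(M_0, A, M, N)$. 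Assume by contradiction no such $M_0^*$ exists and attach to each candidate $M_0^* \lea M$ in $K_\is$ a failure witness $(B_0(M_0^*), N_0(M_0^*))$. I plan to build a $\leg$-increasing continuous $\seq{M_i : i < \lambda^+}$ in $K_\is$ with $M_i \lea M$ and $B_0(M_i) \subseteq |M_{i+1}|$; the critical point is to realize $M_i \cup B_0(M_i)$, a set of size $\le \lambda$ lying inside $|M|$, inside a single model of $K_\is$ that still sits below $M$, which follows from density of $K_\is$ in $K$ combined with the standard upgrade of $\lambda^+$-saturation of $M$ to $\lambda^+$-model-homogeneity (Remark \ref{sat-rmk}). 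Weak chain local character applied to $\gtp(\ba/\bigcup_{i < \lambda^+} M_i; N)$, with $\ba$ enumerating $A$, then yields some $i_*$ with $\nf_{\is}(M_{i_*}, \ba, M_{i_*+1}, N^{**})$ for some $N^{**} \in K_\is$. After choosing $N^{**}$ above $N_0(M_{i_*})$ inside $N$ (again by density plus saturation of $N$), right monotonicity of $\nf_{\is}$ transfers the nonforking to the smaller right-hand side $B_0(M_{i_*}) \subseteq |M_{i_*+1}|$, and ambient monotonicity brings the conclusion down to $N_0(M_{i_*})$, contradicting the failure witness.

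Part 2(b) follows the same template, with $\clc{\alpha_0}(\is, \leg) \le \mu$ in place of weak chain local character. Given $\seq{M_i : i < \delta}$ $\lea$-increasing in $K_{\is'}$ with $\delta$ regular and $\delta \ge \mu$, $N \in K_{\is'}$ extending them, and $A \subseteq |N|$ of size $\le \alpha_0$, I assume failure for every index and build $\seq{M_i^* : i < \delta}$ $\leg$-increasing in $K_\is$ with $M_i^* \lea M_i$, absorbing failure witnesses at successor steps. For $\cf{\delta} > \lambda$ each failure set $B_0^i \subseteq |M_\delta|$ lies in some $|M_j|$ with $j > i$ by a cofinality argument (since $|B_0^i| \le \lambda < \cf{\delta}$) and can be pulled into $M_{j+1}^*$ using density and $\lambda^+$-model-homogeneity of $M_{j+1}$; for $\delta \le \lambda$ the entire union $\bigcup_{i < \delta} M_i^*$ has size $\le \lambda$, so density alone supplies an ambient $N^* \in K_\is$ above it. Picking $N^* \in K_\is$ above $\bigcup_i M_i^* \cup A$ inside $N$ and applying $\clc{\alpha_0}(\is, \leg) \le \mu$ to the $\leg$-chain produces $i_*$ with $\nf_{\is}(M_{i_*}^*, A, \bigcup_i M_i^*, N^*)$; the right and ambient monotonicity transfer (with base monotonicity of $\is$ invoked, if needed, to match the base of the failure witness to $M_{i_*}^*$) then contradicts the chosen failure witness, exactly as in part (a).

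The main obstacle throughout is the absorption step: pulling a failure witness $B_0 \subseteq |M|$ (respectively $\subseteq |M_\delta|$) down into a $\leg$-chain of models of size $\lambda$ in $K_\is$ that still lies below the ambient one must keep fixed. This is precisely what $\lambda^+$-saturation of the right-hand model buys via its model-homogeneous form (Remark \ref{sat-rmk}), and it explains why $\is'$ is defined on $\Ksatpp{(\Kup)}{\lambda^+}$ rather than on all of $\Kup$.
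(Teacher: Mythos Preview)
Parts 1 and 2(a) are correct and follow the paper's line.

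Part 2(b) has a genuine gap in the case $\delta \ge \lambda^+$ (your ``$\cf{\delta} > \lambda$''). You write ``Picking $N^* \in K_{\is}$ above $\bigcup_i M_i^* \cup A$ inside $N$ and applying $\clc{\alpha_0}(\is, \leg)$''; but $\is$ is a $(<\alpha, \lambda)$-independence relation, so every model in $K_{\is}$ has size exactly $\lambda$, while your chain $\seq{M_i^* : i < \delta}$ is strictly increasing of length $\ge \lambda^+$ (it absorbs new failure data at each successor), hence has union of size $\ge \lambda^+$. No such $N^*$ can exist, and the hypothesis $\clc{\alpha_0}(\is, \leg) < \infty$ gives you nothing here: Definition \ref{loc-card-def} quantifies only over chains admitting an ambient $N \in K_{\is}$. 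The paper handles $\delta \ge \lambda^+$ by abandoning $\clc{\alpha_0}(\is, \leg)$ altogether and using \emph{weak chain local character} (a standing hypothesis throughout part 2): one builds a chain of length exactly $\lambda^+$, increasing in the ordering witnessing weak chain local character, inside $M_\delta$, with $p \rest N_{i+1}$ $\is$-forking over $N_i$ at every successor; weak chain local character then yields the contradiction directly. Your case $\delta \le \lambda$ is also underspecified: the failure set $B_0^{i_*}$ lives in $|M_\delta|$ and need not lie in $\bigcup_i |M_i^*|$ unless you enforce an absorption like $\bigcup_{j<i}(B_0^j \cap |M_i|) \subseteq |M_i^*|$, which you never describe for this case; without it, right monotonicity cannot take you from $\bigcup_i M_i^*$ down to $B_0^{i_*}$. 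The paper implements precisely this absorption via a second auxiliary chain $N_i' \lea M_\delta$ recording the forking witnesses.
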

\begin{proof} \
  \begin{enumerate}
    \item As in \cite[Claim II.2.11.(3)]{shelahaecbook}
    \item This is a generalization of the proof of \cite[Lemma 4.11]{ss-tame-jsl} (itself a variation on \cite[Claim II.2.11.(5)]{shelahaecbook}) but we have to say slightly more so we give the details. Let $\leg^0$ be an ordering witnessing weak chain local character. We first prove (\ref{77-b}). Fix $\alpha_0 < \alpha$, and assume $\clc{\alpha_0} (\is, \leg) < \infty$. Then by definition $\clc{\alpha_0} (\is, \leg) \le \lambda$. Let $\delta = \cf{\delta} \ge \clc{\alpha_0} (\is, \leg)$. 

Let $\seq{M_i : i < \delta}$ be increasing in $\Ksatp{\lambda^+}$ and write $M_\delta := \bigcup_{i < \delta} M_i$ (note that we do not claim $M_\delta \in \Ksatp{\lambda^+}$. However, $M_\delta \in K_{\ge \lambda}$). Let $p \in \gS^{\alpha_0} (M_\delta)$. We want to find $i < \delta$ such that $p$ does not fork over $M_i$. There are two cases:

      \begin{itemize}
        \item \underline{Case 1: $\delta < \lambda^+$}: 

          We imitate the proof of \cite[Claim II.2.11.(5)]{shelahaecbook}. Assume the conclusion fails. Build $\seq{N_i :i < \delta}$ $\leg$-increasing in $K_{\is}$, $\seq{N_i' : i < \delta}$ $\lea$-increasing in $K_{\is}$ such that for all $i < \delta$:
          \begin{enumerate}
          \item $N_i \lea M_i$.
          \item \label{lcproof-ni-mdelta} $N_i \lea N_i' \lea M_\delta$.
          \item \label{lcproof-forkcond} $p \rest N_i'$ $\is$-forks over $N_i$.
          \item \label{lcproof-ni-contain} $\bigcup_{j < i} (|N_j'| \cap |M_j|) \subseteq |N_i|$.
          \end{enumerate}

          \paragraph{\underline{This is possible}}
          Assume $N_j$ and $N_j'$ have been constructed for $j < i$. Choose $N_i \lea M_i$ satisfying (\ref{lcproof-ni-contain}) so that $N_j \leg N_i$ for all $j < i$ (This is possible: use that $M_i$ is $\lambda^+$-saturated and that in skeletons of AECs, chains have upper bounds). By assumption, $p$ $\is'$-forks over $M_i$, and so by definition of forking there exists $N_i' \lea M_\delta$ in $K_{\is}$ such that $p \rest N_i'$ forks over $N_i$. By monotonicity, we can of course assume $N_i' \gea N_i$, $N_i' \gea N_j'$ for all $j < i$.

          \paragraph{\underline{This is enough}}
          Let $N_\delta := \bigcup_{i < \delta} N_i$, $N_\delta' := \bigcup_{i < \delta} N_i'$. By local character for $\is$, there is $i < \delta$ such that $p \rest N_\delta$ does not fork over $N_i$. By (\ref{lcproof-ni-mdelta}) and (\ref{lcproof-ni-contain}), $N_\delta' \lea N_\delta$. Thus by monotonicity $p \rest N_i'$ does not $\is$-fork over $N_i$, contradicting (\ref{lcproof-forkcond}).
        \item \underline{Case 2: $\delta \ge \lambda^+$}: Assume the conclusion fails. As in the previous case (in fact it is easier), we can build $\seq{N_i : i < \lambda^+}$ $\leg^0$-increasing in $K_{\is}$ such that $N_i \lea M_\delta$ and $p \rest N_{i + 1}$ $\is$-forks over $N_i$. Since $\is$ has weak chain local character, there exists $i < \lambda^+$ such that $p \rest N_{i + 1}$ does not $\is$-fork over $N_i$, contradiction.
      \end{itemize}

      For (\ref{77-a}), assume not: then there exists $M \in \Ksatp{\lambda^+}$ and $p \in \gS^{<\alpha} (M)$ such that for all $M_0 \lea M$ in $\Ksatp{\lambda^+}_{\lambda^+}$, $p$ $\is'$-forks over $M_0$. By stability, for any $A \subseteq |M|$ with $|A| \le \lambda$, there exists $M_0 \lea M$ containing $A$ which is $\lambda^+$-saturated of size $\lambda^+$. As in case 2 above, we build $\seq{N_i : i < \lambda^+}$ $\leg^0$-increasing in $K_{\is}$ such that $N_i \lea M$ and $p \rest N_{i + 1}$ $\is$-fork over $N_i$. This is possible (for the successor step, given $N_i$, take any $M_0 \lea M$ saturated of size $\lambda^+$ containing $N_i$. By definition of $\is'$ and the fact $p$ $\is'$-forks over $M_0$, there exists $N_{i + 1}' \lea M$ in $K_{\is}$ witnessing the forking. This can further extended to $N_{i + 1}$ which is as desired). This is enough: we get a contradiction to weak chain local character.
  \end{enumerate}
\end{proof}

\section{Weakly good independence relations}\label{ns-canon}

Interestingly, nonsplitting and $(<\kappa)$-coheir (for a suitable choice of $\kappa$) are already well-behaved if the AEC is stable. This raises the question of whether there is an object playing the role of a good frame (see the next section) in AECs that are stable but not superstable (whatever the exact meaning of superstability should be in this context, see Section \ref{sec-ss}). Note that \cite{bgkv-v3-toappear} proves the canonicity of independence relations that satisfy much less than the full properties of good frames, so it is reasonable to expect existence of such an object. The next definition comes from extracting all the properties we are able to prove from the construction of a good frame in \cite{ss-tame-jsl} assuming only stability.

\begin{defin}\label{weakly-good-def}\index{weakly good (independence relation or frame)}\index{pre-weakly good|see {weakly good (independence relation or frame)}}
Let $\is = (K, \nf)$ be a $(<\alpha, \F)$-independence relation, $\F = [\lambda, \theta)$. $\is$ is \emph{weakly good}\footnote{The name ``weakly good'' is admittedly not very inspired. A better choice may be to rename good independence relations to superstable independence relations and weakly independence relations to stable independence relations. We did not want to change Shelah's terminology here and wanted to make the relationship between ``weakly good'' and ``good'' clear.} if:
  
  \begin{enumerate}
    \item $K$ is nonempty, is $\lambda$-closed (Recall Definition \ref{lambda-closed-def}), and every chain in $K$ of ordinal length less than $\theta$ has an upper bound.
    \item $K$ is stable in $\lambda$.
    \item $\is$ has base monotonicity, disjointness, existence, and transitivity.
    \item $\pre (\is)$ has uniqueness.
    \item $\is$ has the left $\lambda$-witness property and the right $\lambda$-model-witness property.
    \item Local character: For all $\alpha_0 < \min(\lambda^+, \alpha)$, $\slc{\alpha_0} (\is) = \lambda^+$.
    \item Local extension and uniqueness: $\is_{\lambda}^{<\lambda^+}$ has extension and uniqueness.
  \end{enumerate}

  We say a pre-$(<\alpha, \F)$-frame $\s$ is \emph{weakly good} if $\cl (\s)$ is weakly good. $\is$ is \emph{pre-weakly good} if $\pre (\is)$ is weakly good.
\end{defin}
\begin{remark}\label{weakly-good-rmk}
  By Propositions \ref{indep-props}.(\ref{indep-props-exi}), \ref{indep-props}.(\ref{indep-props-wit}), existence and the right $\lambda$-witness property follow from the others.
\end{remark}

Our main tool to build weakly good independence relations will be to start from a $\lambda$-generator (see Definition \ref{generator-def}) which satisfies some additional properties:

\begin{defin}\index{generator for a weakly good independence relation}\index{$\lambda$-generator for a weakly good $(<\alpha)$-independence relation|see {generator for a weakly good independence relation}}
    $(K, \is)$ is a \emph{$\lambda$-generator for a weakly good $(<\alpha)$-independence relation} if:

    \begin{enumerate}
      \item $(K, \is)$ is a $\lambda$-generator for a $(<\alpha)$-independence relation.
      \item $K$ is nonempty, has no maximal models, and is stable in $\lambda$.
      \item $\Ksatpp{(\Kup)}{\lambda^+}$ is $\lambda$-tame for types of length less than $\alpha$.
      \item $\is$ has base monotonicity, existence, and is extended by $\lambda$-nonsplitting: whenever $p \in \gS^{<\alpha} (M)$ does not $\is$-fork over $M_0 \lea M$, then $p$ does not $\snsp{\lambda} (K_{\is})$-fork over $M_0$.
      \item $(K, \is)$ has weak chain local character.
    \end{enumerate}
\end{defin}

Both coheir and $\lambda$-nonsplitting induce a generator for a weakly good independence relation:

\begin{prop}\label{weakly-good-examples} 
  Let $K$ be an AEC with amalgamation and let $\lambda \ge \LS (K)$ be such that $K_\lambda$ is nonempty, has no maximal models, and $K$ is stable in $\lambda$. Let $2 \le \alpha \le \lambda^+$.
  \begin{enumerate}
    \item Let $\LS (K) < \kappa \le \lambda$. Assume that $K$ is $(<\kappa)$-tame and short for types of length less than $\alpha$. Let $\is := \left(\isch{\kappa} (K)\right)^{<\alpha}$.
      \begin{enumerate}
      \item If $K$ does not have the $(<\kappa)$-order property of length $\kappa$, $\clc{<\alpha} (\is) \le \lambda^+$, and $\Ksatp{\kappa}_{\lambda}$ is dense in $K_\lambda$, then $(K_\lambda, \is_{\lambda})$ is a $\lambda$-generator for a weakly good $(<\alpha)$-independence relation.
      \item If $\kappa = \beth_\kappa$, $(\alpha_0 + 2)^{<\kappap} \le \lambda$ for all $\alpha_0 < \alpha$, then $(K_\lambda, \is_{\lambda})$ is a $\lambda$-generator for a weakly good $(<\alpha)$-independence relation.
      \end{enumerate}
    \item Assume $\alpha \le \omega$ and $\Ksatp{\lambda^+}$ is $\lambda$-tame for types of length less than $\alpha$. Then $\left(K_\lambda, \left(\insp{\lambda} (K_\lambda)\right)^{<\alpha}\right)$ is a $\lambda$-generator for a weakly good $(<\alpha)$-independence relation.
    \item Let $K'$ be a dense sub-AC of $K$ such that $\Ksatp{\lambda^+} \subseteq K'$ and let $\is$ be a $(<\alpha, \ge \lambda)$-independence relation with $K_{\is} = K'$, such that $\pre (\is)$ has uniqueness, $\is$ has base monotonicity, and $\slc{<\alpha} (\is) = \lambda^+$. If $K_\lambda'$ is dense in $K_\lambda$, then $(K_\lambda, \is_\lambda)$ is a $\lambda$-generator for a weakly good $(<\alpha)$-independence relation.
  \end{enumerate}
\end{prop}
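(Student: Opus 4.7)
The plan is to verify, for each of the three cases, the five clauses in the definition of a $\lambda$-generator for a weakly good $(<\alpha)$-independence relation. I would handle the most general case (3) first; part (1) then reduces to (3), while part (2) must be handled directly because there $\pre(\is)$ has only weak uniqueness (Fact \ref{splitting-basics}.(\ref{splitting-weak-props})) rather than the full uniqueness needed to invoke (3).

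For part (3), the generator structure and the properties of $K'_\lambda$ (nonempty, stable in $\lambda$, no maximal models) transfer from $K_\lambda$ by density and Proposition \ref{skel-transfer}. Tameness of $\Ksatpp{(\Kup)}{\lambda^+}$ for types of length $<\alpha$ follows from Proposition \ref{class-props} given uniqueness of $\pre(\is)$ and $\slc{<\alpha}(\is) = \lambda^+$. Base monotonicity is assumed, and existence comes from Proposition \ref{indep-props}.(\ref{indep-props-exi}) since $\slc{<\alpha}(\is) < \infty$. That $\is$ is extended by $\snsp{\lambda}(K_{\is})$ combines Fact \ref{splitting-basics}.(\ref{splitting-basics-3}) (uniqueness of $\pre(\is)$ forces it into $\sns$) with Fact \ref{splitting-basics}.(\ref{splitting-basics-tameness}) ($\sns$ and $\snsp{\lambda}$ agree on types of length $<\alpha$ in the presence of $\lambda$-tameness, which we just derived). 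For weak chain local character I would take $\leg := \leu$: given a $\leu$-increasing chain $\seq{M_i : i < \lambda^+}$ in $K'_\lambda$, the standard Shelah--Van Dieren argument using stability in $\lambda$ shows the union is $\lambda^+$-saturated and hence lies in $K'$; then $\slc{<\alpha}(\is) = \lambda^+$ produces a witness $N \lea \bigcup_i M_i$ of size $\le \lambda$, and by regularity of $\lambda^+$ we have $N \lea M_{i_0}$ for some $i_0 < \lambda^+$, whence base monotonicity and right monotonicity give that $p \rest M_{i_0 + 1}$ does not $\is$-fork over $M_{i_0}$.

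Part (1)(a) is then an application of (3) with $K' := \Ksatp{\kappa}$ and $\is := (\isch{\kappa}(K))^{<\alpha}$: density of $K'_\lambda$ is assumed, $\pre(\is)$ has uniqueness by Fact \ref{coheir-syn}.(\ref{coheir-2}) (using the $(<\kappa)$-tameness and shortness hypothesis of part (1) together with the no-order-property assumption), base monotonicity is Fact \ref{coheir-syn}.(\ref{coheir-1}), and local character is assumed. Part (1)(b) in turn reduces to (1)(a): taking $\alpha_0 = 0$, the bound $(\alpha_0 + 2)^{<\kappap} \le \lambda$ forces $\kappa \le \lambda$, so stability in $\lambda$ and amalgamation furnish a $\kappa$-saturated extension of size $\lambda$ above any model in $K_\lambda$, delivering the required density; stability in $\lambda$ combined with $(<\kappa)$-tameness rules out the order property by Fact \ref{op-facts}.(2), and because $\kappa = \beth_\kappa$ this is equivalent by Fact \ref{op-facts}.(1) to the absence of the $(<\kappa)$-order property of length $\kappa$; finally the assumption $(\alpha_0 + 2)^{<\kappap} \le \lambda$ together with Fact \ref{coheir-syn}.(\ref{coheir-2}) gives $\slc{\alpha_0}(\is) \le ((\alpha_0 + 2)^{<\kappap})^+ \le \lambda^+$. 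For part (2), base monotonicity and existence for $\snsp{\lambda}(K_\lambda)$ come from Fact \ref{splitting-basics}.(1) and pass through $\cl$ via Proposition \ref{cl-basics}; the ``extended by $\snsp{\lambda}$'' clause is tautological, and tameness of $\Ksatp{\lambda^+}$ is assumed. The remaining clause, weak chain local character, reduces for $\alpha \le \omega$ to Shelah's classical chain local character for $\lambda$-splitting in stable AECs (\cite[Claim 3.3]{sh394}, cf.\ Fact \ref{splitting-basics}.(\ref{splitting-basics-25})).

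The main obstacle is weak chain local character: $\bigcup_{i < \lambda^+} M_i$ is not a member of $K_\lambda$, so the locality cardinal $\slc{<\alpha}$ of the independence relation, which is defined only over models of the underlying class, cannot be invoked directly on it. In (3) I circumvent this by lifting the union into $\Ksatp{\lambda^+} \subseteq K'$ and then applying $\slc{<\alpha}(\is) = \lambda^+$ there; in (2), where $\is$ lives only on $K_\lambda$, one must fall back on the classical chain-local-character argument for nonsplitting, which is precisely why the hypothesis $\alpha \le \omega$ appears.
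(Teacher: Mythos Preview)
Your reduction of (1)(a) to (3) has a hypothesis mismatch: part (3) assumes $\slc{<\alpha}(\is) = \lambda^+$, but (1)(a) only gives you $\clc{<\alpha}(\is) \le \lambda^+$. These are not interchangeable---for coheir with $\alpha$ close to $\lambda^+$ and $\kappa$ small, $\slc{\alpha_0}(\is) \le ((\alpha_0+2)^{<\kappap})^+$ need not be bounded by $\lambda^+$. In your proof of (3) you use $\slc$ twice: once (via Proposition~\ref{class-props}) to obtain $\lambda$-tameness of $\Ksatp{\lambda^+}$, and once to derive weak chain local character through the saturated-union trick. Neither route is available in (1)(a) from its stated hypotheses. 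The paper instead handles (1)(a) directly: tameness is already among the hypotheses of part (1), and $\clc{<\alpha}(\is) \le \lambda^+$ immediately yields weak chain local character (a $\leg$-increasing chain of length $\lambda^+$ falls under the definition of $\clc$ with $\delta = \lambda^+$, giving an $i$ with $\nfs{M_i}{A}{M_{\lambda^+}}{N}$, and monotonicity then produces the required $p \rest M_{i+1}$ not forking over $M_i$). Your argument is easily repaired along these lines, but as written the claimed ``application of (3)'' does not go through.

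Two smaller points. In (1)(b) your density argument (``stability in $\lambda$ and amalgamation furnish a $\kappa$-saturated extension of size $\lambda$'') tacitly assumes $\kappa < \lambda$; when $\kappa = \lambda$ one must observe, as the paper does, that $2^{<\kappap} \le \kappa = \beth_\kappa$ forces $\kappa$ to be strongly inaccessible, so that $\kappa^{<\kappa} = \kappa$ and $\kappa$-saturated models of size $\kappa$ exist. On the other hand, your treatment of ``extended by $\snsp{\lambda}$'' in (3) is actually more careful than the paper's: Fact~\ref{splitting-basics}.(\ref{splitting-basics-3}) only gives containment in $\nf_{\sns}$, and the passage to $\nf_{\snsp{\lambda}}$ genuinely requires $\lambda$-tameness via Fact~\ref{splitting-basics}.(\ref{splitting-basics-tameness}), which you make explicit. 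Your saturated-union argument for weak chain local character in (3) is also a legitimate alternative to the paper's terser ``follows from $\clc{<\alpha}(\is) = \lambda^+$''.
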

\begin{proof} \
  \begin{enumerate}
    \item \begin{enumerate}
        \item By Fact \ref{coheir-syn}, $\is$ has base monotonicity, existence, and uniqueness. By Fact \ref{splitting-basics}.(\ref{splitting-basics-3}), coheir is extended by $\lambda$-nonsplitting. The other properties are easy. For example, weak chain local character follows from $\clc{<\alpha} (\is) \le \lambda^+$ and monotonicity.
        \item We check that $K$ and $\is$ satisfy all the conditions of the previous part. By Fact \ref{op-facts}, $K$ does not have the $(<\kappa)$-order property of length $\kappa$. By (the proof of) Proposition \ref{indep-props}.(\ref{indep-props-lc}) and Fact \ref{coheir-syn}: 

          $$
          \clc{<\alpha} (\is) \le \slc{<\alpha} (\is) \le \sup_{\alpha_0 < \alpha} \left((\alpha_0 + 2)^{<\kappap}\right)^+ \le \lambda^+
          $$
          
          Since $K$ is stable in $\lambda$, if $\kappa < \lambda$ then $\Ksatp{\kappa}_\lambda$ is dense in $K_\lambda$. If $\kappa = \lambda$, then $\kappa = 2^{<\kappap}$ so is regular, hence strongly inaccessible, so $\kappa = \kappa^{<\kappa}$ so again it is easy to check that $\Ksatp{\kappa}_\lambda$ is dense in $K_\lambda$.
    \end{enumerate}
    \item Let $\is := (\snsp{\lambda} (K))^{<\alpha}$. By Fact \ref{splitting-basics}.(\ref{splitting-basics-25}) and Proposition \ref{indep-props}.(\ref{indep-props-lc}), $\clc{<\alpha} (\is) = \lambda^+$. By monotonicity, weak chain local character follows. The other properties are easy to check. 
    \item By Fact \ref{splitting-basics}.(\ref{splitting-basics-3}), $\is$ is extended by $\lambda$-nonsplitting. Weak chain character follows from $\clc{<\alpha} (\is) = \lambda^+$. By (the proof of) Proposition \ref{class-props}, $\Ksatp{\lambda^+}$ is $\lambda$-tame for types of length less than $\alpha$. The other properties are easy to check.
  \end{enumerate}
\end{proof}

The next result is that a generator for a weakly good independence relation indeed induces a weakly good independence relation.

\begin{thm}\label{weakly-good-up}
  Let $(K, \is)$ be a $\lambda$-generator for a weakly good $(<\alpha)$-independence relation. Then $\Kupp{(K, \is)} \rest \Ksatpp{(\Kup)}{\lambda^+}$ is a pre-weakly good $(<\alpha, \ge \lambda^+)$-independence relation.
\end{thm}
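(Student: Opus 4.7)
Set $\is' := \Kupp{(K, \is)} \rest \Ksatpp{(\Kup)}{\lambda^+}$. The plan is to verify every clause of Definition \ref{weakly-good-def} for the pre-$(<\alpha, [\lambda^+, \infty))$-frame $\pre(\is')$. Two items come for free: by Lemma \ref{up-indep-rel}.(\ref{up-indep-rel-2}), $\is'$ is an independence relation, while Lemma \ref{up-indep-prop} delivers base monotonicity and the local character $\slc{<\alpha}(\is') = \lambda^{++}$.

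Next, $K_{\is'} = \Ksatpp{(\Kup)}{\lambda^+}$ is nonempty (stability of $K$ in $\lambda$ together with no maximal models yields $\lambda^+$-saturated models of size $\lambda^+$ built as $\ltu$-chains), is $\lambda^+$-closed (standard absorption into a saturated submodel), and every chain has an upper bound (take the $\Kup$-union and embed it in a $\lambda^+$-saturated extension). For stability of $K_{\is'}$ in $\lambda^+$, write any $M \in K_{\is'}$ of size $\lambda^+$ as an increasing continuous union of $\seq{N_j : j < \lambda^+}$ from $K_\is$ of size $\lambda$; for each of $\lambda^{++}$ given types $p_i \in \gS(M)$, weak chain local character selects $j_i < \lambda^+$ with $p_i \rest N_{j_i+1}$ not $\is$-forking over $N_{j_i}$; pigeonholing on $j_i$ and counting using stability of $K$ in $\lambda$ yields two indices with equal restrictions to $N_{j+1}$, and then the uniqueness derived below combined with the $\lambda$-tameness of $\Ksatpp{(\Kup)}{\lambda^+}$ propagates this to equality over $M$.

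The technical core is uniqueness of $\pre(\is')$. Given $p_1, p_2 \in \gS^{<\alpha}(M)$ not $\is'$-forking over $M_0 \lea M$ in $K_{\is'}$ and agreeing on $M_0$, use $\lambda^+$-saturation of $M_0$ together with left monotonicity to absorb the two $K_\is$-level witnesses from the definition of $\Kupp{\nf}$ into a single common $M_0^* \lea M_0$ in $K_\is$. By $\lambda$-tameness of $\Ksatpp{(\Kup)}{\lambda^+}$ it suffices to compare $p_1, p_2$ on every size-$\lambda$ submodel, and in fact only on $N_0^* \lea M_0$ of size $\lambda$ with $M_0^* \ltu N_0^*$ (which exists by $\lambda^+$-saturation). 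On any such $N_0^*$, both $p_\ell \rest N_0^*$ fail to $\is$-fork over $M_0^*$, and since $\is$ is extended by $\lambda$-nonsplitting, neither $\lambda$-splits over $M_0^*$; weak uniqueness of $\lambda$-nonsplitting (Fact \ref{splitting-basics}.(\ref{splitting-weak-props})) then forces $p_1 \rest N_0^* = p_2 \rest N_0^*$.

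The remaining properties follow routinely. Existence for $\is'$ is inherited from existence of $\is$; disjointness and transitivity then fall out of uniqueness plus existence via Proposition \ref{indep-props}.(\ref{indep-props-4}). The right $\lambda^+$-model-witness property is essentially built into the definition of $\Kupp{(K,\is)}$, which certifies forking by checking all size-$\le \lambda$ subsets on the right, and the left $\lambda^+$-witness is automatic because types have length $<\alpha \le \lambda^+$. Local extension and uniqueness of $\is'^{<\lambda^{++}}_{\lambda^+}$ are obtained by a directed-system construction that extends a non-$\is'$-forking type across an enveloping $\lambda^+$-saturated model of size $\lambda^+$, again using the $\lambda$-level generator together with the uniqueness above. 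The main obstacle will be the uniqueness step and the matching local extension, where the precise interplay between weak uniqueness of $\lambda$-nonsplitting, $\lambda$-tameness of $\Ksatpp{(\Kup)}{\lambda^+}$, and $\lambda^+$-saturation must be orchestrated carefully.
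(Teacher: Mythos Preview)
Your uniqueness argument has a genuine gap. You claim that by $\lambda$-tameness it suffices to compare $p_1, p_2$ on every size-$\lambda$ submodel, ``and in fact only on $N_0^* \lea M_0$ of size $\lambda$ with $M_0^* \ltu N_0^*$.'' But restricting to submodels of $M_0$ is vacuous: $p_1 \rest M_0 = p_2 \rest M_0$ is the hypothesis. The correct step (as in \cite[Lemma 5.3]{ss-tame-jsl}, which the paper cites) is: fix $N_0^*$ with $M_0^* \ltu N_0^* \lea M_0$, and then for an \emph{arbitrary} $N' \lea M$ of size $\lambda$ with $N_0^* \lea N'$, apply weak uniqueness of $\lambda$-nonsplitting (Fact \ref{splitting-basics}.(\ref{splitting-weak-props})) with the chain $M_0^* \ltu N_0^* \lea N'$ to conclude $p_1 \rest N' = p_2 \rest N'$. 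Since such $N'$ are cofinal among size-$\lambda$ submodels of $M$, tameness finishes. Your write-up never leaves $M_0$.

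Several secondary items are also off. Proposition \ref{indep-props}.(\ref{indep-props-4}) yields transitivity from uniqueness \emph{and extension}, not existence; here you only have \emph{local} extension, so the paper instead imitates \cite[Lemma 4.10]{ss-tame-jsl} directly. Disjointness is not a consequence of that proposition at all; the paper's argument reduces to length one, uses the local-extension construction to produce a nonforking extension that is algebraic iff the original is, and then invokes uniqueness. Your ``directed-system construction'' for local extension is not what the paper does: the actual argument uses $\lambda^+$-saturation to embed $N$ into $M$ over the witness $M_0'$ and pulls $p$ back via $f^{-1}$, appealing to weak extension of nonsplitting (Fact \ref{splitting-basics}.(\ref{splitting-weak-props})) to see that this really extends $p$. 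Finally, your stability-in-$\lambda^+$ argument is circuitous and leans on uniqueness before it is established; the paper simply invokes the tameness stability transfer (Fact \ref{op-facts}) to get $\Kup$ stable in $\lambda^+$, then transfers to the skeleton via Proposition \ref{skel-transfer}.
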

\begin{proof}
  This follows from the arguments of \cite{ss-tame-jsl}, but we give some details. Let $\is' := \Kupp{(K, \is)} \rest \Ksatpp{(\Kup)}{\lambda^+}$. Let $\nf := \nf_{\is'}$, $\s' := \pre (\is')$. We check the conditions in the definition of a weakly good independence relation. Note that by Remark \ref{weakly-good-rmk} we do not need to check existence or the right $\lambda^+$-witness property.

  \begin{itemize}
    \item $\is'$ is a $(<\alpha, \ge \lambda^+)$-independence relation: By Lemma \ref{up-indep-rel}.
    \item $K_{\is'}$ is stable in $\lambda^+$: By Fact \ref{op-facts}, $\Kup$ is stable in $\lambda^+$. By stability, $K_{\is'}$ is dense in $K$ so by Proposition \ref{skel-transfer}, $K_{\is'}$ is stable in $\lambda^+$.
    \item $K_{\is'} \neq \emptyset$ since it is dense in $\Kup_{\lambda^+}$ and $\Kup_\lambda = K$ is nonempty and has no maximal models. Every chain $\seq{M_i : i < \delta}$ in $K_{\is'}$ has an upper bound: we have $M_\delta := \bigcup_{i < \delta} M_i \in K$, and by density there exists $M \gea M_\delta$ in $K_{\is'}$. $K_{\is'}$ is $\lambda^+$-closed by an easy increasing chain argument, using stability in $\lambda^+$.
    \item Local character: $\slc{<\alpha} (\is') = \lambda^{++}$ by Lemma \ref{up-indep-prop}.
    \item $\s'$ has:
      \begin{itemize}
        \item Base monotonicity: By Lemma \ref{up-indep-prop}.
        \item Uniqueness: First observe that using local character, base monotonicity, $\lambda^+$-closure, and the fact that $K_{\is'}$ is $\lambda^+$-tame for types of length less than $\alpha$, it is enough to show uniqueness for $(\s')_{\lambda^+}$. For this imitate the proof of \cite[Lemma 5.3]{ss-tame-jsl} (the key is weak uniqueness: Fact \ref{splitting-basics}.(\ref{splitting-weak-props})).
        \item Local extension: Let $p \in \gS^{<\alpha} (M)$, $M_0 \lea M \lea N$ be in $(K_{\is'})_{\lambda^+}$ such that $p$ does not fork over $M_0$. Let $M_0' \lea M_0$ be in $K_{\is}$ and witness it. By homogeneity, $M_0' \ltu M$ so there exists $f: N \xrightarrow[M_0']{} M$. Let $q := f^{-1} (p) \rest N$. By invariance, $q$ does not fork over $M_0$ (as witnessed by $M_0'$). Since $\lambda$-nonsplitting extends nonforking, $p \rest M'$ does not $\snsp{\lambda} (K_{\is})$-fork over $M_0'$ whenever $M_0' \lea M' \lea M$ is such that $M' \in K_{\is}$. Let $K' := K_{\is} \cup \Ksatp{\lambda^+}$. By (the proof of) Fact \ref{splitting-basics}.(\ref{splitting-basics-tameness}), $p$ does not $\sns (K')$-fork over $M_0'$. By weak extension (Fact \ref{splitting-basics}.(\ref{splitting-weak-props}), $q$ extends $p$ and is algebraic if and only if $q$ is.
        \item Transitivity: Imitate the proof of \cite[Lemma 4.10]{ss-tame-jsl}. 
        \item Disjointness: It is enough to prove it for types of length 1 so assume $\alpha = 2$. Assume $\nfs{M_0}{a}{M}{N}$ (with $M_0 \lea M \lea N$ in $\Kmhp{\lambda^+}$) and $a \in M$. We show $a \in M_0$. Using local character, we can assume without loss of generality that $\|M_0\| = \lambda^+$ and (by taking a submodel of $M$ containing $a$ of size $\lambda^+$) that also $\|M\| = \lambda^+$. Find $M_0' \lea M_0$ in $K_{\is}$ witnessing the nonforking. By the proof of local extension, we can find $p \in \gS (M)$ extending $p_0 := \gtp (a / M_0; N)$ such that $p_0$ is algebraic if and only if $p$ is. Since $a \in N$, we must have by uniqueness that $p$ is algebraic so $p_0$ is algebraic, i.e.\ $a \in M_0$.
      \end{itemize}
      
      Now by Proposition \ref{cl-basics}, $\cl (\s')$ has the above properties.
    \item $\cl (\s')$ has the left $\lambda$-witness property: Because $\alpha \le \lambda^+$.
  \end{itemize}
\end{proof}

Interestingly, the generator can always be taken to have a particular form:

\begin{lem}\label{gen-canon}
  Let $(K, \is)$ be a $\lambda$-generator for a weakly good $(<\alpha)$-independence relation. Let $\is' := \insp{\lambda} (K)^{<\alpha}$. Then:

  \begin{enumerate}
    \item $(K, \is')$ is a $\lambda$-generator for a weakly good $(<\alpha)$-independence relation and $\ltu$ is the ordering witnessing weak chain local character.
    \item $\pre (\Kupp{(K, \is)}) \rest \Ksatpp{(\Kup)}{\lambda^+} = \pre (\Kupp{(K, \is')}) \rest \Ksatpp{(\Kup)}{\lambda^+}$.
  \end{enumerate}
\end{lem}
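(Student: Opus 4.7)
For part (1), my plan is to verify each condition in the definition of a $\lambda$-generator for a weakly good $(<\alpha)$-independence relation for $(K, \is')$. All requirements on the underlying class $K$ (nonemptiness, no maximal models, stability in $\lambda$, amalgamation in $\Kup$, $\lambda$-tameness of $\Ksatpp{(\Kup)}{\lambda^+}$ for types of length $<\alpha$, and being an AEC in $\lambda$) are inherited from the hypothesis on $(K, \is)$. Since $K_{\is'} = K$, it is trivially dense in $K$, so $(K, \is')$ is a $\lambda$-generator for a $(<\alpha)$-independence relation. That $\is'$ has base monotonicity and existence follows from Fact \ref{splitting-basics} together with Proposition \ref{cl-basics}; and ``$\is'$ is extended by $\lambda$-nonsplitting'' is immediate since $\is'$ literally equals $\cl(\snsp{\lambda}(K_{\is'})^{<\alpha})$ at the pre-frame level.

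The substantive point is weak chain local character witnessed by $(K, \leu)$, which is a skeleton of $(K, \lea)$ by Example \ref{skel-examples} (taking $\delta = 1$). Given a $\ltu$-increasing $\seq{M_i : i < \lambda^+}$ in $K$ and $p \in \gS^{<\alpha}(\bigcup_i M_i)$, I would transfer to the witnessing skeleton for $(K, \is)$. Let $(K_\is, \leg)$ witness weak chain local character for $(K, \is)$; by Lemma \ref{univ-skel-lem} we may assume $\leg \subseteq \leu$. Using density together with Lemma \ref{proper-ext-skel} and the skeleton property, I would build a $\leg$-increasing chain $\seq{N_i : i < \lambda^+}$ in $K_\is$ interleaved with the original: $N_i \lea M_{j(i)} \lea N_{i+1}$ for a strictly increasing cofinal $j: \lambda^+ \to \lambda^+$. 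Weak chain local character of $(K, \is)$ applied to $p$ and $\seq{N_i}$ then yields $i^*$ with $p \rest N_{i^*+1}$ not $\is$-forking over $N_{i^*}$. Combining with ``$\is$ extended by $\lambda$-nonsplitting,'' base monotonicity, and the interleaving, one should conclude that $p \rest M_{j(i^*)+1}$ does not $\snsp{\lambda}(K)$-split over $M_{j(i^*)}$, which is the pre-frame version of $\is'$-nonforking on a consecutive pair in the original chain.

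For part (2), set $K^* := \Ksatpp{(\Kup)}{\lambda^+}$ and write $\s_1 := \pre(\Kupp{(K, \is)}) \rest K^*$, $\s_2 := \pre(\Kupp{(K, \is')}) \rest K^*$. By Theorem \ref{weakly-good-up} applied to the generators $(K, \is)$ and $(K, \is')$ (the latter from part (1)), both $\s_1$ and $\s_2$ are pre-weakly good on $K^*$; each therefore has base monotonicity, uniqueness, transitivity, local character $\lambda^{++}$, and local extension on $K^*_{\lambda^+}$. The inclusion $\nf_{\s_1} \subseteq \nf_{\s_2}$ follows by unwinding the definition of $\Kupp{\cdot}$ using that $\is$-nonforking passes through $\lambda$-nonsplitting on $K_\is$ and (via the same interleaving as in part (1)) through $\snsp{\lambda}(K)$. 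Canonicity between two pre-weakly good frames on $K^*$ then forces $\s_1 = \s_2$: given an $\s_2$-nonforking instance, local extension of $\s_1$ produces an $\s_1$-nonforking type with the same restriction to a small base, whereupon uniqueness of $\s_2$ identifies them.

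The main technical obstacle is the transfer step used in both parts: bridging the asymmetry between $\snsp{\lambda}(K_\is)$ and $\snsp{\lambda}(K)$. The generator hypothesis only says $\is$ is extended by $\snsp{\lambda}(K_\is)$, i.e.\ nonsplitting with respect to small test models in $K_\is$, whereas $\is'$-nonforking requires nonsplitting against \emph{all} small models in $K$, formally a stronger condition. Resolving this requires exploiting density of $K_\is$ in $K$ to absorb any $K$-test model inside a $K_\is$-model (using amalgamation in $\Kup$), combined with invariance under the back-and-forth isomorphism produced by Fact \ref{lim-uq}. The bookkeeping across the two parallel chains $\seq{M_i}$ and $\seq{N_i}$ -- making sure the indices line up so that the final conclusion is stated about consecutive members of the original chain rather than of the derived one -- is where the details become most delicate.
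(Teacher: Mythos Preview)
Your approach is essentially the paper's. For part~(1) the paper simply invokes Lemma~\ref{univ-lc} (with $\is'$ and $K_{\is}$ here standing for $\is$ and $K'$ there), which packages your chain-interleaving and back-and-forth argument into a single citation; the remaining generator axioms are dismissed as easy, just as you do. For part~(2) the paper argues exactly as you outline: it verifies the inclusion $\nf_{\s_1} \subseteq \nf_{\s_2}$ from the ``extended by $\lambda$-nonsplitting'' hypothesis, cites the canonicity argument of \cite[Lemma~4.1]{bgkv-v3-toappear} to get equality at size $\lambda^+$, and then propagates to all sizes via the right $\lambda$-model-witness property (which both frames have as pre-weakly good ones).

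On your ``main technical obstacle'': you are right that the generator hypothesis literally gives only $\snsp{\lambda}(K_{\is})$-nonsplitting, not $\snsp{\lambda}(K)$-nonsplitting, and the paper's ``easy to check'' does not spell this out. But for part~(2) the resolution is lighter than you suggest: the frames live on $K^\ast := \Ksatpp{(\Kup)}{\lambda^+}$, so every ambient model is $\lambda^+$-saturated, and density of $K_{\is}$ in $K$ lets you enlarge any test model $N_0 \in K_\lambda$ with $N_0 \lea N$ to some $N_0' \in K_{\is}$ \emph{inside} $N$. This reduces the $\s_2$-side check to $K_{\is}$-models, where the hypothesis applies directly. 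For part~(1), rather than trying to bridge the two nonsplitting notions at level $\lambda$, it is cleaner to follow the paper and push the whole problem through the isomorphism of Fact~\ref{lim-uq}: one never compares $\is$ to $\is'$ directly, one transports $\is'$-nonforking along the isomorphism between the $\ltu$-chain and a $\leg$-chain inside $K_{\is}$.
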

\begin{proof} \
  \begin{enumerate}
    \item By Lemma \ref{univ-lc} (with $K$, $\is'$, $K_{\is}$ here standing for $K, \is$, $K'$ there), $(K, \is')$ has weak chain local character (witnessed by $\ltu$) and the other properties are easy to check.
    \item Let $\s := \pre (\Kupp{(K, \is)}) \rest \Ksatpp{(\Kup)}{\lambda^+}$, $\s' := \pre (\Kupp{(K, \is')}) \rest \Ksatpp{(\Kup)}{\lambda^+}$. We want to see that $\nf_{\s} = \nf_{\s'}$. Since $\pre(\is)$ is extended by $\lambda$-nonsplitting, it is easy to check that $\nf_{\s} \subseteq \nf_{\s'}$. By the proof of \cite[Lemma 4.1]{bgkv-v3-toappear}, $\nf_{\s_{\lambda^+}} = \nf_{\s_{\lambda^+}'}$. By the right $\lambda$-model-witness property, $\nf_{\s} = \nf_{\s'}$.
  \end{enumerate}
\end{proof}

In Theorem \ref{weakly-good-up}, $\is' := \Kupp{(K, \is)} \rest \Ksatpp{(\Kup)}{\lambda^+}$ is only pre-weakly good, not necessarily weakly good: in general, only $\is'' := \cl (\pre (\is'))$ will be weakly good. The following technical lemma shows that $\is'$ and $\is''$ agree on slightly more than $\pre (\is')$.

\begin{lem}\label{pre-cl-generator}
  Let $(K, \is)$ be a $\lambda$-generator for a weakly good $(<\alpha)$-independence relation. Let $\is' := \Kupp{(K, \is)} \rest \Ksatpp{(\Kup)}{\lambda^+}$ and let $\is'' := \cl (\pre (\is'))$. Let $M \lea N$ be in $\Kup_{\ge \lambda^+}$ with $M \in \Ksatp{\lambda^+}$ (but maybe $N \notin \Ksatp{\lambda^+}$). Assume $\Kup$ is $\lambda$-tame\footnote{Note that the definition of a generator for a weakly good independence relation only requires that $\Ksatpp{(\Kup)}{\lambda^+}$ be $\lambda$-tame for types of length less than $\alpha$.} for types of length less than $\alpha$. Let $p \in \gS^{<\alpha} (N)$. 

  If $\|N\| = \lambda^+$ or $\is''$ has extension, then $p$ does not $\is'$-fork over $M$ if and only if $p$ does not $\is''$-fork over $M$.
\end{lem}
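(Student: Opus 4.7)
The backward implication is routine monotonicity: if $p$ does not $\is''$-fork over $M$, unpacking $\cl$ gives $\tilde{M} \in \Ksatp{\lambda^+}$ with $M \lea \tilde{M}$, $|N| \subseteq |\tilde{M}|$, and $\tilde{N}' \gea \tilde{M}$ containing a realization $\ba$ of $p$ with $\nf_{\is'}(M, \ba, \tilde{M}, \tilde{N}')$. Since $\is'$ is the restriction of $\Kupp{(K, \is)}$ to $\Ksatp{\lambda^+}$, right monotonicity applied with $N \subseteq \tilde{M}$ yields $\nf_{\Kupp{(K,\is)}}(M, \ba, N, \tilde{N}')$. Note that this direction needs neither the hypothesis $\|N\| = \lambda^+$ nor that $\is''$ has extension.

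For the forward direction, the plan is to build a saturated $\tilde{M} \gea N$ and a type $\tilde{p} \in \gS^{<\alpha}(\tilde{M})$ with $\tilde{p} \rest N = p$ and $\tilde{p}$ not $\is''$-forking over $M$; right monotonicity of $\is''$ then gives that $p$ itself does not $\is''$-fork over $M$. Let $M_0 \lea M$ in $K_\lambda$ be the $\Kupp{(K, \is)}$-witness for $p$ not $\is'$-forking over $M$. Using that $M$ is $\lambda^+$-saturated together with base monotonicity of $\is$, I may enlarge $M_0$ inside $M$ so that $M_0 \ltu M$ while preserving the witness property. To construct $\tilde{M}$ and $\tilde{p}$: in Case 1 ($\|N\| = \lambda^+$) I pick $\tilde{M} \in \Ksatp{\lambda^+}_{\lambda^+}$ with $N \lea \tilde{M}$ (using stability of $\Kup$ in $\lambda^+$, from Theorem \ref{weakly-good-up}), and apply local extension of $\is''_{\lambda^+}^{<\lambda^{++}}$ (from weak goodness of $\is''$) to the trivially nonforking type $p \rest M$, producing $\tilde{p}$ extending $p \rest M$ and not $\is''$-forking over $M$. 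In Case 2 ($\is''$ has extension) I pick any $\tilde{M} \in \Ksatp{\lambda^+}$ with $N \lea \tilde{M}$ of appropriate size (existing via Proposition \ref{class-props}), and apply full extension of $\is''$ to $p \rest M$.

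The main obstacle is verifying that $\tilde{p} \rest N = p$, since the construction only ensures $\tilde{p} \rest M = p \rest M$. For this I use that $\is$ is extended by $\lambda$-nonsplitting. Unpacking the $\Kupp{(K, \is)}$-witness condition on $\lambda$-sized enclosures inside $N$ and using $\lambda$-tameness of $\Kup$ for types of length $<\alpha$, $p$ does not $\snsp{\lambda}(\Kup)$-split over $M_0$. By the backward direction already proved, $\tilde{p}$ does not $\is'$-fork over $M$; the same unpacking (taking a common witness by base monotonicity if needed) gives that $\tilde{p}$ also does not $\snsp{\lambda}(\Kup)$-split over $M_0$. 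By $\lambda$-tameness together with Fact \ref{splitting-basics}.(\ref{splitting-basics-tameness}), $\snsp{\lambda}(\Kup)$-nonsplitting coincides with $\sns(\Kup)$-nonforking on types of length $<\alpha$. Then for any $N' \lea N$ of size $\lambda^+$ containing $M$ (provided by the Löwenheim-Skolem-Tarski axiom of $\Kup$), the configuration $M_0 \ltu M \lea N'$ with $\|M\| = \|N'\| = \lambda^+$ together with $p \rest M = \tilde{p} \rest M$ allows me to apply weak uniqueness of nonsplitting (Fact \ref{splitting-basics}.(\ref{splitting-weak-props})) to conclude $p \rest N' = \tilde{p} \rest N'$. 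Running over all such $N'$ and invoking $\lambda$-tameness once more yields $p = \tilde{p} \rest N$, completing the plan.
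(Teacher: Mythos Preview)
Your approach is essentially the paper's: both prove the backward direction by monotonicity, then for the forward direction pick a saturated extension $N' \gea N$, use (local) extension of $\is''$ to produce a nonforking $q$ (your $\tilde{p}$) over $N'$ extending $p \rest M$, and argue $q \rest N = p$ via the uniqueness underlying Theorem~\ref{weakly-good-up}. The paper simply cites ``the proof of Theorem~\ref{weakly-good-up} (more precisely \cite[Lemma~5.3]{ss-tame-jsl})'' for this uniqueness, while you unpack it explicitly through weak uniqueness of nonsplitting.

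There is one technical slip in your unpacking. In Case~2 you write ``for any $N' \lea N$ of size $\lambda^+$ containing $M$'' with ``$\|M\| = \|N'\| = \lambda^+$'', but nothing in Case~2 forces $\|M\| = \lambda^+$; when $\|M\| > \lambda^+$ no such $N'$ exists, and $M_0 \ltu M$ does not follow from mere $\lambda^+$-saturation of $M$ (universality would require embedding models of size $\|M\|$). The fix is painless. Either first reduce to $\|M\| = \lambda^+$: since the witness $M_0$ lies in $K_\lambda$, by $\lambda^+$-closure there is $M^\ast \in \Ksatp{\lambda^+}_{\lambda^+}$ with $M_0 \lea M^\ast \lea M$, $p$ does not $\is'$-fork over $M^\ast$, and $\is''$-nonforking over $M^\ast$ yields $\is''$-nonforking over $M$ by base monotonicity. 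Or --- closer to how Theorem~\ref{weakly-good-up} actually argues --- run weak uniqueness at level $\lambda$ rather than $\lambda^+$: pick $M_0' \in K_{\is}$ with $M_0 \ltu M_0' \lea M$ (using $\lambda^+$-saturation of $M$), and for each $N_0 \lea N$ in $K_\lambda$ with $M_0' \lea N_0$ apply Fact~\ref{splitting-basics}.(\ref{splitting-weak-props}) to the configuration $M_0 \ltu M_0' \lea N_0$; then conclude by $\lambda$-tameness. Your Case~1 argument is fine as written, since there $\|M\| \le \|N\| = \lambda^+$.
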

\begin{proof}
  Assume $p$ does not $\is''$-fork over $M$. Then by definition there exists an extension of $p$ to a model in $\Ksatp{\lambda^+}$ that does not $\is'$-fork over $M$ so by monotonicity $p$ does not $\is'$-fork over $M$. Assume now that $p$ does not $\is'$-fork over $M$. Note that the proof of Theorem \ref{weakly-good-up} (more precisely \cite[Lemma 5.3]{ss-tame-jsl}) implies that $p$ is the unique type over $N$ that does not $\is'$-fork over $M$. 

  Pick $N' \gea N$ in $\Ksatp{\lambda^+}$ with $\|N'\| = \|N\|$. We imitate the proof of \cite[Lemma 4.1]{bgkv-v3-toappear}. By extension (or local extension if $\|N\| = \lambda^+$, recall that $\is''$ is weakly good, see Theorem \ref{weakly-good-up}), there exists $q \in \gS^{<\alpha} (N')$ that does not $\is''$-fork over $M$ and extends $p \rest M$. By the above, $q$ does not $\is'$-fork over $M$. By uniqueness, $q$ extends $p$, so $q \rest N = p$ does not $\is''$-fork over $M$.
\end{proof}

Note that if the independence relation of the generator is coheir, then the weakly good independence relation obtained from it is also coheir. We first prove a slightly more abstract lemma:

\begin{lem}\label{weakly-good-coheir-0}
  Let $K$ be an AEC, $\lambda \ge \LS (K)$. Let $K'$ be a dense sub-AC of $K$ such that $\Ksatp{\lambda^+} \subseteq K'$ and $K_\lambda'$ is dense in $K_\lambda$. Let $\is$ be a $(<\alpha, \ge \lambda)$-independence relation with base monotonicity and $K_{\is} = K'$, $2 \le \alpha \le \lambda^+$. Assume that $\is$ has base monotonicity and the right $\lambda$-model-witness property.

  Assume $\slc{<\alpha} (\is) = \lambda^+$ and $(K_\lambda, \is_\lambda)$ is a $\lambda$-generator for a weakly good $(<\alpha)$-independence relation. Let $\is' := \Kupp{(K_\lambda, \is_\lambda)} \rest \Ksatp{\lambda^+}$. Then $\pre (\is') = \pre (\is) \rest \Ksatp{\lambda^+}$.

  Moreover if $\is$ has the right $\lambda$-witness property, then $\is' = \is \rest \Ksatp{\lambda^+}$.
\end{lem}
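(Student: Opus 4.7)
The lemma asserts $\pre(\is') = \pre(\is) \rest \Ksatp{\lambda^+}$, so for $M \lea M' \lea N$ in $\Ksatp{\lambda^+}$ and $A \subseteq |N|$ of length less than $\alpha$, I will establish the equivalence $\nf_{\is'}(M, A, M', N) \iff \nf_\is(M, A, M', N)$ by proving two containments. The \emph{moreover} clause will then follow by a short upgrade using the strengthened right $\lambda$-witness hypothesis on $\is$.

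For $\nf_{\is'} \Rightarrow \nf_\is$, the plan is to unfold the definition of $\Kupp{\nf}$ from Definition \ref{is-up-def} and then apply the right $\lambda$-model-witness property of $\is$. Given a witness $M_0 \lea M$ in $K'_\lambda$ from the definition of $\is'$-nonforking, I would first verify $\nf_\is(M_0, A, B_0, N)$ for each $B_0 \subseteq |M'|$ of size $\le \lambda$: choose $N_0 \in K'_\lambda$ with $M_0 \lea N_0 \lea N$ and $A \cup B_0 \subseteq |N_0|$ (using density of $K'_\lambda$ in $K_\lambda$ together with $\lambda$-closure of $\Kup$), and then invoke ambient monotonicity. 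The right $\lambda$-model-witness property of $\is$ then delivers $\nf_\is(M_0, A, M', N)$, and base monotonicity of $\is$ (valid since $M_0 \lea M \lea M'$ and $|M| \subseteq |M'|$) upgrades this to $\nf_\is(M, A, M', N)$.

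For $\nf_\is \Rightarrow \nf_{\is'}$, my plan is to exploit that $\pre(\is')$ is weakly good by Theorem \ref{weakly-good-up}, hence enjoys uniqueness, base monotonicity, local extension, and $\slc{<\alpha}(\is') = \lambda^{++}$. Writing $p := \gtp(A/M';N)$, I would apply local character of $\is'$ at base $M$ to find $M^* \lea M$ in $\Ksatp{\lambda^+}$ of size $\le \lambda^+$ with $\nf_{\is'}(M^*, A, M, N)$. Iterating local extension of $\pre(\is')$ along a filtration of $M'$ by $\lambda^+$-saturated models of size $\lambda^+$, and using uniqueness of $\pre(\is')$ to glue at limits, I would produce $q \in \gS^{<\alpha}(M')$ extending $p \rest M^*$ and not $\is'$-forking over $M^*$. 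The previous direction then yields that $q$ does not $\is$-fork over $M^*$, hence not over $M$ by base monotonicity. To conclude $p = q$ (and so that $p$ does not $\is'$-fork over $M^*$, whence not over $M$ by base monotonicity of $\pre(\is')$), I would independently check that $p$ itself does not $\is'$-fork over $M^*$ by producing a witness $M_0 \in K'_\lambda$ with $M_0 \lea M$: apply local character of $\is$ to $(M', N)$ to get a small base $M_0^\dagger \lea M'$, then transport it into $M$ using $\lambda^+$-saturation of $M$ and invariance of $\is$ to carry the nonforking across the transport.

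The moreover follows by a short extension: assuming the right $\lambda$-witness property of $\is$ (not merely the model-witness version), both $\nf_\is(M, A, B, N)$ and $\nf_{\is'}(M, A, B, N)$ reduce via their respective witness or definitional schemes to nonforking statements over $\lambda$-sized pieces $B_0 \subseteq B$, which can be absorbed into suitable small models of $K'_\lambda$ via density, and are therefore governed by the $\pre$-equality already proved. The principal obstacle throughout is the saturation transport step in the backward $\pre$-direction, where one must pull a $\lambda$-sized submodel of $M'$ down inside the $\lambda^+$-saturated $M$ in a way compatible with the location of $A$ so that invariance of $\is$ actually transfers the nonforking.
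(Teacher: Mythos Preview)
Your forward direction ($\nf_{\is'} \Rightarrow \nf_\is$) is correct and matches the paper exactly.

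Your backward direction, however, is both overcomplicated and contains a genuine gap. First, the whole detour through building $q$ is superfluous: you yourself note that to conclude $p=q$ you would ``independently check that $p$ itself does not $\is'$-fork over $M^*$.'' But once you have that, base monotonicity of $\pre(\is')$ gives $\nf_{\is'}(M,A,M',N)$ immediately --- you never need $q$ at all. So the real content of your argument is the ``independent check,'' and everything before it (local character of $\is'$ to find $M^*$, the filtration, the iterated local extension) is dead weight.

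Second, and more seriously, the independent check does not go through as described. You propose to apply $\slc_{<\alpha}(\is)=\lambda^+$ to the pair $(M',N)$, obtaining $M_0^\dagger \lea M'$ in $K'_\lambda$ with $\nf_\is(M_0^\dagger,A,M',N)$, and then ``transport it into $M$ using $\lambda^+$-saturation of $M$ and invariance of $\is$.'' Saturation of $M$ only lets you realize $\gtp(M_0^\dagger/C;N)$ inside $M$ for small $C \subseteq |M|$; the resulting isomorphism fixes at most $C$, not $A$ or $M'$ or $N$. Invariance of $\is$ would transfer $\nf_\is(M_0^\dagger,A,M',N)$ only along an isomorphism of $N$ fixing $A$ and $M'$, and there is no reason such an isomorphism should move $M_0^\dagger$ into $M$. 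So the transport step does not preserve the nonforking statement you need.

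The paper's approach avoids all of this. It applies local character $\slc_{<\alpha}(\is)=\lambda^+$ directly to produce $M_0 \lea M$ in $K'_\lambda$ such that $p$ does not $\is$-fork over $M_0$; homogeneity of the $\lambda^+$-saturated $M$ is used only to ensure $M_0$ can be taken in $K'_\lambda$ (via base monotonicity). Then right monotonicity gives $\nf_\is(M_0,A,B_0,N)$ for every $B_0 \subseteq B$ of size $\le \lambda$, which is precisely the witnessing condition for $\nf_{\is'}$. No appeal to the weakly good structure of $\is'$, no filtrations, no $q$.
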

\begin{proof} 
  We prove the moreover part and it will be clear how to change the proof to prove the weaker statement (just replace the use of the witness property by the model-witness property).
  
  Let $M \lea N$ be in $\Ksatp{\lambda^+}$, $p \in \gS^{<\alpha} (B; N)$. We want to show that $p$ does not $\is$-fork over $M$ if and only if there exists $M_0 \lea M$ in $K_\lambda'$ so that for all $B_0 \subseteq B$ of size $\le \lambda$, $p \rest B_0$ does not $\is$-fork over $M_0$. 
      Assume first that $p$ does not $\is$-fork over $M$. Since $\slc{<\alpha} (\is) = \lambda^+$, there exists $M_0 \lea M$ in $K_\lambda$ such that $p$ does not $\is$-fork over $M_0$. By base monotonicity and homogeneity, we can assume that $M_0 \in K_\lambda'$. I particular $p \rest B_0$ does not $\is$-fork over $B_0$ for all $B \subseteq B$ of size $\le \lambda$.

      Conversely, assume $p$ does not $\is'$-fork over $M$, and let $M_0 \lea M$ in $K_\lambda'$ witness it. Then by the right $\lambda$-witness property, $p$ does not $\is$-fork over $M_0$, so over $M$, as desired.

\end{proof}

\begin{lem}\label{weakly-good-coheir}
  Let $K$ be an AEC, $\LS (K) < \kappa \le \kappa' \le \lambda$. Let $2 \le \alpha \le \lambda^+$. Let $\is := \left(\isch{\kappa} (K)\right)_{\ge \lambda}^{<\alpha} \rest \Ksatp{\kappa'}$. 
  
  Assume $\slc{<\alpha} (\is) = \lambda^+$ and $(K_\lambda, \is_\lambda)$ is a $\lambda$-generator for a weakly good $(<\alpha)$-independence relation. Let $\is' := \Kupp{(K_\lambda, \is_\lambda)} \rest \Ksatp{\lambda^+}$. Then $\is' = \is \rest \Ksatp{\lambda^+}$.
\end{lem}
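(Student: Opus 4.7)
The strategy is to apply Lemma \ref{weakly-good-coheir-0} with $K' := \Ksatp{\kappa'}_{\ge \lambda}$, so that $K_{\is} = K'$. Since $\kappa' \le \lambda < \lambda^+$, every $\lambda^+$-saturated model is $\kappa'$-saturated, giving $\Ksatp{\lambda^+} \subseteq K'$. The density of $K_\lambda' = \Ksatp{\kappa'}_\lambda$ in $K_\lambda$ comes for free: it is built into the hypothesis that $(K_\lambda, \is_\lambda)$ is a $\lambda$-generator, which requires $K_{\is_\lambda}$ to be a dense sub-AC of $K_\lambda$.

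Next I would verify the structural properties of $\is$. By Fact \ref{coheir-syn}.(\ref{coheir-1}), the ambient relation $\isch{\kappa}(K)$ has base monotonicity and the right $(<\kappa)$-witness property. These properties are purely relational and pass without change to the restrictions of the relation to types of length less than $\alpha$, to models of size at least $\lambda$, and to the subclass $\Ksatp{\kappa'}$: whenever a configuration witnessing nonforking or its witness-by-small-subsets criterion occurs within the restricted class, it is also a valid configuration in the ambient class, so the inherited properties hold. Since $\kappa \le \lambda$, the $(<\kappa)$-witness property is strictly stronger than the $\lambda$-witness property, so $\is$ has the right $\lambda$-witness property.

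The remaining hypotheses of Lemma \ref{weakly-good-coheir-0}, namely $\slc{<\alpha}(\is) = \lambda^+$ and that $(K_\lambda, \is_\lambda)$ is a $\lambda$-generator for a weakly good $(<\alpha)$-independence relation, are direct assumptions. Hence the moreover clause of Lemma \ref{weakly-good-coheir-0} applies and yields $\is' = \is \rest \Ksatp{\lambda^+}$.

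There is no serious obstacle here; the proof is essentially a bookkeeping exercise that checks the hypotheses of the more general Lemma \ref{weakly-good-coheir-0} for the particular coheir relation. The only subtlety worth flagging is the matching of the ambient classes: one must be careful that the restriction $\rest \Ksatp{\kappa'}$ does not disturb the witness property (it does not, since the witness property is inherited downward to any sub-AC), and that the density hypothesis on $K_\lambda'$ is indeed packaged into being a $\lambda$-generator.
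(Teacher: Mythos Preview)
Your proposal is correct and follows exactly the paper's approach: the paper's proof is the single line ``By Lemma \ref{weakly-good-coheir-0} applied with $K' = \Ksatp{\kappa'}$.'' You have simply unpacked the hypothesis-checking that the paper leaves implicit, in particular noting that the right $(<\kappa)$-witness property from Fact \ref{coheir-syn} yields the right $\lambda$-witness property needed for the moreover clause.
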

\begin{proof}
By Lemma \ref{weakly-good-coheir-0} applied with $K' = \Ksatp{\kappa'}$.
\end{proof}

We end this section by showing how to build a weakly good independence relation in any stable fully tame and short AEC (with amalgamation and no maximal models).

\begin{thm}
  Let $K$ be a $\LS (K)$-tame AEC with amalgamation and no maximal models. Let $\kappa = \beth_\kappa > \LS (K)$. Assume $K$ is stable and $(<\kappa)$-tame and short for types of length less than $\alpha$, $\alpha \ge 2$.

  If $K_\kappa \neq \emptyset$, then $\isch{\kappa} (K)^{<\alpha} \rest \Ksatp{(2^{\kappa})^+}$ is a pre-weakly good $(<\alpha, \ge (2^{\kappa})^+)$-independence relation. Moreover if $\alpha = \infty$, then it is weakly good.
\end{thm}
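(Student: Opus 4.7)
The strategy is to produce, via Proposition \ref{weakly-good-examples}.(1)(b), a $\lambda$-generator for a weakly good independence relation (with $\lambda := 2^\kappa$), apply Theorem \ref{weakly-good-up} to extract a pre-weakly good relation on $\Ksatp{\lambda^+}$, and then identify that relation with coheir via Lemma \ref{weakly-good-coheir}. Set $\alpha' := \min(\alpha, \lambda^+)$.

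First I would check that $K$ is stable in $\lambda$. Since $K$ is $(<\kappa)$-tame, has amalgamation, and is stable, Fact \ref{op-facts}.(2) gives that $K$ does not have the $(<\kappa)$-order property, and combined with $\lambda^{<\kappap} = \lambda$ (direct computation from $\lambda = 2^\kappa$ and $\kappa = \beth_\kappa$, handling the regular and singular cases of $\kappa$ separately) this yields stability in $\lambda$. Furthermore $K_\lambda$ is nonempty with no maximal models (using $K_\kappa \neq \emptyset$ and the no maximal models hypothesis on $K$), $K$ is $(<\kappa)$-tame and short for types of length less than $\alpha'$, and $(\alpha_0+2)^{<\kappap} \le \lambda^{<\kappap} = \lambda$ for all $\alpha_0 < \alpha' \le \lambda^+$. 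Thus Proposition \ref{weakly-good-examples}.(1)(b) gives that $(K_\lambda, \is_\lambda)$ is a $\lambda$-generator for a weakly good $(<\alpha')$-independence relation, where $\is := \isch{\kappa}(K)^{<\alpha'}$. Theorem \ref{weakly-good-up} then yields that $\Kupp{(K_\lambda, \is_\lambda)} \rest \Ksatp{\lambda^+}$ is pre-weakly good. Since Fact \ref{coheir-syn}.(2)(b) gives $\slc{<\alpha'}(\is) = \lambda^+$, Lemma \ref{weakly-good-coheir} identifies this relation with $\isch{\kappa}(K)^{<\alpha'} \rest \Ksatp{\lambda^+}$, completing the pre-weakly good claim when $\alpha \le \lambda^+$.

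For $\alpha > \lambda^+$, I would split the weakly good conditions of Definition \ref{weakly-good-def} into two groups. The \emph{local} ones (local character for $\alpha_0 < \min(\lambda^+, \alpha) = \lambda^+$, and local extension and uniqueness on $\is_\lambda^{<\lambda^+}$) only concern types of length $<\lambda^+$ and are inherited from the $\alpha = \lambda^+$ case just handled. The \emph{global} ones (base monotonicity, disjointness, existence, transitivity, $\pre(\is)$-uniqueness, left and right witness properties) hold for $\isch{\kappa}(K)^{<\alpha}$ by Fact \ref{coheir-syn} (using that $K$ is $(<\kappa)$-tame and short for types of length less than $\alpha$) and transfer to $\cl(\pre(\isch{\kappa}(K)^{<\alpha} \rest \Ksatp{\lambda^+}))$ via Proposition \ref{cl-basics}, giving the pre-weakly good claim for all $\alpha \ge 2$.

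For the ``moreover'' (weakly good when $\alpha = \infty$), I need the weakly good properties for $\is := \isch{\kappa}(K) \rest \Ksatp{\lambda^+}$ itself rather than just for $\cl(\pre(\is))$. Most follow directly from Fact \ref{coheir-syn} using full tameness and shortness. The crucial step is local extension on $\is_\lambda^{<\lambda^+}$: the local extension of $\cl(\pre(\is))$ (from the established pre-weakly good property) transfers to $\pre(\is)$ by Proposition \ref{cl-basics}.(1), and combined with local uniqueness of $\is$ (from Fact \ref{coheir-syn}.(2)(d)), Proposition \ref{cl-basics}.(\ref{cl-basics-ext}) gives local extension for $\is$ itself. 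The main obstacle will be the cardinal-arithmetic verification of stability of $K$ in $\lambda = 2^\kappa$ when $\kappa$ is singular, where the standard bound $\slc{1}(\isch{\kappa}(K)) \le (2^\kappa)^+$ does not immediately yield stability in $\lambda$ via Proposition \ref{class-props} and requires exploiting $(2^\kappa)^\kappa = 2^\kappa$; a secondary care point is upgrading the $K$-witnesses produced by local character of coheir to saturated witnesses inside $\Ksatp{\lambda^+}$, which uses base monotonicity of coheir together with $\lambda^+$-saturation of the ambient model.
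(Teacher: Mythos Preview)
Your proposal is correct and follows essentially the same route as the paper: reduce to $\alpha' = \min(\alpha,\lambda^+)$ via the left witness property of coheir, verify that $(K_\lambda,\is_\lambda)$ is a $\lambda$-generator by Proposition~\ref{weakly-good-examples}, invoke Theorem~\ref{weakly-good-up}, and identify the result with coheir on $\Ksatp{\lambda^+}$ via Lemma~\ref{weakly-good-coheir}; the ``moreover'' is handled the same way, bootstrapping local extension from $\pre(\is)$ to $\is$ using Proposition~\ref{cl-basics}.(\ref{cl-basics-ext}) and uniqueness from Fact~\ref{coheir-syn}. Your explicit split into local versus global conditions for $\alpha > \lambda^+$ is more careful than the paper's one-line reduction, but the content is the same.

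One small sharpening regarding the obstacle you flag: rather than attempting to push Proposition~\ref{class-props} through with the coheir bound $\slc{1} \le (2^{<\kappap})^+$ (which, as you note, runs into trouble for singular $\kappa$), use the hypothesis that $K$ is $\LS(K)$-tame and apply Fact~\ref{op-facts}.(2) with that tameness parameter. Since $\kappa = \beth_\kappa > \LS(K)$, the Hanf number $\hanf{\LS(K)}$ is below $\kappa$, so the $\mu,\lambda_0$ produced by Fact~\ref{op-facts}.(2)(b) are both below $\kappa < \lambda$; then $\lambda^{<\mu} \le \lambda^{<\kappa} = 2^\kappa = \lambda$ gives stability in $\lambda$ without any case split on the cofinality of $\kappa$. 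This is presumably what the paper's one-line ``From Fact~\ref{op-facts}, we get that $K$ is stable in $\lambda$'' is invoking.
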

\begin{proof}
  Let $\lambda := 2^{\kappa}$. By Fact \ref{coheir-syn}, $\isch{\kappa} (K)^{<\alpha} \rest \Ksatp{\lambda^+}$ already has several of the properties of a weakly good independence relation, and in particular has the left $\lambda$-witness property so it is enough to check that $\is := \isch{\kappa} (K)^{<(\min(\alpha, \lambda^+))} \rest \Ksatp{\lambda^+}$ is weakly good, so assume now without loss of generality that $\alpha \le \lambda^+$. Note that by Fact \ref{coheir-syn}, $\slc{<\alpha} (\is) \le (\lambda^{\kappa})^+ = \lambda^+$. By Lemma \ref{weakly-good-coheir} it is enough to check that $(K_\lambda, \is_\lambda)$ is a $\lambda$-generator for a weakly good $(< \alpha)$-independence relation. From Fact \ref{op-facts}, we get that $K$ is stable in $\lambda$. Finally, note that $K_\lambda \neq \emptyset$. Now apply Proposition \ref{weakly-good-examples}.

  If $\alpha = \infty$, then by Fact \ref{coheir-syn}, $\is$ has uniqueness. Since $\is$ is pre-weakly good, $\pre (\is_\lambda)$ has extension, so by Proposition \ref{cl-basics}.(\ref{cl-basics-ext}), $\is_\lambda$ also has extension. The other properties of a weakly good independence relation follow from Fact \ref{coheir-syn}.
\end{proof}

\section{Good independence relations}\label{good-sec}

Good frames were introduced by Shelah \cite[Definition II.2.1]{shelahaecbook} as a ``bare bone'' definition of superstability in AECs. Here we adapt Shelah's definition to independence relations. We also define a variation, being \emph{fully} good. This is only relevant when the types are allowed to have length at least $\lambda$, and asks for more continuity (like in \cite{tame-frames-revisited-v5}, but the continuity property asked for is different). This is used to enlarge a good frame in the last sections.

\begin{defin}\label{goodness-def}\index{good (independence relation or frame)} \index{pre-good|see {good (independence relation or frame)}}\index{fully good|see {good (independence relation or frame)}} \index{fully pre-good|see {good (independence relation or frame)}} \
  \begin{enumerate}
    \item A \emph{good $(<\alpha, \F)$-independence relation} $\is = (K, \nf)$ is a $(<\alpha, \F)$-independence relation satisfying:
      \begin{enumerate}
        \item $K$ is a nonempty AEC in $\F$, $\LS (K) = \lambda_{\is}$, $K$ has no maximal models and joint embedding, $K$ is stable in all cardinals in $\F$.
        \item $\is$ has base monotonicity, disjointness, symmetry, uniqueness, existence, extension, the left $\lambda_{\is}$-witness property, and for all $\alpha_0 < \alpha$ with $|\alpha_0|^+ < \theta_{\is}$, $\clc{\alpha_0} (\is) = |\alpha_0|^+ + \aleph_0$ and $\slc{\alpha_0} (\is) = |\alpha_0|^+ + \lambda_{\is}^+$.
      \end{enumerate}
    \item A \emph{type-full good $(<\alpha, \F)$-frame} $\s$ is  a pre-$(<\alpha, \F)$-frame so that $\cl (\s)$ is good.
    \item $\is$ is \emph{pre-good} if $\pre (\is)$ is good. 
  \end{enumerate}

  When we add ``fully'', we require in addition that the frame/independence relation satisfies full model-continuity.
\end{defin}
\begin{remark}
  This paper's definition is equivalent to that of Shelah \cite[Definition II.2.1]{shelahaecbook} if we remove the requirement there on the existence of a superlimit (as was done in almost all subsequent papers, for example in \cite{jrsh875}) and assume the frame is type-full\index{type-full} (i.e.\ the basic types are all the nonalgebraic types). For example, the continuity property that Shelah requires follows from $\clc{1} (\s) = \aleph_0$ (\cite[Claim II.2.17.(3)]{shelahaecbook}).
\end{remark}
\begin{remark}
  If $\is$ is a good $(<\alpha, \F)$-independence relation (except perhaps for the symmetry axiom) then $\is$ is weakly good.
\end{remark}

\begin{defin}\label{goodness-def-2}\index{good (AEC)}\index{$(<\alpha, \F)$-good|see {good (AEC)}} \index{fully good (AEC)|see {good (AEC)}} \index{fully $(<\alpha, \F)$-good|see {good (AEC)}}
  An AEC $K$ is \emph{[fully] $(<\alpha, \F)$-good} if there exists a [fully] $(<\alpha, \F)$-good independence relation $\is$ with $K_{\is} = K$. When $\alpha = \infty$ and $\F = [\LS (K), \infty)$, we omit them.
\end{defin}

As in the previous section, we give conditions for a generator to induce a good independence relation:

\begin{defin}\index{generator for a good independence relation}\index{$\lambda$-generator for a good $(<\alpha)$-independence relation|see {generator for a good independence relation}}
    $(K, \is)$ is a \emph{$\lambda$-generator for a good $(<\alpha)$-independence relation} if:

    \begin{enumerate}
      \item $(K, \is)$ is a $\lambda$-generator for a weakly good $(<\alpha)$-independence relation.
      \item $\Kup$ is $\lambda$-tame.
      \item There exists $\mu \ge \lambda$ such that $\Kup_{\mu}$ has joint embedding.
      \item Local character: For all $\alpha_0 < \min(\alpha, \lambda)$, there exists an ordering $\leg$ such that $(K_{\is}, \leg)$ is a skeleton of $K$ and $\clc{\alpha_0} (\is, \leg) = |\alpha_0|^+ + \aleph_0$.
    \end{enumerate}
\end{defin}
\begin{remark}
  If $(K, \is)$ is a $\lambda$-generator for a good $(<\alpha)$-independence relation, then it is a $\lambda$-generator for a weakly good $(<\alpha)$-independence relation. Moreover if $\alpha < \lambda^+$, the weak chain local character axiom follows from the local character axiom. 
\end{remark}

As before, the generator can always be taken to be of a particular form: 

\begin{lem}\label{gen-canon-good}
  Let $(K, \is)$ be a $\lambda$-generator for a good $(<\alpha)$-independence relation. Let $\is' := \insp{\lambda} (K)^{<\alpha}$. Then:

  \begin{enumerate}
    \item $(K, \is')$ is a $\lambda$-generator for a good $(<\alpha)$-independence relation and $\ltu$ is the ordering witnessing local character.
    \item $\pre (\Kupp{(K, \is)}) \rest \Ksatpp{(\Kup)}{\lambda^+} = \pre (\Kupp{(K, \is')}) \rest \Ksatpp{(\Kup)}{\lambda^+}$.
  \end{enumerate}
\end{lem}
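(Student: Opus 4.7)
The plan is to invoke Lemma \ref{gen-canon} to obtain all the weakly good parts for free, and then verify the one additional axiom required for a good generator, namely local character with a suitable ordering. Part (2) of the statement is in fact identical in form to Lemma \ref{gen-canon}.(2); its proof only uses that both generators induce weakly good independence relations, that $\nf_{\is} \subseteq \nf_{\is'}$ (which holds because $\is$ is extended by $\lambda$-nonsplitting, built into the definition of a weakly good generator), together with the right $\lambda$-model-witness property via \cite[Lemma 4.1]{bgkv-v3-toappear}. All of these ingredients are still available here, so that proof transfers verbatim and no new work is needed for part (2).

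For part (1), Lemma \ref{gen-canon} already delivers that $(K, \is')$ is a $\lambda$-generator for a \emph{weakly} good $(<\alpha)$-independence relation with weak chain local character witnessed by $\ltu$. What remains to verify for a \emph{good} generator is: $\Kup$ is $\lambda$-tame, $\Kup_\mu$ has joint embedding for some $\mu \ge \lambda$, and the strengthened local character clause. The first two depend only on $K$ (which is unchanged) and are therefore inherited directly from the hypothesis that $(K,\is)$ is a generator for a good independence relation.

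The remaining and main work is the local character clause, and I claim $\leu$ itself is the witness. Fix $\alpha_0 < \min(\alpha,\lambda)$ and let $\leg$ be an ordering witnessing local character for $(K,\is)$, so $\clc{\alpha_0}(\is,\leg) = |\alpha_0|^+ + \aleph_0$. Applying Lemma \ref{univ-lc}.(1) with $K' = K_{\is}$ (trivially a skeleton of itself) gives $\clc{\alpha_0}(\is,\leu) \le \clc{\alpha_0}(\is,\leg)$. Since $\is$ is extended by the $\lambda$-nonsplitting relation $\is'$ (as required by the definition of a weakly good generator), we have $\nf_\is \subseteq \nf_{\is'}$, so any $M_i$ exhibited by local character for $\is$ also works for $\is'$; hence $\clc{\alpha_0}(\is',\leu) \le \clc{\alpha_0}(\is,\leu) \le |\alpha_0|^+ + \aleph_0$. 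The reverse inequality is built into the definition of $\clc{\alpha_0}$.

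The main bookkeeping obstacle will be verifying once and for all that $(K, \leu)$ really qualifies as a skeleton of $(K, \lea)$ in the sense of Definition \ref{sub-skeletal-ac}, so that both the statement ``local character witnessed by $\leu$'' and the invocation of Lemma \ref{univ-lc} are formally legitimate. This is handled exactly as in Lemma \ref{univ-skel-lem} and Example \ref{skel-examples}, using amalgamation, no maximal models, and stability of $K$ in $\lambda$ to produce universal extensions at every step and to extract an upper bound inside $K'$ for any $\lea$-chain that sits below a common model. Apart from this routine check, the proof is essentially a transcription of Lemma \ref{gen-canon} together with the monotonicity observation $\nf_\is \subseteq \nf_{\is'}$ applied to the local character cardinals.
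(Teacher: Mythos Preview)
Your overall strategy matches the paper's: part~(2) is indeed identical to Lemma~\ref{gen-canon}.(2), and for part~(1) the weakly good axioms, tameness, and joint embedding transfer immediately, leaving only local character. However, your chain of inequalities for local character has a gap when $K_{\is}$ is a \emph{proper} dense sub-AC of $K$ (which the definition of a generator allows, e.g.\ when $\is$ comes from coheir on $\kappa$-saturated models).

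The problem is your second inequality $\clc{\alpha_0}(\is',\leu) \le \clc{\alpha_0}(\is,\leu)$. The left side is computed over $\leu$-chains in $K_{\is'} = K$, while the right side is computed over $\leu$-chains in $K_{\is}$. Since a $\leu$-increasing chain in $K$ need not lie in $K_{\is}$, the containment $\nf_{\is} \subseteq \nf_{\is'}$ alone does not bridge these two quantities; your sentence ``any $M_i$ exhibited by local character for $\is$ also works for $\is'$'' only applies to chains that are already in $K_{\is}$. Relatedly, your parenthetical ``trivially a skeleton of itself'' glosses over the fact that $(K_{\is},\leg)$ must be checked to be a skeleton of $(K_{\is},\lea)$, not of $(K,\lea)$; this does hold, but it is not literally trivial since $\leg \neq \lea$.

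The paper sidesteps this by applying Lemma~\ref{univ-lc}.(1) directly with $\is'$ (whose underlying class is all of $K$) in the role of the lemma's $\is$, and with $(K_{\is},\leg)$ as the skeleton of $K$. This yields $\clc{\alpha_0}(\is',\leu) \le \clc{\alpha_0}(\is' \rest K_{\is}, \leg)$, and now both sides live over chains in $K_{\is}$, so the inclusion $\nf_{\is} \subseteq \nf_{\is' \rest K_{\is}}$ legitimately gives $\clc{\alpha_0}(\is' \rest K_{\is}, \leg) \le \clc{\alpha_0}(\is,\leg) = |\alpha_0|^+ + \aleph_0$. Your argument can be repaired in the same way, or by inserting a second application of Lemma~\ref{univ-lc} (with the skeleton $(K_{\is},\leu)$ of $K$) between your two steps.
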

\begin{proof} \
  \begin{enumerate}
    \item By Lemma \ref{univ-lc} (with $K$, $\is'$, $K_{\is}$ here standing for $K, \is$, $K''$ there), $(K, \is')$ has the local character properties, witnessed by $\ltu$, and the other properties are easy to check.
    \item By Lemma \ref{gen-canon}.
  \end{enumerate}
\end{proof}

Unfortunately it is not strictly true that a generator for a good $(<\alpha)$-independence relation induces a good independence relation. For one thing, the extension property is problematic when $\alpha > \omega$ and this in turn creates trouble in the proof of symmetry. Also, we are unable to prove $\Ksatp{\lambda^+}$ is an AEC (although we suspect it should be true, see also Fact \ref{bv-sat-fact}). For the purpose of stating a clean result, we introduce the following definition: 

\begin{defin}\label{almost-good-def}\index{almost pre-good (for an independence relation)}
  $\is$ is an \emph{almost pre-good $(<\alpha, \F)$-independence relation} if:
  \begin{enumerate}
    \item It is a pre-weakly good $(<\alpha, \F)$-independence relation. 
    \item It satisfies all the conditions in the definition of a \emph{pre-good} independence relation except that: 
  \begin{enumerate}
    \item $K_{\is}$ is not required to be an AEC.
    \item $\cl (\pre(\is))$ is not required to have extension or uniqueness, but we still ask that $\pre (\is^{<\omega})$ has extension.
    \item $\cl (\pre (\is))$ is not required to have symmetry, but we still require that $\pre (\is^{<\omega})$ has full symmetry.
    \item We replace the condition on $\clc{\alpha_0} (\cl (\pre (\is)))$ by:
      \begin{enumerate}
      \item\label{almost-good-cl-pre} $\clc{<(\min (\alpha, \omega))} (\cl (\pre (\is))) = \aleph_0$.
      \item For all $\alpha_0 < \alpha$, $\clc{\alpha_0} (\is) = |\alpha_0|^+ + \aleph_0$.
      \end{enumerate}
  \end{enumerate}
  \end{enumerate}
\end{defin}

\begin{thm}\label{good-up}
  Let $(K, \is)$ be a $\lambda$-generator for a good $(<\alpha)$-independence relation. Then:

  \begin{enumerate}
    \item\label{good-up-1} $\Kup$ has joint embedding and no maximal models.
    \item\label{good-up-2} $\Kup$ is stable in every $\mu \ge \lambda$.
    \item\label{good-up-3} $\is' := \Kupp{(K, \is)} \rest \Ksatpp{(\Kup)}{\lambda^+}$ is an almost pre-good $(<\alpha, \ge \lambda^+)$-independence relation.
    \item\label{good-up-4} If $\alpha \le \omega$ and $\mu \ge \lambda^+$ is such that $\Ksatpp{(\Kup)}{\mu}$ is an AEC with Löwenheim-Skolem-Tarski number $\mu$, then $\left(\is'\right)^{<\alpha} \rest \Ksatpp{(\Kup)}{\mu}$ is a pre-good $(<\alpha, \ge \mu)$-independence relation.
  \end{enumerate}
\end{thm}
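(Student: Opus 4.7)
The plan is to verify the four clauses in order. Clause (1): absence of maximal models in $K$ propagates upward by density (a maximal model in $\Kup$ of any size $\ge \lambda$ would, via Löwenheim--Skolem-Tarski and amalgamation in $\Kup$, yield one in $K$). For joint embedding, the hypothesis supplies some $\mu \ge \lambda$ with joint embedding in $\Kup_\mu$; amalgamation together with arbitrarily large models (just established) lets one embed any two models of sizes $\ge \lambda$ into common models of size $\mu$ and combine there.

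For (2), Theorem \ref{weakly-good-up} gives that $\is' := \Kupp{(K,\is)} \rest \Ksatpp{(\Kup)}{\lambda^+}$ is pre-weakly good, so $\pre(\is')$ has uniqueness and $\slc{1}(\is') \le \lambda^{++}$. Proposition \ref{class-props} then yields stability of $K_{\is'}$ in every $\mu$ with $\mu = \mu^{\lambda^+}$, and its third clause transports stability to all $\mu \ge \lambda^+$ using that $\clc{1}(\is') \le \aleph_0$ (established in (3)). Density of $\Ksatpp{(\Kup)}{\lambda^+}$ together with Proposition \ref{skel-transfer} carries stability back to $\Kup_{\ge \lambda^+}$, while stability at $\lambda$ itself follows from stability of $K$ in $\lambda$ and density of $K$ in $\Kup_\lambda$.

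For (3), pre-weakly goodness of $\is'$ is already in hand, and joint embedding plus absence of maximal models in $K_{\is'}$ follow from (1) combined with stability and density. Chain local character $\clc{\alpha_0}(\is') \le |\alpha_0|^+ + \aleph_0$ for each $\alpha_0 < \alpha$ comes from Lemma \ref{up-indep-prop}.(\ref{77-b}) applied with the skeleton ordering $\leg$ supplied by the generator's local-character axiom; the same bound transfers to $\cl(\pre(\is'))$ because density of $\lambda^+$-saturated models together with monotonicity identifies $\is'$ with $\cl(\pre(\is'))$ on every quadruple whose right-hand side can be enveloped in a $\lambda^+$-saturated model. To upgrade local extension of $\pre(\is'_{\lambda^+})$ to extension for $\pre(\is'^{<\omega})$, I would induct on the size of the target model: at successor cardinals, use uniqueness, transitivity, and base monotonicity to amalgamate extensions along a resolution of the target into smaller pieces; at limits, use the finite chain local character just proved to relocate the base inside some $M_i$. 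Full symmetry of $\pre(\is'^{<\omega})$ then follows from Proposition \ref{indep-props}.(\ref{indep-props-sym}), since $K_{\is'}$ is stable and hence free of the order property (Fact \ref{op-facts}), chains in $K_{\is'}$ have upper bounds, and uniqueness, existence, and extension for finite-length types are all available. The $\slc{\alpha_0}(\is') = |\alpha_0|^+ + \lambda^{++}$ bound is automatic from the definition combined with $\slc{<\alpha}(\is') \le \lambda^{++}$ supplied by Lemma \ref{up-indep-prop}.(\ref{77-a}).

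For (4), the restriction of $\is'$ to $K^* := \Ksatpp{(\Kup)}{\mu}$ inherits every property verified in (3); the hypothesis that $K^*$ is an AEC with $\LS(K^*) = \mu$ removes the ``$K_{\is}$ is not an AEC'' deviation, and the restriction $\alpha \le \omega$ means that the extension and full symmetry for $\pre(\is'^{<\omega})$ proved in (3) cover every relevant type length, so $\cl(\pre(\is'^{<\alpha}))$ itself has extension and symmetry. Stability in each cardinal $\ge \mu$ is (2) restricted to $K^*$, and equality of frame and independence-relation chain local character is now supplied by Proposition \ref{cl-basics} since $K^*$ is an AEC. The main obstacle will be the extension argument for $\pre(\is'^{<\omega})$ in (3): one must track carefully that every intermediate model used in the induction lies in $K_{\is'}$ so that forking is defined on it, use Lemma \ref{pre-cl-generator} to reconcile $\is'$ with $\cl(\pre(\is'))$ at the boundary, and verify that the finite chain local character just proved is strong enough to control the base through every limit stage of the induction.
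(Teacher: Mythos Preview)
Your argument for ``no maximal models'' in clause (1) has a genuine gap. You claim that a maximal model $M \in \Kup$ of size $\theta \ge \lambda$ would, via L\"owenheim--Skolem--Tarski and amalgamation, produce a maximal model in $K = K_\lambda$. But this does not follow: if $M_0 \lea M$ has size $\lambda$ and $M_0' \gta M_0$ in $K_\lambda$, amalgamation only tells you $M_0'$ embeds over $M_0$ into some extension of $M$, which (by maximality) is $M$ itself. So $M_0'$ embeds into $M$, and no contradiction arises. Absence of maximal models in $K_\lambda$ simply does not propagate upward by soft arguments; it is a substantive result that genuinely uses the superstability-like local character hypothesis of a good generator.

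The paper proceeds by induction on $\theta \ge \lambda^+$, reducing (via Lemma \ref{gen-canon-good}) to the case where $\is$ is nonsplitting with local character witnessed by $\ltu$. Given a hypothetical maximal $M \in \Kup_\theta$ (necessarily saturated), one builds an increasing continuous $\ltu$-chain $\seq{M_i : i < \theta}$ inside $M$ with $M_i \in \Kup_{<\theta}$ and a fixed $a \in |M|$ avoided by every $M_i$; the construction of such a chain (following \cite[Lemma 5.12]{ss-tame-jsl}) is exactly where $\clc{1}(\is, \ltu) = \aleph_0$ enters. Then $M_\theta := \bigcup_i M_i$ satisfies $M_0 \ltu M_\theta$ by Lemma \ref{lem-univ}, so $M$ embeds into $M_\theta$ over $M_0$; maximality forces this to be an isomorphism, contradicting $M_\theta \lta M$. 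Your stability argument in (2) and the transfer of $\clc{<\omega}$ to $\cl(\pre(\is'))$ in (3) also lean on structural facts (that $K_{\is'}$ is an AEC, that $\is'$ agrees with $\cl(\pre(\is'))$) which are not available in this generality; the paper handles the latter via Lemma \ref{pre-cl-generator}, using the $\lambda$-tameness of $\Kup$ that is part of the good-generator axioms.
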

\begin{proof}
  Again, this follows from the arguments in \cite{ss-tame-jsl}, but we give some details. We show by induction on $\theta \ge \lambda^+$ that $\s' := \pre (\is')_{[\lambda^+, \theta]} $ is a good frame, except perhaps for symmetry and the conditions in Definition \ref{almost-good-def}. This gives (\ref{good-up-3}) (use Proposition \ref{indep-props}.(\ref{indep-props-sym}) to get symmetry, the proof of \cite[Lemma 5.9]{ss-tame-jsl} to get extension for types of finite length, and Lemma \ref{pre-cl-generator} to get (\ref{almost-good-cl-pre}) in Definition \ref{almost-good-def}), and (\ref{good-up-4}) together with (\ref{good-up-1}),(\ref{good-up-2}) (use Proposition \ref{skel-transfer}) follow.

  \begin{itemize}
    \item $\s'$ is a weakly good $(<\alpha, [\lambda^+, \theta])$-frame: By Theorem \ref{weakly-good-up}.

    \item Let $\mu \ge \lambda$ be such that $\Kup_{\mu}$ has joint embedding. By amalgamation, $\Kup_{\ge \mu}$ has joint embedding. Once it is shown that $\Kup$ has no maximal models, it will follow that $\Kup$ has joint embedding (every model of size $\ge \lambda$ extends to one of size $\mu$). Note that joint embedding is never used in any of the proofs below.
    \item To prove that $\Kup_{[\lambda, \theta]}$ has no maximal models, we can assume without loss of generality that $\alpha = 2$ and (by Lemma \ref{gen-canon-good}) that $\is = \insp{\lambda} (K)$, with $\clc{1} (\is, \ltu) = \aleph_0$. By the induction hypothesis (and the assumption that $K$ has no maximal models), $\Kup_{[\lambda, \theta)}$ has no maximal models. It remains to see that $\Kup_\theta$ has no maximal models. Assume for a contradiction that $M \in \Kup_\theta$ is maximal. Then it is easy to check that $M \in \Ksatpp{(\Kup)}{\theta}_\theta$. Build $\seq{M_i : i < \theta}$ increasing continuous and $a \in |M|$ such that for all $i < \theta$:
          \begin{enumerate}
          \item $M_i \lea M$.
          \item $M_i \ltu M_{i + 1}$.
          \item $M_i \in \Kup_{<\theta}$.
          \item $a \notin |M_i|$.
          \end{enumerate}
          
          \paragraph{\underline{This is enough}} Let $M_\theta := \bigcup_{i < \theta} M_i$. Note that $\|M_\theta\| = \theta$ and $a \in |M| \backslash |M_\theta|$, so $M_\theta \lta M$. By Lemma \ref{lem-univ}, $M_0 \ltu M_\theta$. Thus there exists $f: M \xrightarrow[M_0]{} M_\theta$ and since $M$ is maximal $f$ is an isomorphism. However $M$ is maximal whereas $M$ witnesses that $M_\theta$ is not maximal, so $M$ cannot be isomorphic to $M_\theta$, a contradiction.
          \paragraph{\underline{This is possible}} Imitate the proof of \cite[Lemma 5.12]{ss-tame-jsl} (this is where it is useful that the generator is nonsplitting and the local character is witnessed by $\ltu$).
    \item $\Kup$ is stable in all $\mu \in [\lambda^+, \theta]$: Exactly as in the proof of \cite[Theorem 5.6]{ss-tame-jsl}.
    \item $\s'$ has base monotonicity, disjointness, and uniqueness because it is weakly good. For all $\alpha_0 < \alpha$, $\clc{\alpha_0} (\is') = |\alpha_0|^+ + \aleph_0$, $\slc{\alpha_0} (\s') = |\alpha_0|^+ + \lambda^{++} = \lambda^{++}$ by Lemma \ref{up-indep-prop}.
  \end{itemize}
\end{proof}
\begin{remark}
  Our proof of no maximal models above improves on \cite[Conclusion 4.13.(3)]{shelahaecbook}, as it does not use the symmetry property. 
\end{remark}

\section{Canonicity}\label{canon-sec}

In \cite{bgkv-v3-toappear}, we gave conditions under which two independence relations are the same. There we strongly relied on the extension property, but coheir and weakly good frames only have a weak version of it. In this section, we show that if we just want to show two independence relations are the same over sufficiently saturated models, then the proofs become easier and the extension property is not needed. In addition, we obtain an explicit description of the forking relation. We conclude that coheir, weakly good frames, and good frames are (in a sense made precise below) canonical. This gives further evidence that these objects are not ad-hoc and answers several questions in \cite{bgkv-v3-toappear}. The results of this section are also used in Section \ref{sec-ss} to show the equivalence between superstability and strong superstability.

\begin{lem}[The canonicity lemma]\label{canon-lem}
    Let $K$ be an AEC with amalgamation and let $\lambda \ge \LS (K)$ be such that $K$ is stable in $\lambda$. Let $K'$ be a dense sub-AC of $K$ such that $\Ksatp{\lambda^+} \subseteq K'$ and $K_\lambda'$ is dense in $K_\lambda$. Let $\is$, $\is'$ be $(<\alpha, \ge \lambda)$-independence relation with $K_{\is} = K_{\is'} = K'$. Let $\alpha_0 := \min (\alpha, \lambda^+)$.

    If:

    \begin{enumerate}
      \item $\pre (\is)$ and $\pre (\is')$ have uniqueness.
      \item $\is$ and $\is'$ have base monotonicity, the left $\lambda$-witness property, and the right $\lambda$-model-witness property.
      \item $\slc{<\alpha_0} (\is) = \slc{<\alpha_0} (\is') = \lambda^+$.
    \end{enumerate}

    Then $\pre (\is) \rest \Ksatp{\lambda^+} = \pre (\is') \rest \Ksatp{\lambda^+}$, and if in addition both $\is$ and $\is'$ have the right $\lambda$-witness property, then $\is \rest \Ksatp{\lambda^+} = \is' \rest \Ksatp{\lambda^+}$.

    Moreover for $M \lea N$ in $\Ksatp{\lambda^+}$, $p \in \gS^{<\alpha} (N)$ does not $\is$-fork over $M$ if and only if for all $I \subseteq \ell (p)$ with $|I| \le \lambda$, there exists $M_0 \lea M$ in $K_\lambda'$ such that $p^I$ does not $\snsp{\lambda} (K')$-fork over $M_0$.
\end{lem}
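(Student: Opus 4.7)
The plan is to prove the ``moreover'' explicit characterization first, from which the other assertions follow automatically since the right-hand side of the equivalence depends only on $K, K', \lambda$ and not on the particular independence relation. Throughout, I first apply the left $\lambda$-witness property of $\is$ (and $\is'$) to reduce to the case $\ell(p) \leq \lambda$: indeed, $\nfs{M}{\ba}{N}{N'}$ holds iff it holds for every subtuple of $\ba$ of length $\leq \lambda$, so we may work with $p^I$ for a fixed $I \subseteq \ell(p)$ of size $\leq \lambda$ and drop the quantifier over $I$.

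For the forward direction, assume $p$ does not $\is$-fork over $M$, so $\nfs{M}{\ba}{N}{N'}$ where $\ba$ realizes $p$. Apply local character $\slc{\leq \lambda}(\is) = \lambda^+$ with right-hand side $M$ to find $M_0 \lea M$ in $K_{\leq \lambda}$ with $\nfs{M_0}{\ba}{M}{N'}$; by density of $K_\lambda'$ in $K_\lambda$ combined with $\lambda^+$-saturation of $M$, arrange $M_0 \in K_\lambda'$. By Fact~\ref{splitting-basics}.(\ref{splitting-basics-3}) applied to the pre-frame $\pre(\is) \rest K'$ (which has uniqueness by hypothesis), $p \rest M$ does not $\sns(K')$-fork over $M_0$, hence does not $\snsp{\lambda}(K')$-fork over $M_0$. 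To promote this nonsplitting from $p \rest M$ to $p$ itself: given $N_1, N_2 \lea N$ of size $\leq \lambda$ containing $M_0$ and $f \colon N_1 \cong_{M_0} N_2$, both $f(p \rest N_1)$ and $p \rest N_2$ are Galois types over $N_2$ that agree on $M_0$ (since $f$ fixes $M_0$); by right monotonicity of $\is$ and invariance under a suitable extension of $f$, both are $\pre(\is)$-nonforking extensions of $p \rest M_0$, so uniqueness of $\pre(\is)$ forces them to be equal.

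For the backward direction, assume $p$ does not $\snsp{\lambda}(K')$-fork over some $M_0 \lea M$ in $K_\lambda'$. By local character of $\is$ applied with right-hand side $N$, together with base monotonicity and density inside the $\lambda^+$-saturated model $N$, find $M_1 \in K_\lambda'$ with $M_0 \lea M_1 \lea N$ and $\nfs{M_1}{\ba}{N}{N'}$. Using $\lambda^+$-saturation of $M$, realize a copy $M_1^* \lea M$ of $M_1$ over $M_0$, yielding an isomorphism $h \colon M_1 \to M_1^*$ fixing $M_0$ (extended via amalgamation to the ambient); the assumed nonsplitting of $p$ over $M_0$ combined with invariance of $\pre(\is)$ identifies the nonforking of $p$ over $M_1$ with nonforking over $M_1^*$, and uniqueness of $\pre(\is)$ pins down $p$ so that $\nfs{M_1^*}{\ba}{N}{N'}$. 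Since $M_1^* \lea M$, base monotonicity of $\is$ gives $\nfs{M}{\ba}{N}{N'}$, as required.

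With the moreover established, $\pre(\is) \rest \Ksatp{\lambda^+} = \pre(\is') \rest \Ksatp{\lambda^+}$ is immediate: both relations satisfy the same explicit characterization involving only $K'$ and $\snsp{\lambda}$. Under the additional right $\lambda$-witness hypothesis, $\is \rest \Ksatp{\lambda^+} = \is' \rest \Ksatp{\lambda^+}$ then follows because that property reduces general nonforking to nonforking against sets of size $\leq \lambda$, and each such set can be absorbed via amalgamation into a submodel of $N$ of size $\leq \lambda$, bringing us back to the pre-frame comparison already established. The main obstacle is the promotion step in the forward direction (and its dual in the backward direction), where nonsplitting or nonforking must be transported between the interior of $M$ and the larger model $N$; this is where the $\lambda^+$-saturation of $M$ together with the uniqueness of $\pre(\is)$ do the essential work.
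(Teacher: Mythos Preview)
Your promotion step in the forward direction has a real gap. You established $\nfs{M_0}{\ba}{M}{N'}$ with $M_0 \lea M$, and then claim that for $N_1, N_2 \lea N$ of size $\le\lambda$ containing $M_0$, both $f(p\rest N_1)$ and $p\rest N_2$ are $\pre(\is)$-nonforking extensions of $p\rest M_0$. But right monotonicity from $\nfs{M_0}{\ba}{M}{N'}$ only yields $\nfs{M_0}{\ba}{M'}{N'}$ for $M'\lea M$; for $N_2\lea N$ with $N_2\not\lea M$ you would need to glue $\nfs{M_0}{\ba}{M}{N'}$ with the hypothesis $\nfs{M}{\ba}{N}{N'}$, i.e.\ transitivity. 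Transitivity is not among the hypotheses and does not follow from uniqueness of $\pre(\is)$ alone (Proposition~\ref{indep-props}.(\ref{indep-props-4}) needs extension as well). The backward direction has the mirror problem: invariance applied to $h:M_1\cong_{M_0}M_1^*$ sends $\nfs{M_1}{\ba}{N}{N'}$ to a statement about $h(\ba)$ over $h[N]$, not about $\ba$ over $N$; nonsplitting only tells you $h(p\rest M_1)=p\rest M_1^*$, and the sentence ``uniqueness of $\pre(\is)$ pins down $p$ so that $\nfs{M_1^*}{\ba}{N}{N'}$'' is doing unjustified work.

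The paper does not attempt a direct argument. It routes through the generator machinery: both $(K_\lambda,\is_\lambda)$ and $(K_\lambda,\is'_\lambda)$ are $\lambda$-generators for weakly good relations (Proposition~\ref{weakly-good-examples}), Lemma~\ref{gen-canon} shows the generated pre-frame on $\Ksatp{\lambda^+}$ is the same as the one coming from $\insp{\lambda}(K_\lambda)$ (this is where the explicit nonsplitting description originates), and Lemma~\ref{weakly-good-coheir-0} identifies the generated relation with $\pre(\is)\rest\Ksatp{\lambda^+}$. The missing ingredient in your approach is \emph{local extension} at $\lambda^+$ for the generated relation, established inside Theorem~\ref{weakly-good-up} via weak extension of nonsplitting (Fact~\ref{splitting-basics}.(\ref{splitting-weak-props})), which is available precisely because $M_0\ltu M$ when $M\in\Ksatp{\lambda^+}_{\lambda^+}$ and $\|M_0\|=\lambda$. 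That extension property is what makes the uniqueness-based comparison (\`a la \cite[Lemma 4.1]{bgkv-v3-toappear}) go through; without it your direct argument stalls exactly at the points above.
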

\begin{proof}
  By Fact \ref{jep-partition}, we can assume without loss of generality that $K$ has joint embedding. If $K_{\lambda^+} = \emptyset$, there is nothing to prove so assume $K_{\lambda^+} \neq \emptyset$. Using joint embedding, it is easy to see that $K_\lambda$ is nonempty and has no maximal models. By the left $\lambda$-witness property, we can assume without loss of generality that $\alpha \le \lambda^+$, i.e.\ $\alpha = \alpha_0$. By Proposition \ref{weakly-good-examples}, $(K, \is)$ and $(K, \is')$ are $\lambda$-generators for a weakly good $(<\alpha)$-independence relation.   By Lemma \ref{gen-canon}, $\pre (\Kupp{(K, \is)}) \rest \Ksatp{\lambda^+} = \pre (\Kupp{(K, \is')}) \rest \Ksatp{\lambda^+}$.

By Lemma \ref{weakly-good-coheir-0}, for $x \in \{\is, \is'\}$, $\pre (\Kupp{(K, x)}) \rest \Ksatp{\lambda^+} = \pre (x) \rest \Ksatp{\lambda^+}$, so the result follows (the definition of $(K, x)_{\ge \lambda}$ and Lemma \ref{gen-canon} also give the moreover part). The moreover part of lemma \ref{weakly-good-coheir-0} says that if $x \in \{\is, \is'\}$ has the right $\lambda$-witness property, then $\Kupp{(K, x)} \rest \Ksatp{\lambda^+} = x \rest \Ksatp{\lambda^+}$, so in case both $\is$ and $\is'$ have the right $\lambda$-witness property, we must have $\is \rest \Ksatp{\lambda^+} = \is' \rest \Ksatp{\lambda^+}$.
\end{proof}
\begin{remark}
  If $K$ is an AEC with amalgamation, $K'$ is a dense sub-AC of $K$ such that $\Ksatp{\lambda^+} \subseteq K'$ and $K_\lambda'$ is dense in $K_\lambda$, and $\is$ is a $(\le 1, \ge \lambda)$-independence relation with $K_{\is} = K'$ and base monotonicity, uniqueness, $\slc{1} (\is) = \lambda^+$, then by the proof of Proposition \ref{class-props} and Lemma \ref{skel-transfer} $K$ is stable in any $\mu \ge \LS (K)$ with $\mu = \mu^\lambda$.
\end{remark}

\begin{thm}[Canonicity of coheir]\label{canon-coheir}
  Let $K$ be an AEC with amalgamation. Let $\kappa = \beth_\kappa > \LS (K)$. Assume $K$ is $(<\kappa)$-tame and short for types of length less than $\alpha$, $\alpha \ge 2$. 

  Let $\lambda \ge \kappa$ be such that $K$ is stable in $\lambda$ and $(\alpha_0 + 2)^{<\kappap} \le \lambda$ for all $\alpha_0 < \min (\lambda^+, \alpha)$. Let $\is$ be a $(<\alpha, \ge \lambda)$-independence relation so that:

  \begin{enumerate}
    \item $K_{\is}$ is a dense sub-AC of $K$ so that $\Ksatp{\lambda^+} \subseteq K_{\is}$ and $\left(K_{\is}\right)_\lambda$ is dense in $K_\lambda$.
    \item $\pre (\is)$ has uniqueness.
    \item $\is$ has base monotonicity, the left $\lambda$-witness property, and the right $\lambda$-model-witness property.
    \item $\slc{<\min (\lambda^+, \alpha)} (\is) = \lambda^+$.
  \end{enumerate}

  Then $\pre (\is) \rest \Ksatp{\lambda^+} = \pre (\isch{\kappa} (K)^{<\alpha}) \rest \Ksatp{\lambda^+}$. If in addition $\is$ has the right $\lambda$-witness property, then $\is \rest \Ksatp{\lambda^+} = \isch{\kappa} (K)^{<\alpha} \rest \Ksatp{\lambda^+}$.
\end{thm}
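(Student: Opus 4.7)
The plan is to deduce the theorem by applying the Canonicity Lemma (Lemma~\ref{canon-lem}) to $\is$ and $\is' := \isch{\kappa}(K)^{<\alpha}$. Since the two relations live on different underlying classes ($K_{\is}$ and $\Ksatp{\kappa}$ respectively), the first step is to restrict both to a suitable common dense sub-AC of $K$ containing $\Ksatp{\lambda^+}$, and then verify that the coheir side meets the hypotheses of Lemma~\ref{canon-lem}; the hypotheses on $\is$ are given outright.

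Set $K'' := K_{\is} \cap \Ksatp{\kappa}$, as a sub-AC of $K$ under the inherited ordering. Because $\lambda \ge \kappa$ we have $\Ksatp{\lambda^+} \subseteq \Ksatp{\kappa}$, so together with $\Ksatp{\lambda^+} \subseteq K_{\is}$ this gives $\Ksatp{\lambda^+} \subseteq K''$. For density of $K''_\lambda$ in $K_\lambda$, note that $(K_{\is})_\lambda$ is dense by assumption, while $\Ksatp{\kappa}_\lambda$ is dense by stability of $K$ in $\lambda$ when $\kappa < \lambda$ and by the standard argument using $\kappa = \beth_\kappa$ when $\kappa = \lambda$ (as in the proof of Proposition~\ref{weakly-good-examples}). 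A back-and-forth of length $\omega$ alternating between extensions in the two classes and taking a union at stage $\omega$ then yields a member of $K''_\lambda$ above any prescribed $M \in K_\lambda$.

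Next I verify that $\is' \rest K''$ satisfies the hypotheses of Lemma~\ref{canon-lem}. Because $K$ is stable in $\lambda \ge \kappa$ and is $(<\kappa)$-tame, Fact~\ref{op-facts} rules out the $(<\kappa)$-order property of length $\kappa$. Fact~\ref{coheir-syn} then supplies: base monotonicity and left and right $(<\kappa)$-witness properties (hence in particular the left $\lambda$-witness and right $\lambda$-model-witness properties since $\lambda \ge \kappa$); uniqueness of $\pre(\is')$ from part (2)(d) using the $(<\kappa)$-tameness and shortness hypothesis; and the bound $\slc{\alpha_0}(\is') \le \left((\alpha_0 + 2)^{<\kappap}\right)^+ \le \lambda^+$ for each $\alpha_0 < \min(\lambda^+, \alpha)$, by part (2)(b) combined with the standing assumption $(\alpha_0 + 2)^{<\kappap} \le \lambda$. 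Restriction to $K''$ preserves all these properties, and the local character bound for $\is \rest K''$ is preserved by using base monotonicity of $\is$ together with the density of $\Ksatp{\kappa}_\lambda$: given a small base $M_0 \lea M$ in $K_{\is}$ witnessing nonforking in $\is$, replace it by a $\kappa$-saturated extension inside $M$ (which lives in $K''$) and invoke base monotonicity.

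Now Lemma~\ref{canon-lem} applied to $\is \rest K''$ and $\is' \rest K''$ delivers $\pre(\is) \rest \Ksatp{\lambda^+} = \pre(\is') \rest \Ksatp{\lambda^+}$, as required. For the moreover clause, if $\is$ has the right $\lambda$-witness property then so does $\is'$ by Fact~\ref{coheir-syn}(1), and the corresponding part of Lemma~\ref{canon-lem} gives $\is \rest \Ksatp{\lambda^+} = \is' \rest \Ksatp{\lambda^+}$. I expect the main technical subtlety to be justifying that $K''$ is a genuine dense sub-AC and that, after restriction, local character still yields witnesses of size at most $\lambda$ living in $K''$; both are handled by the back-and-forth construction and the base monotonicity trick above.
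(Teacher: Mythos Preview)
Your overall strategy — verify the hypotheses of Lemma~\ref{canon-lem} for $\is$ and for coheir and then invoke that lemma — is exactly the paper's. The difficulty is that Lemma~\ref{canon-lem} as stated asks the two relations to share an underlying class, and your fix of passing to $K'' := K_{\is} \cap \Ksatp{\kappa}$ creates a genuine gap. Your $\omega$-length back-and-forth need not produce a member of $K''$: the union at stage $\omega$ need not be $\kappa$-saturated (a subset of size $<\kappa$ can be spread across all finitely-indexed stages once $\kappa > \aleph_0$), and it need not lie in $K_{\is}$ either, since $K_{\is}$ is only assumed to be a coherent abstract class, not one closed under chains. The same problem recurs in your local-character step: you ask for a $\kappa$-saturated extension of $M_0$ \emph{inside} $M$ that also lies in $K_{\is}$, but nothing in the hypotheses guarantees such a model.

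The paper avoids this by not forcing a common underlying class at all. After reducing (via Fact~\ref{jep-partition} and Fact~\ref{hanf-existence}) to the case where $K$ has joint embedding and no maximal models, it verifies separately — using Proposition~\ref{weakly-good-examples} — that $(K_\lambda, \is_\lambda)$ (on $(K_{\is})_\lambda$) and $(K_\lambda, \isch{\kappa}(K)^{<\alpha}_\lambda)$ (on $\Ksatp{\kappa}_\lambda$) are each $\lambda$-generators for a weakly good independence relation. The engine behind Lemma~\ref{canon-lem} is Lemma~\ref{gen-canon}, which shows that \emph{any} such generator yields the same pre-frame on $\Ksatp{\lambda^+}$ as the nonsplitting generator $\insp{\lambda}(K)$; this step is insensitive to whether the two generators sit on the same dense sub-AC. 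One then recovers $\pre(\is)\rest\Ksatp{\lambda^+}$ and $\pre(\isch{\kappa}(K)^{<\alpha})\rest\Ksatp{\lambda^+}$ from their respective generators via Lemma~\ref{weakly-good-coheir-0} (using the right $\lambda$-model-witness property, which coheir has in the strong $(<\kappa)$-form). So rather than intersecting, you should simply run the generator argument for each relation on its own natural class.
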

\begin{proof}
  By Fact \ref{jep-partition}, we can assume without loss of generality that $K$ has joint embedding. If $K_{\lambda^+} = \emptyset$, there is nothing to prove so assume $K_{\lambda^+} \neq \emptyset$. By Fact \ref{hanf-existence}, $K$ has arbitrarily large models so no maximal models. Let $\is' := \isch{\kappa} (K)^{<\alpha}$. By the proof of Proposition \ref{weakly-good-examples}, $\is' \rest K_{\is}$ satisfies the hypotheses of Lemma \ref{canon-lem}. Moreover, it has the right $(<\kappa)$-witness property so the result follows.
\end{proof}

\begin{thm}[Canonicity of weakly good independence relations]\label{canon-weakly-good}
    Let $K$ be an AEC with amalgamation and let $\lambda \ge \LS (K)$. Let $K'$ be a dense sub-AC of $K$ such that $\Ksatp{\lambda^+} \subseteq K'$ and $K_\lambda'$ is dense in $K_\lambda$. Let $\is$, $\is'$ be weakly good $(<\alpha, \ge \lambda)$-independence relations with $K_{\is} = K_{\is'} = K'$.

    Then $\pre (\is) \rest \Ksatp{\lambda^+} = \pre (\is') \rest \Ksatp{\lambda^+}$. If in addition both $\is$ and $\is'$ have the right $\lambda$-witness property, then $\is \rest \Ksatp{\lambda^+} = \is' \rest \Ksatp{\lambda^+}$.  
\end{thm}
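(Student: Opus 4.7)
The plan is to reduce directly to the canonicity lemma (Lemma \ref{canon-lem}), by verifying that every hypothesis of that lemma is built into the definition of a weakly good independence relation. First, I would observe that since both $\is$ and $\is'$ are weakly good, the underlying class $K'$ and the ambient AEC $K$ satisfy that $K$ is stable in $\lambda$ (the relevant stability hypothesis for Lemma \ref{canon-lem} is immediate from Definition \ref{weakly-good-def}, which requires $K_\s$ to be stable in $\lambda_\s$, and this transfers to $K$ via density using Proposition \ref{skel-transfer}).

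Next I would go through the remaining hypotheses of Lemma \ref{canon-lem}. Uniqueness of $\pre(\is)$ and $\pre(\is')$ is part of the definition of weakly good. Base monotonicity, the left $\lambda$-witness property, and the right $\lambda$-model-witness property are also explicitly listed in Definition \ref{weakly-good-def}. For the local character hypothesis, a weakly good $(<\alpha, \ge \lambda)$-independence relation satisfies $\slc{\alpha_0}(\is) = \lambda^+$ for all $\alpha_0 < \min(\lambda^+, \alpha)$, which yields $\slc{<\alpha_0}(\is) = \lambda^+$ for $\alpha_0 = \min(\alpha, \lambda^+)$, as required.

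Having verified every hypothesis, Lemma \ref{canon-lem} applied to the pair $(\is, \is')$ yields directly that $\pre(\is) \rest \Ksatp{\lambda^+} = \pre(\is') \rest \Ksatp{\lambda^+}$. For the moreover clause, if both $\is$ and $\is'$ additionally have the right $\lambda$-witness property, then the second conclusion of Lemma \ref{canon-lem} gives $\is \rest \Ksatp{\lambda^+} = \is' \rest \Ksatp{\lambda^+}$.

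There is essentially no obstacle here: all the work has already been packaged into the canonicity lemma, and the definition of weakly good was designed precisely so that its axioms subsume the hypotheses needed. The only point worth double-checking is the handling of the parameter $\alpha$: the left $\lambda$-witness property in Definition \ref{weakly-good-def} allows us to reduce to types of length at most $\lambda^+$ (so that $\min(\alpha, \lambda^+) = \alpha_0$ does the right job in the local character clause of Lemma \ref{canon-lem}), but this reduction is also already internal to the proof of Lemma \ref{canon-lem}, so no additional argument is needed here.
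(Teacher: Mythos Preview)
Your proposal is correct and follows essentially the same route as the paper: establish stability of $K$ in $\lambda$ by transferring from $K'$ via density (Proposition~\ref{skel-transfer}), then invoke Lemma~\ref{canon-lem} directly. The paper's proof is simply more terse, omitting the explicit verification of the remaining hypotheses since they are read off immediately from Definition~\ref{weakly-good-def}.
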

\begin{proof}
  By definition of a weakly good independence relation, $K_\lambda'$ is stable in $\lambda$. Therefore by Lemma \ref{skel-transfer} $K_\lambda$, and hence $K$, is stable in $\lambda$. Now apply Lemma \ref{canon-lem}.
\end{proof}
\begin{thm}[Canonicity of good independence relations]\label{good-canon}
  If $\is$ and $\is'$ are good $(<\alpha, \ge \lambda)$-independence relations with the same underlying AEC $K$, then $\is \rest \Ksatp{\lambda^+} = \is' \rest \Ksatp{\lambda^+}$.
\end{thm}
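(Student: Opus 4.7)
The strategy is to reduce this to the previously established canonicity for weakly good independence relations and then to upgrade equality of the pre-parts to equality of the full relations using extension. The key initial observation is that every good $(<\alpha, \ge \lambda)$-independence relation is weakly good: this is noted in the remark after Definition \ref{goodness-def} (symmetry is not used in the derivation), and one can verify directly that the transitivity, $\lambda$-closure, chain-upper-bound, $\pre$-uniqueness, model-witness, local character, and local extension/uniqueness requirements of Definition \ref{weakly-good-def} all follow from the corresponding global properties of goodness (transitivity comes from uniqueness plus extension via Proposition \ref{indep-props}.(\ref{indep-props-4}), and the right $\lambda$-model-witness property from Proposition \ref{indep-props}.(\ref{indep-props-wit}) using base monotonicity, transitivity, and $\slc{<\alpha} \le \lambda^+$). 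Therefore $\is$ and $\is'$ satisfy the hypotheses of Theorem \ref{canon-weakly-good} with $K' := K$ (so trivially $\Ksatp{\lambda^+} \subseteq K'$ and $K_\lambda$ is dense in itself), yielding
$$\pre(\is) \rest \Ksatp{\lambda^+} \;=\; \pre(\is') \rest \Ksatp{\lambda^+}.$$

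To upgrade this to the full equality $\is \rest \Ksatp{\lambda^+} = \is' \rest \Ksatp{\lambda^+}$, I plan to invoke Lemma \ref{pre-to-cl}. Two items must be checked: that $\is \rest \Ksatp{\lambda^+}$ and $\is' \rest \Ksatp{\lambda^+}$ are genuine independence relations in the sense of Definition \ref{indep-rel-def}, and that each has extension. Coherence of $\Ksatp{\lambda^+}$ is inherited from $K$; amalgamation within $\Ksatp{\lambda^+}$ uses amalgamation in $K$ and the fact that a good AEC is stable in every cardinal (Definition \ref{goodness-def}), so that any amalgam can be enlarged without changing cardinality to a $\lambda^+$-saturated model. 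For extension, given $p \in \gS^{<\alpha}(MB; N)$ not $\is$-forking over $M$ with $N \in \Ksatp{\lambda^+}$ and $B \subseteq C \subseteq |N|$, the global extension property of $\is$ produces $N^\ast \gea N$ in $K$ and a nonforking extension $q \in \gS^{<\alpha}(MC; N^\ast)$; stability in $\|N^\ast\|$ then gives a $\lambda^+$-saturated $N^\dagger \gea N^\ast$ with $\|N^\dagger\| = \|N^\ast\|$, and ambient monotonicity preserves nonforking of $q$ in $N^\dagger$. The same argument works verbatim for $\is'$. With both restrictions having extension and sharing the same pre-part, Lemma \ref{pre-to-cl} (via Proposition \ref{cl-basics}.(\ref{cl-basics-1})) delivers the conclusion.

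The main technical point is verifying that extension survives the restriction to $\Ksatp{\lambda^+}$; everything else is essentially a citation of Theorem \ref{canon-weakly-good} and a short check that good implies weakly good. An alternative, even more direct, route is to bypass weak goodness entirely and apply Lemma \ref{canon-lem} directly, since a good $(<\alpha, \ge \lambda)$-independence relation manifestly satisfies its hypotheses (uniqueness of $\pre(\is)$, base monotonicity, left $\lambda$-witness, right $\lambda$-model-witness, and $\slc{<\min(\alpha, \lambda^+)} = \lambda^+$); the extension argument above then completes the proof identically.
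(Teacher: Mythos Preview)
Your proposal is correct and follows essentially the same route as the paper: apply Theorem \ref{canon-weakly-good} with $K' := K$ to obtain equality of the pre-parts on $\Ksatp{\lambda^+}$, then use extension and Lemma \ref{pre-to-cl} to conclude. The paper's own proof is just these two sentences; your additional verifications (that good implies weakly good, and that extension survives the restriction to $\Ksatp{\lambda^+}$ via stability) correctly fill in details the paper leaves implicit.
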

\begin{proof}
  By Theorem \ref{canon-weakly-good} (with $K' := K$), $\pre (\is) \rest \Ksatp{\lambda^+} = \pre (\is') \rest \Ksatp{\lambda^+}$. Since good independence relations have extension, Lemma \ref{pre-to-cl} implies $\is \rest \Ksatp{\lambda^+} = \is' \rest \Ksatp{\lambda^+}$.
\end{proof}

Recall that \cite[Question 6.14]{bgkv-v3-toappear} asked if two good $\lambda$-frames with the same underlying AEC should be the same. We can make progress toward this question by slightly refining our arguments. Note that the results below can be further adapted to work for not necessarily type-full frames (that is for two good frames, in Shelah's sense, with the same basic types and the same underlying AEC).

\begin{lem}\label{good-frame-limit}
  Let $\s$ and $\s'$ be good $(<\alpha, \lambda)$-frames with the same underlying AEC $K$ and $\alpha \le \lambda$. Let $K'$ be the class of $\lambda$-limit models of $K$ (recall Definition \ref{univ-def}). Then $\s \rest K' = \s' \rest K'$.
\end{lem}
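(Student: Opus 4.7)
The plan is to reduce the general case to the universal case, where both frames collapse onto $\sns(K)$-nonforking.

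For the universal case, fix a good $\lambda$-frame $\ts$ on $K$ and $M_0 \ltu M$ in $K$. I claim that $p \in \gS^{<\alpha}(M)$ does not $\ts$-fork over $M_0$ if and only if $p$ does not $\sns(K)$-fork over $M_0$. The forward direction is Fact \ref{splitting-basics}.(\ref{splitting-basics-3}), using that $\ts$ has uniqueness. For the converse, existence and extension of $\ts$ produce $q \in \gS^{<\alpha}(M)$ extending $p \rest M_0$ that does not $\ts$-fork over $M_0$; by the forward direction, $q$ does not $\sns(K)$-fork over $M_0$, and then weak uniqueness of $\sns(K)$ (Fact \ref{splitting-basics}.(\ref{splitting-weak-props})) forces $p = q$. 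Applied to both $\s$ and $\s'$, this shows that $\s$ and $\s'$ agree over $M_0$ whenever $M_0 \ltu M$ in $K$.

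For the general case, fix $M_0, M \in K'$ and $p \in \gS^{<\alpha}(M)$ not $\s$-forking over $M_0$; the converse direction will be symmetric. Using uniqueness of $\lambda$-limit models under a good frame (Remark \ref{lim-uq-rmk}), I would represent $M_0$ as a $\ltu$-increasing union $\bigcup_{i < \delta^*} M_0^i$ with $\delta^*$ regular and at least $|\ell(p)|^+ + \aleph_0$; this is feasible since $\ell(p) < \alpha \le \lambda$, so $\delta^* < \lambda^+$ can be arranged. By Fact \ref{ltl-basic-props}.(\ref{univ-trans}), $M_0^i \ltu M$ for every $i < \delta^*$. Local character of $\s$ along this chain delivers $i < \delta^*$ with $p \rest M_0$ not $\s$-forking over $M_0^i$; transitivity of $\s$ (which it has by uniqueness and extension via Proposition \ref{indep-props}.(\ref{indep-props-4})), combined with the hypothesis that $p$ does not $\s$-fork over $M_0$, upgrades this to $p$ itself not $\s$-forking over $M_0^i$. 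The universal case applied at $M_0^i \ltu M$ yields that $p$ does not $\s'$-fork over $M_0^i$, and base monotonicity of $\s'$ propagates this up to $M_0$.

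The main obstacle I anticipate is the cofinality requirement for local character: the default $\lambda$-limit representation of $M_0$ may have too small a cofinality when $\ell(p)$ is uncountable, so it cannot be fed directly into the local character axiom. Uniqueness of limit models under a good frame is precisely the tool that lets me retrofit an arbitrary regular cofinality onto $M_0$ by transporting a suitable chain through an isomorphism. A secondary subtlety is that local character delivers non-forking of $p \rest M_0$ (rather than of $p$) over $M_0^i$; the transitivity step is what bridges that gap before the universal case can be invoked.
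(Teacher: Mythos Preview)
Your overall strategy is correct and is essentially what the paper's sketch is pointing to (both routes pass through nonsplitting and invoke Fact \ref{splitting-basics}.(\ref{splitting-weak-props})). However, the converse direction of your ``universal case'' has a gap.

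Weak uniqueness as stated in Fact \ref{splitting-basics}.(\ref{splitting-weak-props}) needs the configuration $M_0 \ltu M \lea N$: the two types live over $N$, do not $\sns$-fork over $M_0$, and agree on the \emph{intermediate} model $M$. In your converse argument you have $p,q \in \gS^{<\alpha}(M)$ that do not $\sns$-fork over $M_0$ and agree only on $M_0$ itself; there is no model strictly $\ltu$-above the base on which agreement is known, so the fact does not apply. Indeed, the characterization the paper actually obtains (see the Remark after Theorem \ref{good-frame-cor}) explicitly drops one level: $\s$-nonforking over a limit $M_0$ corresponds to $\snsp{\lambda}$-nonforking over some $M_0' \ltu M_0$, not over $M_0$ itself.

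The repair is available inside your general-case argument and does not require the standalone biconditional. Take $q$ to be the $\s'$-nonforking extension of $p \rest M_0$ (not of $p \rest M_0^i$) to $M$, and use local character of $\s'$ along the same $\ltu$-chain to find $j$ with $q$ not $\s'$-forking over $M_0^j$. Replacing $i,j$ by their maximum, both $p$ and $q$ fail to $\sns$-split over $M_0^i$, and since $q \rest M_0 = p \rest M_0$ they agree on $M_0^{i+1}$. Now weak uniqueness with base $M_0^i \ltu M_0^{i+1} \lea M$ legitimately gives $p = q$, hence $p$ does not $\s'$-fork over $M_0$. This extra layer is exactly what the paper's sketch encodes when it says to work with $(\lambda, |\beta|^+ + \aleph_0)$-limit models.
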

\begin{proof}[Proof sketch]
  By Remark \ref{lim-uq-rmk}, $I(K') = 1$. Now refine the proof of Theorem \ref{good-canon} by replacing $\lambda^+$-saturated models by $(\lambda, |\beta|^+ + \aleph_0)$-limit models for each $\beta < \alpha$. Everything still works since one can use the weak uniqueness and extension properties of nonsplitting (Fact \ref{splitting-basics}.(\ref{splitting-weak-props})).
\end{proof}

\begin{thm}[Canonicity of categorical good $\lambda$-frames]\label{good-frame-cor}
Let $\s$ and $\s'$ be good $(<\alpha, \lambda)$-frames with the same underlying AEC $K$ and $\alpha \le \lambda$. If $K$ is categorical in $\lambda$, then $\s = \s'$.
\end{thm}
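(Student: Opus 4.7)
The plan is to reduce the statement to Lemma \ref{good-frame-limit}, which already gives that $\s$ and $\s'$ agree on the class of $\lambda$-limit models. Concretely, I would argue that categoricity in $\lambda$ forces every model in $K_\lambda$ to be a $\lambda$-limit model, so that the restriction in Lemma \ref{good-frame-limit} is no restriction at all.

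First I would recall what the good $(<\alpha, \lambda)$-frame structure buys us on $K_\lambda$: the existence of a good frame gives amalgamation, joint embedding, no maximal models, and stability in $\lambda$ (this is all part of Definition \ref{goodness-def}, transferred through $\cl$). In particular, by Fact \ref{ltl-basic-props}.(4), for any limit ordinal $\delta < \lambda^+$ and any $M \in K_\lambda$ there exists $N \in K_\lambda$ with $M \ltl{\lambda}{\delta} N$, so $\lambda$-limit models of size $\lambda$ exist in $K$. Being a $\lambda$-limit model is clearly invariant under isomorphism. Hence, by categoricity in $\lambda$, every $M \in K_\lambda$ is isomorphic to some $\lambda$-limit model, and is therefore itself a $\lambda$-limit model.

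Let $K'$ denote the class of $\lambda$-limit models of $K$, as in Lemma \ref{good-frame-limit}. By the previous paragraph, $K_\lambda \subseteq K'$; conversely $K' \subseteq K_\lambda$ by definition of a limit model (a $(\lambda,\delta)$-limit model for $\delta < \lambda^+$ has size $\lambda$). Thus $K' = K_\lambda$ as abstract classes (with the same ordering inherited from $K$). Now Lemma \ref{good-frame-limit} yields
\[
\s \;=\; \s \rest K' \;=\; \s' \rest K' \;=\; \s',
\]
as desired.

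I do not anticipate any serious obstacle: the only point requiring a moment of thought is the observation that limit models exist in $K_\lambda$ under the good-frame hypotheses, which follows immediately from stability in $\lambda$ plus no maximal models via Fact \ref{ltl-basic-props}, and the trivial invariance of the property ``is a $\lambda$-limit model'' under isomorphism over the base.
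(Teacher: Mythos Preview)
Your proposal is correct and follows essentially the same argument as the paper: use the good-frame axioms (via Fact~\ref{ltl-basic-props}) to produce a $\lambda$-limit model, invoke categoricity to conclude that every model in $K_\lambda$ is limit, and then apply Lemma~\ref{good-frame-limit}. The paper's proof is just a terser version of what you wrote.
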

\begin{proof}
  By Fact \ref{ltl-basic-props}, $K$ has a limit model, and so by categoricity any model of $K$ is limit. Now apply Lemma \ref{good-frame-limit}.
\end{proof}
\begin{remark}
  The proof also gives an explicit description of forking: For $M_0 \lea M$ with $M_0$ a limit model, $p \in \gS (M)$ does not $\s$-fork over $M_0$ if and only if there exists $M_0' \ltu M_0$ such that $p$ does not $\snsp{\lambda}$-fork over $M_0'$. Note that this is the definition of forking in \cite{ss-tame-jsl}.
\end{remark}

Note that Shelah's construction of a good $\lambda$-frame in \cite[Theorem II.3.7]{shelahaecbook} relies on categoricity in $\lambda$, so Theorem \ref{good-frame-cor} establishes that the frame there is canonical. We are still unable to show that the frame built in Theorem \ref{categ-frame} is canonical in general, although it will be if $\lambda$ is the categoricity cardinal or if it is weakly successful (by \cite[Theorem 6.13]{bgkv-v3-toappear}).

\section{Superstability}\label{sec-ss}

Shelah has pointed out \cite[p.~19]{shelahaecbook} that superstability in abstract elementary classes suffers from schizophrenia, i.e.\ there are several different possible definitions that are equivalent in elementary classes but not necessarily in AECs. The existence of a good $(\ge \lambda)$-frame is a possible candidate but it is very hard to check. Instead, one would like a simple definition that implies existence of a good frame.

Shelah claims in chapter IV of his book that solvability\footnote{One can ask whether there are any implications between this paper's definition of superstability and Shelah's. We leave this to future work.} (\cite[Definition IV.1.4]{shelahaecbook}) is such a notion, but his justification is yet to appear (in \cite{sh842}). Essentially, solvability says that certain EM models are superlimits. On the other hand previous work (for example \cite{sh394, shvi635, vandierennomax, nomaxerrata, gvv-toappear-v1_2}) all rely on a local character property for nonsplitting. This is even made into a definition of superstability in \cite[Definition 7.12]{grossberg2002}. In \cite{ss-tame-jsl} we gave a similar condition and used it with tameness to build a good frame. Shelah has shown \cite[Lemma I.6.3]{sh394} that categoricity in a cardinal of high-enough cofinality implies the superstability condition.

We now aim to show the same conclusion under categoricity in a high-enough cardinal of arbitrary cofinality. The following definition of superstability is implicit in \cite{shvi635} and stated explicitly in \cite[Definition 7.12]{grossberg2002}.

\begin{defin}[Superstability]\label{ss-def}\index{superstability}\index{superstable|see {superstability}}\index{$\mu$-superstable|see {superstability}}\index{$\ssp$|see {superstability}}\index{$\mu$-$\ssp$|see {superstability}}
  An AEC $K$ is \emph{$\mu$-superstable} if:

  \begin{enumerate}
  \item $\LS (K) \le \mu$.
  \item\label{cond-two-mods} There exists $M \in K_\mu$ such that for any $M' \in K_\mu$ there is $f: M' \rightarrow M$ with $f[M'] \ltu M$.
  \item $\clc{1} (\snsp{\mu} (K_\mu), \leu) = \aleph_0$.
  \end{enumerate}

  We say $K$ is \emph{$\mu$-$\ssp$} if $K_{\ge \mu}$ is $\mu$-superstable, has amalgamation, and is $\mu$-tame. We may omit $\mu$, in which case we mean there exists a value such that the definition holds, e.g.\ $K$ is \emph{superstable} if it is $\mu$-superstable for some $\mu$.
\end{defin}
\begin{remark}\label{equiv-two-mods}
  Using Fact \ref{ltl-basic-props}, it is easy to check that Condition (\ref{cond-two-mods}) above is equivalent to ``$K_\mu$ is nonempty, has amalgamation, joint embedding, no maximal models, and is stable in $\mu$''.
\end{remark}
\begin{remark}
  While Definition \ref{ss-def} makes sense in any AEC, here we focus on tame AECs with amalgamation, and will not study what happens to Definition \ref{ss-def} without these assumptions (although this can be done, see \cite{gvv-toappear-v1_2}). In other words, we will study ``$\ssp$'' rather than just ``superstable''.
\end{remark}

For technical reasons, we will also use the following version that uses coheir rather than nonsplitting.

\begin{defin}\label{ss-strong-def}\index{strong superstability}\index{strongly superstable|see {strong superstability}}\index{$\kappa$-strongly $\mu$-superstable|see {strong superstability}}\index{$\kappa$-strongly $\mu$-$\ssp$|see {strong superstability}}
  An AEC $K$ is \emph{$\kappa$-strongly $\mu$-superstable} if:
  \begin{enumerate}
    \item\label{ss-strong-def-1} $\LS (K) < \kappa \le \mu$.
    \item (\ref{cond-two-mods}) in Definition \ref{ss-def} holds.
    \item\label{ss-strong-def-5} $K$ does not have the $(<\kappa)$-order property of length $\kappa$.
    \item $\Ksatp{\kappa}_\mu$ is dense in $K_\mu$.
    \item $\clc{1} (\isch{\kappa} (K)_\mu, \leu) = \aleph_0$.
  \end{enumerate}
  
  As before, we may omit some parameters and say $K$ is \emph{$\kappa$-strongly $\mu$-$\ssp$} if there exists $\kappa_0 < \kappa$ such that $K_{\ge \kappa_0}$ is $\kappa$-strongly $\mu$-superstable, has amalgamation, and is $(<\kappa)$-tame.
\end{defin}

It is not too hard to see that a $\mu$-$\ssp$ AEC induces a generator for a good independence relation, but what if we have a generator of some other form (assume for example that $\ltu$ is replaced by $\ltl{\mu}{\delta}$ in the definition)? This is the purpose of the next definition. 

\begin{defin}
  Let $K$ be an AEC.
  \begin{enumerate}
    \item\index{$(\mu, \is)$-$\ssp$} $K$ is $(\mu, \is)$-$\ssp$ if $\LS (K) \le \mu$ and $(K_\mu, \is)$ is a $\mu$-generator for a good $(\le 1)$-independence relation.
    \item\index{$\kappa$-strongly $(\mu, \is)$-$\ssp$} $K$ is $\kappa$-strongly $(\mu, \is)$-$\ssp$ if:
      \begin{enumerate}
      \item $\LS (K) < \kappa \le \mu$.
      \item There exists $\kappa_0 < \kappa$ such that $K_{\ge \kappa_0}$ has amalgamation.
      \item $K$ is $(<\kappa)$-tame.
      \item $K$ does not have the $(<\kappa)$-order property of length $\kappa$.
      \item $K$ is $(\mu, \is)$-$\ssp$.
      \item $K_{\is} \subseteq \Ksatp{\kappa}_\mu$ and $\is = \isch{\kappa} (K)^{\le 1} \rest K_{\is}$.
      \end{enumerate}
  \end{enumerate}
\end{defin}

The terminology is justified by the next proposition which tells us that the existence of \emph{any} generator is equivalent to superstability. It makes checking that superstability holds easier and we will use it freely.

\begin{prop}\label{equiv-ss}
  Let $K$ be an AEC.

  \begin{enumerate}
    \item $K$ is $\mu$-$\ssp$ if and only if there exists $\is$ such that $K$ is $(\mu, \is)$-$\ssp$.
    \item $K$ is $\kappa$-strongly $\mu$-$\ssp$ if and only if there exists $\is$ such that $K$ is $\kappa$-strongly $(\mu, \is)$-$\ssp$.
  \end{enumerate}
\end{prop}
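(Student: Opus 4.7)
The plan is a routine unpacking of definitions, made clean by the canonicalization provided by Lemma \ref{gen-canon-good}. For the forward direction of (1), assuming $K$ is $\mu$-$\ssp$, I take $\is := \insp{\mu}(K_\mu)^{\le 1}$ with $K_{\is} := K_\mu$ and verify that $(K_\mu, \is)$ is a $\mu$-generator for a good $(\le 1)$-independence relation. Amalgamation of $\Kup = K_{\ge \mu}$ and $\mu$-tameness are immediate from $\mu$-$\ssp$. Condition (2) of Definition \ref{ss-def}, via Remark \ref{equiv-two-mods}, gives that $K_\mu$ is nonempty and has amalgamation, joint embedding, no maximal models, and stability in $\mu$; in particular the generator has joint embedding in some cardinal. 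Setting $\leg := \leu$, the pair $(K_\mu, \leu)$ is a skeleton of $K_\mu$ by Example \ref{skel-examples} (with $\delta = 1$), and the local character $\clc{1}(\is, \leu) = \aleph_0$ is inherited from the hypothesis $\clc{1}(\snsp{\mu}(K_\mu), \leu) = \aleph_0$ since $\insp{\mu}$-nonforking and $\snsp{\mu}$-nonforking agree over models in $K_\mu$; weak chain local character then follows since $\alpha = 2 < \mu^+$. The remaining clauses (base monotonicity, existence, extension by $\mu$-nonsplitting, and $\mu$-tameness of $\Ksatpp{(\Kup)}{\mu^+}$) are direct from Fact \ref{splitting-basics} and the $\mu$-$\ssp$ hypothesis.

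For the backward direction of (1), suppose $K$ is $(\mu, \is)$-$\ssp$. By Lemma \ref{gen-canon-good}, I may replace $\is$ by $\insp{\mu}(K_\mu)^{\le 1}$ with $\leu$ witnessing local character. The required clauses of $\mu$-$\ssp$ then read off the generator: amalgamation of $K_{\ge \mu}$ is amalgamation of $\Kup$; $\mu$-tameness is direct; condition (2) of Definition \ref{ss-def} reduces via Remark \ref{equiv-two-mods} to amalgamation, joint embedding, no maximal models, and stability in $\mu$ of $K_\mu$, with joint embedding propagating from the witnessing cardinal $\mu' \ge \mu$ down to $\mu$ via amalgamation and no maximal models. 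Finally, $\clc{1}(\snsp{\mu}(K_\mu), \leu) = \aleph_0$ is inherited from $\clc{1}(\insp{\mu}(K_\mu)^{\le 1}, \leu) = \aleph_0$, which is the local character supplied by the canonicalized generator.

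Part (2) follows the same pattern, replacing $\insp{\mu}(K_\mu)$ by $\isch{\kappa}(K)^{\le 1} \rest \Ksatp{\kappa}_\mu$. For the forward direction, Proposition \ref{weakly-good-examples}.(1)(a) provides the weakly good generator structure once I verify the milder local character $\clc{1}(\isch{\kappa}(K)_\mu^{\le 1}) \le \mu^+$, which follows from stability in $\mu$ (itself derived from condition (2)). The stronger local character $\clc{1}(\is, \leu) = \aleph_0$ needed to upgrade to a generator for a \emph{good} independence relation is precisely the hypothesis in the definition of $\kappa$-strongly $\mu$-superstable, witnessed by the skeleton $(\Ksatp{\kappa}_\mu, \leu)$ of $K_\mu$ (valid by Example \ref{skel-examples} since $\Ksatp{\kappa}_\mu$ is dense in $K_\mu$ by hypothesis). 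The backward direction transfers the generator's local character on $K_{\is}$ (over some skeleton $\leg$) down to local character on $\Ksatp{\kappa}_\mu$ with $\leu$ via Lemma \ref{univ-lc}, using that $K_{\is}$ is dense in $\Ksatp{\kappa}_\mu$ (from $K_{\is} \subseteq \Ksatp{\kappa}_\mu$ combined with density of $K_{\is}$ in $K_\mu$).

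The main technical obstacle is reconciling the two formulations of local character: the generator axioms allow an arbitrary skeleton $\leg$ on a possibly proper dense sub-AC $K_{\is}$, while the definitions of $\mu$-$\ssp$ and $\kappa$-strongly $\mu$-$\ssp$ demand local character over all of $K_\mu$ (respectively $\Ksatp{\kappa}_\mu$) with the universal ordering $\leu$. In part (1), Lemma \ref{gen-canon-good} handles this cleanly by canonicalizing to nonsplitting with $\leu$; in part (2), where $\is$ must remain coheir, one instead invokes Lemma \ref{univ-lc} and performs a small skeleton-based transfer. Everything else is routine bookkeeping against the lists of clauses in the definitions.
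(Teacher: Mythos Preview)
Your approach matches the paper's: both directions of (1) hinge on Lemma~\ref{gen-canon-good} to canonicalize to nonsplitting with $\leu$, and part~(2) uses the coheir generator together with Lemma~\ref{univ-lc} for the backward transfer of local character. The overall structure is correct.

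There is one small gap in your forward direction of~(2). You invoke Proposition~\ref{weakly-good-examples}.(1)(a), whose stated hypothesis is $\clc{1}(\is) \le \mu^+$ (with respect to $\lea$), and you claim this follows from stability in $\mu$. That justification is not correct: stability in $\mu$ controls local character for $\mu$-nonsplitting (Fact~\ref{splitting-basics}.(\ref{splitting-basics-25})), not for coheir, and the coheir bound from Fact~\ref{coheir-syn} is $(2^{<\kappap})^+$, which need not be $\le \mu^+$ since strong superstability only assumes $\kappa \le \mu$. The paper sidesteps this by citing ``the proof of'' Proposition~\ref{weakly-good-examples}: in that proof, the hypothesis $\clc{<\alpha}(\is) \le \lambda^+$ is used solely to derive weak chain local character, and here weak chain local character follows directly from the strong superstability hypothesis $\clc{1}(\isch{\kappa}(K)_\mu, \leu) = \aleph_0$ (with the skeleton $(\Ksatp{\kappa}_\mu, \leu)$). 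So your argument is easily repaired by bypassing the $\lea$-local-character claim and using the $\leu$ hypothesis directly.
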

\begin{proof} \
  \begin{enumerate}
    \item Assume first that $K$ is $\mu$-$\ssp$. Then one can readily check (using Proposition \ref{weakly-good-examples} and Remark \ref{equiv-two-mods}) that $(K_\mu, \insp{\mu} (K)^{\le 1})$ is a generator for a good independence relation, where the local character axiom is witnessed by $\leu$. Conversely, assume that $K$ is $(\mu, \is)$-$\ssp$. By definition, $\LS (K) \le \mu$ and by definition of a generator $K_{\ge \mu}$ has amalgamation and is $\mu$-tame. By Lemma \ref{gen-canon-good}, $(K_\mu, \insp{\mu} (K)^{\le 1})$ is a $\mu$-generator for a good $(\le 1)$-independence relation, and $\leu$ is the ordering witnessing local character. Thus $K$ is $\mu$-$\ssp$.
    \item Assume first that $K$ is $\kappa$-strongly $\mu$-$\ssp$. Let $\kappa_0 < \kappa$ be such that $K_{\ge \kappa_0}$ has amalgamation. Assume without loss of generality that $\kappa_0 = \LS (K)$ and that $K_{\ge \kappa_0} = K$. By (the proof of) Proposition \ref{weakly-good-examples}, $(K_\mu, \isch{\kappa} (K)_\mu^{\le 1})$ is a $\mu$-generator for a weakly good $(\le 1)$-independence relation. By the other conditions, it is actually a $\mu$-generator for a good $(\le 1)$-independence relation. Conversely, assume that $K$ is $\kappa$-strongly $(\mu, \is)$-$\ssp$. We check the last two conditions in the definition of strong superstability, the others are straightforward. We know that $(K_\mu, \is)$ is a generator and $\is = \isch{\kappa} (K)^{\le 1} \rest K_{\is}$. Thus $K_{\is} \subseteq \Ksatp{\kappa}_\mu$ is dense in $K_\mu$, so $\Ksatp{\kappa}_\mu$ is dense in $K_\mu$. By Lemma \ref{univ-lc}, $\clc{1} (\isch{\kappa} (K)_\mu, \leu) \le \clc{1} (\is, \leg)$ for any $\leg$ such that $(K_{\is}, \leg)$ is a skeleton of $K_\mu$ (and hence of $\Ksatp{\kappa}_\mu$). By assumption one can find such a $\leg$ with $\clc{1} (\is, \leg) = \aleph_0$. Thus 
      $$
      \clc{1} (\isch{\kappa} (K)_\mu, \leu) = \aleph_0
      $$
      
  \end{enumerate}
\end{proof}
\begin{remark}\label{equiv-ss-rmk}
  Thus in Definitions \ref{ss-def} and \ref{ss-strong-def}, one can replace $\leu$ by $\lel{\mu}{\delta}$ for $1 \le \delta < \mu^+$.
\end{remark}

The next result gives evidence that Definition \ref{ss-def} is a reasonable definition of superstability, at least in tame AECs with amalgamation. Note that most of it already appears implicitly in \cite{ss-tame-jsl} and essentially restates Theorem \ref{good-up}.

\begin{thm}\label{superstab-frame}
  Assume $K$ is a $(\mu, \is)$-$\ssp$ AEC. Then:

  \begin{enumerate}
    \item\label{class-cond} $K_{\ge \mu}$ has joint embedding, no maximal models, and is stable in all $\lambda \ge \mu$.
    \item\label{frame-cond} Let $\lambda \ge \mu^+$ and let $\is' := \Kupp{(K_\mu, \is)} \rest \Ksatp{\mu^+}_{\ge \lambda}$.   \begin{enumerate}
    \item\label{frame-cond-a} $\is'$ is an almost pre-good $(\le 1, \ge \lambda)$-independence relation (recall Definition \ref{almost-good-def}).
    \item\label{frame-cond-b} If in addition $K$ is \emph{$\kappa$-strongly} $(\mu, \is)$-$\ssp$, then $\pre (\is') = \pre(\isch{\kappa} (K))^{\le 1} \rest \Ksatp{\mu^+}_{\ge \lambda}$. That is, the frame is $(<\kappa)$-coheir.
    \item\label{frame-cond-c} If $\theta \ge \mu^+$ is such that $K' := \Ksatp{\theta}_{\ge \lambda}$ is an AEC with $\LS (K') = \lambda$, then $\is' \rest K'$ is a pre-good $(\le 1, \ge \lambda)$-independence relation that will be $(<\kappa)$-coheir if $K$ is $\kappa$-strongly $(\mu, \is)$-$\ssp$.
    \end{enumerate}
  \end{enumerate}
\end{thm}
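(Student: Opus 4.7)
The plan is to observe, as the author signals immediately before the statement, that this is essentially a repackaging of Theorem \ref{good-up} together with the coheir canonicity results of Section \ref{canon-sec}. The first step is to unpack the hypothesis: by definition of $(\mu, \is)$-$\ssp$ we have $\LS(K) \le \mu$ and $(K_\mu, \is)$ is a $\mu$-generator for a good $(\le 1)$-independence relation, with amalgamation on $K_{\ge \mu}$ and $\mu$-tameness. By Fact \ref{kup-fact} the unique AEC $\Kup$ with $(\Kup)_\mu = K_\mu$ and $\LS(\Kup) = \mu$ is simply $K_{\ge \mu}$ (since $K_{\ge \mu}$ is itself such an AEC). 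Thus Theorem \ref{good-up} applies directly with $\lambda = \mu$ and $\alpha = 2$.

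For part (\ref{class-cond}), the first two conclusions of Theorem \ref{good-up} give joint embedding, no maximal models, and stability in every $\lambda \ge \mu$ for $K_{\ge \mu}$. For part (\ref{frame-cond-a}), conclusion (3) of Theorem \ref{good-up} says $\Kupp{(K_\mu, \is)} \rest \Ksatpp{(K_{\ge \mu})}{\mu^+}$ is an almost pre-good $(\le 1, \ge \mu^+)$-independence relation; restricting further to models of size $\ge \lambda$ preserves almost pre-goodness (all properties in Definition \ref{almost-good-def} are preserved by restricting the underlying interval to a sub-interval, using monotonicity and the fact that $K_{\ge \lambda}$ inherits the relevant chain and closure properties). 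For part (\ref{frame-cond-c}), we apply conclusion (4) of Theorem \ref{good-up} with parameter $\theta$ in place of that theorem's $\mu$: since $\theta \ge \mu^+$ and the hypothesis tells us $K' = \Ksatp{\theta}_{\ge \lambda}$ is an AEC (with the natural Löwenheim-Skolem-Tarski number), the restriction $\is' \rest K'$ becomes a pre-good $(\le 1, \ge \lambda)$-independence relation.

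Part (\ref{frame-cond-b}) is the one point not captured by a direct citation of Theorem \ref{good-up}, but it is essentially Lemma \ref{weakly-good-coheir} applied to our setup. The hypothesis of $\kappa$-strong $(\mu, \is)$-$\ssp$ gives us that $\is = \isch{\kappa}(K)^{\le 1} \rest K_{\is}$, that $K$ does not have the $(<\kappa)$-order property of length $\kappa$, and that $K_{\is} \subseteq \Ksatp{\kappa}_\mu$ is dense in $K_\mu$. By Fact \ref{coheir-syn}, $\isch{\kappa}(K)^{\le 1}$ already has uniqueness on its pre-frame, base monotonicity, and the $(<\kappa)$-witness property, and Proposition \ref{indep-props}.(\ref{indep-props-lc}) combined with Fact \ref{coheir-syn} gives $\slc{1}(\isch{\kappa}(K)^{\le 1}) \le \mu^+$. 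One then applies (the proof of) Lemma \ref{weakly-good-coheir-0}, in its more general form allowing $K_{\is}$ to be any dense sub-AC of $\Ksatp{\kappa}_\mu$, to conclude that on $\mu^+$-saturated models the pre-frame of $\Kupp{(K_\mu, \is)}$ agrees with that of $\isch{\kappa}(K)^{\le 1}$; restricting to models of size $\ge \lambda$ yields the desired equality.

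The main technical obstacle is keeping the various classes straight and checking that the extension of $K_{\is}$ to $K_{\ge \mu}$ (as required by the hypotheses of Lemma \ref{weakly-good-coheir-0}, which asks $\Ksatp{\mu^+} \subseteq K'$) does not create problems. The natural choice is $K' := K_{\is} \cup \Ksatp{\mu^+}_{\ge \mu}$, which is dense in $K_{\ge \mu}$ and contains all $\mu^+$-saturated models; on this class both the generator $\is$ and coheir can be compared via the witness property. The bookkeeping is routine but must be done carefully to verify the hypotheses of the canonicity lemma in this slightly generalized form. All other verifications reduce mechanically to citations of Theorem \ref{good-up} and standard manipulations.
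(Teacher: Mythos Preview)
Your treatment of parts (\ref{class-cond}), (\ref{frame-cond-a}), and (\ref{frame-cond-c}) via Theorem \ref{good-up} is correct and matches the paper's approach.

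For part (\ref{frame-cond-b}), however, there is a gap. You claim that Proposition \ref{indep-props}.(\ref{indep-props-lc}) combined with Fact \ref{coheir-syn} gives $\slc{1}(\isch{\kappa}(K)^{\le 1}) \le \mu^+$, but Proposition \ref{indep-props}.(\ref{indep-props-lc}) states $\clc{<\alpha}(\s) \le \slc{<\alpha}(\s)$, which is the wrong direction. Fact \ref{coheir-syn} itself only yields $\slc{1} \le (2^{<\kappap})^+$, and the definition of $\kappa$-strongly $(\mu,\is)$-$\ssp$ does not force $2^{<\kappap} \le \mu$. Without $\slc{1} = \mu^+$ you cannot invoke Lemma \ref{weakly-good-coheir-0} as stated, and your extension of $K_{\is}$ to $K' := K_{\is} \cup \Ksatp{\mu^+}$ does not by itself supply the missing local character.

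The paper sidesteps this by a different argument. It uses only \emph{one direction} of the proof of Lemma \ref{weakly-good-coheir-0}: since $\is$ is already coheir on $K_{\is}$ and coheir has the $(<\kappa)$-witness property, any $\is'$-nonforking type is coheir-nonforking, giving $\nf_{\is'} \subseteq \nf_{\is''}$ where $\is'' := \isch{\kappa}(K)^{\le 1} \rest \Ksatp{\mu^+}_{\ge \lambda}$. For the reverse containment, the paper observes that $\pre(\is')$ has existence and extension (from the almost pre-good conclusion of part (\ref{frame-cond-a})) while $\is''$ has uniqueness (from Fact \ref{coheir-syn}), and then applies \cite[Lemma 4.1]{bgkv-v3-toappear} to conclude $\pre(\is') = \pre(\is'')$. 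This route never needs $\slc{1}$ of coheir to be $\mu^+$.
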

\begin{proof}
  Theorem \ref{good-up} gives (\ref{class-cond}) and (\ref{frame-cond-a}), while (\ref{frame-cond-c}) follows from (\ref{frame-cond-a}) and (\ref{frame-cond-b}). It remains to prove (\ref{frame-cond-b}). Let $\is'' := \isch{\kappa} (K))^{\le 1} \rest \Ksatp{\mu^+}_{\ge \lambda}$. By the proof of Lemma \ref{weakly-good-coheir-0}, $\nf_{\is'} \subseteq \nf_{\is''}$. Now by (\ref{frame-cond-a}), $\pre (\is')$ has existence and extension and by Fact \ref{coheir-syn}, $\is''$ has uniqueness. By \cite[Lemma 4.1]{bgkv-v3-toappear}, $\pre (\is') = \pre(\is'')$, as desired.
\end{proof}
\begin{remark}
  Let $T$ be a complete first-order theory and let $K := (\text{Mod} (T), \preceq)$. Then this paper's definitions of superstability and strong superstability coincide with the classical definition. More precisely for all $\mu \ge |T|$, $K$ is (strongly) $\mu$-superstable if and only if $T$ is stable in all $\lambda \ge \mu$.
\end{remark}

Note also that [strong] $\mu$-$\text{superstability}^+$ is monotonic in $\mu$:

\begin{prop}\label{ss-monot}
  If $K$ is [$\kappa$-strongly] $\mu$-$\ssp$ and $\mu' \ge \mu$, then $K$ is [$\kappa$-strongly] $\mu'$-$\ssp$.
\end{prop}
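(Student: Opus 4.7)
The plan is to apply Proposition \ref{equiv-ss} twice: first in the reverse direction, picking $\is$ witnessing [$\kappa$-strong] $(\mu, \is)$-$\ssp$; and then in the forward direction, after exhibiting an appropriate $\mu'$-generator. The case $\mu' = \mu$ is trivial, so I assume $\mu' > \mu$ and therefore $\mu' \ge \mu^+$. With $\is$ fixed, Theorem \ref{superstab-frame} tells me that $K_{\ge \mu}$ has joint embedding, no maximal models, and is stable in every $\lambda \ge \mu$, and that $\is' := \Kupp{(K_\mu, \is)} \rest \Ksatp{\mu^+}_{\ge \mu'}$ is an almost pre-good $(\le 1, \ge \mu')$-independence relation, which in the strong case moreover satisfies $\pre(\is') = \pre(\isch{\kappa}(K))^{\le 1} \rest \Ksatp{\mu^+}_{\ge \mu'}$.

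I would next claim that $(K_{\mu'}, \is'')$ is a $\mu'$-generator for a good $(\le 1)$-independence relation, where $\is'' := \is' \rest K_{\mu'}$ in the non-strong case, and $\is'' := \isch{\kappa}(K)^{\le 1} \rest \Ksatp{\mu^+}_{\mu'}$ in the strong case (they share the same pre-frame by Theorem \ref{superstab-frame}). The structural conditions are direct: $K_{\is''} = \Ksatp{\mu^+}_{\mu'}$ is a dense sub-AC of $K_{\mu'}$ by stability in $\mu'$, and $K_{\ge \mu'}$ has amalgamation. The weakly good conditions are inherited from $\is'$: base monotonicity, existence, uniqueness of $\pre(\is'')$, and extension by $\mu'$-nonsplitting via Fact \ref{splitting-basics}.(\ref{splitting-basics-3}). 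Tameness at $\mu'$ follows from $\mu$-tameness since $\mu' \ge \mu$, while nonemptyness, no maximal models, and stability in $\mu'$ are delivered by Theorem \ref{superstab-frame}. For the good-generator local character clause, taking $\leg := \leu$---which by Example \ref{skel-examples} makes $(K_{\is''}, \leu)$ a skeleton of $K_{\mu'}$---one has $\clc{1}(\is'', \leu) \le \clc{1}(\is') = \aleph_0$, the rightmost equality being part of almost pre-goodness. In the strong case, the remaining clauses---$(<\kappa)$-tameness, absence of the $(<\kappa)$-order property of length $\kappa$, and $K_{\is''} \subseteq \Ksatp{\kappa}_{\mu'}$---are either inherited from hypothesis or follow from $\kappa \le \mu < \mu^+$.

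The hardest step I anticipate is verifying weak chain local character at the new level $\mu'^+$: I need that for every $\leu$-increasing $\seq{M_i : i < \mu'^+}$ in $\Ksatp{\mu^+}_{\mu'}$ and every $p \in \gS(\bigcup_{i<\mu'^+} M_i)$, some $i$ has $p \rest M_{i+1}$ not $\is''$-forking over $M_i$. The snag is that the union $M_{\mu'^+} := \bigcup_{i<\mu'^+} M_i$ has size $\mu'^+$ and is not itself a member of the domain of $\is''$. My plan is to sidestep this by realizing $p$ as $\gtp(a / M_{\mu'^+}; N)$ for some $N \in \Ksatp{\mu^+}_{\mu'^+}$ extending $M_{\mu'^+}$---such $N$ exists because stability in $\mu^+$ yields density of $\mu^+$-saturated models and $K_{\ge \mu}$ has amalgamation---and then invoke $\clc{1}(\is') = \aleph_0$ inside the AEC $\Ksatp{\mu^+}_{\ge \mu'}$ to produce $i < \mu'^+$ with $\nf_{\is'}(M_i, a, M_{\mu'^+}, N)$. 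Right monotonicity then yields $\nf_{\is'}(M_i, a, M_{i+1}, N)$, which is precisely $p \rest M_{i+1}$ not $\is''$-forking over $M_i$. With every generator clause in place, Proposition \ref{equiv-ss} completes the argument and delivers [$\kappa$-strong] $\mu'$-$\ssp$.
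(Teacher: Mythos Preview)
Your proof is correct and follows essentially the same route as the paper: both obtain the new generator by restricting the almost pre-good relation $\is'$ furnished by Theorem~\ref{superstab-frame} to level $\mu'$, then verify the generator axioms and apply Proposition~\ref{equiv-ss}. The paper is simply terser, citing Proposition~\ref{weakly-good-examples} (part~3) for the weakly-good generator clauses (including weak chain local character, which there is derived exactly as you do from $\clc{1}(\is') \le \slc{1}(\is') \le \mu'^+$ after passing to an ambient $N \in \Ksatp{\mu^+}$), while you spell this step out explicitly. One cosmetic point: at the very end you pass from $\nf_{\is'}(M_i, a, M_{i+1}, N)$ to nonforking in $\is''$, but $N$ has size $\mu'^+$; you are implicitly using ambient monotonicity together with $\mu'$-closedness of $\Ksatp{\mu^+}_{\ge \mu'}$ (part of pre-weakly good) to shrink $N$ to a model of size $\mu'$.
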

\begin{proof}
  Say $K$ is $(\mu, \is)$-$\ssp$. It is clearly enough to check that $K$ is $\mu'$-superstable. Let $\is' := \Kupp{(K, \is)} \rest \Ksatp{\mu^+}_{\ge \mu'}$. By Theorem \ref{superstab-frame} and Proposition \ref{weakly-good-examples}, $(K_{\mu'}, \is')$ is a generator for a good $\mu'$-independence relation, so $K$ is $(\mu', \is')$-superstable. Similarly, if $K$ is $\kappa$-strongly $(\mu, \is)$-$\ssp$ then $K$ will be $\kappa$-strongly $(\mu', \is')$-superstable.
\end{proof}

Theorem \ref{superstab-frame}.(\ref{frame-cond-b}) is the reason we introduced strong superstability. While it may seem like a detail, we are interested in extending our good frame to a frame for types longer than one element and using coheir to do so seems reasonable. Using the canonicity of coheir, we can show that superstability and strong superstability are equivalent if we do not care about the parameter $\mu$:

\begin{thm}\label{ss-strong-equiv}
  If $K$ is $\mu$-$\ssp$ and $\kappa = \beth_\kappa > \mu$, then $K$ is $\kappa$-strongly $(2^{<\kappap})^+$-$\ssp$.

  In particular a tame AEC with amalgamation is strongly superstable if and only if it is superstable.
\end{thm}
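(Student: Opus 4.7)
The plan is to verify the conditions of $\kappa$-strong $\lambda$-$\ssp$ from Definition \ref{ss-strong-def} at $\lambda := (2^{<\kappap})^+$. First, by Proposition \ref{ss-monot}, since $\lambda \ge \mu$, $K$ is $\lambda$-$\ssp$. Conditions (1), (2), and (4) are essentially immediate: $\LS(K) \le \mu < \kappa \le \lambda$ gives (1); $\lambda$-$\ssp$ delivers (2) via Remark \ref{equiv-two-mods}; and stability of $K$ in $\lambda$ together with $\kappa \le \lambda$ yields density of $\Ksatp{\kappa}_\lambda$ in $K_\lambda$, giving (4). For (3), no $(<\kappa)$-order property of length $\kappa$, I would use that $K$ is stable in $\mu$ (from $\mu$-$\ssp$), $(<\kappa)$-tame (since $\mu$-tame and $\mu < \kappa$), and apply Fact \ref{op-facts}.

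The crux is condition (5): $\clc{1}(\isch{\kappa}(K)_\lambda, \leu) = \aleph_0$. By Proposition \ref{equiv-ss}.(2) it suffices to exhibit some $\is$ of coheir form such that $(K_\lambda, \is)$ is a $\lambda$-generator for a good $(\le 1)$-independence relation. I take $\is := \isch{\kappa}(K)^{\le 1} \rest \Ksatp{\kappa}_\lambda$. Proposition \ref{weakly-good-examples}.(1)(b) provides that $(K_\lambda, \is_\lambda)$ is a generator for a \emph{weakly} good $(\le 1)$-independence relation, using $\kappa = \beth_\kappa$ and $2^{<\kappap} < \lambda$. The extra conditions needed to make it a generator for a good independence relation---$\lambda$-tameness of $\Kup$ and joint embedding above $\lambda$---follow from $\mu$-tameness and Theorem \ref{superstab-frame}.(\ref{class-cond}) applied to $\lambda$-$\ssp$. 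Thus the only remaining property to verify is the local character axiom $\clc{1}(\is_\lambda, \leu) = \aleph_0$ (noting that $\leu$ witnesses being a skeleton by Example \ref{skel-examples}).

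The main obstacle is precisely this last local character step. My plan is to leverage the chain local character of nonsplitting at level $\lambda$, supplied for free by $\lambda$-$\ssp$: given $\leu$-increasing $\seq{M_i : i < \omega}$ in $\Ksatp{\kappa}_\lambda$, an ambient $N$ in the same class, and $p \in \gS(N)$, find $i^*$ with $p$ not $\lambda$-splitting over $M_{i^*}$. I then aim to show $p$ does not $(<\kappa)$-coheir-fork over $M_{i^*+1}$. Given $B_0 \subseteq |N|$ of size $<\kappa$, I use $M_{i^*} \ltu M_{i^*+1}$ to embed some $\kappa$-saturated $N_1 \lea N$ containing $M_{i^*} \cup B_0$ of size $\lambda$ via $g: N_1 \to M_{i^*+1}$ over $M_{i^*}$, setting $B_0' := g(B_0) \subseteq |M_{i^*+1}|$; by $\kappa$-saturation of $g(N_1)$ (preserved from $N_1$ under the iso), the type $\gtp(a/B_0'; N)$ is realized by some $a' \in g(N_1) \subseteq |M_{i^*+1}|$; and the non-splitting hypothesis, applied to the iso $g: N_1 \cong g(N_1)$ over $M_{i^*}$, transports $\gtp(a/B_0; N)$ to $\gtp(a/B_0'; N)$. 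The delicate point I expect will demand the most care is producing a witness in $|M_{i^*+1}|$ realizing $\gtp(a/B_0; N)$ itself (rather than its $g$-image $\gtp(a/B_0'; N)$); I anticipate this will require either a further use of shortness and the uniqueness property of coheir (Fact \ref{coheir-syn}.(\ref{coheir-uq})), or an adjustment of the base model in the chain so that the pulled-back witness lies within the base.

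Finally, the ``in particular'' clause is immediate: given a tame AEC $K$ with amalgamation that is $\mu$-superstable, choose $\kappa$ with $\kappa = \beth_\kappa > \mu$ (which exists by unboundedness of $\beth$-fixed points) and apply the main statement, yielding $\kappa$-strong $(2^{<\kappap})^+$-$\ssp$ and hence strong superstability of $K$.
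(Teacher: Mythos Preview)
Your treatment of conditions (1)--(4) and the framing via Proposition \ref{equiv-ss}.(2) are correct, and you rightly isolate $\clc{1}(\isch{\kappa}(K)_\lambda, \leu) = \aleph_0$ as the crux. However, your direct argument for this step has a genuine gap --- precisely the one you flag. Having found $a' \in g(N_1) \subseteq M_{i^*+1}$ realizing $\gtp(a/B_0'; N)$, there is no evident way to produce a realization of $\gtp(a/B_0; N)$ inside $M_{i^*+1}$: pulling $a'$ back through $g$ lands you in $N_1$, not in $M_{i^*+1}$. Your proposed fixes do not close this. Invoking uniqueness of coheir is circular, since applying it requires already knowing the relevant types are coheirs over a common base; and shortness is vacuous for types of length one, so it adds nothing beyond the $(<\kappa)$-tameness you already have. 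I do not see how to make the direct bridge ``$p$ does not $\lambda$-split over $M_{i^*}$ $\Rightarrow$ $p$ is a $(<\kappa)$-coheir over $M_{i^*+1}$'' work by an elementary realization argument.

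The paper sidesteps this entirely. From $\mu$-$\ssp$, Theorem \ref{superstab-frame} produces an almost pre-good $(\le 1, \ge \mu^+)$-independence relation $\is'$ on $\Ksatp{\mu^+}$, with $\clc{1}(\is') = \aleph_0$ already built in. The key move is then the \emph{canonicity of coheir} (Theorem \ref{canon-coheir}, applied with $\lambda$ there equal to $2^{<\kappap}$): this identifies $\pre(\is') \rest \Ksatp{\mu'}$ with $\pre(\isch{\kappa}(K)^{\le 1}) \rest \Ksatp{\mu'}$, where $\mu' = (2^{<\kappap})^+$. A short additional argument via Lemma \ref{cont-lem} (giving $\is'$ the right $(<\kappa)$-witness property) extends the identification to types over arbitrary $M \in K_{\ge \mu'}$ with base in $\Ksatp{\mu'}$, which is exactly what is needed to transfer $\clc{1}(\is') = \aleph_0$ to $\clc{1}(\isch{\kappa}(K)_{\mu'}) = \aleph_0$. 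In short, rather than bridging nonsplitting to coheir by hand, the paper uses the canonicity machinery of Section \ref{canon-sec}, whose engine is the weak uniqueness and extension properties of nonsplitting (Fact \ref{splitting-basics}.(\ref{splitting-weak-props})) rather than a direct witness-finding argument.
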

\begin{proof} 
  Let $\mu' := (2^{<\kappap})^+$. We show that $K$ is $\kappa$-strongly $\mu'$-$\ssp$. By Theorem \ref{superstab-frame}, $K_{\ge \mu}$ has joint embedding, no maximal models and is stable in all cardinals. By definition, $K_{\ge \mu}$ also has amalgamation. Also, $K$ is $\mu$-tame, hence $(<\kappa)$-tame. By Fact \ref{op-facts}, $K$ does not have the $(<\kappa)$-order property of length $\kappa$. Moreover we have already observed that $K_{\mu'}$ is stable in $\mu'$ and has joint embedding and no maximal models. Also, $\Ksatp{\kappa}_{\mu'}$ is dense in $K_{\mu'}$ by stability and the fact $\mu' > \kappa$. It remains to check that $\clc{1} (\isch{\kappa} (K)_{\mu'}, \leu) = \aleph_0$.

By Theorem \ref{superstab-frame}, there is a $(\le 1, \ge \mu^+)$-independence relation $\is'$ such that $K_{\is'} = \Ksatp{\mu^+}$ and $\is'$ is good, except that $K_{\is'}$ may not be an AEC. By Theorem \ref{canon-coheir} (with $\lambda$ there standing for $2^{<\kappap}$ here), $\pre (\is') \rest \Ksatp{\mu'} = \pre (\isch{\kappa} (K)^{\le 1}) \rest \Ksatp{\mu'}$. By the proof of Lemma \ref{cont-lem}, $\is'$ has the right $(<\kappa)$-witness property for members of $K_{\ge \mu^+}$: If $M \in K_{\ge \mu^+}$, $M_0 \lea M$ is in $\Ksatp{\mu^+}$, and $p \in \gS (M)$, then $p$ does not $\is$-fork over $M_0$ if and only if $p \rest B$ does not $\is$-fork over $M_0$ for all $B \subseteq |M|$ with $|B| < \kappa$. Therefore by the proof of Theorem \ref{canon-coheir}, we actually have that for any $M \in K_{\ge \mu'}$ and $M_0 \lea M$ in $\Ksatp{\mu'}$, $p \in \gS (M)$ does not $\is'$-fork over $M_0$ if and only if $p$ does not $\isch{\kappa} (K)$-fork over $M_0$. In particular:

$$
\clc{1} (\isch{\kappa} (K)_{\mu'}) = \clc{1} (\is_\mu') = \aleph_0
$$

Therefore $\clc{1} (\isch{\kappa} (K)_{\mu'}, \leu) = \aleph_0$, as needed.

\end{proof}

We now arrive to the main result of this section: categoricity implies strong superstability. We first recall several known consequences of categoricity.

\begin{fact}\label{categ-facts}
  Let $K$ be an AEC with no maximal models, joint embedding, and amalgamation. Assume $K$ is categorical in a $\lambda > \LS (K)$. Then:
  \begin{enumerate}
    \item\label{categ-facts-1} \cite[Claim I.1.7]{sh394} $K$ is stable in all $\mu \in [\LS (K), \lambda)$.
    \item\label{ss-high-cof} \cite[Lemma 6.3]{sh394} For $\LS (K) \le \mu < \cf{\lambda}$, $\clc{1} (\snsp{\mu} (K_\mu), \lel{\mu}{\omega}) = \aleph_0$.
    \item\label{categ-facts-bg} \cite[Theorem 6.6]{bg-v9} Assume $K$ does not have the weak $\kappa$-order property (see Definition \ref{def-weak-op}) and $\LS (K) < \kappa \le \mu < \lambda$. Then:
      $$
      \clc{1}(\isch{\kappa} (K)_\mu, \leu) = \aleph_0
      $$
    \item \label{sat-transfer} \cite[Lemma II.1.5]{sh394} If the model of size $\lambda$ is $\mu$-saturated for $\mu > \LS (K)$, then every member of $K_{\ge \chi}$ is $\mu$-saturated, where $\chi := \min (\lambda, \sup_{\mu_0 < \mu} \hanf{\mu_0})$.
  \end{enumerate}
\end{fact}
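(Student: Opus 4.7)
Since these are Facts recalled from the literature, my plan is to sketch the proof ideas behind each citation rather than reproduce the full arguments. The four items are logically somewhat independent, though (2) and (3) share a common template, and (1) and (4) are used inside the proofs of (2) and (3).

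For (\ref{categ-facts-1}), I would follow Shelah's original argument: assume for contradiction that $K$ is unstable in some $\mu \in [\LS(K), \lambda)$. By the equivalence between stability in $\mu$ and the absence of the order property (the relevant half of Fact \ref{op-facts} suffices here), $K$ has the order property of unboundedly many lengths, hence of every length by Shelah's presentation theorem and a Hanf-number argument. Using Ehrenfeucht-Mostowski functors on two distinct linear orders of size $\lambda$ (one densely saturated, one with many invariant cuts) one then constructs two non-isomorphic models of size $\lambda$, contradicting categoricity in $\lambda$.

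For (\ref{ss-high-cof}), the plan is to exploit saturation. Since $\mu < \cf(\lambda)$, a standard chain-union argument (or an application of (\ref{sat-transfer}) once proven) shows that the $\lambda$-model is $\mu^+$-saturated. Now suppose for contradiction that some $p \in \gS(\bigcup_{i<\omega} M_i)$ $\mu$-splits over every $M_i$ along a $(\mu,\omega)$-limit chain. Realizing everything inside the saturated $\lambda$-model, one iterates the splitting witnesses $\omega$ many times to build a tree producing $\mu^+$ pairwise distinct Galois types over a set of size $\mu$, contradicting stability in $\mu$ from (\ref{categ-facts-1}). The step requiring the most care is ensuring the splitting witnesses produce \emph{Galois}-distinct types rather than merely different syntactic splittings, which is exactly why the universal-extension structure of $\lel{\mu}{\omega}$ is invoked.

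Item (\ref{categ-facts-bg}) runs along the same lines with $(<\kappa)$-coheir replacing $\mu$-nonsplitting: the absence of the weak $\kappa$-order property (Fact \ref{op-facts} gives the equivalence between this and the non-order property below $\kappa$) bounds how many pairwise incompatible coheir configurations can coexist, so a failure of local character along a $\ltu$-chain of suitable cofinality would reproduce a weak $\kappa$-order property witness inside the categorical model, which is impossible. The main obstacle is Boney-Grossberg's extraction of the order-property witness from an arbitrary $\ltu$-chain, which is the technical heart of \cite[Theorem 6.6]{bg-v9}.

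Finally, for (\ref{sat-transfer}), I would argue by Morley's omitting types theorem for AECs. Suppose $M \in K_{\ge \chi}$ and some $p \in \gS^{<\mu}(A;N)$ with $A \subseteq |M|$, $|A| = \mu_0 < \mu$ is omitted in $M$. Since $\chi \ge \hanf{\mu_0}$, Morley's theorem \cite[II.1.10]{sh394} produces a model of size $\lambda$ omitting $p$; by categoricity this is the unique $\lambda$-model, which is $\mu$-saturated by hypothesis and so cannot omit $p$, a contradiction. The only real obstacle is the AEC-version of Morley's omitting types theorem, which Shelah dispatches by the presentation theorem.
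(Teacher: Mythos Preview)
The paper does not prove this statement at all: it is labeled a \emph{Fact}, each item carries a citation to the literature (\cite{sh394} or \cite{bg-v9}), and no proof environment follows. So there is no ``paper's own proof'' to compare against; the author simply imports these results.

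That said, your sketches are broadly faithful to the cited sources. Item~(\ref{categ-facts-1}) via EM models over ill-behaved orders and item~(\ref{sat-transfer}) via Morley's omitting types theorem plus categoricity are exactly Shelah's arguments. For (\ref{ss-high-cof}) your tree-of-types idea is in the right spirit, though Shelah's actual argument in \cite{sh394} is more direct: one works inside the saturated $\lambda$-model and uses the splitting witnesses together with universality to produce an explicit violation of stability, rather than a full binary tree. For (\ref{categ-facts-bg}) you have the right hypothesis (no weak $\kappa$-order property) and the right shape, but Boney--Grossberg's extraction of the order witness is somewhat more combinatorial than your one-line summary suggests; still, nothing you wrote is wrong. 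Since the paper itself only cites these results, your level of detail already exceeds what the paper provides.
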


The next proposition is folklore: it derives joint embedding and no maximal models from amalgamation and categoricity. We could not find a proof in the literature, so we include one here.

\begin{prop}\label{jep-from-categ}
  Let $K$ be an AEC with amalgamation. If there exists $\lambda \ge \LS (K)$ such that $K_\lambda$ has joint embedding, then there exists $\chi < \hanf{\LS (K)}$ such that $K_{\ge \chi}$ has joint embedding and no maximal models.
\end{prop}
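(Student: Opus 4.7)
The plan is to use the partition theorem (Fact \ref{jep-partition}) to decompose $K$ into AECs with joint embedding, identify the unique partition class containing $K_\lambda$, and apply the Hanf-number argument (Fact \ref{hanf-existence}) to bound the sizes of models in all other classes.

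Concretely, I would first invoke Fact \ref{jep-partition} to write $K = \bigcup_{i \in I} K^i$, a disjoint union of AECs each with amalgamation, joint embedding, and $\LS(K^i) = \LS(K)$. Since $K_\lambda$ has joint embedding by hypothesis, any two models of size $\lambda$ have a common $K$-extension, so they lie in a common partition class; call it $K^*$. For every $i \neq *$, the class $K^i$ can contain no model of size $\geq \lambda$: otherwise downward Löwenheim-Skolem in $K^i$ (using $\LS(K^i) = \LS(K) \le \lambda$) would produce a model of size $\lambda$ in $K^i$, contradicting $K_\lambda \subseteq K^*$ and the disjointness of the partition. Thus each such $K^i$ fails to have arbitrarily large models, so by (the contrapositive of) Fact \ref{hanf-existence} there is $\chi_i < \hanf{\LS(K)}$ with $(K^i)_{\geq \chi_i} = \emptyset$.

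Next I would bound $|I|$. Each class $K^i$ contains a model of size $\LS(K)$ by downward Löwenheim-Skolem, and there are at most $2^{\LS(K)}$ isomorphism types of such models; since isomorphic models lie in the same class, $|I| \le 2^{\LS(K)}$. Because $\cf(\hanf{\LS(K)}) = \cf(\beth_{(2^{\LS(K)})^+}) = (2^{\LS(K)})^+ > 2^{\LS(K)}$, the supremum $\chi_0 := \sup_{i \neq *} \chi_i$ is strictly less than $\hanf{\LS(K)}$. For any $\chi \ge \chi_0$ we then have $K_{\ge \chi} \subseteq K^*$, so $K_{\ge \chi}$ inherits joint embedding from $K^*$.

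The remaining obstacle --- and the only subtle point --- is arranging ``no maximal models''. Here I would split on whether $K^*$ has arbitrarily large models. If it does, then by amalgamation and joint embedding within $K^*$, no model of $K^*$ can be maximal (a maximal model would force every other model of $K^*$ to embed into it, capping the cardinality). Taking $\chi := \max(\chi_0, \LS(K))$ finishes this case. If $K^*$ does not have arbitrarily large models, Fact \ref{hanf-existence} applied to $K^*$ yields some $\chi^* < \hanf{\LS(K)}$ with $(K^*)_{\geq \chi^*} = \emptyset$; setting $\chi := \max(\chi_0, \chi^*) < \hanf{\LS(K)}$ makes $K_{\ge \chi} = \emptyset$, which satisfies both conclusions vacuously. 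In either case $\chi < \hanf{\LS(K)}$ is as required, completing the proof.
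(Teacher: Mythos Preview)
Your proof is correct and uses the same essential ingredients as the paper: the partition from Fact~\ref{jep-partition}, the Hanf bound from Fact~\ref{hanf-existence}, and the cofinality computation $\cf(\hanf{\LS(K)}) = (2^{\LS(K)})^+ > |I|$ to control the supremum of the $\chi_i$'s. One small omission: you implicitly assume $K_\lambda \neq \emptyset$ when you name $K^*$; if $K_\lambda = \emptyset$ then there is no distinguished class, but then every $K^i$ dies out below $\hanf{\LS(K)}$ and the conclusion holds vacuously, so just say so.

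Organizationally you differ slightly from the paper. You identify the unique surviving class $K^*$ at the outset and read off joint embedding directly from $K_{\ge \chi} \subseteq K^*$, then case-split on whether $K^*$ has arbitrarily large models to get no maximal models. The paper instead first observes that $K_{\ge \lambda}$ has joint embedding (propagating up from $K_\lambda$ via amalgamation), reduces joint embedding above $\chi$ to ``no maximal models above $\chi$'' (since any such model then extends into $K_{\ge \lambda}$), and only then runs the partition argument, allowing in principle several $K^i$'s to survive. Your route is a bit more direct, since you show outright that only one class survives; the paper's route avoids naming $K^*$ and handles the empty case uniformly at the start.
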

\begin{proof}
  Write $\mu := \hanf{\LS (K)}$. If $K_\mu = \emptyset$, then by Fact \ref{hanf-existence} there exists $\chi < \mu$ such that $K_{\ge \chi} = \emptyset$, so it has has joint embedding and no maximal models. Now assume $K_\mu \neq \emptyset$. In particular, $K$ has arbitrarily large models. By amalgamation, $K_{\ge \lambda}$ has joint embedding, and so no maximal models. If $\lambda < \mu$ we are done so assume $\lambda \ge \mu$. It is enough to show that there exists $\chi < \mu$ such that $K_{\ge \chi}$ has no maximal model since then any model of $K_{\ge \chi}$ embeds inside a model in $K_{\ge \lambda}$ and hence $K_{\ge \chi}$ has joint embedding.

  By Fact \ref{jep-partition}, we can write $K = \bigcup_{i \in I} K^i$ where the $K^i$'s are disjoint AECs with $\LS (K^i) = \LS (K)$ and each $K^i$ has joint embedding and amalgamation. Note that $|I| \le I (K, \LS (K)) \le 2^{\LS (K)}$. For $i \in I$, let $\chi_i$ be the least $\chi < \mu$ such that $K_{\ge \chi}^i = \emptyset$, or $\LS (K)$ if $K_\mu^i \neq \emptyset$. Let $\chi := \sup_{i \in I} \chi_i$. Note that $\cf{\mu} = \left(2^{\LS (K)}\right)^+ > 2^{\LS (K)} \ge |I|$, so $\chi < \mu$. 

  Now let $M \in K_{\ge \chi}$. Let $i \in I$ be such that $M \in K^i$. $M$ witnesses that $K_\chi^i \neq \emptyset$ so by definition of $\chi$, $K^i$ has arbitrarily large models. Since $K^i$ has joint embedding, this implies that $K^i$ has no maximal models. Therefore there exists $N \in K^i \subseteq K$ with $M \lta N$, as desired.
\end{proof}

The next two results are simple consequences of Fact \ref{categ-facts}.(\ref{ss-high-cof}).

\begin{prop}\label{thm-ss-high-cof}
  Let $K$ be an $\LS (K)$-tame AEC with amalgamation and no maximal models. If $K$ is categorical in a $\lambda$ with $\cf{\lambda} > \LS (K)$, then $K$ is $\LS (K)$-$\ssp$.
\end{prop}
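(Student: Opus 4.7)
The plan is to set $\mu := \LS(K)$ and verify each clause in the definition of $\mu$-$\ssp$. The $\mu$-tameness and amalgamation requirements in $\ssp$ are given by hypothesis, so the task reduces to showing $K$ is $\mu$-superstable. By Remark~\ref{equiv-two-mods}, this amounts to: $K_\mu$ is nonempty with amalgamation, joint embedding, no maximal models, and stability in $\mu$; together with the local character condition $\clc{1}(\snsp{\mu}(K_\mu), \leu) = \aleph_0$.

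The first substantive step is establishing joint embedding at $\mu$. Given $M_1, M_2 \in K_\mu$, I would use no maximal models to extend each $M_\ell$ to a model $N_\ell \in K_\lambda$, by iterating proper $\lea$-extensions along a continuous chain of length $\lambda$. Categoricity at $\lambda$ yields an isomorphism $N_1 \cong N_2$, so both $M_1$ and $M_2$ embed into $N_2$ via inclusion composed with the isomorphism. Combined with global amalgamation and LST, this yields joint embedding in $K$ globally (standard transfer of JEP up from $\LS(K)$). The remaining structural conditions on $K_\mu$ are routine bookkeeping: nonemptiness follows from $K_\lambda \neq \emptyset$ and LST; amalgamation is inherited from global amalgamation; and no maximal models at $\mu$ follows by taking any proper $\lea$-extension in $K$ and descending via LST to size $\mu$. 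Stability in $\mu$ is then immediate from Fact~\ref{categ-facts}.(\ref{categ-facts-1}), whose hypotheses we have just verified, since $\mu < \lambda$.

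For the local character property, the hypothesis $\cf{\lambda} > \LS(K) = \mu$ is precisely what is needed to invoke Fact~\ref{categ-facts}.(\ref{ss-high-cof}), which yields $\clc{1}(\snsp{\mu}(K_\mu), \lel{\mu}{\omega}) = \aleph_0$. By Remark~\ref{equiv-ss-rmk}, the definition of superstability allows $\leu$ to be replaced by $\lel{\mu}{\omega}$ without loss, so this delivers the required local character and completes the verification. No step constitutes a serious obstacle; the whole argument is a clean reassembly of previously collected facts, with the only mildly delicate point being the extraction of joint embedding at $\LS(K)$, where the unique categoricity model at $\lambda$ serves as a common anchor for arbitrary models of size $\mu$.
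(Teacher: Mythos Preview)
Your proposal is correct and follows essentially the same approach as the paper: obtain joint embedding from amalgamation, categoricity, and no maximal models; get stability in $\LS(K)$ from Fact~\ref{categ-facts}.(\ref{categ-facts-1}); and invoke Fact~\ref{categ-facts}.(\ref{ss-high-cof}) together with Remark~\ref{equiv-ss-rmk} (via Proposition~\ref{equiv-ss}) for local character. You are simply more explicit than the paper's terse proof in unpacking the verification of the structural hypotheses on $K_\mu$.
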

\begin{proof}
By amalgamation, categoricity, and no maximal models, $K$ has joint embedding. By Fact \ref{categ-facts}.(\ref{categ-facts-1}), $K$ is stable in $\LS (K)$. Now apply Fact \ref{categ-facts}.(\ref{ss-high-cof}) and Proposition \ref{equiv-ss} (with Remark \ref{equiv-ss-rmk}).
\end{proof}
\begin{prop}
  Let $K$ be an $\LS (K)$-tame AEC with amalgamation. If $K$ is categorical in a $\lambda$ with $\cf{\lambda} \ge \hanf{\LS (K)}$, then there exists $\mu < \hanf{\LS (K)}$ such that $K$ is $\mu$-$\ssp$.
\end{prop}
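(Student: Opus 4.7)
The plan is to reduce this to Proposition \ref{thm-ss-high-cof} by cutting off the bottom of the class so that the remaining class has no maximal models. The previous proposition already produces $\mu$-$\ssp$ from categoricity in a cardinal of cofinality above $\LS(K)$, provided one has the extra assumption of no maximal models; the only thing we must arrange is that no-maximal-models holds after possibly discarding small models, and that the cutoff cardinal stays below $\hanf{\LS(K)}$.

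First, I would observe that since $\cf{\lambda} \ge \hanf{\LS(K)} > \LS(K)$, we have $\lambda > \LS(K)$, so categoricity in $\lambda$ trivially gives joint embedding in $K_\lambda$ (there being only one model up to isomorphism). Proposition \ref{jep-from-categ} then applies and yields a $\chi < \hanf{\LS(K)}$ such that $K_{\ge \chi}$ has joint embedding and no maximal models. Set $K' := K_{\ge \chi}$, viewed as an AEC with $\LS(K') = \chi$.

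Next, I would verify that $K'$ satisfies the hypotheses of Proposition \ref{thm-ss-high-cof}. Amalgamation is inherited from $K$; joint embedding and no maximal models come from the previous step; $K'$ is $\chi$-tame because $K$ is $\LS(K)$-tame and $\chi \ge \LS(K)$; and $K'$ is still categorical in $\lambda$, with $\cf{\lambda} \ge \hanf{\LS(K)} > \chi = \LS(K')$. Applying Proposition \ref{thm-ss-high-cof} to $K'$ gives that $K'$ is $\LS(K')$-$\ssp$, i.e.\ $\chi$-$\ssp$.

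Finally, by definition of $\mu$-$\ssp$ (which only constrains the tail $K_{\ge \mu}$), the statement that $K' = K_{\ge \chi}$ is $\chi$-$\ssp$ is exactly the statement that $K$ is $\chi$-$\ssp$. Taking $\mu := \chi$ gives the conclusion with $\mu < \hanf{\LS(K)}$. There is no real obstacle here; the only subtle point is ensuring the cutoff $\chi$ remains strictly below $\hanf{\LS(K)}$, which is precisely what Proposition \ref{jep-from-categ} guarantees.
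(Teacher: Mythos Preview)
Your argument is correct and matches the paper's proof exactly: invoke Proposition~\ref{jep-from-categ} to get $\chi < \hanf{\LS(K)}$ with $K_{\ge \chi}$ having joint embedding and no maximal models, then apply Proposition~\ref{thm-ss-high-cof} to $K_{\ge \chi}$. You have simply spelled out in full the verifications that the paper leaves implicit.
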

\begin{proof}
  By Proposition \ref{jep-from-categ}, there exists $\mu < \hanf{\LS (K)}$ such that $K_{\ge \mu}$ has joint embedding and no maximal models. Now apply Proposition \ref{thm-ss-high-cof} to $K_{\ge \mu}$.
\end{proof}

We now remove the restriction on the cofinality and get strong superstability. The downside is that $\hanf{\LS (K)}$ is replaced by a fixed point of the beth function above $\LS (K)$.

\begin{thm}\label{categ-frame}
  Let $K$ be an AEC with amalgamation. Let $\kappa = \beth_{\kappa} > \LS (K)$ and assume $K$ is $(<\kappa)$-tame. If $K$ is categorical in a $\lambda > \kappa$, then:

  \begin{enumerate}
    \item \label{categ-frame-1} $K$ is $\kappa$-strongly $\kappa$-$\ssp$.
    \item \label{categ-frame-2} $K$ is stable in all cardinals above or equal to $\hanf{\LS (K)}$.
    \item \label{categ-frame-3} The model of size $\lambda$ is saturated.
    \item \label{categ-frame-5} $K$ is categorical in $\kappa$.
    \item \label{categ-frame-4} For $\chi := \min (\lambda, \hanf{\kappa})$, $\pre \left(\isch{\kappa} (K)_{\ge \chi}^{\le 1}\right)$ is a good $(\le 1, \ge \chi)$-frame with underlying AEC $K_{\ge \chi}$.
  \end{enumerate}
\end{thm}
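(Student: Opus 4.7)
The plan is to prove (\ref{categ-frame-1}) first, use it to derive (\ref{categ-frame-2}), then tackle the saturation statement (\ref{categ-frame-3}) which is the heart of the theorem, and finally obtain (\ref{categ-frame-5}) and (\ref{categ-frame-4}) as corollaries.

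For (\ref{categ-frame-1}), since $\kappa = \beth_\kappa > \LS(K)$ forces $\hanf{\LS(K)} \le \kappa$, Proposition~\ref{jep-from-categ} produces $\chi_0 < \kappa$ such that $K_{\ge \chi_0}$ has joint embedding and no maximal models; we replace $K$ by $K_{\ge \chi_0}$. Because $\kappa < \lambda$, Fact~\ref{categ-facts}.(\ref{categ-facts-1}) gives stability in $\kappa$, and together with $(<\kappa)$-tameness, Fact~\ref{op-facts} rules out both the $(<\kappa)$-order property and the weak $\kappa$-order property. Fact~\ref{categ-facts}.(\ref{categ-facts-bg}) then supplies $\clc{1}(\isch{\kappa}(K)_\kappa, \leu) = \aleph_0$, and the remaining hypotheses of $\kappa$-strong $\kappa$-$\ssp$ (nonemptiness of $K_\kappa$, density of $\Ksatp{\kappa}_\kappa$ in $K_\kappa$ via $\kappa = \beth_\kappa$, etc.) follow routinely. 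This proves (\ref{categ-frame-1}); Theorem~\ref{superstab-frame}.(\ref{class-cond}) combined with Fact~\ref{categ-facts}.(\ref{categ-facts-1}) yields (\ref{categ-frame-2}).

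I expect (\ref{categ-frame-3}) to be the main obstacle, particularly the case $\cf{\lambda} < \kappa$: the classical long-chain argument of \cite{sh394} gives $\kappa$-saturation of the $\lambda$-model only when $\cf{\lambda} \ge \kappa$, so a genuinely new input is required. My strategy is to build, using (\ref{categ-frame-2}) and the almost pre-good coheir frame provided by Theorem~\ref{superstab-frame}.(\ref{frame-cond-a},\ref{frame-cond-b}), a saturated model of cardinality $\lambda$ by hand, after which categoricity in $\lambda$ transports saturation to every model of cardinality $\lambda$. For regular $\lambda$, a $(\mu, \lambda)$-limit construction with $\mu < \lambda$ suffices by the usual cofinality argument. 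For singular $\lambda$, I would produce saturated models at a cofinal set of cardinalities below $\lambda$ (using stability at each, together with joint embedding, amalgamation, and universality of saturated models to keep the chain coherent) and argue that the union is saturated; the delicate point, when $|A| \ge \cf{\lambda}$, is that a type $p \in \gS(A)$ can still be realized by first extending $p$ to a type over the union, using the $\aleph_0$ chain local character of coheir to push the base into some $M_j$, and then using the universality of (and uniqueness of nonforking extensions to) later saturated stages of the chain. This establishes (\ref{categ-frame-3}).

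Conclusions (\ref{categ-frame-5}) and (\ref{categ-frame-4}) then follow quickly from (\ref{categ-frame-3}) via Fact~\ref{categ-facts}.(\ref{sat-transfer}). For (\ref{categ-frame-5}), apply it with $\mu := \kappa$: since $\kappa$ is a strong limit, $\sup_{\mu_0 < \kappa} \hanf{\mu_0} = \kappa$, so every $M \in K_{\ge \kappa}$ is $\kappa$-saturated; in particular every $M \in K_\kappa$ is saturated, and standard uniqueness of saturated models of the same cardinality (from stability in $\kappa$ with joint embedding and amalgamation) forces categoricity in $\kappa$. For (\ref{categ-frame-4}), apply Fact~\ref{categ-facts}.(\ref{sat-transfer}) with $\mu := \kappa^+$: here $\sup_{\mu_0 < \kappa^+} \hanf{\mu_0} = \hanf{\kappa}$, so $\chi = \min(\lambda, \hanf{\kappa})$ and $K_{\ge \chi} = \Ksatp{\kappa^+}_{\ge \chi}$ is an AEC with Löwenheim-Skolem-Tarski number $\chi$. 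Theorem~\ref{superstab-frame}.(\ref{frame-cond-c}), applied with $\theta = \kappa^+$ and $\chi$ in place of its $\lambda$, then produces the desired pre-good $(\le 1, \ge \chi)$-frame induced by coheir.
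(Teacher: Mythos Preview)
Your plan has a genuine gap in the proof of (\ref{categ-frame-1}), and this forces a different order of attack than the one you propose.

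The problematic step is your claim that density of $\Ksatp{\kappa}_\kappa$ in $K_\kappa$ ``follows routinely'' from $\kappa = \beth_\kappa$. Nothing in the hypotheses rules out $\kappa$ being singular, and for singular $\kappa$ the existence of a $\kappa$-saturated extension of an arbitrary $M \in K_\kappa$ inside $K_\kappa$ is \emph{not} routine: the obvious chain constructions only give you $\cf{\kappa}$-saturation of the union, and upgrading to full $\kappa$-saturation already requires the kind of forking machinery you are trying to set up. (Even in the first-order case this step uses nonforking.) The paper sidesteps this entirely: it first obtains $\kappa$-strong $\mu$-superstability for some $\mu < \lambda$ (taking $\mu = \kappa^+$ when $\lambda > \kappa^+$, where density of $\Ksatp{\kappa}_{\kappa^+}$ is trivial from stability, and using Proposition~\ref{thm-ss-high-cof} when $\lambda = \kappa^+$), then derives (\ref{categ-frame-2}), (\ref{categ-frame-3}), (\ref{categ-frame-5}) in that order, and only \emph{then} concludes (\ref{categ-frame-1}): once every model of size $\kappa$ is saturated (from (\ref{categ-frame-5})), $\Ksatp{\kappa}_\kappa = K_\kappa$ and density is vacuous.

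Separately, you are massively overcomplicating (\ref{categ-frame-3}). Once you know $K$ is stable in $\lambda$ (from (\ref{categ-frame-2})), you can directly build, for each $\lambda_0 < \lambda$, a $\lambda_0^+$-saturated model of size $\lambda$; categoricity in $\lambda$ then forces the unique model of size $\lambda$ to be $\lambda_0^+$-saturated for every $\lambda_0 < \lambda$, hence saturated. No frame argument, no case split on $\cf{\lambda}$, no chain-local-character trick. Your proposed singular-$\lambda$ argument (pushing a type over the union down to some $M_j$ via local character and then realizing it in $M_{j+1}$) is also incomplete as written: a realization of $q \rest M_j$ inside $M_{j+1}$ need not realize $q \rest A$ when $A \not\subseteq M_j$, and closing this gap would require further nonforking bookkeeping.

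Your derivations of (\ref{categ-frame-5}) and (\ref{categ-frame-4}) from (\ref{categ-frame-3}) via Fact~\ref{categ-facts}.(\ref{sat-transfer}) are correct and match the paper.
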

\begin{proof}
  Note that $K_\lambda$ has joint embedding so by Proposition \ref{jep-from-categ}, there exists $\chi_0 < \hanf{\LS (K)}$ such that $K_{\ge \chi_0}$ (and thus $K_{\ge \kappa}$) has joint embedding and no maximal models. By Fact \ref{categ-facts}.(\ref{categ-facts-1}), $K_{\ge \chi_0}$ is stable everywhere below $\lambda$. Since $\kappa = \beth_{\kappa}$, Fact \ref{op-facts} implies that $K$ does not have the $(<\kappa)$-order property of length $\kappa$.

  Let $\kappa \le \mu < \lambda$. By Fact \ref{categ-facts}.(\ref{categ-facts-bg}), $\clc{1} (\isch{\kappa} (K)_{\mu}, \leu) = \aleph_0$. Now using Proposition \ref{equiv-ss}, $K$ is $\kappa$-strongly $\mu$-superstable if and only if $\Ksatp{\kappa}_{\mu}$ is dense in $K_{\mu}$. If $\kappa < \mu$, then $\Ksatp{\kappa}_{\mu}$ is dense in $K_\mu$ (by stability), so $K$ is $\kappa$-strongly $\mu$-superstable. However we want $\kappa$-strong $\kappa$-superstability. We proceed in several steps.

  First, we show $K$ is $\mu$-superstable for \emph{some} $\mu < \lambda$. If $\lambda = \kappa^+$, then this follows directly from Proposition \ref{thm-ss-high-cof} with $\mu = \kappa$, so assume $\lambda > \kappa^+$. Then by the previous paragraph $K$ is $\kappa$-strongly $\mu$-superstable for $\mu := \kappa^+$.

Second, we prove (\ref{categ-frame-2}). We have already observed $K_{\ge \chi_0}$ is stable everywhere below $\lambda$. By Theorem \ref{superstab-frame}, $K$ is stable in every $\mu' \ge \mu$. In particular, it is stable in and above $\lambda$, so (\ref{categ-frame-2}) follows.

Third, we show (\ref{categ-frame-3}). Since $K$ is stable in $\lambda$, we can build a $\lambda_0^+$-saturated model of size $\lambda$ for all $\lambda_0 < \lambda$. Thus the model of size $\lambda$ is $\lambda_0^+$-saturated for all $\lambda_0 < \lambda$, and hence $\lambda$-saturated. 

Fourth, we prove (\ref{categ-frame-5}). Since the model of size $\lambda$ is saturated, it is $\kappa$-saturated. By Fact \ref{categ-facts}.(\ref{sat-transfer}), every model of size $\sup_{\kappa_0 < \kappa} h (\kappa_0) = \kappa$ is $\kappa$-saturated. By uniqueness of saturated models, $K$ is categorical in $\kappa$.

Fifth, observe that since every model of size $\kappa$ is saturated, $\Ksatp{\kappa}_\kappa = K_\kappa$ is dense in $K_\kappa$. By the second paragraph above, $K$ is $\kappa$-strongly $\kappa$-superstable so (\ref{categ-frame-1}) holds.

Finally, we prove (\ref{categ-frame-4}). We have seen that the model of size $\lambda$ is saturated, thus $\kappa^+$-saturated. By Fact \ref{categ-facts}.(\ref{sat-transfer}), every model of size $\ge \chi$ is $\kappa^+$-saturated. Now use (\ref{categ-frame-1}) with Theorem \ref{superstab-frame}.
\end{proof}
\begin{remark}
  If one just wants to get strong superstability from categoricity, we suspect it should be possible to replace the $\beth_\kappa = \kappa$ hypothesis by something more reasonable (maybe just asking for the categoricity cardinal to be above $2^{\kappa}$). Since we are only interested in eventual behavior here, we leave this to future work.
\end{remark}

As a final remark, we point out that it is always possible to get a good independence relation from superstability (i.e.\ even without categoricity) if one is willing to restrict the class to sufficiently saturated models:

\begin{fact}[Corollary 4.5 in \cite{bv-sat-v3}]\label{bv-sat-fact}
  Let $K$ be an AEC. If $K$ is $\kappa$-strongly $\mu$-$\ssp$, then whenever $\lambda > (\mu^{<\kappap})^{+}$, $\Ksatp{\lambda}$ is an AEC with $\LS (\Ksatp{\lambda}) = \lambda$.
\end{fact}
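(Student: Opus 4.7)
The plan is to verify the four AEC axioms for $(\Ksatp{\lambda}, \lea)$ and then show $\LS(\Ksatp{\lambda}) = \lambda$, leveraging the good frame machinery from Theorem \ref{superstab-frame} together with the explicit stability bounds afforded by $(<\kappa)$-coheir. First, I would note that since $K$ is $\kappa$-strongly $\mu$-$\ssp$, Theorem \ref{superstab-frame}.(\ref{frame-cond-b}) produces a good (in particular: uniqueness, symmetry, base monotonicity, local character $\aleph_0$) frame on $\Ksatp{\mu^+}_{\ge \mu^+}$ whose forking is just $(<\kappa)$-coheir restricted to types of length one. Since $\lambda > (\mu^{<\kappap})^+ \geq \mu^+$, every $\lambda$-saturated model is $\mu^+$-saturated, so this frame (and Fact \ref{coheir-syn}) is available throughout $\Ksatp{\lambda}$. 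Coherence and invariance under isomorphism are inherited from $K$ without work.

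Second, I would handle closure under unions of chains. Let $\seq{M_i : i < \delta}$ be $\lea$-increasing in $\Ksatp{\lambda}$ with union $M_\delta$. Given $A \subseteq |M_\delta|$ of size ${<}\lambda$ and $p \in \gS(A; N)$ with $N \gea M_\delta$, pick by local character an $M_0 \lea M_\delta$ of size $<\kappap$ over which $p$ does not coheir-fork; since $\kappap \le \mu^+ < \lambda$ and $\delta$ can be analyzed in cases on cofinality, we may arrange $M_0 \lea M_i$ for some $i < \delta$. Using extension of coheir (which holds on the frame side via Theorem \ref{superstab-frame}) to a nonforking extension $q \in \gS(M_i)$ of $p\rest M_0$, saturation of $M_i$ (or $M_{i+1}$ at successor cofinality steps) realizes $q$ by some $a$; by uniqueness of nonforking extensions, $a$ realizes $p$. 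Thus $M_\delta \in \Ksatp{\lambda}$, and (\ref{tv-last}) is inherited from $K$.

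Third, I would prove the Löwenheim–Skolem–Tarski axiom, which is the main obstacle. Given $M \in \Ksatp{\lambda}$ and $A \subseteq |M|$ with $|A| \le \lambda$, I build a chain $\seq{N_i : i < \lambda}$ inside $M$ with $\|N_i\| < \lambda$, $A \subseteq |N_0|$, and the property that every $p \in \gS(B;M)$ with $B \subseteq |N_i|$, $|B| < \lambda$, is realized in $N_{i+1}$. At each step one must enumerate enough types. Here the hypothesis $\lambda > (\mu^{<\kappap})^+$ is crucial: by Proposition \ref{class-props} applied to the good frame, and by the right $(<\kappa)$-witness property of coheir, one obtains $(<\alpha)$-stability bounds polynomial in $\mu^{<\kappap}$, so the number of relevant types over a set of size ${<}\lambda$ is at most $(\mu^{<\kappap})^{|B|} \le \lambda$. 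Taking $N := \bigcup_{i < \lambda} N_i$, the previous paragraph applied along a cofinal subchain shows $N \in \Ksatp{\lambda}$, and coherence gives $N \lea M$; one also has $\|N\| = \lambda$ by the cardinal arithmetic. This simultaneously yields $\LS(\Ksatp{\lambda}) \le \lambda$; the reverse inequality is automatic since members of $\Ksatp{\lambda}$ have size $\ge \lambda$.

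The hardest technical point is the LST bookkeeping: one must realize types over arbitrary ${<}\lambda$-subsets of the evolving chain while keeping each $N_i$ strictly of size $<\lambda$, which forces the threshold to sit above $(\mu^{<\kappap})^+$ rather than just $\mu^{<\kappap}$. Everything else reduces cleanly to properties already recorded for the frame induced by coheir.
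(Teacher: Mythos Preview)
The paper does not give a proof of this statement: it is recorded as a Fact, citing \cite{bv-sat-v3}, and is used as a black box throughout. There is therefore no in-paper argument against which to compare your proposal; what you have written is essentially an attempted reconstruction of the cited external result.

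Your overall strategy---inherit coherence from $K$, use the almost pre-good frame on $\Ksatp{\mu^+}$ provided by Theorem~\ref{superstab-frame} to show that unions of chains of $\lambda$-saturated models are $\lambda$-saturated, and use stability in every cardinal $\ge \mu$ (Theorem~\ref{superstab-frame}.(\ref{class-cond})) to build a $\lambda$-saturated substructure of size $\lambda$---is the natural one. Several details need repair, though. First, the local-character bound you invoke is wrong: coheir gives only $\slc{1} \le (2^{<\kappap})^+$ (Fact~\ref{coheir-syn}), and the frame on $\Ksatp{\mu^+}$ gives $\slc{1} = \mu^{++}$; neither yields a base of size $<\kappap$. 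Second, and more seriously, in the chain-closure step when $\cf(\delta)$ is small you cannot absorb your base model into a single $M_i$; the correct move is to apply $\clc{1} = \aleph_0$ from the almost pre-good frame (Definition~\ref{almost-good-def}.(\ref{almost-good-cl-pre}), proved via Lemma~\ref{pre-cl-generator}) directly to the chain. Third, the type-counting estimate $(\mu^{<\kappap})^{|B|} \le \lambda$ in your LST step is not correct as written; what is actually needed is stability of $K$ in the relevant cardinals together with the frame's uniqueness and local character. Finally, watch for circularity: Theorem~\ref{superstab-frame}.(\ref{frame-cond-c}) \emph{assumes} the saturated class is an AEC, so you may only use parts (\ref{frame-cond-a}) and (\ref{frame-cond-b}) in this argument.
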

\begin{cor}
  Let $K$ be an AEC. If $K$ is $\kappa$-strongly $\mu$-$\ssp$, then $\Ksatp{(\mu^{<\kappap})^{+2}}$ is $(\le 1)$-good. Moreover the good frame is induced by $(<\kappa)$-coheir.
\end{cor}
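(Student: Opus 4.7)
The plan is to combine Theorem \ref{superstab-frame} with Fact \ref{bv-sat-fact} in a direct way. First, I would apply Proposition \ref{equiv-ss} to obtain an independence relation $\is$ such that $K$ is $\kappa$-strongly $(\mu, \is)$-$\ssp$. Then I would set $\lambda := (\mu^{<\kappap})^{+2}$ and observe the two key numerical facts needed: (i) $\lambda > (\mu^{<\kappap})^+$, which feeds Fact \ref{bv-sat-fact}, and (ii) $\lambda \ge \mu^+$, which is needed to apply Theorem \ref{superstab-frame}.

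Next I would apply Fact \ref{bv-sat-fact} to conclude that $K' := \Ksatp{\lambda}$ is an AEC with $\LS(K') = \lambda$. This is what lets me invoke Theorem \ref{superstab-frame}.(\ref{frame-cond-c}) with $\theta := \lambda$: indeed $\Ksatp{\theta}_{\ge \lambda} = \Ksatp{\lambda} = K'$, and by the fact just cited this class is an AEC with Löwenheim-Skolem-Tarski number $\lambda$. Setting $\is' := \Kupp{(K_\mu, \is)} \rest \Ksatp{\mu^+}_{\ge \lambda}$, Theorem \ref{superstab-frame}.(\ref{frame-cond-c}) then yields that $\is' \rest K'$ is a pre-good $(\le 1, \ge \lambda)$-independence relation.

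Unpacking definitions, ``pre-good'' means $\pre(\is' \rest K')$ is a good $(\le 1, \ge \lambda)$-frame, so $\cl(\pre(\is' \rest K'))$ is a good $(\le 1, \ge \lambda)$-independence relation with underlying AEC $K' = \Ksatp{\lambda}$. Since $\LS(K') = \lambda$, this is exactly what Definition \ref{goodness-def-2} requires for $K'$ to be $(\le 1)$-good. For the moreover clause, Theorem \ref{superstab-frame}.(\ref{frame-cond-b}) gives $\pre(\is') = \pre(\isch{\kappa}(K))^{\le 1} \rest \Ksatp{\mu^+}_{\ge \lambda}$ (this is where the \emph{strong} superstability hypothesis is used); restricting to $K'$ on both sides shows the induced frame is precisely $(<\kappa)$-coheir.

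There is no real obstacle here: all the heavy lifting has been done in Theorem \ref{superstab-frame} and in Fact \ref{bv-sat-fact}. The only point of care is bookkeeping the cardinal parameters to ensure simultaneously that $\Ksatp{\lambda}$ is an AEC (needing $\lambda > (\mu^{<\kappap})^+$, hence the ``$+2$'' in the statement rather than just ``$+$'') and that $\lambda \ge \mu^+$ so that Theorem \ref{superstab-frame}.(\ref{frame-cond-c}) applies.
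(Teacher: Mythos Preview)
Your proposal is correct and follows essentially the same approach as the paper's proof, which simply reads ``Combine Theorem \ref{superstab-frame}.(\ref{frame-cond-c}) and Fact \ref{bv-sat-fact}.'' You have spelled out the details that the paper leaves implicit, including the preliminary invocation of Proposition \ref{equiv-ss} and the cardinal bookkeeping. One minor simplification: Theorem \ref{superstab-frame}.(\ref{frame-cond-c}) already includes the ``moreover'' clause about $(<\kappa)$-coheir directly (under the $\kappa$-strong hypothesis), so you need not route through part (\ref{frame-cond-b}) separately.
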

\begin{proof}
  Combine Theorem \ref{superstab-frame}.(\ref{frame-cond-c}) and Fact \ref{bv-sat-fact}.
\end{proof}
\begin{remark}\label{superlimit-remark}
  Let $K$ be an AEC in $\lambda := \LS (K)$ with amalgamation, joint embedding, and no maximal models. If $\Ksatp{\lambda}$ is a nonempty AEC in $\lambda$, then the saturated model is superlimit (see \cite[Definition 1.13]{shelahaecbook}). Thus we even obtain a good frame in the sense of \cite[Chapter II]{shelahaecbook}.
\end{remark}

\section{Domination}\label{domin-sec}

Our next aim is to take a sufficiently nice good $\lambda$-frame (for types of length 1) and show that it can be extended to types of any length at most $\lambda$. To do this, we will give conditions under which a good $\lambda$-frame is \emph{weakly successful} (a key technical property of \cite[Chapter II]{shelahaecbook}, see Definition \ref{weakly-succ-def}), and even $\omega$-successful (Definition \ref{omega-succ-def}).

The hypotheses we will work with are: 

\begin{hypothesis}\label{ss-hyp-2} \
  \begin{enumerate}
    \item $\is = (K, \nf)$ is a $(<\infty, \ge \mu)$-independence relation.
    \item $\s := \pre (\is^{\le 1})$ is a type-full good $(\ge \mu)$-frame.
    \item $\lambda > \mu$ is a cardinal.
    \item\label{ss-hyp-2-4} For all $n < \omega$:
      \begin{enumerate}
      \item $\Ksatp{\lambda^{+n}}$ is an AEC\footnote{Thus we have a superlimit of size $\lambda^{+n}$, see Remark \ref{superlimit-remark}.} with Löwenheim-Skolem-Tarski number $\lambda^{+n}$.
      \item $\clc{\lambda^{+n}} (\is) = \lambda^{+{n + 1}}$.
      \end{enumerate}
    \item $\is$ has base monotonicity, $\pre (\is)$ has uniqueness.
    \item $\is$ has the left and right $(\le \mu)$-model-witness properties.
  \end{enumerate}
\end{hypothesis}

\begin{remark}
  We could have given more local hypotheses (e.g.\ by replacing $\infty$ by $\theta$ or only assuming (\ref{ss-hyp-2-4}) for $n$ below some fixed $m < \omega$) and made some of the required properties more precise (this is part of what should be done to improve ``short'' to ``diagonally tame'' in the main theorem, see the discussion in Section \ref{main-thm-sec}).
\end{remark}

The key is that we assume there is \emph{already} an independence notion for longer types. However, it is potentially weak compared to what we want. The next fact shows that the hypotheses above are reasonable.

\begin{fact}\label{ss-hyp-2-prop}
  Assume $K^0$ is a fully $(<\kappa)$-tame and short $\kappa$-strongly $\mu_0$-superstable AEC with amalgamation. Then for any $\mu \ge \left(\mu_0^{<\kappap}\right)^{+2}$ and any $\lambda > \mu$ with $\lambda = \lambda^{<\kappap}$, Hypothesis \ref{ss-hyp-2} holds for $K := \left(K^0\right)^{\mu\text{-sat}}$ and $\is := \isch{\kappa} (K^0) \rest K$.
\end{fact}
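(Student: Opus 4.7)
The plan is to verify each of the six conditions of Hypothesis \ref{ss-hyp-2} in turn, drawing on the general facts already established for coheir and superstability. First I would observe some bookkeeping: by definition of $\kappa$-strongly $\mu_0$-superstable we have $\kappa \le \mu_0$, hence $\mu \ge (\mu_0^{<\kappap})^{+2} > \kappa$, and the assumption $\lambda = \lambda^{<\kappap}$ gives $(\lambda^{+n})^{<\kappap} = \lambda^{+n}$ for each $n < \omega$ (since $\kappap \le \mu \le \lambda$).

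Next I would use Fact \ref{bv-sat-fact} (with $\lambda$ there replaced by $\lambda^{+n}$) to get that $\Ksatpp{K^0}{\lambda^{+n}}$ is an AEC with Löwenheim-Skolem-Tarski number $\lambda^{+n}$ for every $n < \omega$. In particular $K = \Ksatpp{K^0}{\mu}$ is an AEC, so $K$ is a coherent abstract class with amalgamation (inherited from $K^0$). Since the restriction of $\isch{\kappa}(K^0)$ to $K$ keeps it a $(<\infty, \ge \mu)$-independence relation by Fact \ref{coheir-syn}.(\ref{coheir-1}), condition (1) and (4a) are immediate.

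Then I would extract the good frame structure: by Theorem \ref{superstab-frame}.(\ref{frame-cond-c}) applied to $K^0$ with the saturation threshold chosen to be $\mu$ (which is $\ge \mu^+$ of the initial superstability cardinal and for which $\Ksatpp{K^0}{\mu}$ is an AEC), we obtain that $\pre(\is^{\le 1})$ is a pre-good $(\le 1, \ge \mu)$-frame which moreover coincides with $(<\kappa)$-coheir. This is (2). Base monotonicity and the $(<\kappa)$-witness properties (left and right, in particular the $(\le \mu)$-model-witness properties since $\mu \ge \kappa$) follow from Fact \ref{coheir-syn}.(\ref{coheir-1}), yielding (6) and half of (5). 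For the uniqueness part of (5), full $(<\kappa)$-tameness and shortness let us invoke Fact \ref{coheir-syn}.(\ref{coheir-uq}) (or its consequence for $\pre(\is)$) to conclude $\pre(\is)$ has uniqueness.

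Finally, for (4b) I would compute local character explicitly: since $K^0$ does not have the $(<\kappa)$-order property of length $\kappa$ (by $\kappa$-strong superstability, via Fact \ref{op-facts}), Fact \ref{coheir-syn}.(\ref{coheir-2}) gives $\slc{\lambda^{+n}}(\is) \le \left((\lambda^{+n} + 2)^{<\kappap}\right)^+ = (\lambda^{+n})^+ = \lambda^{+n+1}$, using $\lambda = \lambda^{<\kappap}$. Combined with Proposition \ref{indep-props}.(\ref{indep-props-lc}) we obtain $\clc{\lambda^{+n}}(\is) \le \slc{\lambda^{+n}}(\is) \le \lambda^{+n+1}$, and the reverse inequality is forced by the definition $\clc{\alpha_0}(\is) \ge |\alpha_0|^+$, giving equality. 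There is no single hard step here; the main subtlety is tracking which $\mu$-saturated AEC the good frame is actually produced on, and confirming the tameness/shortness hypotheses transfer from $K^0$ to $K = \Ksatpp{K^0}{\mu}$ (which they do, since tameness and shortness are properties of Galois types and are inherited by the sub-AC of saturated models).
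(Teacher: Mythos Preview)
Your proposal is correct and follows essentially the same approach as the paper's proof, which invokes Fact~\ref{bv-sat-fact} for the AEC condition, Theorem~\ref{superstab-frame}.(\ref{frame-cond-c}) for the good frame, and Fact~\ref{coheir-syn} together with the cardinal arithmetic $(\lambda^{+n})^{<\kappap} = \lambda^{+n}$ for the remaining properties. One small point: Fact~\ref{coheir-syn}.(\ref{coheir-2}) literally bounds $\slc{\lambda^{+n}}$ for $\isch{\kappa}(K^0)$ rather than its restriction $\is$, but the transfer is immediate via base monotonicity since $K$ is an AEC with $\LS(K) = \mu \le \lambda^{+n}$; also, the failure of the $(<\kappa)$-order property is part of the definition of $\kappa$-strong superstability and need not be extracted from Fact~\ref{op-facts}.
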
  
\begin{proof}
  By Fact \ref{bv-sat-fact}, for any $\mu' \ge \mu$, $\Ksatp{\mu'}$ is an AEC with $\LS (\Ksatp{\mu'}) = \mu'$. By Theorem \ref{superstab-frame}.(\ref{frame-cond-c}), $(<\kappa)$-coheir induces a good $(\ge \mu)$-frame for $\mu$-saturated models. The other conditions follow directly from the definition of strong superstability and the properties of coheir (Fact \ref{coheir-syn}). For example, the local character condition holds because $\lambda^{<\kappap} = \lambda$ implies $\left(\lambda^{+n}\right)^{<\kappap} = \lambda^{+n}$ for any $n < \omega$.
\end{proof}

The next technical property is of great importance in Chapter II and III of \cite{shelahaecbook}. The definition below follows \cite[Definition 4.1.5]{jrsh875} (but as usual, we work only with type-full frames).

\begin{defin}\label{weakly-succ-def}
  Let $\ts$ be a type-full good $\lambda_{\ts}$-frame. 

  \begin{enumerate}
  \item\index{amalgam} For $M_0 \lea M_\ell$ in $K$, $\ell = 1,2$, an \emph{amalgam of $M_1$ and $M_2$ over $M_0$} is a triple $(f_1, f_2, N)$ such that $N \in K_{\ts}$ and $f_\ell : M_\ell \xrightarrow[M_0]{} N$.
  \item\index{equivalence of amalgam} Let $(f_1^x, f_2^x, N^x)$, $x = a,b$ be amalgams of $M_1$ and $M_2$ over $M_0$. We say $(f_1^a, f_2^a, N^a)$ and $(f_1^b, f_2^b, N^b)$ are \emph{equivalent over $M_0$} if there exists $N_\ast \in K_{\ts}$ and $f^x : N^x \rightarrow N_\ast$ such that $f^b \circ f_1^b = f^a \circ f_1^a$ and $f^b \circ f_2^b = f^a \circ f_2^a$, namely, the following commutes:

  \[
  \xymatrix{ & N^b \ar@{.>}[r]^{f^b} & N_\ast \\
    M_1 \ar[ru]^{f_1^b} \ar[rr]|>>>>>{f_1^a} & & N^a \ar@{.>}[u]_{f^a} \\
    M_0 \ar[u] \ar[r] & M_2 \ar[uu]|>>>>>{f_2^b}  \ar[ur]_{f_2^a} & \\
  }
  \]

  Note that being ``equivalent over $M_0$'' is an equivalence relation (\cite[Proposition 4.3]{jrsh875}).
\item\index{uniqueness triple}\index{$\Ktuqp{\ts}$|see {uniqueness triple}} Let $\Ktuqp{\ts}$ be the set of triples $(a, M, N)$ such that $M \lea N$ are in $K$, $a \in |N| \backslash |M|$ and for any $M_1 \gea M$ in $K$, there exists a unique (up to equivalence over $M$) amalgam $(f_1, f_2, N_1)$ of $N$ and $M_1$ over $M$ such that $\gtp (f_1 (a) / f_2[M_1] ; N_1)$ does not fork over $M$. We call the elements of $\Ktuqp{\ts}$ \emph{uniqueness triples}.
\item\index{existence property (for uniqueness triples)} $\Ktuqp{\ts}$ has the \emph{existence property} if for any $M \in K_{\ts}$ and any nonalgebraic $p \in \gS (M)$, one can write $p = \gtp (a / M; N)$ with $(a, M, N) \in \Ktuqp{\ts}$. We also talk about the \emph{existence property for uniqueness triples}.
\item\index{weakly successful} $\s$ is \emph{weakly successful} if $\Ktuqp{\ts}$ has the existence property.
  \end{enumerate}
\end{defin}

The uniqueness triples can be seen as describing a version of domination. They were introduced by Shelah for the purpose of starting with a good $\lambda$-frame and extending it to a good $\lambda^+$-frame. The idea is to first extend the good $\lambda$-frame to a forking notion for types of models of size $\lambda$ (and really this is what interests us here, since tameness already gives us a good $\lambda^+$-frame). Now, since we already have an independence notion for longer types, we can follow \cite[Definition 4.21]{makkaishelah} and give a more explicit version of domination that is exactly as in the first-order case.

\begin{defin}[Domination]\index{domination}\index{dominates|see {domination}}\index{model-dominates|see domination}
  Fix $N \in K$. For $M \lea N$, $B, C \subseteq |N|$, $B$ \emph{dominates} $C$ over $M$ in $N$ if for any $N' \gea N$ and any $D \subseteq |N'|$, $\nfs{M}{B}{D}{N'}$ implies $\nfs{M}{B \cup C}{D}{N'}$. 

  We say that $B$ \emph{model-dominates} $C$ over $M$ in $N$ if for any $N' \gea N$ and any $M \lea N_0' \lea N'$, $\nfs{M}{B}{N_0'}{N'}$ implies $\nfs{M}{B \cup C}{N_0'}{N'}$.
\end{defin}

Model-domination turns out to be the technical variation we need, but of course if $\is$ has extension, then it is equivalent to domination. We start with two easy ambient monotonicity properties:

\begin{lem}\label{domination-monot}
  Let $M \lea N$. Let $B, C \subseteq |N|$ and assume $B$ [model-]dominates $C$ over $M$ in $N$. Then:

  \begin{enumerate}
    \item If $N' \gea N$, then $B$ [model-]dominates $C$ over $M$ in $N'$.
    \item If $M \lea N_0 \lea N$ contains $B \cup C$, then $B$ [model-]dominates $C$ over $M$ in $N_0$.
  \end{enumerate}
\end{lem}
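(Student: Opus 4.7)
The plan is to verify both statements directly from the ambient monotonicity axiom of the independence relation $\nf$, together with amalgamation and invariance. Since the definitions of domination and model-domination differ only in whether the right-hand object is an arbitrary set $D$ or a model $N_0'$, the same argument works for both; I will describe the set case.

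For part 1, there is essentially nothing to do. To check that $B$ dominates $C$ over $M$ in $N'$, I fix any $N'' \gea N'$ and $D \subseteq |N''|$ with $\nfs{M}{B}{D}{N''}$; since $\lea$ is transitive, $N'' \gea N$, so the hypothesis that $B$ dominates $C$ over $M$ in $N$ applies verbatim to give $\nfs{M}{B \cup C}{D}{N''}$.

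Part 2 is the only place that requires real work, and even there it is routine. Given $M \lea N_0 \lea N$ with $B \cup C \subseteq |N_0|$, I fix $N'' \gea N_0$ and $D \subseteq |N''|$ with $\nfs{M}{B}{D}{N''}$. Using amalgamation (which $K_{\is}$ satisfies by the definition of an independence relation), I produce $N^* \in K_{\is}$ together with $K$-embeddings $f: N \to N^*$ and $g: N'' \to N^*$ which both fix $N_0$ pointwise. Invariance of $\nf$ applied to $g$ yields $\nfs{M}{B}{g[D]}{N^*}$ (since $g$ fixes $M$ and $B \subseteq |N_0|$). Invariance applied to $f$ together with part 1 shows that $B$ dominates $C$ over $M$ in $N^*$; combining this with the previous display gives $\nfs{M}{B \cup C}{g[D]}{N^*}$. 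Now ambient monotonicity (the restriction clause) produces $\nfs{M}{B \cup C}{g[D]}{g[N'']}$, and a final application of invariance via $g^{-1}$ yields $\nfs{M}{B \cup C}{D}{N''}$, as needed.

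There is no serious obstacle; the only subtlety is making sure that when amalgamating to compare the ambient models $N$ and $N''$, one invokes invariance correctly to justify that both $B$ and the domination property transport along $f$ (which is automatic because $f$ fixes $N_0 \supseteq B \cup C$).
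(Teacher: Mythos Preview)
Your proof is correct and follows essentially the same approach as the paper. The only cosmetic difference is that the paper's amalgamation in part 2 keeps $N$ fixed (so the amalgam $N'' \gea N$ directly and only one embedding $f : N' \xrightarrow[N_0]{} N''$ is introduced), which lets domination be applied immediately without the extra invocation of part 1 and invariance of domination along $f$; your two-embedding version works just as well.
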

\begin{proof} We only do the proofs for the non-model variation but of course the model variation is completely similar.

  \begin{enumerate}
    \item By definition of domination.
    \item Let $N' \gea N_0$ and $D \subseteq |N'|$ be given such that $\nfs{M}{B}{D}{N'}$. By amalgamation, there exists $N'' \gea N$ and $f: N' \xrightarrow[N_0]{} N''$. By invariance, $\nfs{M}{B}{f[D]}{N''}$. By definition of domination, $\nfs{M}{B \cup C}{f[D]}{f[N']}$. By invariance again, $\nfs{M}{B \cup C}{D}{N'}$, as desired.
  \end{enumerate}
\end{proof}

The next result is key for us: it ties domination with the notion of uniqueness triples:  

\begin{lem}\label{domination-uq}
  Assume $M_0 \lea M_1$ are in $K_\lambda$, and $a \in M_1$ model-dominates $M_1$ over $M_0$ (in $M_1$). Then $(a, M_0, M_1) \in \Ktuqp{\s_\lambda}$.
\end{lem}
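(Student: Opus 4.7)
The plan is to verify the two defining conditions of a uniqueness triple in $\Ktuqp{\s_\lambda}$: existence of a non-forking amalgam of $M_1$ and $M'$ over $M_0$, and its uniqueness up to equivalence over $M_0$, for each $M' \gea M_0$ in $K_\lambda$. I tacitly assume $a \notin |M_0|$, without which the triple would not even belong to $\Ktuqp{\s_\lambda}$.

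For existence, given $M' \gea M_0$ in $K_\lambda$, the extension property of the good frame $\s_\lambda$ extends $p_0 := \gtp(a/M_0; M_1)$ to $p \in \gS(M')$ not forking over $M_0$. Realizing $p$ by some $a^*$ in $N \gea M'$ and using the equality $\gtp(a/M_0; M_1) = \gtp(a^*/M_0; N)$ to glue $M_1$ and $N$ over $M_0$, I would obtain an amalgam $(f_1, f_2, N_1)$ of $M_1$ and $M'$ over $M_0$ with $\gtp(f_1(a)/f_2[M']; N_1)$ not forking over $M_0$.

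For uniqueness, let $(f_1^x, f_2^x, N_1^x)$, $x = a, b$, be two amalgams satisfying the non-forking condition. Replacing each $N_1^x$ by an isomorphic copy, I may assume $f_1^a = f_1^b = \id_{M_1}$, so that $M_1 \lea N_1^x$ and $f_2^x : M' \to N_1^x$ fixes $M_0$. By Lemma \ref{domination-monot}, the model-domination of $M_1$ by $a$ over $M_0$ transfers to hold in $N_1^x$; applied to the hypothesis $\nfs{M_0}{a}{f_2^x[M']}{N_1^x}$, this yields that $q^x := \gtp(M_1/f_2^x[M']; N_1^x)$ does not fork over $M_0$.

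The key final step is to apply uniqueness of $\pre(\is)$ to the long types $q^a, q^b$ (legitimate because Hypothesis \ref{ss-hyp-2} provides $\is$ for arbitrary length with $\pre(\is)$ having uniqueness). Since $q^a$ and $q^b$ live over the distinct sets $f_2^a[M']$ and $f_2^b[M']$, I would transport $q^b$ along an isomorphism $\Phi$ (between $N_1^b$ and some isomorphic copy $\tilde N_1^b$) extending $f_2^a \circ (f_2^b)^{-1} : f_2^b[M'] \to f_2^a[M']$, obtaining a type $\tilde q^b$ over $f_2^a[M']$ which, by invariance of nonforking, still does not fork over $M_0$. Both $q^a$ and $\tilde q^b$ restrict to $\gtp(M_1/M_0; M_1)$ over $M_0$, so uniqueness of $\pre(\is)$ forces $q^a = \tilde q^b$. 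Unwinding this equality of long Galois types produces a common $K$-extension $N_*$ together with embeddings $h_1 : N_1^a \to N_*$ and $h_2 : \tilde N_1^b \to N_*$ that agree on $f_2^a[M']$ and match $M_1$ appropriately; setting $g^a := h_1$ and $g^b := h_2 \circ \Phi$ then yields $g^a \circ f_1^a = g^b \circ f_1^b$ and $g^a \circ f_2^a = g^b \circ f_2^b$, which is exactly the equivalence of the two amalgams over $M_0$. The main technical hurdle is precisely this bookkeeping, converting equality of long Galois types over the ``abstract'' $M'$ into the commuting equivalence diagram.
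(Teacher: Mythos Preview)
Your proof is correct and follows the same overall strategy as the paper: existence via the extension property of $\s_\lambda$, and uniqueness by upgrading the nonforking of $a$ to nonforking of $M_1$ via model-domination, then invoking uniqueness of $\pre(\is)$ for the long type of $M_1$.

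The one noteworthy difference is the choice of normalization in the uniqueness half. You set $f_1^a = f_1^b = \id_{M_1}$, which forces the two long types $q^a, q^b$ to live over distinct copies $f_2^a[M'], f_2^b[M']$ of $M'$; this is exactly what creates the ``main technical hurdle'' you flag, namely the transport along $\Phi$ and the subsequent bookkeeping to recover the commuting square. The paper makes the dual choice: it renames so that $f_2^a = f_2^b = \id_{M'}$. Then both long types $\gtp(f_1^x(\bar{M}_1)/M'; N^x)$ are over the \emph{same} model $M'$, and a single amalgamation of $N^a$ and $N^b$ over $M_1$ (via $f_1^a, f_1^b$) shows they agree over $M_0$, after which uniqueness of $\pre(\is)$ applies directly with no transport needed. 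The witnesses $f^x : N^x \xrightarrow[M']{} N_\ast$ to the resulting type equality are then already the maps required for equivalence of amalgams. So your argument works, but the paper's normalization eliminates precisely the step you identified as the hurdle.
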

\begin{proof}
  Let $M_2 \gea M_0$ be in $K_\lambda$. First, we need to show that there exists $(b, M_2, N)$ such that $\gtp (b / M_2; N)$ extends $\gtp (a / M_0; M_1)$ and $\gtp (b / M_2; N)$ does not fork over $M_0$. This holds by the extension property of good frames.

  Second, we need to show that any such amalgam is unique: Let $(f_1^x, f_2^x, N^x)$, $x \in \{a, b\}$ be amalgams of $M_1$ and $M_2$ over $M_0$ such that $\nfs{M_0}{f_1^x (a)}{f_2^x[M_2]}{N^x}$. We want to show that the two amalgams are equivalent: we want $N_\ast \in K_\lambda$ and $f^x : N^x \rightarrow N_\ast$ such that $f^b \circ f_1^b = f^a \circ f_1^a$ and $f^b \circ f_2^b = f^a \circ f_2^a$, namely, the following commutes:

  \[
  \xymatrix{ & N^b \ar@{.>}[r]^{f^b} & N_\ast \\
    M_1 \ar[ru]^{f_1^b} \ar[rr]|>>>>>{f_1^a} & & N^a \ar@{.>}[u]_{f^a} \\
    M_0 \ar[u] \ar[r] & M_2 \ar[uu]|>>>>>{f_2^b}  \ar[ur]_{f_2^a} & \\
  }
  \]

  For $x = a,b$, rename $f_2^x$ to the identity to get amalgams $((f_1^x)', \id_{M_2}, (N^x)')$ of $M_1$ and $M_2$ over $M_0$. For $x = a, b$, the amalgams $((f_1^x)', \id_{M_2}, (N^x)')$ and $(f_1^x, f_2^x, N^x)$ are equivalent over $M_0$, hence we can assume without loss of generality that the renaming has already been done and $f_2^x = \id_{M_2}$

  Thus we know that $\nfs{M_0}{f_1^x (a)}{M_2}{N^x}$ for $x = a,b$. By domination, $\nfs{M_0}{f_1^x[M_1]}{M_2}{N^x}$. Let $\bar{M}_1$ be an enumeration of $M_1$. Using amalgamation, we can obtain the following diagram:

  \[
  \xymatrix{ & N^a \ar@{.>}[r]_{g^a} & N'\\
    & M_1 \ar[u]^{f_1^a} \ar[r]_{f_1^b} & N^b \ar@{.>}[u]_{g^b}
  }
  \]

  This shows $\gtp (f_1^a(\bar{M}_1) / M_0; N^a) = \gtp (f_1^b(\bar{M}_1) / M_0; N^b)$. By uniqueness, $\gtp (f_1^a(\bar{M}_1) / M_2; N^a) = \gtp (f_1^b(\bar{M}_1) / M_2; N^b)$. Let $N_\ast$ and $f^x: N^x \xrightarrow[M_2]{} N_\ast$ witness the equality. Since $f_2^x = \id_{M_2}$, $f^b \circ f_2^b = f^b \rest M_2 = \id_{M_2} = f^a \circ f_2^a$. Moreover, $(f^b \circ f_1^b) (\bar{M}_1) = f^b (f_1^b (\bar{M}_1)) = f^a (f_2^a (\bar{M}_1))$ by definition, so $f^b \circ f_1^b = f^a \circ f_1^a$. This completes the proof.
\end{proof}
\begin{remark}
  The converse holds if $\is$ has left extension.
\end{remark}
\begin{remark}
  The relationship of uniqueness triples with domination is already mentioned in \cite[Proposition 4.1.7]{jrsh875}, although the definition of domination there is different.
\end{remark}

Thus to prove the existence property for uniqueness triples, it will be enough to imitate the proof of \cite[Proposition 4.22]{makkaishelah}, which gives conditions under which the hypothesis of Lemma \ref{domination-uq} holds. We first show that we can work inside a local monster model.

\begin{lem}\label{monster-domination}
  Let $M \lea N$ and $B \subseteq |N|$. Let $\sea \ge N$ be $\|N\|^+$-saturated. Then $B$ model-dominates $N$ over $M$ in $\sea$ if and only if for any $M' \lea \sea$ with $M \lea M'$, $\nfs{M}{B}{M'}{\sea}$ implies $\nfs{M}{N}{M'}{\sea}$. Moreover if $\is$ has the right $(\le \mu)$-witness property, we get an analogous result for domination instead of model-domination. 
\end{lem}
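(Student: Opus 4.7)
Plan: The forward direction is immediate by restricting the universal quantifier in the definition of model-domination to $\sea' = \sea$, $N_0' = M'$, and observing that $B \cup |N| = |N|$ since $B \subseteq |N|$. For the backward direction, I would assume the restricted condition holds over $\sea$ and, given $\sea' \gea \sea$ with $M \lea N_0' \lea \sea'$ and $\nfs{M}{B}{N_0'}{\sea'}$, aim to conclude $\nfs{M}{|N|}{N_0'}{\sea'}$. The overall strategy is to use the right $(\le \mu)$-model-witness property to shrink $N_0'$, then exploit the $\|N\|^+$-saturation of $\sea$ to import the remaining picture into $\sea$, where the hypothesis can be applied directly.

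Concretely, the first step would be to invoke the right $(\le \mu)$-model-witness property from Hypothesis \ref{ss-hyp-2}: it suffices to prove $\nfs{M}{|N|}{B_0}{\sea'}$ for every $B_0 \subseteq |N_0'|$ with $|B_0| \le \mu$. For such a $B_0$, the Löwenheim-Skolem-Tarski axiom of $K$ provides $N_0'' \lea N_0'$ containing $M \cup B_0$ with $\|N_0''\| \le \|N\|$, and right monotonicity yields $\nfs{M}{B}{N_0''}{\sea'}$.

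The key step is then a saturation-invariance transfer. Since $\sea$ is $\|N\|^+$-saturated, $|N|\subseteq |\sea|$ has size $\|N\|$, and $\|N_0''\| \le \|N\|$, an enumeration of $N_0''$ over $|N|$ is realized by some tuple inside $|\sea|$; coherence of $K$ upgrades this to a sub-model $N_0^\ast \lea \sea$ together with a $K$-isomorphism $g : N_0'' \cong_{|N|} N_0^\ast$ (in particular fixing $M$ and $B$ pointwise). Working in a common ambient model that witnesses the equality $\gtp(N_0^\ast / |N|; \sea) = \gtp(N_0''/|N|; \sea')$, invariance and ambient monotonicity turn $\nfs{M}{B}{N_0''}{\sea'}$ into $\nfs{M}{B}{N_0^\ast}{\sea}$. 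The hypothesis applied to $M' := N_0^\ast$ then gives $\nfs{M}{|N|}{N_0^\ast}{\sea}$, which transfers back to $\nfs{M}{|N|}{N_0''}{\sea'}$ by the same ambient-monotonicity trick, and right monotonicity delivers the required $\nfs{M}{|N|}{B_0}{\sea'}$.

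I expect the main obstacle to be the saturation-invariance transfer: one must promote a realizing tuple in $|\sea|$ to an actual sub-model of $\sea$ via coherence, and carefully track the ambient models when moving forking statements across a Galois-type equality that only fixes $|N|$ pointwise. For the ``moreover'' clause, the same argument should go through after replacing the right $(\le \mu)$-model-witness property by the stronger right $(\le \mu)$-witness property, which accommodates arbitrary subsets $D \subseteq |\sea'|$ on the right and thus yields the analogous equivalence for domination in place of model-domination.
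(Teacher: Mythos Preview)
Your proposal is correct and uses the same two ingredients as the paper's proof: the right $(\le \mu)$-model-witness property to cut the right-hand side down to size $\le \|N\|$, and $\|N\|^+$-saturation of $\sea$ to pull the resulting configuration back into $\sea$ over $N$. The only difference is packaging: the paper argues by contradiction, so after shrinking $M'$ it takes a single $N' \lea \sea'$ of size $\|N\|$ containing $N \cup M'$, embeds $N'$ into $\sea$ over $N$, and transfers both $\nfs{M}{B}{M'}{}$ and $\nnfs{M}{N}{M'}{}$ in one stroke, which avoids your ``transfer back'' step entirely.
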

\begin{proof}
  We prove the non-trivial direction for model-domination. The proof of the moreover part for domination is similar. Assume $\sea' \ge \sea$ and $M \lea M' \lea \sea'$ is such that $\nfs{M}{B}{M'}{\sea'}$. We want to show that $\nfs{M}{N}{M'}{\sea'}$. Suppose not. Then we can use the $(\le \mu)$-model-witness property to assume without loss of generality that $\|M'\| \le \mu + \|M\|$, and so we can find $N \lea N' \lea \sea'$ containing $M'$ with $\|N'\| = \|N\|$ and $\nfs{M}{B}{M'}{N'}$, $\nnfs{M}{N}{M'}{N'}$. By homogeneity, find $f: N' \xrightarrow[N]{} \sea$. By invariance, $\nfs{M}{B}{f[M']}{f[N']}$ but $\nnfs{M}{N}{f[M']}{f[N']}$. By monotonicity, $\nfs{M}{B}{f[M']}{\sea}$ but $\nnfs{M}{N}{f[M']}{\sea}$, a contradiction.
\end{proof}

\begin{lem}[Lemma 4.20 in \cite{makkaishelah}]\label{two-seq-technical}
  Let $\seq{M_i : i < \lambda^+}$, $\seq{N_i : i < \lambda^+}$ be increasing continuous in $K_\lambda$ such that $M_i \lea N_i$ for all $i < \lambda^+$. Let $M_{\lambda^+} := \bigcup_{i < \lambda^+} M_i$, $N_{\lambda^+} := \bigcup_{i < \lambda^+} N_i$. 

  Then there exists $i < \lambda^+$ such that $\nfs{M_i}{N_i}{M_{\lambda^+}}{N_{\lambda^+}}$.
\end{lem}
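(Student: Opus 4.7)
The plan is to invoke $\clc{\lambda}(\is) = \lambda^+$ from Hypothesis \ref{ss-hyp-2} applied to each set $|N_i|$ individually against the chain $\seq{M_j : j < \lambda^+}$, and then to combine the resulting nonforking statements using the left $(\le \mu)$-model-witness property at a sufficiently saturated index. Note first that $M_{\lambda^+}, N_{\lambda^+} \in K$ with $M_{\lambda^+} \lea N_{\lambda^+}$ (by the Tarski-Vaught axioms applied to the chain $M_i \lea N_i \lea N_{\lambda^+}$), so the target statement is well-formed.

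For each $i < \lambda^+$, since $|N_i|$ has size $\lambda$ and $\lambda^+$ is regular, local character supplies some $j(i) < \lambda^+$ with $\nfs{M_{j(i)}}{N_i}{M_{\lambda^+}}{N_{\lambda^+}}$; by base monotonicity I may arrange $j(i) \ge i$. I would then pick $i^\ast < \lambda^+$ satisfying both $\cf(i^\ast) \ge \mu^+$ and $j(i') < i^\ast$ for every $i' < i^\ast$. Such indices form a stationary subset of $\lambda^+$ since $\mu < \lambda$ and $\lambda^+$ is regular, so the choice is harmless. For every $i' < i^\ast$, base monotonicity upgrades the starting statement to $\nfs{M_{i^\ast}}{N_{i'}}{M_{\lambda^+}}{N_{\lambda^+}}$.

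Finally, continuity of $\seq{N_{i'} : i' < i^\ast}$ combined with $\cf(i^\ast) > \mu$ forces every $B \subseteq |N_{i^\ast}|$ of size $\le \mu$ to lie inside some $|N_{i'}|$ with $i' < i^\ast$, whence right monotonicity gives $\nfs{M_{i^\ast}}{B}{M_{\lambda^+}}{N_{\lambda^+}}$. Applying the left $(\le \mu)$-model-witness property (provided by Hypothesis \ref{ss-hyp-2}) assembles these small-$B$ statements into $\nfs{M_{i^\ast}}{N_{i^\ast}}{M_{\lambda^+}}{N_{\lambda^+}}$, completing the argument. The only delicate point — and the reason for imposing the cofinality condition — is aligning $\cf(i^\ast) > \mu$ with the witness size $\mu$; everything else is routine monotonicity bookkeeping.
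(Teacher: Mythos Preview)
Your proof is correct and follows essentially the same approach as the paper's: apply $\clc{\lambda}(\is)=\lambda^+$ to each $N_i$, pass to a closure point $i^\ast$ of cofinality $\ge\mu^+$, and invoke the left $(\le\mu)$-model-witness property. The only (harmless) slip is terminological: when you shrink $N_{i'}$ down to $B$ on the left of $\nf$, that is \emph{left} monotonicity, not right.
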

\begin{proof}
  For each $i < \lambda^+$, let $j_i < \lambda^+$ be least such that $\nfs{M_{j_i}}{N_i}{M_{\lambda^+}}{N_{\lambda^+}}$ (exists since $\clc{\lambda} (\is) = \lambda^+$). Let $i^\ast$ be such that $j_i < i^\ast$ for all $i < i^\ast$ and $\cf{i^\ast} \ge \mu^+$. By definition of $j_i$ and base monotonicity we have that for all $i < i^\ast$, $\nfs{M_{i^\ast}}{N_i}{M_{\lambda^+}}{N_{\lambda^+}}$. By the left $(\le \mu)$-model-witness property, $\nfs{M_{i^\ast}}{N_{i^\ast}}{M_{\lambda^+}}{N_{\lambda^+}}$.
\end{proof}

\begin{lem}[Proposition 4.22 in \cite{makkaishelah}]\label{domination-existence}
      Let $M \in K_\lambda$ be saturated. Let $\sea \gea M$ be saturated of size $\lambda^+$. Work inside $\sea$. Write $A \nf_M B$ for $\nfs{M}{A}{B}{\sea}$.
  
  \begin{itemize}
    \item There exists a saturated $N \lea \sea$ in $K_\lambda$ such that $M \lea N$, $N$ contains $a$, and $a$ model-dominates $N$ over $M$ (in $\sea$).
    \item In fact, if $M^\ast \lea M$ is in $K_{<\lambda}$, $a \nf_{M^\ast} M$, and $r \in \gS^{\le \lambda} (M^\ast a)$, then $N$ can be chosen so that it realizes $r$.
  \end{itemize}
\end{lem}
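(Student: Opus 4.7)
I would follow the Makkai--Shelah strategy (Proposition 4.22 of \cite{makkaishelah}) adapted to this paper's independence framework, proceeding by contradiction. The first step is a reduction: by Lemma \ref{monster-domination} together with the right $(\le\mu)$-model-witness property built into Hypothesis \ref{ss-hyp-2}, to show $a$ model-dominates $N$ over $M$ (in $\sea$) it suffices to verify the implication ``$\nfs{M}{a}{M'}{\sea}\Rightarrow\nfs{M}{N}{M'}{\sea}$'' for saturated $M'\in K_\lambda$ with $M\lea M'\lea \sea$. Call such an $M'$ a \emph{test model}. So it is enough to find a saturated $N\in K_\lambda$ inside $\sea$ extending $M$, containing $M^\ast a$, realizing $r$, and satisfying the above implication against every test model.

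Suppose no such $N$ exists. I would then build, by induction on $i<\lambda^+$, increasing continuous chains $\seq{N_i:i<\lambda^+}$ and $\seq{M'_i:i<\lambda^+}$ of saturated models in $K_\lambda$, all $\lea\sea$, with $M'_i\lea N_i$, $M'_0=M$, $N_0$ saturated containing $M^\ast a$ and realizing $r$, and at each successor stage $\nfs{M}{a}{M'_{i+1}}{\sea}$ but $\nnfs{M}{N_i}{M'_{i+1}}{\sea}$. The successor step is possible: applying the contradiction hypothesis to $N_i$ yields a test model $\tilde M$ witnessing failure of domination for $N_i$; using amalgamation inside the saturated $\sea$, I amalgamate $M'_i$ and $\tilde M$ over $M$, then close up to a saturated $M'_{i+1}\in K_\lambda$ still witnessing failure (base monotonicity lets the failure persist), and finally take $N_{i+1}\lea\sea$ saturated of size $\lambda$ containing $N_i\cup M'_{i+1}$. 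Limits are handled because $\Ksatp{\lambda}$ is an AEC by Hypothesis \ref{ss-hyp-2}.

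Now apply Lemma \ref{two-seq-technical} to $\seq{M'_i:i<\lambda^+}$ and $\seq{N_i:i<\lambda^+}$ (the hypothesis $M'_i\lea N_i$ is met). This yields some $i<\lambda^+$ such that $\nfs{M'_i}{N_i}{M'_{\lambda^+}}{N_{\lambda^+}}$, where $M'_{\lambda^+}$ and $N_{\lambda^+}$ are the respective unions. On the other hand, from $\nfs{M}{a}{M'_j}{\sea}$ for all $j$ together with base monotonicity and the right $(\le\mu)$-model-witness property, $\nfs{M}{a}{M'_{\lambda^+}}{\sea}$, and in particular $\nfs{M}{a}{M'_{i+1}}{\sea}$.

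The main obstacle, and the heart of the argument, is combining these two independence statements to derive $\nfs{M}{N_i}{M'_{i+1}}{\sea}$, contradicting the construction. The plan is to use uniqueness of nonforking extensions (Hypothesis \ref{ss-hyp-2}: $\pre(\is)$ has uniqueness, together with model-witness), applied to the type of $a$ over $M'_{i+1}$: since $a\in N_i$ and $N_i$ is independent from $M'_{i+1}$ over the intermediate saturated $M'_i$, while $a$ itself is independent from $M'_{i+1}$ directly over $M$, transitivity and uniqueness force $N_i$ to inherit the independence over $M$. Making this rigorous is the technical crux: it will involve invoking the uniqueness property for types over the saturated $M'_i$, and probably an intermediate application of extension and amalgamation to align the various nonforking witnesses. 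Once this final ``patching'' step is carried out, the contradiction is immediate.
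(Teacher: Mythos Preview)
Your reduction via Lemma \ref{monster-domination} and the overall contradiction-by-$\lambda^+$-chain strategy are correct, but the construction and the final ``patching'' step have a genuine gap. In your chain you arrange $\nnfs{M}{N_i}{M'_{i+1}}{\sea}$, i.e.\ forking \emph{over the fixed base} $M$; Lemma \ref{two-seq-technical} then hands you $\nfs{M'_i}{N_i}{M'_{i+1}}{\sea}$, i.e.\ nonforking \emph{over the moving base} $M'_i$. These two statements are perfectly compatible, and your proposed bridge (``transitivity and uniqueness force $N_i$ to inherit the independence over $M$'') does not go through: to descend the base from $M'_i$ to $M$ via transitivity you would need $\nfs{M}{N_i}{M'_i}{\sea}$, which is exactly an instance of the domination you are trying to prove. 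Uniqueness of $\pre(\is)$ only compares two nonforking extensions over the \emph{same} base and gives nothing here. (There is a secondary issue at the successor step: after amalgamating $M'_i$ and $\tilde M$ you need $\nfs{M}{a}{M'_{i+1}}{\sea}$, which is not automatic since you would be extending over two incomparable models simultaneously.)

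The paper avoids both problems with one extra ingredient you are missing: a preliminary claim that if the conclusion fails over $M$, then it also fails over any saturated $M'\in K_\lambda$ with $M\lea M'$ and $\nfs{M}{a}{M'}{\sea}$. This is proven by choosing an isomorphism $f:M'\cong_{M^\ast}M$ (uniqueness of saturated models, using $\nfs{M^\ast}{a}{M'}{\sea}$ via transitivity), extending $f$ to an automorphism of $\sea$, and using uniqueness of the good frame $\s$ to arrange that $f$ fixes $a$; invariance then transports the failure. With this claim in hand, at stage $i$ you apply the failure hypothesis over $M_i$ (not over $M$), obtaining $M_{i+1}$ with $\nfs{M_i}{a}{M_{i+1}}{\sea}$ and $\nnfs{M_i}{N_i}{M_{i+1}}{\sea}$. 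Now the forking is over the moving base $M_i$, and the output of Lemma \ref{two-seq-technical} contradicts it directly---no patching needed. The extension of the $M_i$-chain at successor stages is also straightforward, since you only need $\nfs{M_i}{a}{M_{i+1}}{\sea}$ (extension in the good frame $\s$) and then $\nfs{M_0}{a}{M_{i+1}}{\sea}$ follows by transitivity.
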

\begin{proof}
  Since $\slc{1} (\s) = \mu^+ \le \lambda$, it suffices to prove the second part. Assume it fails.

  \underline{Claim}: For any saturated $M' \gea M$ in $K_\lambda$, if $a \nf_M M'$, then the second part fails with $M'$ replacing $M$. \\

  \underline{Proof of claim}: By transitivity, $a \nf_{M^\ast} M'$. By uniqueness of saturated models, there exists $f: M' \cong_{M^\ast} M$, which we can extend to an automorphism of $\sea$. Thus we also have $f (a) \nf_{M^\ast} M$. By uniqueness, we can assume without loss of generality that $f$ fixes $a$ as well. Since the second part above is invariant under applying $f^{-1}$, the result follows.

  We now construct increasing continuous chains $\seq{M_i : i \le \lambda^+}$, $\seq{N_i : i \le \lambda^+}$ such that for all $i < \lambda^+$:

  \begin{enumerate}
    \item $M_0 = M$.
    \item $M_i \lea N_i$.
    \item $M_i \in K_\lambda$ is saturated.
    \item $a \nf_{M_0} M_i$. 
    \item $N_i \fork_{M_i} M_{i + 1}$.
  \end{enumerate}

  This is enough: the sequences contradict Lemma \ref{two-seq-technical}. This is possible: take $M_0 = M$, and $N_0$ any saturated model of size $\lambda$ containing $M_0$ and $a$ and realizing $r$. At limits, take unions (we are using that $\Ksat$ is an AEC). Now assume everything up to $i$ has been constructed. By the claim, the second part above fails for $M_i$, so in particular $N_i$ cannot be model-dominated by $a$ over $M_i$. Thus (implicitly using Lemma \ref{monster-domination}) there exists $M_i' \gea M_i$ with $a \nf_{M_i} M_i'$ and $N_i \fork_{M_i} M_i'$. By the model-witness property, we can assume without loss of generality that $\|M_i'\| \le \lambda$, so using extension and transitivity, we can find $M_{i + 1} \in K_\lambda$ saturated containing $M_i'$ so that $a \nf_{M_i} M_{i + 1}$. By monotonicity we still have $N_i \fork_{M_i} M_{i + 1}$. Let $N_{i + 1} \in K_\lambda$ be any saturated model containing $N_i$ and $M_{i + 1}$.
\end{proof}
\begin{thm}\label{weakly-successful}
  $\s_\lambda \rest \Ksat_\lambda$ is a \emph{weakly successful} type-full good $\lambda$-frame.
\end{thm}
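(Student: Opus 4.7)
The plan is to first verify that $\s_\lambda \rest \Ksat_\lambda$ inherits the good $\lambda$-frame axioms from $\s$, and then establish the existence property for uniqueness triples as the main content. For the first step, the hypothesis that $\Ksatp{\lambda}$ is an AEC with $\LS(\Ksatp{\lambda}) = \lambda$ makes $\Ksat_\lambda$ a well-defined AEC in $\lambda$; stability, joint embedding, no maximal models, and local character transfer from $\s$ once one notes that $\Ksat_\lambda$ is cofinal in $K_\lambda$ by stability in $\lambda$. Base monotonicity, disjointness, symmetry, uniqueness, existence, extension, and the left witness property are simply inherited by restriction of the relation.

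For the existence property, fix $M \in \Ksat_\lambda$ and a nonalgebraic $p \in \gS(M)$. By the local character $\slc{1}(\s) = \mu^+$, pick $M^\ast \lea M$ in $K_{\le \mu}$ such that $p$ does not $\s$-fork over $M^\ast$, and realize $p$ by an element $a$ in some saturated $\sea \gea M$ of size $\lambda^+$ (such $\sea$ exists because $\s$ is good on $[\mu,\infty)$ and so stable in $\lambda^+$, using hypothesis \ref{ss-hyp-2}(4)(a) to identify $\Ksatp{\lambda^+}$). Thus $\nfs{M^\ast}{a}{M}{\sea}$. Now apply Lemma \ref{domination-existence} with this $M^\ast$ (taking the type $r$ trivially) to obtain a saturated $N \lea \sea$ in $K_\lambda$ with $M \lea N$, $a \in N$, and $a$ model-dominating $N$ over $M$ inside $\sea$. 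By Lemma \ref{domination-monot} the same domination holds with $\sea$ replaced by $N$, so Lemma \ref{domination-uq} yields $(a, M, N) \in \Ktuqp{\s_\lambda}$.

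It remains to upgrade this to $(a, M, N) \in \Ktuqp{\s_\lambda \rest \Ksat_\lambda}$, where amalgams are required to live in $\Ksat_\lambda$ rather than merely in $K_\lambda$. For existence of a nonforking amalgam over any saturated $M_1 \gea M$ in $K_\lambda$: use extension in $\s_\lambda$ to obtain an amalgam in some $N_1 \in K_\lambda$, then embed $N_1$ into a saturated $N_1' \in \Ksat_\lambda$ of size $\lambda$, using ambient monotonicity to preserve nonforking. For uniqueness: given two saturated amalgams $(f_1^x, f_2^x, N^x)$ with $N^x \in \Ksat_\lambda$ and no forking, the triple $(a, M, N) \in \Ktuqp{\s_\lambda}$ produces equivalence maps into some $N_\ast \in K_\lambda$; embed $N_\ast$ into a saturated $N_\ast' \in \Ksat_\lambda$ to obtain equivalence over $M$ inside $\Ksat_\lambda$.

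The substantive obstacle is entirely confined to producing the model-dominating saturated extension $N$, which has already been resolved by Lemmas \ref{monster-domination}--\ref{domination-existence} via the combinatorics of two interlocking saturated chains and the witness property. The remaining steps—inheritance of the good frame axioms and the passage from $K_\lambda$-amalgams to $\Ksat_\lambda$-amalgams—are routine ambient arguments given that $\Ksatp{\lambda}$ is an AEC with the expected Löwenheim-Skolem-Tarski number.
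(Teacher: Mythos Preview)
Your proposal is correct and follows essentially the same route as the paper: restrict the good frame to saturated models, realize the given type inside a large saturated model, invoke Lemma~\ref{domination-existence} to produce a saturated $N$ model-dominated by $a$ over $M$, transfer the domination down via Lemma~\ref{domination-monot}, and conclude with Lemma~\ref{domination-uq}. Two minor remarks: your explicit choice of $M^\ast$ is unnecessary since the first clause of Lemma~\ref{domination-existence} already absorbs the local-character step; and your final paragraph upgrading from $\Ktuqp{\s_\lambda}$ to $\Ktuqp{\s_\lambda \rest \Ksat_\lambda}$ is more careful than the paper, which simply cites Lemma~\ref{domination-uq} directly for the restricted frame (implicitly relying on the fact that the proof of that lemma goes through verbatim when all models are saturated and the amalgam $N_\ast$ is extended to a saturated model).
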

\begin{proof}
  Since $\s_\lambda$ is a type-full good frame, $\s_\lambda \rest \Ksat_\lambda$ also is. To show it is weakly successful, we want to prove the existence property for uniqueness triples. So let $M \in \Ksat_\lambda$ and $p \in \gS (M)$ be nonalgebraic. Say $p = \gtp (a / M; N')$. Let $\sea$ be a monster model with $N' \lea \sea$. By Lemma \ref{domination-existence}, there exists $N \lea \sea$ in $\Ksat_\lambda$ such that $M \lea N$, $a \in |N|$, and $a$ dominates $N$ over $M$ in $\sea$. By Lemma \ref{domination-monot}, $a$ dominates $N$ over $M$ in $N$. By Lemma \ref{domination-uq}, $(a, M, N) \in \Ktuqp{\s_\lambda \rest \Ksat_\lambda}$. Now, $p = \gtp (a / M; N') = \gtp (a / M; \sea) = \gtp (a / M; N)$, as desired.
\end{proof}

The term ``weakly successful'' suggests that there must exist a definition of ``successful''. This is indeed the case:

\begin{defin}[Definition 10.1.1 in \cite{jrsh875}]\index{successful}
  A type-full good $\lambda_{\ts}$-frame $\ts$ is \emph{successful} if it is weakly successful and $\lea_{\lambda_{\ts}^+}^{\text{NF}}$ \index{$\lea_{\lambda_{\ts}^+}^{\text{NF}}$} has smoothness: whenever $\seq{N_i : i \le \delta}$ is a $\lea_{\lambda_{\ts}^+}^{\text{NF}}$-increasing continuous chain of saturated models in $(\Kup_{\ts})_{\lambda_{\ts}^+}$, $N \in (\Kup_{\ts})_{\lambda_{\ts}^+}$ is saturated and $i < \delta$ implies $N_i \lea_{\lambda_{\ts}^+}^{\text{NF}} N$, then $N_\delta \lea_{\lambda_{\ts}^+}^{\text{NF}} N$.
\end{defin}

We will not define $\lea_{\lambda_{\ts}^+}^{\text{NF}}$ (the interested reader can consult e.g.\ \cite[Definition 6.14]{jrsh875}). The only fact about it we will need is:

\begin{fact}[Theorem 7.8 in \cite{jarden-tameness-apal}]\label{jarden-ltnf}
  If $\ts$ is a weakly successful type-full good $\lambda_{\ts}$-frame, $(\Kup_\ts)_{[\lambda_{\ts}, \lambda_{\ts}^+]}$ has amalgamation and is $\lambda_{\ts}$-tame, then $\lea \rest (\Kup_{\ts})_{\lambda_{\ts}^+}^{\lambda_{\ts}^+\text{-sat}} = \lea_{\lambda_{\ts}^+}^{\text{NF}}$.
\end{fact}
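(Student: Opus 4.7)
Write $\lambda := \lambda_{\ts}$, $K := \Kup_{\ts}$, and $K^\ast := K_{\lambda^+}^{\lambda^+\text{-sat}}$; abbreviate $\lea^{\NF} := \lea^{\NF}_{\lambda^+}$. The containment $\lea^{\NF} \subseteq \lea$ is immediate from the definition of $\lea^{\NF}$: any witnessing filtrations $\seq{M_i : i < \lambda^+}$, $\seq{N_i : i < \lambda^+}$ of $M, N$ in $K_\lambda$ already satisfy $M_i \lea N_i$ for all $i$, so by the Tarski--Vaught chain axiom, $M = \bigcup_i M_i \lea \bigcup_i N_i = N$. So the content is the reverse inclusion.

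For the reverse inclusion, fix $M \lea N$ in $K^\ast$. Using stability in $\lambda$ (which is part of goodness of $\ts$) and closure of saturation under $\le \lambda$-directed chains of saturated models, build simultaneously filtrations $\seq{M_i : i < \lambda^+}$, $\seq{N_i : i < \lambda^+}$ of $M$ and $N$ by \emph{saturated} members of $K_\lambda$ with $M_i \lea N_i$ for all $i$. The task is to arrange, at each successor step, that $\NF(M_i, M_{i+1}, N_i, N_{i+1})$ holds. Having chosen $M_i, N_i$ and picked $M_{i+1}$ saturated in $K_\lambda$ with $M_i \lea M_{i+1} \lea M$ (absorbing the next prescribed elements of $M$), use the existence property for $\NF$---equivalently, the existence property for uniqueness triples guaranteed by weak successfulness---to produce some $N_{i+1}^\ast \in K_\lambda$ with $M_{i+1}, N_i \lea N_{i+1}^\ast$ and $\NF(M_i, M_{i+1}, N_i, N_{i+1}^\ast)$. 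The point is then to show that such an amalgam can be embedded into $N$ over $M_{i+1} \cup N_i$: set $N_{i+1}$ to be the image, extract the next approximation, and close up at limits by continuity of $\NF$ on chains (a direct consequence of the axioms for the $\NF$-relation derived from a weakly successful frame, cf.\ \cite[Chapter II]{shelahaecbook}).

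\textbf{The main obstacle} is precisely the embedding step: ensuring that \emph{some} $\NF$-amalgam of $M_{i+1}$ and $N_i$ over $M_i$ is realized inside $N$. Weak successfulness provides that all $\NF$-amalgams of a fixed pair over a common base are equivalent in the sense of Definition~\ref{weakly-succ-def}, but only a priori over models in $K_\lambda$. To transfer this equivalence into an equality of Galois types over $M_{i+1} \cup N_i$ (a set of size $\lambda$), invoke $\lambda$-tameness of $K_{[\lambda, \lambda^+]}$: the Galois type of a suitable enumeration of an $\NF$-amalgam over $M_{i+1} \cup N_i$ is determined by its restrictions to sub-models of size $\lambda$, on which the uniqueness of $\NF$-amalgamation pins it down. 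Since $N$ is $\lambda^+$-saturated and contains $M_{i+1} \cup N_i$, this unique Galois type is realized in $N$, yielding the required $N_{i+1} \lea N$.

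\textbf{Remark on limit stages.} At limit $\delta < \lambda^+$ we take unions $M_\delta := \bigcup_{i < \delta} M_i$, $N_\delta := \bigcup_{i < \delta} N_i$; membership in $K_\lambda$ is preserved, and the inductive $\NF$-data lifts to the chain by the standard continuity properties of $\NF$ coming from weak successfulness. This completes the construction of the desired filtrations and thus witnesses $M \lea^{\NF} N$.
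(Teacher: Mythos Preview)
The paper does not prove this statement; it is quoted as a Fact from Jarden's paper. So there is no ``paper's own proof'' to compare against. That said, your direct argument has a genuine gap at the crucial step.

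The problem is in the sentence ``such an amalgam can be embedded into $N$ over $M_{i+1} \cup N_i$.'' To invoke $\lambda^+$-saturation of $N$, you need $\gtp(N_{i+1}^\ast / M_{i+1} \cup N_i)$ to be a type \emph{computed in some extension of $N$}. That requires amalgamating $N$ and $N_{i+1}^\ast$ over a common $K$-submodel containing $M_{i+1} \cup N_i$. But no such common submodel is available: the only shared submodels are $M_{i+1}$ and $N_i$ individually. If you amalgamate over $M_{i+1}$, the image of $N_i$ need not be $N_i$; if over $N_i$, the image of $M_{i+1}$ need not be $M_{i+1}$. Put differently, the amalgam of $M_{i+1}$ and $N_i$ sitting inside $N$ may be \emph{inequivalent} (in the sense of Definition~\ref{weakly-succ-def}) to the $\NF$-amalgam $N_{i+1}^\ast$, and showing they are equivalent is exactly the content of the claim. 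So the argument is circular.

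Your appeal to $\lambda$-tameness does not help here: the domain $M_{i+1} \cup N_i$ already has size $\lambda$, so $\lambda$-tameness for types over it is vacuous. Tameness in Jarden's proof is used differently, essentially to transfer frame-theoretic information between $\lambda$ and $\lambda^+$ (extending the frame upward, obtaining symmetry-type properties), not to compare types over a size-$\lambda$ set. The actual proof in \cite{jarden-tameness-apal} goes through substantial machinery developed earlier in that paper; a short direct argument along the lines you sketch is not known.
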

\begin{cor}\label{cor-successful}
  $\s_\lambda \rest \Ksat_\lambda$ is a \emph{successful} type-full good $\lambda$-frame.
\end{cor}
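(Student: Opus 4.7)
The plan is to verify the two conditions in the definition of a successful type-full good $\lambda$-frame: weak successfulness and smoothness of $\lea_{\lambda^+}^{\text{NF}}$. The first is already established by Theorem \ref{weakly-successful}, so the real work is confined to smoothness. Set $\ts := \s_\lambda \rest \Ksat_\lambda$ throughout.

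First I would identify $\Kup_\ts$. By Hypothesis \ref{ss-hyp-2}(4a), $\Ksatp{\lambda}$ is an AEC with Löwenheim-Skolem-Tarski number $\lambda$, and its restriction to size $\lambda$ is $\Ksat_\lambda = K_\ts$. The uniqueness clause of Fact \ref{kup-fact} then forces $\Kup_\ts = \Ksatp{\lambda}$. In particular, the class $(\Kup_\ts)_{\lambda^+}^{\lambda^+\text{-sat}}$ on which smoothness is to be checked is precisely $\Ksatp{\lambda^+}_{\lambda^+}$.

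Next I would verify the two side conditions that let us invoke Fact \ref{jarden-ltnf}: that $(\Kup_\ts)_{[\lambda, \lambda^+]}$ has amalgamation and is $\lambda$-tame. Amalgamation is inherited from $K$ by amalgamating there and extending the amalgam to a $\lambda$-saturated model of the same size, available via the stability in $\lambda$ built into the good $(\ge \mu)$-frame $\s$. Tameness comes from Proposition \ref{class-props}.(1): since $\pre(\is)$ has uniqueness and $\slc{1}(\s) = \mu^+ \le \lambda$, the ambient AEC $K$ is $\mu$-tame for types of length one, hence $\lambda$-tame, and this tameness descends to the sub-AEC $\Ksatp{\lambda}$ because Galois types over a $\lambda$-saturated model in $\Ksatp{\lambda}$ agree with those computed in $K$.

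Having invoked Fact \ref{jarden-ltnf}, smoothness of $\lea_{\lambda^+}^{\text{NF}}$ reduces to smoothness of the ordinary ordering $\lea$ on $\Ksatp{\lambda^+}_{\lambda^+}$: given a $\lea$-increasing continuous chain $\seq{N_i : i \le \delta}$ of members of $\Ksatp{\lambda^+}_{\lambda^+}$ and some $N$ in the same class with $N_i \lea N$ for $i < \delta$, we want $N_\delta \lea N$. But $\Ksatp{\lambda^+}$ is itself an AEC by Hypothesis \ref{ss-hyp-2}(4a), and this is exactly its Tarski-Vaught axiom (part \ref{tv-last} in Definition \ref{mu-aec-def}). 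I do not anticipate any serious obstacle: after Theorem \ref{weakly-successful} the proof reduces to unpacking $\Kup_\ts$, confirming the amalgamation and tameness side conditions for Fact \ref{jarden-ltnf}, and applying the AEC axioms for $\Ksatp{\lambda^+}$; the only mild subtlety is carefully justifying the identification $\Kup_\ts = \Ksatp{\lambda}$ and the transfer of tameness from $K$ to its saturated sub-AEC.
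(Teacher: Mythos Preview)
Your proposal is correct and follows essentially the same route as the paper: weak successfulness from Theorem \ref{weakly-successful}, then Fact \ref{jarden-ltnf} to reduce smoothness of $\lea_{\lambda^+}^{\text{NF}}$ to smoothness of $\lea$, which holds by the AEC axioms. The only minor difference is that the paper dispatches smoothness in one line by appealing to $K$ itself being an AEC (the ordering on $\Ksatp{\lambda^+}$ is just the restriction of $\lea$ from $K$), whereas you invoke Hypothesis \ref{ss-hyp-2}(4a) to argue via $\Ksatp{\lambda^+}$ being an AEC; both work, and your explicit verification of the amalgamation and tameness hypotheses of Fact \ref{jarden-ltnf} fills in details the paper leaves implicit.
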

\begin{proof}
  By Theorem \ref{weakly-successful}, $\s_\lambda \rest \Ksat_\lambda$ is weakly successful. To show it is successful, it is enough (by Fact \ref{jarden-ltnf}), to see that $\lea$ has smoothness. But this holds since $K$ is an AEC.
\end{proof}

For a good $\lambda_{\ts}$-frame $\ts$, Shelah also defines a $\lambda_{\ts}^+$-frame $\ts^+$ (\cite[Definition III.1.7]{shelahaecbook}). \index{successor of a frame}\index{$\ts^{+}$|see {successor of a frame}}\index{$\s^{+}$|see {successor of a frame}} He then goes on to show:

\begin{fact}[Claim III.1.9 in \cite{shelahaecbook}]\label{tsplus-fact}
  If $\ts$ is a successful good $\lambda_{\ts}$-frame, then $\ts^+$ is a good\footnote{Shelah proves that $\ts^+$ is actually $\text{good}^+$. There is no reason to define what this means here.} $\lambda_{\ts}^+$-frame.
\end{fact}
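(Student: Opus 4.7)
The plan is to follow the general architecture of Shelah's Chapter III but take advantage of the type-full setup and of Fact \ref{jarden-ltnf}, which gives a concrete characterization of $\lea^{\text{NF}}_{\lambda_\ts^+}$ as the usual strong substructure relation restricted to $\lambda_\ts^+$-saturated models. Since we are in a type-full setting and are only asserting goodness (not $\text{good}^+$), several of Shelah's technical verifications simplify.

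First I would set up the underlying AEC $K_{\ts^+}$. Following \cite[Definition III.1.7]{shelahaecbook}, let $K_{\ts^+}$ consist of the $\lambda_\ts^+$-saturated models of size $\lambda_\ts^+$ in $\Kup_\ts$, ordered by $\lea^{\text{NF}}_{\lambda_\ts^+}$. The weakly successful property of $\ts$ gives the existence of uniqueness triples, which yields an NF-relation on models of size $\lambda_\ts$ satisfying all the usual independent amalgamation and uniqueness properties (this is worked out in \cite[Section II.6]{shelahaecbook}). The ``successful'' hypothesis is precisely the smoothness needed to show that increasing $\lea^{\text{NF}}_{\lambda_\ts^+}$-chains of saturated models have saturated unions that remain above each term, so that $K_{\ts^+}$ really is closed under chains and satisfies the Tarski-Vaught axioms. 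By Fact \ref{jarden-ltnf} (which applies once one verifies amalgamation and tameness at $\lambda_\ts^+$, both of which follow from the good frame axioms for $\ts$), $\lea^{\text{NF}}_{\lambda_\ts^+}$ coincides with the AEC ordering restricted to $K_{\ts^+}$, so the verification of the AEC axioms for $K_{\ts^+}$ reduces to smoothness.

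Second, define the nonforking relation of $\ts^+$ by: for $M \lea N$ both in $K_{\ts^+}$ and $p \in \gS(N)$ nonalgebraic, $p$ does not $\ts^+$-fork over $M$ iff there exists $M_0 \in K_\ts$ with $M_0 \lea M$ such that $p$ does not $\cl(\s)$-fork over $M_0$. Invariance and monotonicity are immediate. Local character at $\lambda_\ts^+$ follows from local character of $\s$ at $\lambda_\ts$ combined with saturation of $M$: any $p \in \gS(N)$ does not $\s$-fork over some $M_0$ of size $\lambda_\ts$, and $\lambda_\ts^+$-saturation lets us arrange $M_0 \lea M$ after applying an automorphism. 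Uniqueness and extension transfer from $\ts$ to $\ts^+$ by combining the corresponding properties of $\s$ with saturation of the target models. Stability in $\lambda_\ts^+$ for the ambient class follows by the standard counting argument (as in Proposition \ref{class-props}), which in turn guarantees existence of enough $\lambda_\ts^+$-saturated models of size $\lambda_\ts^+$.

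The main obstacle, and the reason why ``successful'' rather than merely ``weakly successful'' is needed, will be verifying symmetry and continuity for $\ts^+$, together with the fact that $K_{\ts^+}$ is genuinely closed under unions of increasing chains of models of size $\lambda_\ts^+$. The uniqueness-triples machinery gives the necessary ``domination'' property that makes symmetry transfer (in the spirit of Lemma \ref{domination-uq} and the argument of \cite[Proposition 4.22]{makkaishelah}), while smoothness of $\lea^{\text{NF}}_{\lambda_\ts^+}$ is exactly what is needed to show that a union of $\lambda_\ts^+$-saturated models along an $\lea^{\text{NF}}_{\lambda_\ts^+}$-chain remains $\lambda_\ts^+$-saturated and lies above each term in the chain. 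Once these two ingredients are in place, the remaining good frame axioms (disjointness, base monotonicity, transitivity) for $\ts^+$ are inherited from $\ts$ by direct verification using the definition of $\nf^+$ and local character.
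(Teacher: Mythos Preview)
The paper does not actually prove this statement: it is stated as a \emph{Fact} and simply cited from Shelah's book (Claim III.1.9 in \cite{shelahaecbook}). So there is no ``paper's own proof'' to compare against beyond the citation itself.

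That said, your outline has a genuine gap. You invoke Fact~\ref{jarden-ltnf} to identify $\lea^{\text{NF}}_{\lambda_\ts^+}$ with the usual ordering on saturated models, and you claim that the hypotheses of that fact---amalgamation and $\lambda_\ts$-tameness for $(\Kup_\ts)_{[\lambda_\ts,\lambda_\ts^+]}$---``follow from the good frame axioms for $\ts$''. Amalgamation in $\lambda_\ts^+$ does follow (it is part of what is established when building $\ts^+$), but $\lambda_\ts$-tameness does \emph{not} follow from the good frame axioms; it is an additional global hypothesis. Fact~\ref{tsplus-fact} is stated for an arbitrary successful good $\lambda_\ts$-frame with no tameness assumption, and Shelah's proof in \cite[Claim III.1.9]{shelahaecbook} is entirely local: it works directly with the $\NF$ relation and the smoothness of $\lea^{\text{NF}}_{\lambda_\ts^+}$, never appealing to tameness. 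Your shortcut via Jarden's characterization is therefore not available in the stated generality. (In the paper's later applications, tameness \emph{is} ambient---see Corollary~\ref{cor-successful}---so your approach would go through there, but not for the fact as stated.)

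Separately, your sketch of symmetry transfer is vague: the domination-style arguments you allude to (Lemma~\ref{domination-uq}, \cite[Proposition~4.22]{makkaishelah}) establish weak successfulness, not symmetry of $\ts^+$. In Shelah's actual argument, symmetry for $\ts^+$ is deduced from the symmetry of the $\NF$ relation on models of size $\lambda_\ts$ together with the definition of nonforking in $\ts^+$, not from a fresh domination argument at the higher cardinal.
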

\begin{remark}
  This does \emph{not} use the weak continuum hypothesis.
\end{remark}

Note that in our case, it is easy to check that:

\begin{fact}
  $(\s_\lambda)^+ = \s_{\lambda^+} \rest \Ksatp{\lambda^+}_{\lambda^+}$.
\end{fact}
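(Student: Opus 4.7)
The approach is to show that both sides are type-full good $\lambda^+$-frames on the same underlying AEC, and then invoke the canonicity of good frames on categorical AECs (Theorem \ref{good-frame-cor}) to conclude they coincide.

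First I would handle the right-hand side. Since $\s$ is a type-full good $(\geq \mu)$-frame, its restriction $\s_{\lambda^+}$ is a type-full good $\lambda^+$-frame on $K_{\lambda^+}$. By Hypothesis \ref{ss-hyp-2}.(\ref{ss-hyp-2-4}), $\Ksatp{\lambda^+}$ is an AEC with $\LS(\Ksatp{\lambda^+}) = \lambda^+$, so $\s_{\lambda^+} \rest \Ksatp{\lambda^+}_{\lambda^+}$ inherits being a type-full good $\lambda^+$-frame: stability transfers down (any saturated model is still stable), local character and the independence properties are preserved by monotonicity, and the ordering on the restricted class is simply $\lea$.

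Next I would handle $(\s_\lambda)^+$. By Corollary \ref{cor-successful}, $\s_\lambda \rest \Ksat_\lambda$ is a successful type-full good $\lambda$-frame (so the notation $(\s_\lambda)^+$ should be read as applied to this successful frame). Fact \ref{tsplus-fact} then gives that $(\s_\lambda)^+$ is a good $\lambda^+$-frame. Unpacking Shelah's definition of the successor frame, its underlying class is $(\Kup_{\s_\lambda})_{\lambda^+}^{\lambda^+\text{-sat}} = \Ksatp{\lambda^+}_{\lambda^+}$, ordered by $\lea^{\text{NF}}_{\lambda^+}$; by Fact \ref{jarden-ltnf} (using that $\lambda^+$-tameness and amalgamation hold in our setting, courtesy of Hypothesis \ref{ss-hyp-2}), this agrees with the AEC ordering $\lea$ restricted to $\Ksatp{\lambda^+}_{\lambda^+}$. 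Thus the underlying AEC of $(\s_\lambda)^+$ is precisely $\Ksatp{\lambda^+}_{\lambda^+}$.

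Finally I would conclude by canonicity. The common underlying AEC $\Ksatp{\lambda^+}_{\lambda^+}$ is categorical in $\lambda^+$ by uniqueness of saturated models of the same cardinality. Both frames are type-full good $\lambda^+$-frames (hence $(\leq 1, \lambda^+)$-frames with $\alpha = 2 \leq \lambda^+$), so Theorem \ref{good-frame-cor} applies and yields $(\s_\lambda)^+ = \s_{\lambda^+} \rest \Ksatp{\lambda^+}_{\lambda^+}$. The main subtlety in this plan is the identification of the underlying AEC of Shelah's $(\s_\lambda)^+$ via Fact \ref{jarden-ltnf}; once that is pinned down, canonicity does all the remaining work and no direct comparison of the two nonforking relations is needed.
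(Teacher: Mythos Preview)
Your approach via canonicity is correct and complete, but it is considerably more elaborate than what the paper intends. The paper states this as a Fact with the preamble ``it is easy to check that'' and gives no proof at all; the expected verification is a direct unfolding of Shelah's definition of the successor frame $\ts^+$ (from \cite[Definition III.1.7]{shelahaecbook}) together with the transfer results already quoted (Fact \ref{jarden-ltnf} for the ordering, and the fact that $\NF$ respects $\s$ for the forking relation), showing by inspection that nonforking in $(\s_\lambda)^+$ coincides with nonforking in $\s_{\lambda^+}$ on saturated models.

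Your route instead establishes that both sides are type-full good $\lambda^+$-frames on the same categorical AEC and then invokes Theorem \ref{good-frame-cor}. This is a legitimate alternative: it trades the bookkeeping of comparing definitions for the heavier machinery of canonicity. What your approach buys is that you never have to touch Shelah's internal definition of forking in $\ts^+$; what the direct approach buys is that it is self-contained and does not rely on the nontrivial Theorem \ref{good-frame-cor} (which itself depends on the whole generator machinery of Sections \ref{sec-transfer-up}--\ref{canon-sec}). Since the Fact sits in the middle of Section \ref{domin-sec}, where Hypothesis \ref{ss-hyp-2} is in force and all the ingredients for a direct check are at hand, the paper's one-line treatment is in keeping with its local style, but your argument is a perfectly valid substitute.
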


\begin{defin}[Definition III.1.12 in \cite{shelahaecbook}]\label{omega-succ-def}
  Let $\ts$ be a pre-$\lambda_{\ts}$-frame.
  \begin{enumerate}
    \item\index{$n$th successor of a frame}\index{$\s^{+n}$|see {$n$th successor of a frame}}\index{$\ts^{+n}$|see {$n$th successor of a frame}} By induction on $n < \omega$, define $\ts^{+n}$ as follows:
      \begin{enumerate}
        \item $\ts^{+0} = \ts$.
        \item $\ts^{+(n + 1)} = (\ts^{+n})^+$.
      \end{enumerate}
    \item\index{$n$-successful} By induction on $n < \omega$, define ``$\ts$ is $n$-successful'' as follows: 
      \begin{enumerate}
        \item $\ts$ is $0$-successful if and only if it is a good $\lambda$-frame.
        \item $\ts$ is $(n + 1)$-successful if and only if it is a successful good $\lambda$-frame and $\ts^+$ is $n$-successful.
    \end{enumerate}
    \item\index{$\omega$-successful} $\ts$ is \emph{$\omega$-successful} if it is $n$-successful for all $n < \omega$.
  \end{enumerate}
\end{defin}

Thus by Fact \ref{tsplus-fact}, $\ts$ is 1-successful if and only if it is a successful good $\lambda_{\ts}$-frame. More generally a good $\lambda_{\ts}$-frame $\ts$ is $n$-successful if and only if $\ts^{+m}$ is a successful good $\lambda_{\ts}^{+m}$-frame for all $m < n$.

\begin{thm}\label{succ-thm}
  $\s_\lambda \rest \Ksat_\lambda$ is an \emph{$\omega$-successful} type-full good $\lambda$-frame. 
\end{thm}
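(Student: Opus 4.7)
The plan is to prove this by induction on $n < \omega$, establishing that $(\s_\lambda \rest \Ksat_\lambda)^{+n} = \s_{\lambda^{+n}} \rest \Ksatp{\lambda^{+n}}_{\lambda^{+n}}$ and that this is a successful type-full good $\lambda^{+n}$-frame. The base case $n = 0$ is precisely Corollary \ref{cor-successful}, and $n = 1$ is given by the identification $(\s_\lambda)^+ = \s_{\lambda^+} \rest \Ksatp{\lambda^+}_{\lambda^+}$ together with Fact \ref{tsplus-fact} and the observation that all the ingredients for Corollary \ref{cor-successful} remain in place at stage $\lambda^+$.

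The key observation is that Hypothesis \ref{ss-hyp-2} is invariant under replacing $\lambda$ by $\lambda^{+m}$ for any $m < \omega$: the independence relation $\is$, the underlying AEC $K$, the base cardinal $\mu$, the good $(\ge \mu)$-frame $\s$, the uniqueness and witness properties are all global data that do not depend on $\lambda$, and clause (\ref{ss-hyp-2-4}) is stated uniformly for all $n < \omega$, so when we shift up it still holds (the clause for $n$ at the shifted parameter $\lambda^{+m}$ is just the clause for $n + m$ at the original $\lambda$). Consequently, for each $m < \omega$ the conclusions of Theorem \ref{weakly-successful} and Corollary \ref{cor-successful} apply at $\lambda^{+m}$, yielding that $\s_{\lambda^{+m}} \rest \Ksatp{\lambda^{+m}}_{\lambda^{+m}}$ is a successful type-full good $\lambda^{+m}$-frame.

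For the induction step, assume $(\s_\lambda \rest \Ksat_\lambda)^{+m} = \s_{\lambda^{+m}} \rest \Ksatp{\lambda^{+m}}_{\lambda^{+m}}$ and that this frame is successful. Then by definition $(\s_\lambda \rest \Ksat_\lambda)^{+(m+1)} = \left(\s_{\lambda^{+m}} \rest \Ksatp{\lambda^{+m}}_{\lambda^{+m}}\right)^+$, and the same identification used to prove $(\s_\lambda)^+ = \s_{\lambda^+} \rest \Ksatp{\lambda^+}_{\lambda^+}$ (which is a purely formal computation from the definitions of Shelah's $+$ operation, relying on the fact that in a successful frame the $\NF$ relation of Fact \ref{jarden-ltnf} coincides with the AEC ordering on saturated models) applied at the shifted parameter $\lambda^{+m}$ gives $\left(\s_{\lambda^{+m}} \rest \Ksatp{\lambda^{+m}}_{\lambda^{+m}}\right)^+ = \s_{\lambda^{+(m+1)}} \rest \Ksatp{\lambda^{+(m+1)}}_{\lambda^{+(m+1)}}$. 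By the previous paragraph, this latter frame is a successful good $\lambda^{+(m+1)}$-frame, completing the induction.

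The main obstacle is the bookkeeping in step (2): verifying that Shelah's $+$ construction, applied to the frame $\s_{\lambda^{+m}} \rest \Ksatp{\lambda^{+m}}_{\lambda^{+m}}$, really does coincide with the restriction of our global frame at the next cardinal. This is granted here as an instance of the identification $(\s_\lambda)^+ = \s_{\lambda^+} \rest \Ksatp{\lambda^+}_{\lambda^+}$ stated just before Definition \ref{omega-succ-def}; the content is that successfulness of the frame at $\lambda^{+m}$ (together with tameness, amalgamation, and the characterization of $\lea^{\NF}_{\lambda^{+(m+1)}}$ from Fact \ref{jarden-ltnf}) ensures that the nonforking relation derived from $+$ agrees with the nonforking relation coming from $\is$ on the saturated models of size $\lambda^{+(m+1)}$, so everything fits together at the next stage and the induction can proceed.
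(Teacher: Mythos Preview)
Your proposal is correct and follows the same approach as the paper. The paper's proof is a single sentence (``By induction on $n < \omega$, simply observing that we can replace $\lambda$ by $\lambda^{+n}$ in Corollary \ref{cor-successful}''); you have unpacked exactly the two ingredients that make this work, namely the invariance of Hypothesis \ref{ss-hyp-2} under the shift $\lambda \mapsto \lambda^{+m}$ and the identification of the $+$-successor with the restriction of the global frame at the next cardinal.
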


\begin{proof}
  By induction on $n < \omega$, simply observing that we can replace $\lambda$ by $\lambda^{+n}$ in Corollary \ref{cor-successful}.
\end{proof}

We emphasize again that we did \emph{not} use the weak continuum hypothesis (as Shelah does in \cite[Chapter II]{shelahaecbook}). We pay for this by using tameness (in Fact \ref{ss-hyp-2-prop}). Note that all the results of \cite[Chapter III]{shelahaecbook} apply to our $\omega$-successful good frame.

Recall that part of Shelah's point is that $\omega$-successful good $\lambda$-frames extend to $(\ge \lambda)$-frames. However this is secondary for us (since tameness already implies that a frame extends to larger models, see \cite{ext-frame-jml, tame-frames-revisited-v5}). Really, we want to extend the good frame to longer \emph{types}. We show that it is possible in the next section.

\section{A fully good long frame}\label{long-frame-sec}

\begin{hypothesis}\label{more-prop-hyp} 
  $\s = (K, \nf)$ is a weakly successful type-full good $\lambda$-frame.
\end{hypothesis}

This is reasonable since the previous section showed us how to build such a frame. Our goal is to extend $\s$ to obtain a fully good $(\le \lambda, \lambda)$-independence relation. Most of the work has already been done by Shelah:

\begin{fact}[Conclusion II.6.34 in \cite{shelahaecbook}]\label{nf-existence}\index{$\NF$}
  There exists a relation $\NF \subseteq \fct{4}{K}$ satisfying:

  \begin{enumerate}
    \item\label{nf-1} $\NF (M_0, M_1, M_2, M_3)$ implies $M_0 \lea M_\ell \lea M_3$ are in $K$ for $\ell = 1,2$.
    \item\label{nf-respects} $\NF (M_0, M_1, M_2, M_3)$ and $a \in |M_1| \backslash |M_2|$ implies $\gtp (a / M_2; M_3)$ does not $\s$-fork over $M_0$.
    \item\label{nf-2} Invariance: $\NF$ is preserved under isomorphisms.
    \item\label{nf-3} Monotonicity: If $\NF (M_0, M_1, M_2, M_3)$:
      \begin{enumerate}
        \item If $M_0 \lea M_\ell' \lea M_\ell$ for $\ell = 1,2$, then $\NF (M_0, M_1', M_2', M_3')$. 
        \item If $M_3' \lea M_3$ contains $|M_1| \cup |M_2|$, then $\NF (M_0, M_1, M_2, M_3')$.
        \item If $M_3' \gea M_3$, then $\NF (M_0, M_1, M_2, M_3')$.
      \end{enumerate}
    \item Symmetry: $\NF (M_0, M_1, M_2, M_3)$ if and only if $\NF (M_0, M_2, M_1, M_3)$.
    \item Long transitivity: If $\seq{M_i : i \le \alpha}$, $\seq{N_i : i \le \alpha}$ are increasing continuous and $\NF (M_i, N_i, M_{i + 1}, N_{i + 1})$ for all $i < \alpha$, then $\NF (M_0, N_0, M_\alpha, N_\alpha)$.
    \item Independent amalgamation: If $M_0 \lea M_\ell$, $\ell = 1,2$, then for some $M_3 \in K$, $f_\ell : M_\ell \xrightarrow[M_0]{} M_3$, we have $\NF (M_0, f_1[M_1], f_2[M_2], M_3)$.
    \item\label{nf-def-uq} Uniqueness: If $\NF (M_0^\ell, M_1^\ell, M_2^\ell, M_3^\ell)$, $\ell = 1,2$, $f_i : M_i^1 \cong M_i^2$ for $i = 0, 1, 2$, and $f_0 \subseteq f_1$, $f_0 \subseteq f_2$, then $f_1 \cup f_2$ can be extended to $f_3 : M_3^1 \rightarrow M_4^2$, for some $M_4^2$ with $M_3^2 \lea M_4^2$.
  \end{enumerate}
\end{fact}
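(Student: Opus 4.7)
The plan is to follow Shelah's strategy from \cite[Chapter II]{shelahaecbook}: first define $\NF$ on $K_\lambda$ using the weakly successful structure and verify all properties there, then extend to $K$ by directed limits and long transitivity. The starting point is the existence property for uniqueness triples (which holds by Hypothesis \ref{more-prop-hyp}). For any $M_0 \lea M_1$ in $K_\lambda$, iterating the existence property (at each stage adding one element whose type is represented by a uniqueness triple) yields a ``decomposition'' of $M_1$ over $M_0$ as an increasing continuous chain $\seq{N_i : i \le \lambda}$ with $N_0 = M_0$, $N_\lambda = M_1$, each $N_{i+1} = N_i \cup \{a_i\}$, and $(a_i, N_i, N_{i+1})$ a uniqueness triple. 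Such decompositions always exist and are dense enough to be used as the skeleton of the definition.

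Then I would define $\NF(M_0, M_1, M_2, M_3)$ on $K_\lambda$ by the existence of a decomposition of $M_1$ over $M_0$ and an increasing continuous chain $\seq{N_i^* : i \le \lambda}$ inside $M_3$ with $N_0^* = M_2$, $N_{i+1}^* = N_i^* \cup \{b_i\}$, where each $\gtp(b_i / N_i^*; M_3)$ is the unique nonforking extension of $\gtp(a_i / N_i; M_1)$ guaranteed by the fact that $(a_i, N_i, N_{i+1})$ is a uniqueness triple. The clauses (\ref{nf-1}), (\ref{nf-respects}), (\ref{nf-2}), ambient monotonicity, and independent amalgamation (run the construction using extension at each step) are then straightforward. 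Long transitivity on $K_\lambda$ holds because one can concatenate the decompositions from successive steps into a single decomposition of length $\lambda\cdot\alpha$; the standard reindexing back to length $\lambda$ uses that uniqueness triples are closed under increasing unions of length $<\lambda^+$.

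The main obstacle, as in Shelah's development, is \emph{symmetry} together with the uniqueness clause (\ref{nf-def-uq}), which in turn underlies left monotonicity (monotonicity in $M_1$ for arbitrary $M_1' \lea M_1$, not just those arising as initial segments of the distinguished decomposition). The key lemma to prove is that $\NF$ does not depend on the choice of decomposition. This is done by a careful ``cube'' argument: given two decompositions $\seq{N_i}$ and $\seq{N_i'}$ of $M_1$ over $M_0$, build a third decomposition refining both and, by induction on $i$, transport the auxiliary chain $\seq{N_i^*}$ using the uniqueness clause in the definition of a uniqueness triple at each successor step and taking unions at limits. Once independence of the decomposition is established, symmetry is obtained by showing that any decomposition of $M_2$ over $M_0$ can be paired with a decomposition of $M_1$ over $M_0$ so that the resulting amalgam agrees with the original $M_3$, using again the uniqueness of the nonforking amalgam of $M_2$ with a single-element extension. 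Uniqueness (\ref{nf-def-uq}) is extracted by the same transport argument applied simultaneously to both sides.

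Finally, I would extend $\NF$ from $K_\lambda$ to all of $K$. Given $M_0 \lea M_1, M_2 \lea M_3$ in $K$ of arbitrary size, choose resolutions $M_\ell = \bigcup_{i < \|M_\ell\|^+} M_\ell^i$ into models in $K_\lambda$ (using $\LS(K) = \lambda$), compatible in the sense that $M_0^i \lea M_\ell^i \lea M_3^i$ for $\ell = 1,2$. Declare $\NF(M_0, M_1, M_2, M_3)$ if one can arrange $\NF(M_0^i, M_1^i, M_2^i, M_3^i)$ at every $i$; this is well defined by the $K_\lambda$-uniqueness and transports all properties by long transitivity (for which the $K_\lambda$ case already gives the critical content). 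The ambient monotonicity clauses and invariance then transfer routinely, and independent amalgamation in $K$ follows by building the $M_3^i$'s cofinally via independent amalgamation in $K_\lambda$.
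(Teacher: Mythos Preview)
The paper does not prove this statement; it is recorded as a \emph{Fact} with a citation to \cite[Conclusion II.6.34]{shelahaecbook}, so there is no proof in the paper to compare against. Your sketch is a reasonable outline of Shelah's argument, but two points are worth flagging.

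First, under Hypothesis \ref{more-prop-hyp}, $\s$ is a good $\lambda$-frame, so its underlying class $K$ is an AEC \emph{in $\lambda$}: every model already has size $\lambda$. Your final paragraph about extending $\NF$ from $K_\lambda$ to models of arbitrary size is therefore unnecessary here (that extension does happen later in the paper, via $\is'$ in Section \ref{up-transfer-sec}, but it is not part of this Fact).

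Second, in the decomposition step you write $N_{i+1} = N_i \cup \{a_i\}$, but this is not how uniqueness triples work in AECs: in a uniqueness triple $(a_i, N_i, N_{i+1})$ the model $N_{i+1}$ is typically strictly larger than $N_i \cup \{a_i\}$ (there is no reason for $N_i \cup \{a_i\}$ even to be the universe of a model). Shelah's ``smooth'' decompositions are chains of uniqueness triples, not chains adding a single element at a time; the same correction applies to your $N_i^*$ on the $M_2$ side. With this fixed, your outline of the independence-of-decomposition and symmetry arguments matches the shape of Shelah's proof.
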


\begin{notation}\index{$\nfs{M_0}{M_1}{M_2}{M_3}$}\index{$\nfs{M_0}{\ba}{M_2}{M_3}$}
  We write $\nfs{M_0}{M_1}{M_2}{M_3}$ instead of $\NF (M_0, M_1, M_2, M_3)$. If $\ba$ is a sequence, we write $\nfs{M_0}{\ba}{M_2}{M_3}$ for $\nfs{M_0}{\text{ran} (\ba)}{M_2}{M_3}$, and similarly if sequences appear at other places.
\end{notation}
\begin{remark}
  Shelah's definition of $\NF$ (\cite[Definition II.6.12]{shelahaecbook}) is very complicated. It is somewhat simplified in \cite{jrsh875}.
\end{remark}
\begin{remark}
  Shelah calls such an $\NF$ a \emph{nonforking relation which respects $\s$}\index{nonforking relation which respects $\s$} (\cite[Definition II.6.1]{shelahaecbook}). While there are similarities with this paper's definition of a good $(\le \lambda)$-frame, note that $\NF$ is only defined for types of models while we would like to make it into a relation for arbitrary types of length at most $\lambda$.
\end{remark}

We start by showing that uniqueness is really the same as the uniqueness property stated for frames. We drop Hypothesis \ref{more-prop-hyp} for the next lemma.

\begin{lem}\label{uq-equiv}
  Let $K$ be an AEC in $\lambda$ and assume $K$ has amalgamation. The following are equivalent for a relation $\NF \subseteq \fct{4}{K}$ satisfying (\ref{nf-1}), (\ref{nf-2}), (\ref{nf-3}) of Fact \ref{nf-existence}:

  \begin{enumerate}
    \item Uniqueness in the sense of Fact \ref{nf-existence}.(\ref{nf-def-uq}).
    \item Uniqueness in the sense of frames: If $\nfs{M_0}{M}{M_1}{N}$ and $\nfs{M_0}{M'}{M_1}{N'}$ for models $M, M' \in \K$, $\ba$ and $\ba'$ are enumerations of $M$ and $M'$ respectively, $p := \gtp (\ba / M_1; N)$, $q := \gtp (\ba' / M_1; N')$, and $p \rest M_0 = q \rest M_0$, then $p = q$.
  \end{enumerate}
\end{lem}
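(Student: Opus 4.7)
The plan is to prove each direction separately, with the main observation being that equality of Galois types between enumerations of models over a common base translates (via AEC coherence and amalgamation) into an honest isomorphism of models respecting the enumerations. Let me fix notation: for (2), note that the hypothesis ``$\bar a$ enumerates $M$ and $\bar a'$ enumerates $M'$, and $\gtp(\bar a/M_0;N) = \gtp(\bar a'/M_0;N')$'' actually yields an isomorphism $g: M \cong_{M_0} M'$ with $g(\bar a) = \bar a'$: amalgamating witnesses of the type equality inside some $N^*$ produces embeddings $h: N \to N^*$, $h': N' \to N^*$ over $M_0$ with $h(\bar a) = h'(\bar a')$, so $h[M]$ and $h'[M']$ have the same universe; by coherence both are the same strong substructure of $N^*$, and $(h')^{-1} \circ h \rest M$ is the desired isomorphism.

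\emph{First, (1) $\Rightarrow$ (2).} Assume the frame-style hypotheses: $\nfs{M_0}{M}{M_1}{N}$ and $\nfs{M_0}{M'}{M_1}{N'}$, with $p := \gtp(\bar a/M_1;N)$, $q := \gtp(\bar a'/M_1;N')$, and $p \rest M_0 = q \rest M_0$. By the preceding observation, pick $f_1 : M \cong_{M_0} M'$ sending $\bar a$ to $\bar a'$. Set $f_0 := \id_{M_0}$ and $f_2 := \id_{M_1}$. Then $f_0 \subseteq f_1$ and $f_0 \subseteq f_2$, so (1) applies to the two amalgams $(M_0, M, M_1, N)$ and $(M_0, M', M_1, N')$ and produces $M_4^2 \gea N'$ and $f_3 : N \to M_4^2$ extending $f_1 \cup f_2$. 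In particular, $f_3$ fixes $M_1$ and sends $\bar a$ to $\bar a'$, witnessing $p = q$.

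\emph{Second, (2) $\Rightarrow$ (1).} Assume $\nfs{M_0^\ell}{M_1^\ell}{M_2^\ell}{M_3^\ell}$ for $\ell = 1,2$, and isomorphisms $f_i : M_i^1 \cong M_i^2$ for $i = 0,1,2$ with $f_0 \subseteq f_1, f_2$. By invariance (axiom (\ref{nf-2}) of Fact \ref{nf-existence}), apply $f_0^{-1}$ to the second amalgam and then apply $f_2^{-1}$ to its second coordinate, so we may assume $M_0^1 = M_0^2 =: M_0$ with $f_0 = \id_{M_0}$ and $M_2^1 = M_2^2 =: M_1$ with $f_2 = \id_{M_1}$. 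Let $\bar a$ enumerate $M_1^1$ and set $\bar a' := f_1(\bar a)$, an enumeration of $M_1^2$. Then $p := \gtp(\bar a/M_1; M_3^1)$ and $q := \gtp(\bar a'/M_1; M_3^2)$ satisfy $p \rest M_0 = q \rest M_0$ (witnessed by $f_1$, which fixes $M_0$). By (2), $p = q$; unpacking the definition of Galois type equality, there exist $M_4^2 \gea M_3^2$ and $f_3 : M_3^1 \to M_4^2$ over $M_1$ with $f_3(\bar a) = \bar a'$. Thus $f_3 \rest M_1^1 = f_1$ and $f_3 \rest M_1 = f_2$, so $f_3$ extends $f_1 \cup f_2$ as required.

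\textbf{Main obstacle.} The only non-routine step is the translation between Galois-type equality and the existence of an isomorphism of enumerated models, i.e., upgrading the extracted embeddings $h, h'$ from an amalgam to an honest isomorphism $M \cong_{M_0} M'$. This is where AEC coherence is essential and where one must be careful to keep track of which enumeration corresponds to which model; everything else is a matter of renaming via invariance.
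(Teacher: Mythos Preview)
Your proof is correct and follows essentially the same approach as the paper's. The only organizational difference is that for $(1)\Rightarrow(2)$ you first isolate the observation ``type equality over $M_0$ yields an isomorphism $M\cong_{M_0}M'$'' and then apply (1) directly to the two given quadruples, whereas the paper instead pushes the embedding $f:N\to N''$ witnessing type equality through and applies (1) to the pair $(M_0,M',f[M_1],N'')$ and $(M_0,M',M_1,N')$; both routes amount to the same renaming argument. One minor remark: your phrasing ``apply $f_0^{-1}$ to the second amalgam and then apply $f_2^{-1}$ to its second coordinate'' is a bit loose (these maps are not defined on all of $M_3^2$), but the intended renaming---extend $f_2^{-1}$ to an isomorphism of $M_3^2$ with a copy, which automatically extends $f_0^{-1}$ since $f_0\subseteq f_2$---is standard and matches the paper's own ``by some renaming''.
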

\begin{proof} \
  \begin{itemize}
  \item \underline{(1) implies (2):} Since $p \rest M_0 = q \rest M_0$, there exists $N'' \gea N'$ and $f: N \xrightarrow[M_0]{} N''$ such that $f (\ba) = \ba'$. Therefore by invariance, $\nfs{M_0}{\ba'}{f[M_1]}{N''}$. Let $f_0 := \id_{M_0}$, $f_1 := f^{-1} \rest f[M_1]$, $f_2 := \id_{M'}$. By uniqueness, there exists $N''' \gea N''$, $g \supseteq f_1 \cup f_2$, $g: N'' \rightarrow N'''$. Consider the map $h := g \circ f : N \rightarrow N'''$. Then $g \rest M_1 = \id_{M_1}$ and $h (\ba) = g (\ba') = \ba'$, so $h$ witnesses $p = q$. \\
  \item \underline{(2) implies (1):} By some renaming, it is enough to prove that whenever $\nfs{M_0}{M_2}{M_1}{N}$ and $\nfs{M_0}{M_2}{M_1}{N'}$, there exists $N'' \gea N'$ and $f: N' \xrightarrow[|M_1| \cup |M_2|]{} N''$. Let $\ba$ be an enumeration of $M_2$. Let $p := \gtp (\ba / M_1; N)$, $q := \gtp (\ba / M_1; N')$. We have that $p \rest M_0 = \gtp (\ba / M_1; M_2) = q \rest M_0$. Thus $p = q$, so there exists $N'' \gea N'$ and $f: N \xrightarrow[M_1]{} N''$ such that $f (\ba) = \ba$. In other words, $f$ fixes $M_2$, so is the desired map.
  \end{itemize}
\end{proof}

We now extend $\NF$ to take sets on the left hand side. This step is already made by Shelah in \cite[Claim III.9.6]{shelahaecbook}, for singletons rather than arbitrary sets. We check that Shelah's proofs still work.

\begin{defin}\index{$\NF'$}
  Define $\NF' (M_0, A, M, N)$ to hold if and only if $M_0 \lea M \lea N$ are in $K$, $A \subseteq |N|$, and there exists $N' \gea N$, $N_A \gea M$ with $N_A \lea N'$ and $\nfs{M_0}{N_A}{M}{N'}$. We abuse notation and also write $\nfs{M_0}{A}{M}{N}$ instead of $\NF' (M_0, A, M, N)$. We let $\ts := (K, \nf)$.
\end{defin}
\begin{remark}
  Compare with the definition of $\cl$ (Definition \ref{cl-def}).
\end{remark}

\begin{prop}\label{ts-basic-props} \
  \begin{enumerate}
  \item If $M_0 \lea M_\ell \lea M_3$, $\ell = 1,2$, then $\NF (M_0, M_1, M_2, M_3)$ if and only if $\NF' (M_0, M_1, M_2, M_3)$.
  \item $\ts$ is a (type-full) pre-$(\le \lambda, \lambda)$-frame.
  \item $\ts$ has base monotonicity, full symmetry, uniqueness, existence, and extension.
  \end{enumerate}
\end{prop}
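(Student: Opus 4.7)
For part (1), the forward direction is immediate: if $\nfs{M_0}{M_1}{M_2}{M_3}$ holds in the $\NF$ sense, take $N' := M_3$ and $N_A := M_1$ as witnesses of $\NF'$. For the backward direction, suppose $\NF'(M_0, M_1, M_2, M_3)$ is witnessed by $N' \gea M_3$ and $N_A$ containing $M_1$, with $N_A \lea N'$ and $\nfs{M_0}{N_A}{M_2}{N'}$ in the $\NF$ sense. Since $M_1 \lea M_3 \lea N'$ and $|M_1| \subseteq |N_A| \subseteq |N'|$, coherence forces $M_1 \lea N_A$. Left monotonicity of $\NF$ (Fact~\ref{nf-existence}.(\ref{nf-3})) now yields $\nfs{M_0}{M_1}{M_2}{N'}$, and ambient monotonicity lets us replace $N'$ by $M_3$.

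For part (2), invariance, monotonicity, and left normality of $\ts$ transfer directly from the corresponding properties of $\NF$; in particular left normality, $\nfs{M_0}{AM_0}{M}{N}$ from $\nfs{M_0}{A}{M}{N}$, is automatic because any witness $N_A$ already satisfies $M_0 \lea N_A$. For part (3), base monotonicity is inherited from $\NF$. Existence: given $M \lea N$ and $A \subseteq |N|$ with $|A| \le \lambda$, take $N_A := N$ and observe that $\NF$ itself has existence (via Proposition~\ref{indep-props}.(\ref{indep-props-7}) applied to independent amalgamation plus long transitivity, specialized to a two-step chain to yield ordinary transitivity), so $\nfs{M}{N}{M}{N}$ holds. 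Full symmetry: from a witness $\nfs{M_0}{N_A}{M}{N_1}$, apply the symmetry clause of Fact~\ref{nf-existence} to obtain $\nfs{M_0}{M}{N_A}{N_1}$; since $A \subseteq |N_A|$, setting $M' := N_A$ and restricting to any $B_0 \subseteq M$ by left monotonicity gives what is required. Extension: using extension of $\NF$ (again from independent amalgamation plus transitivity), enlarge the witness $\nfs{M_0}{N_A}{M}{N_1}$ along any $M' \gea M$ to a quadruple $\nfs{M_0}{N_A'}{M'}{N_1'}$ together with an isomorphism $f : N_A \cong_{M_0} N_A'$; then $f(\ba)$ realizes in $N_1'$ an extension of $\gtp(\ba / M; N)$ over $M'$ that is $\ts$-nonforking over $M_0$.

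The most delicate property is uniqueness. Suppose $\nfs{M_0}{\ba}{M}{N}$ and $\nfs{M_0}{\bd}{M}{N'}$ with $\gtp(\ba/M_0; N) = \gtp(\bd / M_0; N')$; I want $\gtp(\ba / M; N) = \gtp(\bd / M; N')$. First amalgamate $N$ and $N'$ over $M$ into a single $\sea$, so that the images $\ba^*, \bd^* \in \sea$ share the same type over $M_0$ in $\sea$ and there are witnesses $N_A^1, N_A^2 \lea \sea$, both $\gea M_0$, with $\ba^* \in N_A^1$, $\bd^* \in N_A^2$, and $\nfs{M_0}{N_A^i}{M}{\sea}$ for $i = 1, 2$. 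The Galois-type equality yields an extension $\sea' \gea \sea$ and an embedding $\phi : N_A^2 \to \sea'$ over $M_0$ with $\phi(\bd^*) = \ba^*$; by invariance, $\nfs{M_0}{\phi[N_A^2]}{M}{\sea'}$ (enlarging $\sea'$ so that $M$ is fixed by the ambient embedding). Using extension of $\NF$ once more, I amalgamate $\phi[N_A^2]$ with $N_A^1$ over $M_0$ inside some $\hat{N}_A \lea \sea''$ satisfying $\nfs{M_0}{\hat{N}_A}{M}{\sea''}$, choosing the amalgam so that $\ba^*$ in $N_A^1$ and $\phi(\bd^*) = \ba^*$ in $\phi[N_A^2]$ are identified. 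Two enumerations of $\hat{N}_A$ starting with $\ba^*$ now both witness $\ts$-nonforking of $\ba^*$ over $M_0$, and Lemma~\ref{uq-equiv} (the equivalence between Shelah's uniqueness clause in Fact~\ref{nf-existence}.(\ref{nf-def-uq}) and frame-style uniqueness) applied to the two $\NF$-quadruples with left hand sides isomorphic over $M_0$ and matching choice of $\ba^*$ delivers $\gtp(\ba^* / M; \sea'') = \gtp(\bd^* / M; \sea'')$, which unwinds to $\gtp(\ba / M; N) = \gtp(\bd / M; N')$.

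The main obstacle is precisely the uniqueness argument: Shelah's uniqueness property in Fact~\ref{nf-existence} requires \emph{isomorphic} left hand sides, whereas our two witnesses $N_A^1, N_A^2$ are only forced to share the type of a distinguished subsequence. Overcoming this requires the amalgamation step above, where extension of $\NF$ is used to absorb both witnesses into a common $\hat{N}_A$ after using the type equality over $M_0$ to align $\ba^*$ with $\bd^*$; once this is in place, all remaining verifications are routine applications of monotonicity, invariance, and coherence.
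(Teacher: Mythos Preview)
Your arguments for parts (1), (2), and for base monotonicity, existence, full symmetry, and extension in part (3) are essentially fine and match the paper's approach (which simply cites Shelah). The real issue is your uniqueness argument, and there the gap you yourself flag is not actually closed.

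The invariance step fails. From $\gtp(\ba^*/M_0)=\gtp(\bd^*/M_0)$ you get an automorphism $\Phi$ of some $\sea'\gea\sea$ fixing $M_0$ with $\Phi(\bd^*)=\ba^*$. Setting $\phi:=\Phi\rest N_A^2$, invariance applied to $\nfs{M_0}{N_A^2}{M}{\sea}$ gives $\nfs{M_0}{\phi[N_A^2]}{\Phi[M]}{\sea'}$, \emph{not} $\nfs{M_0}{\phi[N_A^2]}{M}{\sea'}$. Your parenthetical ``enlarging $\sea'$ so that $M$ is fixed'' cannot be arranged: an automorphism fixing $M$ and sending $\bd^*$ to $\ba^*$ would already witness the conclusion you are trying to prove. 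The subsequent amalgamation step is also unjustified: even granting two quadruples $\nfs{M_0}{N_A^1}{M}{\cdot}$ and $\nfs{M_0}{\phi[N_A^2]}{M}{\cdot}$, there is no available operation that merges $N_A^1$ and $\phi[N_A^2]$ into a single $\hat N_A$ while preserving $\nfs{M_0}{\hat N_A}{M}{\cdot}$; extension lets you enlarge the \emph{right} hand side, not the left, and independent amalgamation over $M_0$ neither identifies $\ba^*$ nor remembers $M$.

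The paper's fix is to apply symmetry \emph{first}. With witnesses $M_{\ba},M_{\ba'}$ one flips to $M\nf_{M_0} M_{\ba}$ and $M\nf_{M_0} M_{\ba'}$. Taking $f$ an automorphism of the monster fixing $M_0$ with $f(\ba)=\ba'$, invariance gives $f[M]\nf_{M_0} f[M_{\ba}]$, and now both $M_{\ba'}$ and $f[M_{\ba}]$ contain $\ba'$. In the language of the minimal closure this says exactly that $\gtp(M/M_0\ba')$ and $\gtp(f[M]/M_0\ba')$ both do not fork over $M_0$, and they agree over $M_0$ since $f$ fixes $M_0$. Since $\NF$ (as a frame on models) has uniqueness (Lemma~\ref{uq-equiv}) and extension, its closure has uniqueness over sets (this is the content of the cited \cite[Lemma~5.4.(3)]{bgkv-v3-toappear}, cf.\ Proposition~\ref{cl-basics}), so $\gtp(M/M_0\ba')=\gtp(f[M]/M_0\ba')$. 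Composing the witnessing automorphism with $f$ then shows $\gtp(\ba/M)=\gtp(\ba'/M)$. The point is that after flipping, the ``mismatched witnesses'' problem becomes a ``different domains containing the same set'' problem on the right, which is exactly what the closure handles.
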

\begin{proof} 
  Exactly as in \cite[Claim III.9.6]{shelahaecbook}. Shelah omits the proof of uniqueness, so we give it here. For notational simplicity, let us work in a local monster model $\sea \in \Ksatp{\lambda^+}_{\lambda^+}$, and write $A \nf_{M_0} M_1$ instead of $A \nf_{M_0}^{\sea} M_1$. Let $\alpha \le \lambda$ and assume that $p, q \in \gS^{\alpha} (M)$ are given such that $p = \gtp (\ba / M)$, $q = \gtp (\ba' / M)$. Assume that $M_0 \lea M$ is such that both $p$ and $q$ do not fork over $M_0$ (in the sense of NF'). We want to see that $p = q$.

  By definition, there exists $M_{\ba} \in \K_{\lambda}$ such that $M_0 \lea M_{\ba}$, $\ba \in \fct{\alpha}{|M_{\ba}|}$, and $M_{\ba} \nf_{M_0} M$. By symmetry for NF, $M \nf_{M_0} M_{\ba}$. Similarly, there exists a model $M_{\ba'} \in \K_{\lambda}$ containing $\ba'$ such that $M_0 \lea M_{\ba'}$ and $M \nf_{M_0} M_{\ba'}$.

  Since $p \rest M_0 = q \rest M_0$, there exists an automorphism $f$ of $\sea$ fixing $M_0$ such that $f (\ba) = \ba'$. By invariance, $M \nf_{M_0} M_{\ba'}$ and $f[M] \nf_{M_0} f[M_{\ba}]$, and both $M_{\ba'}$ and $f[M_{\ba}]$ contain $\ba'$. By Lemma \ref{uq-equiv} and the proof of \cite[Lemma 5.4.(3)]{bgkv-v3-toappear}, we have that (for some enumeration $\bc$ of $M$) $\gtp(\bc / M_0 \ba') = \gtp(f (\bc) / M_0 \ba')$. Thus we can pick an automorphism $g$ of $\sea$ fixing $M_0 \ba'$ and sending $f(\bc)$ back to $\bc$. Now $f \circ g^{-1}$ shows that $\gtp(\ba / M) = \gtp(\ba' / M)$, i.e.\ $p = q$ as needed.
\end{proof}

We now turn to local character. The key is:

\begin{fact}[Claim III.1.17 in \cite{shelahaecbook}]\label{limit-chain-fact}
  Let $\delta \le \lambda^+$ be a limit ordinal. Given $\seq{M_i : i \le \delta}$ increasing continuous, we can build $\seq{N_i : i \le \delta}$ increasing continuous such that for all $i \le j \le \delta$ with $j < \lambda^+$, $\nfs{M_i}{N_i}{M_{j}}{N_{j}}$ and $M_\delta \ltu N_\delta$.
\end{fact}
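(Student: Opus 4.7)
The plan is to build $\seq{N_i : i \le \delta}$ by transfinite recursion on $i$, maintaining three invariants: (a) $M_i \lea N_i$, with the $N_i$ increasing continuous and in $K_\lambda$ when $i < \lambda^+$; (b) for each successor $i+1 < \lambda^+$, the forking condition $\nfs{M_i}{N_i}{M_{i+1}}{N_{i+1}}$ holds; (c) $N_i \ltu N_{i+1}$ whenever $i+1 < \lambda^+$. At the base step, take any $N_0 \gea M_0$ in $K_\lambda$. At a successor $i+1 < \lambda^+$, first apply independent amalgamation from Fact~\ref{nf-existence} to produce $N_{i+1}^0 \in K_\lambda$ with $N_i, M_{i+1} \lea N_{i+1}^0$ and $\nfs{M_i}{N_i}{M_{i+1}}{N_{i+1}^0}$; then enlarge to $N_{i+1} \gea N_{i+1}^0$ with $N_i \ltu N_{i+1}$ using the stability in $\lambda$, amalgamation, and no maximal models supplied by the good frame together with Fact~\ref{ltl-basic-props}. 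Right ambient monotonicity of $\NF$ (part (\ref{nf-3}) of Fact~\ref{nf-existence}) guarantees the forking condition is preserved by the enlargement. At a limit stage $i \le \lambda^+$, set $N_i := \bigcup_{j < i} N_j$; the forking condition $\nfs{M_j}{N_j}{M_i}{N_i}$ for $j < i < \lambda^+$ then follows from long transitivity applied to the pair of continuous sequences $(M_{j+k})_{k \le i-j}$ and $(N_{j+k})_{k \le i-j}$.

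The remaining task -- and the main obstacle -- is to force $M_\delta \ltu N_\delta$ at the final stage. Since $M_\delta$ is fixed from the outset, the strategy is to plan ahead with a bookkeeping argument. In the case $\delta < \lambda^+$, enumerate (with repetitions, using stability in $\lambda$) a collection of triples $(j, A, p)$ where $A \subseteq |M_\delta|$ has size $\le \lambda$, $j < \delta$, and $p$ is a Galois type over a submodel of $M_\delta$ realized in some candidate extension of $M_\delta$; at a successor stage treating the triple $(j, A, p)$, we further arrange that the corresponding extension embeds into $N_{i+1}$ over $M_{i+1}$, which is possible because we are free to enlarge $N_{i+1}$ without breaking $\NF$. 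When $\delta = \lambda^+$, given any $M' \gea M_\delta$ of size $\lambda^+$, decompose $M'$ as a continuous union of $M'_i \gea M_i$ in $K_\lambda$ and embed each $M'_i$ into some $N_{j(i)}$ over $M_i$ by transfinite recursion, using that each $N_{j+1}$ is $\ltu$-universal over $N_j$ to absorb the next piece; this back-and-forth (in the spirit of Fact~\ref{lim-uq}) produces an embedding of $M'$ into $N_\delta$ fixing $M_\delta$.

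The central difficulty I expect is the simultaneous maintenance of the NF condition during bookkeeping: every time we enlarge $N_{i+1}$ to realize a prescribed type or accommodate a prescribed extension, we must ensure that $\nfs{M_i}{N_i}{M_{i+1}}{N_{i+1}}$ is retained. The right ambient monotonicity of $\NF$ reduces this to a triviality, which is why the two obligations can be handled simultaneously. A secondary subtlety is the uniform treatment of the $\delta = \lambda^+$ case, where $M_\delta$ does not lie in $K_\lambda$; here the back-and-forth must be done in $\lambda$-sized pieces and patched continuously, but the long transitivity axiom makes the patching automatic.
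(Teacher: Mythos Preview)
The paper does not prove this statement: it is cited verbatim as Claim III.1.17 of Shelah's book, so there is no in-paper argument to compare against. I can only assess your sketch on its own terms.

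The skeleton of your construction---independent amalgamation at successors, unions at limits, long transitivity to propagate $\NF(M_i,N_i,M_j,N_j)$ along the chain, and ambient monotonicity of $\NF$ to allow enlarging $N_{i+1}$ after the amalgamation step---is correct and is exactly what one expects.

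The gap is in the universality step $M_\delta \ltu N_\delta$. In the $\delta=\lambda^+$ case, you propose to resolve $M'\gea M_\delta$ as $\bigcup_i M_i'$ with $M_i\lea M_i'$ and build increasing $f_i:M_i'\to N_{j(i)}$ over $M_i$, using $N_j\ltu N_{j+1}$ to absorb the next piece. The problem is the successor step: you need $f_{i+1}$ to \emph{both} extend $f_i$ \emph{and} fix $M_{i+1}$. Universality of $N_{j+1}$ over $N_j$ (or over $f_i[M_i']$) lets you extend $f_i$, but gives no control over where $M_{i+1}$ lands; conversely, universality over $M_{i+1}$ lets you fix $M_{i+1}$, but gives no reason the map extends $f_i$. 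Bridging this requires the \emph{uniqueness} property of $\NF$: one first checks $\NF(M_i,f_i[M_i'],M_{i+1},N_{j(i)+1})$ (via monotonicity from the tower), then compares it with an $\NF$-amalgam built from $M_{i+1}'$, and only then uses universality to land inside the tower. You do not mention $\NF$-uniqueness at all in the back-and-forth, and without it the step does not go through.

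For $\delta<\lambda^+$ your bookkeeping is too vague to evaluate: you say you will enumerate triples $(j,A,p)$ and ``arrange that the corresponding extension embeds into $N_{i+1}$ over $M_{i+1}$'', but universality of $N_\delta$ over $M_\delta$ requires embeddings over $M_\delta$, not over the individual $M_{i+1}$'s, and you do not explain how $\delta$-many stages suffice to handle all extensions of $M_\delta$. The same $\NF$-uniqueness idea (comparing two $\NF$-towers over $\langle M_i\rangle$, one of them with each $N_{i+1}$ brimmed over the amalgam) is what makes this work; right ambient monotonicity alone is not enough.
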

\begin{lem}\label{nf1-lc}
  For all $\alpha < \lambda$, $\clc{\alpha} (\ts) = |\alpha|^+ + \aleph_0$. Moreover if $\seq{M_i : i < \lambda^+}$ is increasing in $\K_{\lambda}$ and $p \in \gS^\lambda (\bigcup_{i < \lambda^+} M_i)$, there exists $i < \lambda^+$ such that $p \rest M_j$ does not fork over $M_i$ for all $j \ge i$.
\end{lem}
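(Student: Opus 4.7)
The strategy is to reduce chain local character for $\ts$ to $\clc{1}(\s) = \aleph_0$ one coordinate at a time and then glue via the NF chain construction of Fact \ref{limit-chain-fact}. For the first part, fix a regular $\delta \ge |\alpha|^+ + \aleph_0$, an increasing continuous chain $\seq{M_i : i \le \delta}$ in $K$ with $M_\delta \in K$ (so $\delta < \lambda^+$), a model $N \in K$ with $M_\delta \lea N$, and $A \subseteq |N|$ with $|A| \le \alpha$. For each $a \in A$, $\clc{1}(\s) = \aleph_0$ produces $i_a < \delta$ with $\gtp(a / M_\delta; N)$ not $\s$-forking over $M_{i_a}$; regularity of $\delta$ together with $|A| \le \alpha < \delta$ give $i^\ast := \sup_{a \in A} i_a < \delta$. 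The lower bound $\clc{\alpha}(\ts) \ge |\alpha|^+ + \aleph_0$ is routine from the definition.

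Next I would use the Löwenheim-Skolem axiom in $\Kup$ to pick $N^0 \in K$ with $M_{i^\ast} \lea N^0 \lea N$ and $A \subseteq |N^0|$ (possible since $|M_{i^\ast}| + |A| = \lambda$). Then I would adapt Fact \ref{limit-chain-fact} to the chain $\seq{M_j : i^\ast \le j \le \delta}$ with pre-specified initial model $N_{i^\ast} := N^0$; inspecting Shelah's proof (NF-independent amalgamation at successors, long transitivity and unions at limits), the construction is insensitive to the choice of $N_{i^\ast} \gea M_{i^\ast}$. Working inside a monster model $\sea$ for $\Kup$ (available since $\Kup$ has amalgamation and joint embedding, both inherited from $\s$), each $N_j$ can be taken $\lea \sea$ by homogeneity, yielding $\nfs{M_{i^\ast}}{N^0}{M_\delta}{N_\delta}$ in $\sea$. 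By Löwenheim-Skolem inside $\sea$, choose $N^{\ast\ast} \lea \sea$ in $K$ containing $|N| \cup |N_\delta|$. Ambient monotonicity of NF (increase the ambient from $N_\delta$ to $N^{\ast\ast}$, then restrict to $N$, which is $\lea N^{\ast\ast}$ by coherence and contains $|N^0| \cup |M_\delta|$) gives $\nfs{M_{i^\ast}}{N^0}{M_\delta}{N}$, witnessing $\nfs{M_{i^\ast}}{A}{M_\delta}{N}$ in the NF' sense with $N_A := N^0$ and $N' := N$.

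For the moreover, the same strategy applies with $\alpha = \lambda$ and $\delta = \lambda^+$ after an ``eventually non-forking'' preliminary step: fix a realization $\ba \in \fct{\lambda}{|N|}$ of $p$ in some $N$ extending $\bigcup_{i < \lambda^+} M_i$; for each $\beta < \lambda$ there is $i_\beta^\ast < \lambda^+$ such that $\gtp(\ba[\beta]/M_j; N)$ does not $\s$-fork over $M_{i_\beta^\ast}$ for all $j \in [i_\beta^\ast, \lambda^+)$. If no such $i_\beta^\ast$ exists, a standard diagonalization builds increasing $\seq{i_n : n < \omega}$ with $\gtp(\ba[\beta]/M_{i_{n+1}}; N)$ $\s$-forking over $M_{i_n}$, contradicting $\clc{1}(\s) = \aleph_0$ applied to $\seq{M_{i_n} : n \le \omega}$. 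Set $i^\ast := \sup_{\beta < \lambda} i_\beta^\ast < \lambda^+$ (using regularity of $\lambda^+$) and invoke $\s$-base monotonicity to obtain a uniform base $M_{i^\ast}$. Running the chain construction of the first part with the chain $\seq{M_j : i^\ast \le j \le \lambda^+}$ and $A$ the range of $\ba$ then produces $\nfs{M_{i^\ast}}{\ba}{M_j}{N}$ (NF') for every $j \in [i^\ast, \lambda^+)$, noting that Fact \ref{limit-chain-fact} furnishes the required NF relations precisely for $j < \lambda^+$.

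The main obstacle I expect is the adaptation of Fact \ref{limit-chain-fact} to accept an arbitrary initial $N_{i^\ast} \gea M_{i^\ast}$ and to be carried out inside the monster model $\sea$ so that $N_j \lea \sea$ throughout; this should follow from universality and homogeneity of $\sea$ but requires revisiting Shelah's construction. Once that is in place the ambient-monotonicity transfer from $N_\delta$ back to $N$ is straightforward, because the definition of $\ts$ only demands the existence of some $N' \gea N$ witnessing NF, and in $\sea$ one can take $N' := N$ after amalgamating via the common extension $N^{\ast\ast}$.
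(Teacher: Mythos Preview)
Your approach can be made to work, but it is considerably more involved than necessary, and one of your steps is a red herring. The preliminary $\clc_1(\s)$ computation (finding $i^\ast$ so that each $a \in A$ is $\s$-nonforking over $M_{i^\ast}$) is never actually used downstream: once you pick $N^0 \supseteq A$ with $M_{i^\ast} \lea N^0$ and run the adapted chain construction, the relation $\nfs{M_{i^\ast}}{N^0}{M_\delta}{N_\delta}$ follows from long transitivity regardless of any $\s$-nonforking information about the individual elements of $A$. You could have taken $i^\ast = 0$. The same remark applies to the ``eventually nonforking'' preamble in the moreover part.

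The paper avoids both the adaptation of Fact~\ref{limit-chain-fact} and the monster-model bookkeeping by exploiting the one clause of that Fact you ignored: $M_\delta \ltu N_\delta$. Apply the Fact verbatim to $\seq{M_i : i \le \delta}$; since $N_\delta$ is universal over $M_\delta$, the ambient model realizing $A$ (call it $M_{\delta+1}$) embeds into $N_\delta$ over $M_\delta$, so without loss of generality $A \subseteq |N_\delta|$. Now $|A| \le \alpha < \delta = \cf(\delta)$ and the $N_i$'s are continuous, so $A \subseteq |N_i|$ for some $i < \delta$, and $\nfs{M_i}{N_i}{M_\delta}{N_\delta}$ immediately yields $\nfs{M_i}{A}{M_\delta}{N_\delta}$ in the $\ts$ sense. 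The moreover part is identical with $\delta = \lambda^+$: embed a realization of $p$ into $N_{\lambda^+}$ via universality, locate it inside some $N_i$ by regularity of $\lambda^+$, and read off $\nfs{M_i}{N_i}{M_j}{N_j}$ for every $j \in [i, \lambda^+)$.

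The moral is that universality at the top of the $N$-chain lets you \emph{pull} $A$ into the chain after the fact, whereas your route \emph{pushes} it in at the bottom, which is what forces you to reopen Shelah's construction and manage the ambient model by hand. One minor caveat: your justification for a monster $\sea$ (``$\Kup$ has amalgamation'') overstates what Hypothesis~\ref{more-prop-hyp} provides; a $\lambda^+$-saturated model in $\Kup_{\lambda^+}$ is what is actually available and would suffice for your argument.
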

\begin{proof}
  Let $\alpha < \lambda$. Let $\seq{M_i : i \le \delta + 1}$ be increasing continuous with $\delta = \cf{\delta} > |\alpha|$. Let $A \subseteq |M_{\delta + 1}|$ have size $\le \alpha$. Let $\seq{N_i : i \le \delta}$ be as given by Fact \ref{limit-chain-fact}. By universality, we can assume without loss of generality that $M_{\delta + 1} \lea N_\delta$. Thus $A \subseteq |N_\delta|$ and by the cofinality hypothesis, there exists $i < \delta$ such that $A \subseteq |N_i|$. In particular, $\nfs{M_i}{A}{M_\delta}{N_\delta}$, so $\nfs{M_i}{A}{M_\delta}{M_{\delta + 1}}$, as needed. The proof of the moreover part is completely similar.
\end{proof}

\begin{remark}
  In \cite{jrsh875} (and later in \cite{jasi, jarden-prime, jarden-tameness-apal}), the authors have considered \emph{semi-good}\index{semi-good (frame)} $\lambda$-frames, where the stability condition is replaced by almost stability\index{almost stability} ($|\gS (M)| \le \lambda^+$ for all $M \in K_\lambda$), and an hypothesis called the conjugation property\index{conjugation property} is often added. Several of the above results carry through in that setup but we do not know if Lemma \ref{nf1-lc} would also hold. 
\end{remark}

We come to the last property: disjointness. The situation is a bit murky: At first glance, Fact \ref{nf-existence}.(\ref{nf-respects}) seems to give it to us for free (since we are assuming $\s$ has disjointness), but unfortunately we are assuming $a \notin |M_2|$ there. We will obtain it with the additional hypothesis of categoricity in $\lambda$ (this is reasonable since if the frame has a superlimit, see Remark \ref{superlimit-remark}, one can always restrict oneself to the class generated by the superlimit). Note that disjointness is never used in a crucial way in this paper (but it is always nice to have, as it implies for example disjoint amalgamation when combined with independent amalgamation).

\begin{lem}\label{disj-lem}
  If $K$ is categorical in $\lambda$, then $\ts$ has disjointness and $\ts^{\le 1} = \s$.
\end{lem}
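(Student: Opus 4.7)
\textbf{Proof proposal for Lemma \ref{disj-lem}.} The plan is to first establish $\nf_\s \subseteq \nf_{\ts^{\le 1}}$ without categoricity, then to prove the crucial claim that $\NF(M_0, N_a, M, N')$ with $a \in N_a \setminus M_0$ forces $a \notin M$. Both disjointness of $\ts$ and the remaining inclusion $\nf_{\ts^{\le 1}} \subseteq \nf_\s$ will follow at once.

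For $\nf_\s \subseteq \nf_{\ts^{\le 1}}$, suppose $\gtp(a/M;N)$ does not $\s$-fork over $M_0$. The case $a \in M_0$ is handled by the existence and base monotonicity properties of $\ts$ from Proposition \ref{ts-basic-props}, since the algebraic type is realized in $M_0$. Otherwise $a \notin M_0$, and disjointness of $\s$ forces $a \notin M$. Using weak successfulness, I pick a uniqueness triple $(a, M_0, N_a)$ with $\gtp(a/M_0; N_a) = \gtp(a/M_0; N)$ (amalgamating over $M_0$ and renaming so the distinguished element is $a$ and $N_a \lea N'$ for some $N' \gea N$). Independent amalgamation of $\NF$ produces an $\NF$-amalgam $(f_1, f_2, N^*)$ of $N_a$ and $M$ over $M_0$; arranging $f_1(a) \notin f_2[M]$ (possible because $a \notin M$), Fact \ref{nf-existence}.(\ref{nf-respects}) shows this amalgam satisfies the non-forking condition in the uniqueness-triple property. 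The ``trivial'' amalgam $(\id_{N_a}, \id_M, N')$ also satisfies that condition by the hypothesis. By uniqueness of the amalgam (property of uniqueness triples), these are equivalent over $M_0$; invariance and monotonicity of $\NF$ then yield $\NF(M_0, N_a, M, N')$, witnessing $\nfs{M_0}{a}{M}{N}$ via $\ts$.

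For the crucial claim, assume toward contradiction that $\NF(M_0, N_a, M, N')$ with $a \in N_a \cap M$ and $a \notin M_0$, and set $p := \gtp(a/M_0; N')$, which is non-algebraic. By independent amalgamation, construct a second $\NF$-amalgam $\NF(M_0, f_1[N_a], f_2[M], N_1)$ with $f_1: N_a \to N_1$ and $f_2: M \to N_1$ fixing $M_0$, arranged so that $f_1(a) \notin f_2[M]$. This is where categoricity enters: every model in $K_\lambda$ is saturated, so $p$ has ample realizations in a suitable extension of $N_1$, and one may pick $f_1$ by transporting so that its image of $a$ lands on a realization of $p$ outside $f_2[M]$ (monotonicity of $\NF$ allows restricting back to an ambient model of size $\lambda$). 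Now apply uniqueness of $\NF$ from Fact \ref{nf-existence}.(\ref{nf-def-uq}) with $f_0 := \id_{M_0}$, $\tilde f_1 := f_1$ viewed as an iso $N_a \cong f_1[N_a]$, and $\tilde f_2 := f_2$ viewed as an iso $M \cong f_2[M]$: it yields an extension $\tilde f_3: N' \to M_4$ containing $\tilde f_1 \cup \tilde f_2$. Then $\tilde f_3(a) = f_1(a)$ (using $a \in N_a$) and $\tilde f_3(a) = f_2(a) \in f_2[M]$ (using $a \in M$), whence $f_1(a) \in f_2[M]$, contradicting the choice. Hence $a \notin M$.

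Given the claim, disjointness of $\ts$ follows immediately from its monotonicity (Proposition \ref{ts-basic-props}): a putative $a \in A \cap M \setminus M_0$ with $\nfs{M_0}{A}{M}{N}$ via $\ts$ yields $\nfs{M_0}{a}{M}{N}$ via $\ts$, witnessed by some $\NF(M_0, N_a, M, N')$ with $a \in N_a$, and the claim forces $a \in M_0$, a contradiction. The remaining inclusion $\nf_{\ts^{\le 1}} \subseteq \nf_\s$ is handled analogously: the case $a \in M_0$ is trivial, while $a \notin M_0$ together with the claim gives $a \notin M$, so Fact \ref{nf-existence}.(\ref{nf-respects}) together with ambient monotonicity of $\s$ yields $\nfs{M_0}{a}{M}{N}$ via $\s$. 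The principal obstacle is the categoricity-dependent step arranging $f_1(a) \notin f_2[M]$ in the second amalgam: one must verify that independent amalgamations of $\NF$ are flexible enough under categoricity to avoid a specific element of size-$\lambda$ sets, which may in turn require passing to a $\lambda^+$-saturated ambient model and invoking monotonicity of $\NF$.
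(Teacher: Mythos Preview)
Your approach tries to establish $\NF$-disjointness directly, but there is a genuine gap in the application of $\NF$-uniqueness in your ``crucial claim''. Fact~\ref{nf-existence}.(\ref{nf-def-uq}) asserts that $f_1 \cup f_2$ extends to an embedding $f_3$; this presupposes (and the proof via Lemma~\ref{uq-equiv} only establishes) that $f_1 \cup f_2$ is a well-defined function, i.e.\ that $f_1$ and $f_2$ agree on $|N_a| \cap |M|$. In your setup $a \in N_a \cap M$ and you have deliberately arranged $\tilde f_1(a) = f_1(a) \neq f_2(a) = \tilde f_2(a)$, so $\tilde f_1 \cup \tilde f_2$ is not a function and the uniqueness clause cannot be invoked. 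Applying uniqueness in the opposite direction (with the ``separated'' amalgam as the source) would require $f_1[N_a] \cap f_2[M] = |M_0|$, not merely $f_1(a) \notin f_2[M]$, and you do not arrange this. Moreover, your construction of the second $\NF$-amalgam with $f_1(a) \notin f_2[M]$ via categoricity is vague: saturation lets you realize $p$ outside $f_2[M]$, but it is unclear why transporting $N_a$ onto that realization preserves $\NF$ against $f_2[M]$. The same gap already appears in your first paragraph, where ``arranging $f_1(a) \notin f_2[M]$ (possible because $a \notin M$)'' is asserted without justification: the fact that $a \notin M$ inside $N'$ says nothing about where $f_1(a)$ lands in a \emph{fresh} amalgam produced by independent amalgamation.

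The paper sidesteps all of this by appealing to canonicity. By Proposition~\ref{ts-basic-props} and Lemma~\ref{nf1-lc}, $\ts^{\le 1}$ already has every property of a good $\lambda$-frame except possibly disjointness. Since $K$ is categorical in $\lambda$, the canonicity theorem for good frames over a categorical class (Theorem~\ref{good-frame-cor}, whose proof does not rely on disjointness) forces $\s = \ts^{\le 1}$. Disjointness of $\ts^{\le 1}$ is then inherited from $\s$, and disjointness of $\ts$ follows by monotonicity to length-one subtypes. This gives both inclusions $\nf_\s = \nf_{\ts^{\le 1}}$ at once, so your separate argument for $\nf_\s \subseteq \nf_{\ts^{\le 1}}$ is not needed either.
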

\begin{proof}
  We have shown that $\ts^{\le 1}$ has all the properties of a good frame except perhaps disjointness so by the proof of Theorem \ref{good-frame-cor} (which never relied on disjointness), $\s = \ts^{\le 1}$. Since $\s$ has disjointness, $\ts^{\le 1}$ also does, and therefore $\ts$ has disjointness.
\end{proof}

What about continuity for chains? The long transitivity property seems to suggest we can say something, and indeed we can:

\begin{fact}\label{continuity-models}
  Assume $\lambda = \lambda_0^{+3}$ and there exists an $\omega$-successful good $\lambda_0$-frame $\s'$ such that $\s = (\s')^{+3}$.
  
  Assume $\delta$ is a limit ordinal and $\seq{M_i^\ell : i \le \delta}$ is increasing continuous in $K_\lambda$, $\ell \le 3$. If $\nfs{M_i^0}{M_i^1}{M_i^2}{M_i^3}$ for each $i < \delta$, then $\nfs{M_\delta^0}{M_\delta^1}{M_\delta^2}{M_\delta^3}$.
\end{fact}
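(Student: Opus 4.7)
The statement is a full model-continuity property for $\NF$ that is not formally listed among the properties of Fact \ref{nf-existence}. My plan is to derive it by representing the $\NF$-squares at level $\lambda$ in terms of $\lambda_0$-sized ``building blocks'' and then invoking long transitivity, which \emph{is} already available. The three-successor hypothesis $\lambda = \lambda_0^{+3}$ provides exactly the room needed to set this decomposition up coherently.

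First, I would use the dimensional analysis of $\omega$-successful good frames developed in Chapter III of Shelah's book (essentially \cite[Sections III.8--III.9]{shelahaecbook}, and see also the variant of Fact \ref{limit-chain-fact}). For an $\omega$-successful frame, the $\NF$-relation at each successor level $(\s')^{+k}$ admits a characterization by $\NF$-relations at level $(\s')^{+(k-1)}$: concretely, an $\NF$-square $\nfs{N_0}{N_1}{N_2}{N_3}$ at level $\lambda_0^{+k}$ can be witnessed by a coherent continuous system of $\NF$-squares among $\lambda_0^{+(k-1)}$-sized submodels exhausting the $N_\ell$, and conversely such a witnessing system forces $\NF$ at level $\lambda_0^{+k}$ by uniqueness of non-forking amalgamation. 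Iterating this characterization three times produces a witnessing scheme of $\lambda_0$-building blocks for any $\NF$-square at level $\lambda$.

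Second, I would carry out a simultaneous construction. For each $i < \delta$, fix such a decomposition of $\nfs{M_i^0}{M_i^1}{M_i^2}{M_i^3}$ into $\lambda_0$-building blocks; using the uniqueness of $\NF$ at each intermediate level (which follows from the $n$-successful hypothesis at $n = 1,2,3$) and the continuity of the sequences $\seq{M_i^\ell : i \le \delta}$, arrange inductively that these decompositions are increasing and continuous in $i$. Then apply long transitivity of $\NF$ at levels $\lambda_0$, $\lambda_0^+$, and $\lambda_0^{++}$ in turn to conclude that the ``column'' $\NF$-squares built out of the $i$-stage data assemble into a witnessing scheme for $\nfs{M_\delta^0}{M_\delta^1}{M_\delta^2}{M_\delta^3}$. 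By the characterization of the first step, this is enough.

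The main obstacle is orchestrating the four decompositions (one per sequence $\seq{M_i^\ell}$) so that at every $i$ the $\lambda_0$-level picture really is an $\NF$-square and so that the whole picture is increasing continuous in $i$. This is where the three successors become essential: each layer of the decomposition costs one uniqueness-and-transitivity argument, consuming one successor. If one only had $\lambda = \lambda_0^{+2}$ there would be no slack to push through the final alignment step, and the construction would fail. Modulo invoking the appropriate claims from \cite[Chapter III]{shelahaecbook} for the two uniqueness steps, this is what the argument reduces to.
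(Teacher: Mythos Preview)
The paper does not attempt to prove this statement at all: it is a \emph{Fact}, and the ``proof'' is a two-line citation to Shelah's book --- \cite[Claim III.12.2]{shelahaecbook} to verify that the standing hypotheses of Chapter III hold for $(\s')^{+3}$, and then \cite[Claim III.8.19]{shelahaecbook} for the continuity statement itself. So your proposal is not being compared against an argument in the paper but against a black-box import.

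Your sketch has a substantive misreading of the three-successor hypothesis. You describe the role of $\lambda = \lambda_0^{+3}$ as providing three ``layers'' for a decomposition-into-$\lambda_0$-blocks argument, with each layer ``consuming one successor'' via a uniqueness-and-transitivity step. That is not what the $+3$ is for. As the paper's own Remark immediately after the Fact explains, Shelah's Chapter III machinery requires the frame to have accumulated structural properties (existence of prime triples, weak orthogonality coinciding with orthogonality, categoricity of the underlying class, etc.) that are not available for an arbitrary weakly successful frame but do hold after passing to the third successor of an $\omega$-successful frame. The continuity proof (Claim III.8.19) is then carried out \emph{entirely at level $\lambda$} using these properties, not by descending to $\lambda_0$-sized building blocks. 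Your proposed reduction to $\NF$-squares at level $\lambda_0$ and an application of long transitivity there does not match Shelah's argument and, as sketched, does not work: there is no general characterization of $\NF$ at level $\lambda_0^{+k}$ in terms of $\NF$ at level $\lambda_0^{+(k-1)}$ of the tidy form you need, and the ``coherent continuous system'' alignment step you flag as the main obstacle is genuinely an obstacle --- it is not resolved by the successor count alone.

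If you want to go beyond the citation, the honest thing to say is that the result is Shelah's Claim III.8.19, applicable because after three successors the frame satisfies the hypotheses accumulated through \cite[Chapter III]{shelahaecbook}; the $+3$ is the price of those hypotheses, not of a three-stage decomposition.
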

\begin{proof}
By \cite[Claim III.12.2]{shelahaecbook}, all the hypotheses at the beginning of each section of Chapter III in the book hold for $\s$. Now apply Claim III.8.19 in the book.
\end{proof}
\begin{remark}
  Where does the hypothesis $\lambda = \lambda_0^{+3}$ come from? Shelah's analysis in chapter III of his book proceeds on the following lines: starting with an $\omega$-successful frames $\s$, we want to show $\s$ has nice properties like existence of prime triples, weak orthogonality being orthogonality, etc. They are hard to show in general, however it turns out $\s^{+}$ has some nicer properties than $\s$ (for example, $K_{\s^+}$ is always categorical)... In general, $\s^{+(n + 1)}$ has even nicer properties than $\s^{+n}$; and Shelah shows that the frame has all the nice properties he wants after going up three successors. 
\end{remark}

We obtain:

\begin{thm}\label{good-long-frame} \
  \begin{enumerate}
    \item If $K$ is categorical in $\lambda$, then $\ts$ is a good $(\le \lambda, \lambda)$-frame.
    \item If $\lambda = \lambda_0^{+3}$ and there exists an $\omega$-successful good $\lambda_0$-frame $\s'$ such that $\s = (\s')^{+3}$, then $\ts$ is a fully good $(\le \lambda, \lambda)$-frame.
  \end{enumerate}
\end{thm}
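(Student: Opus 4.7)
The plan is to verify axiom-by-axiom the definition of a (fully) good $(\le \lambda, \lambda)$-frame using the pieces already assembled in this section. For the class-theoretic conditions, observe that $K_{\s}$ inherits everything we need from Hypothesis \ref{more-prop-hyp}: it is a nonempty AEC in $\{\lambda\}$ with $\LS(K_{\s}) = \lambda$, no maximal models, joint embedding, and stability in $\lambda$. Proposition \ref{ts-basic-props} then certifies that $\ts$ is a pre-$(\le \lambda, \lambda)$-frame with base monotonicity, full symmetry (hence symmetry), uniqueness, existence, and extension. The left $\lambda$-witness property is automatic, since the left side of any instance of $\nf_{\ts}$ has size at most $\lambda$ and one may take $A_0 := A$.

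For local character, Lemma \ref{nf1-lc} yields $\clc{\alpha_0}(\ts) = |\alpha_0|^+ + \aleph_0$ for all $\alpha_0 < \lambda$, and its moreover clause dispatches the case $\alpha_0 = \lambda$ via a cofinality argument on chains of length $\lambda^+$. For the strong local character, the target $|\alpha_0|^+ + \lambda^+$ equals $\lambda^+$ whenever $\alpha_0 \le \lambda$, so one picks $M_0 := M$ and invokes existence. To finish Part 1, Lemma \ref{disj-lem} supplies disjointness under the categoricity-in-$\lambda$ hypothesis, completing the checklist.

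For Part 2, the identity $\s = (\s')^{+3} = ((\s')^{+2})^{+}$ means that $K_{\s}$ is, up to isomorphism, the class of saturated models of a fixed size, hence categorical in $\lambda$ (as recorded in the discussion preceding Fact \ref{continuity-models}), so Part 1 applies and $\ts$ is already good. The only additional requirement to upgrade to fully good is full model-continuity, and this is precisely the content of Fact \ref{continuity-models}; note that in a $(\le \lambda, \lambda)$-frame the size constraint $\|M_\delta^1\| < \alpha = \lambda^+$ forces the limit ordinal $\delta$ to lie below $\lambda^+$, so the increasing continuous chain lives entirely inside $K_\lambda$, matching that fact's hypotheses verbatim (and the equivalence of $\NF$ with its extension $\NF'$ on quadruples of models, by Proposition \ref{ts-basic-props}, lets us translate the conclusion back to $\ts$). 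I do not anticipate any substantive obstacle; the whole proof is careful bookkeeping against Definitions \ref{goodness-def} and \ref{indep-props-def}, with the witness and strong local character properties trivialized by the size restrictions intrinsic to the $(\le \lambda, \lambda)$ framework.
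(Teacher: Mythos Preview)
Your proposal is correct and follows essentially the same route as the paper's proof, invoking Proposition \ref{ts-basic-props}, Lemma \ref{nf1-lc}, and Lemma \ref{disj-lem} for Part 1, and then categoricity of $K_{\s^{+3}}$ plus Fact \ref{continuity-models} for Part 2. One small remark: the case $\alpha_0 = \lambda$ in the local character check is actually not required by Definition \ref{goodness-def}, since the condition $|\alpha_0|^+ < \theta_{\is} = \lambda^+$ already restricts to $\alpha_0 < \lambda$, so your appeal to the moreover clause of Lemma \ref{nf1-lc} there is superfluous (though harmless).
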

\begin{proof}
  $\ts$ is good by Proposition \ref{ts-basic-props}, Lemma \ref{nf1-lc}, and Lemma \ref{disj-lem}. The second part follows from Fact \ref{continuity-models} (note that by definition of the successor frame, $K$ will be categorical in $\lambda$ in that case).
\end{proof}

\begin{remark}
  In \cite[Corollary 6.10]{tame-frames-revisited-v5}, it is shown that $\lambda$-tameness and amalgamation imply that a good $\lambda$-frame extends to a good $(<\infty, \lambda)$-frame. However, the definition of a good frame there is not the same as it does \emph{not} assume that the frame is type-full (the types on which forking is defined are only the types of independent sequences). Thus the conclusion of Theorem \ref{good-long-frame} is much stronger (but uses more hypotheses).
\end{remark}

\section{Extending the base and right hand side}\label{up-transfer-sec}

\begin{hypothesis} \
  \begin{enumerate}
    \item $\is = (K, \nf)$ is a fully good $(\le \lambda, \lambda)$-independence relation.
    \item $K' := \Kup$ has amalgamation and is $\lambda$-tame for types of length less than $\lambda^+$.
  \end{enumerate}
\end{hypothesis}

In this section, we give conditions under which $\is$ becomes a fully good $(\le \lambda, \ge \lambda)$-independence relation. In the next section, we will make the left hand side bigger and get a fully good $(<\infty, \ge \lambda)$-independence relation. 

Recall that extending a $(\le 1, \lambda)$-frame to bigger models was investigated in \cite[Chapter II]{shelahaecbook} and \cite{ext-frame-jml, tame-frames-revisited-v5}. Here, most of the arguments are similar but the longer types cause some additional difficulties (e.g.\ in the proof of local character). 

\begin{notation}\index{$\is'$}\index{$\s'$}\index{$K'$}
  Let $\is' := \Kupp{\is}$ (recall Definition \ref{is-up-def}). Write $\s := \pre (\is)$, $\s' := \pre (\is')$, $K' := K_{\is'}$. We abuse notation and also denote $\nf_{\is'}$ by $\nf$.
\end{notation}

We want to investigate when the properties of $\is$ carry over to $\is'$.

\begin{lem}\label{easy-transfer} \
  \begin{enumerate}
    \item $\is'$ is a $(\le \lambda, \ge \lambda)$-independence relation. 
    \item $K'$ has joint embedding, no maximal models, and is stable in all cardinals.
    \item $\is'$ has base monotonicity, transitivity, uniqueness, and disjointness.
    \item $\is'$ has full model continuity.
  \end{enumerate}
\end{lem}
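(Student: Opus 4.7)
The plan is to handle the four parts roughly in the order they are listed, with the first three being essentially routine transfers from $\is$ to $\is'$ and the last being the genuinely technical step.

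For (1), the statement that $\is'$ is a $(\le \lambda, \ge \lambda)$-independence relation is exactly Lemma \ref{up-indep-rel}.(\ref{up-indep-rel-1}) applied with $K_{\is} = K$. For (2), joint embedding and no maximal models in $K' = \Kup$ lift from the corresponding properties of $K$ (which hold because $\is$ is a good frame): given $M, N \in K'$, pick $\lambda$-sized substructures in $K$, jointly embed them in $K$, and extend via amalgamation in $K'$; similarly, given $M \in K'$, pick $M_0 \lea M$ in $K$, use no maximal models in $K$ to obtain $M_0 \lta M_0'$, and amalgamate. Stability in all cardinals $\ge \lambda$ follows from Proposition \ref{class-props}.(2),(3) once uniqueness of $\pre(\is')$ is established in part (3), together with $\slc{<\lambda^+}(\is') = \lambda^+$, which is immediate from the definition of $\Kupp{\is}$ (every instance of $\is'$-nonforking comes equipped with a $\lambda$-sized base witness).

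For (3), base monotonicity and disjointness reduce directly to the corresponding properties of $\is$ applied to the $\lambda$-sized base witnesses, as in the proof of Lemma \ref{up-indep-prop}.(1). Uniqueness of $\pre(\is')$ is the standard tameness argument: if $p, q \in \gS^{\le \lambda}(M)$ both $\is'$-do not fork over $M_0 \lea M$ and agree on $M_0$, it suffices by $\lambda$-tameness of $K'$ to show $p \rest A = q \rest A$ for each $A \subseteq |M|$ with $|A| \le \lambda$; pick an intermediate $M_A \in K_\lambda$ with $M_0 \lea M_A \lea M$ and $A \subseteq |M_A|$, choose a common $\lambda$-sized $M_0' \lea M_0$ witnessing nonforking for both $p$ and $q$ (using monotonicity of the witness condition), and apply uniqueness of $\pre(\is)$ to the $\is$-nonforking extensions $p \rest M_A$ and $q \rest M_A$ over $M_0'$. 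Transitivity is obtained similarly: consolidate $\lambda$-sized witnesses for the two hypotheses and invoke transitivity of $\is$ together with uniqueness and tameness.

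For (4), full model continuity is the main obstacle. Given $\seq{M_i^\ell : i \le \delta}$ and the hypotheses, we may assume $\delta = \cf(\delta)$. The plan is to construct an increasing continuous chain $\seq{N_i^0 : i < \delta}$ in $K_\lambda$ with $N_i^0 \lea M_i^0$, where $N_i^0$ is built using local character of $\is$ (i.e.\ $\slc{\lambda}(\is) = \lambda^+$) so that it serves as a $\lambda$-sized base witness for $\nfs{M_i^0}{M_i^1}{M_i^2}{M_i^3}$; this is possible since $\|M_\delta^1\| \le \lambda$ and continuity plus the $\lambda$-sized coherence of the construction can be ensured inductively. Set $N_\delta^0 := \bigcup_{i < \delta} N_i^0 \lea M_\delta^0$. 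To see $N_\delta^0$ witnesses $\nfs{M_\delta^0}{M_\delta^1}{M_\delta^2}{M_\delta^3}$ in $\is'$, fix any $\lambda$-sized $B_0 \subseteq |M_\delta^2|$ and any $N_0 \lea M_\delta^3$ in $K_\lambda$ containing $M_\delta^1 \cup B_0 \cup N_\delta^0$; approximate $B_0$ by a continuous chain $\seq{B_i : i \le \delta}$ with $B_i \subseteq |M_i^2|$ (using continuity of the $M_i^2$ chain) and similarly approximate $N_0$ by a continuous chain of $\lambda$-sized intermediate models, and apply full model continuity of $\is$ to conclude $\nf_{\is}(N_\delta^0, M_\delta^1, B_0, N_0)$. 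The hard part will be arranging all chains coherently, with the delicate handling of the case $\delta > \lambda$ (where one must either truncate the witness chain at $\lambda$, using weak chain local character, or rely on the built-in local character $\slc{<\lambda^+}(\is') = \lambda^+$ to bypass a direct full-model-continuity argument above $\lambda$).
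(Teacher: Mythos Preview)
Your treatment of (1) and (3) matches the paper's. For (2), the paper takes a shortcut: it invokes the known result (from \cite{tame-frames-revisited-v5}) that a good $\lambda$-frame together with $\lambda$-tameness and amalgamation yields a good $(\ge\lambda)$-frame for types of length one, and joint embedding, no maximal models, and stability in every cardinal are then immediate from the definition of a good frame. Your route via Proposition~\ref{class-props}.(2),(3) does not give stability in \emph{all} cardinals $\ge\lambda$ as stated: part (2) only yields stability when $\mu=\mu^{\lambda}$, and part (3) requires a bound on $\clc{<\alpha}(\is')$ that you have not established at this point in the argument.

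For (4), you miss the simplification that drives the paper's proof. Full model continuity for $\is'$ requires $\|M_\delta^1\|<\alpha_{\is'}=\lambda^+$, and since $M_i^0\lea M_i^1\lea M_\delta^1$, every $M_i^0$ and $M_i^1$ (including $i=\delta$) is \emph{already} in $K_\lambda$. There is no need to build auxiliary witnesses $N_i^0$ via local character. The paper then simply unfolds the definition of $\is'$-nonforking: it suffices to check $\nfs{M_\delta^0}{M_\delta^1}{M'}{M_\delta^3}$ for each $M'\in K_\lambda$ with $M_\delta^0\lea M'\lea M_\delta^2$. If $\delta<\lambda^+$, one resolves $M'$ as a continuous chain $\seq{M_i':i\le\delta}$ in $K_\lambda$ with $M_i^0\lea M_i'\lea M_i^2$ and applies full model continuity of $\is$ directly. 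If $\delta\ge\lambda^+$, the $\lambda$-sized chains $\seq{M_i^0}$, $\seq{M_i^1}$ are eventually constant, so one may assume $M_\delta^0=M_0^0$, $M_\delta^1=M_0^1$; any fixed $M'$ then sits inside some $M_i^2$ by regularity of $\delta$, and monotonicity from the hypothesis $\nfs{M_0^0}{M_0^1}{M_i^2}{M_i^3}$ finishes. The ``delicate handling'' you anticipate for $\delta>\lambda$ thus dissolves once you notice the base and left-hand chains already live in $K_\lambda$.
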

\begin{proof} \
  \begin{enumerate}
    \item By Proposition \ref{up-indep-rel}. 
    \item By \cite[Corollary 6.9]{tame-frames-revisited-v5}, $(\s')^{\le 1}$ is a good $(\ge \lambda)$-frame, so in particular $K'$ has joint embedding, no maximal models, and is stable in all cardinals.
    \item See \cite[Claim II.2.11]{shelahaecbook} for base monotonicity and transitivity. Disjointness is straightforward from the definition of $\is'$, and uniqueness follows from the tameness hypothesis and the definition of $\is'$.
    \item Assume $\seq{M_i^\ell : i \le \delta}$ is increasing continuous in $K'$, $\ell \le 3$, $\delta$ is regular, $M_i^0 \lea M_i^\ell \lea M_i^3$ for $\ell = 1,2$, $\|M_\delta^1\| < \lambda^+$ (recall the definition of full model continuity), $i < \delta$, and $\nfs{M_i^0}{M_i^1}{M_i^2}{M_i^3}$ for all $i < \delta$. Let $N := M_\delta^3$. By ambient monotonicity, $\nfs{M_i^0}{M_i^1}{M_i^2}{N}$ for all $i < \delta$. We want to see that $\nfs{M_\delta^0}{M_\delta^1}{M_\delta^2}{N}$. Since $\|M_\delta^1\| < \lambda^+$, $M_\delta^1$ and $M_\delta^0$ are in $K$. Thus it is enough to show that for all $M' \lea M_\delta^2$ in $K$ with $M_\delta^0 \lea M'$, $\nfs{M_\delta^0}{M_\delta^1}{M'}{N}$. Fix such an $M'$. We consider two cases:
      \begin{itemize}
        \item \underline{Case 1: $\delta < \lambda^+$}: Then we can find $\seq{M_i' :i \le \delta}$ increasing continuous in $K$ (as opposed to just in $K'$) such that $M_\delta' = M'$ and for all $i < \delta$, $M_i^0 \lea M_i' \lea M_i^2$. By monotonicity, for all $i < \delta$, $\nfs{M_i^0}{M_i^1}{M_i'}{N}$. By full model continuity in $K$, $\nfs{M_\delta^0}{M_\delta^1}{M'}{N}$, as desired.
        \item \underline{Case 2: $\delta \ge \lambda^+$}: Since $M_\delta^0, M_\delta^1 \in K$, the chains $\seq{M_i^\ell : i \le \delta}$ for $\ell = 0,1$ must be eventually constant, so we can assume without loss of generality that $M_\delta^0 = M_0^0$, $M_\delta^1 = M_0^1$. Since $\delta$ is regular, there exists $i < \delta$ such that $M' \lea M_i^2$. By assumption, $\nfs{M_0^0}{M_0^1}{M_i^2}{N}$, so by monotonicity, $\nfs{M_0^0}{M_0^1}{M'}{N}$, as needed.
      \end{itemize}
  \end{enumerate}
\end{proof}

We now turn to local character.

\begin{lem}\label{lc-lem-2}
  Assume $\seq{M_i :i  \le \delta}$ is increasing continuous, $p \in \gS^{\alpha} (M_\delta)$, $\alpha < \lambda^+$ a cardinal and $\delta = \cf{\delta} > \alpha$.

  \begin{enumerate}
  \item\label{lc-2} If $\alpha < \lambda$, then there exists $i < \delta$ such that $p$ does not fork over $M_i$.
  \item\label{lc-3} If $\alpha = \lambda$ and $\is$ has the left $(<\cf{\lambda})$-witness property, then there exists $i < \delta$ such that $p$ does not fork over $M_i$.
  \end{enumerate}
\end{lem}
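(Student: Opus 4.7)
The plan is to generalize the contradiction argument of Lemma \ref{up-indep-prop}.(\ref{77-b}) (which handled the length-$1$ case) to longer types, exploiting the fact that $\is$ is fully good $(\le\lambda,\lambda)$ and hence satisfies $\clc{\alpha_0}(\is) = |\alpha_0|^+ + \aleph_0$ for every $\alpha_0 < \lambda^+$. This local character lives at the $\is$-level, i.e.\ for chains in $K_\lambda$, but by running a subchain construction inside $K_\lambda$ one transfers it up to $\is'$-local character for chains in $K'$.

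For part (1) with $\alpha < \lambda$, I would assume for a contradiction that $p$ $\is'$-forks over $M_i$ for every $i < \delta$ and fix $\ba \in \fct{\alpha}{|N|}$ realizing $p$. Mimicking the proof of Lemma \ref{up-indep-prop}.(\ref{77-b}) but tracking the length-$\alpha$ sequence $\ba$ instead of a singleton, I would build increasing continuous chains $\seq{N_i : i \le \delta}$ and $\seq{N_i' : i \le \delta}$ in $K_\lambda$ with $N_i \lea M_i$, $N_i \lea N_i' \lea M_\delta$, the coherence condition $\bigcup_{j<i}(|N_j'|\cap|M_j|) \subseteq |N_i|$, and with $\gtp(\ba/N_{i+1}'; N_*^i)$ $\is$-forking over $N_i$ for some $N_*^i \in K_\lambda$ containing $\ba \cup N_{i+1}'$; the forking witness at each successor stage is supplied by the failure of $\is'$-nonforking of $p$ over $M_i$ applied to the particular $N_i \lea M_i$. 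For $\delta < \lambda^+$, $N_\delta'$ remains in $K_\lambda$ and chain local character $\clc{\alpha}(\is) \le \alpha^+ \le \delta$ delivers some $i < \delta$ with $\gtp(\ba/N_\delta')$ not $\is$-forking over $N_i'$; the coherence condition gives $N_\delta' \lea N_\delta$, so by monotonicity this contradicts the construction at stage $i$. For $\delta \ge \lambda^+$, I would first use singleton chain local character $\clc{1}((\s')^{\le 1}) = \aleph_0$ (available because $(\s')^{\le 1}$ is a good $(\ge \lambda)$-frame by \cite[Corollary 6.9]{tame-frames-revisited-v5}) to locate $i^* < \delta$ above which every $a_\beta$ fails to $\is'$-fork over $M_{i^*}$, then run the subchain construction on $\seq{M_i : i^* \le i}$ up to length $\lambda^+$, again contradicting via chain local character of $\is$ at length $\alpha < \lambda$ (as in Case 2 of the proof of Lemma \ref{up-indep-prop}.(\ref{77-b})).

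For part (2) with $\alpha = \lambda$, the same contradiction-style construction applies, but at each stage $i$ I would replace $\ba$ by a length-$<\cf(\lambda)$ sub-tuple $A^i \subseteq \ba$ whose type over $M_\delta$ $\is'$-forks over $M_i$; such an $A^i$ exists by the left $(<\cf(\lambda))$-witness property of $\is$, which transfers to $\is'$ via the definition of $\is'$-nonforking together with the $\lambda$-tameness of $K'$. Building the subchains to length $\lambda^+$ and then invoking the witness property in the final chain local character step reduces matters to chain local character of $\is$ for types of length $<\cf(\lambda)$, which gives $\clc{<\cf(\lambda)}(\is) \le \cf(\lambda)^+ + \aleph_0 \le \lambda < \lambda^+$; this delivers the desired contradiction. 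The principal obstacle is precisely this last reduction: converting chain local character of $\is$ at lengths $<\cf(\lambda)$ into chain local character at length $\lambda$ through the left $(<\cf(\lambda))$-witness property. This is morally the chain analogue of Lemma \ref{lc-cont} (which performed the parallel upgrade for set local character); the adaptation to chains is nontrivial because one must secure a uniform chain level at which every length-$<\cf(\lambda)$ sub-tuple stops forking, and the witness property has to be invoked on the full length-$\lambda$ type globally rather than sub-tuple by sub-tuple.
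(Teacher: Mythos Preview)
Your part (1) is essentially the paper's argument: the proof of Lemma \ref{up-indep-prop}.(\ref{77-b}) goes through verbatim, with weak chain local character for length $\alpha < \lambda$ coming for free since $\clc_\alpha(\is) = \alpha^+ + \aleph_0 \le \lambda$ (an initial segment of the $K_\lambda$-subchain of length $\alpha^+ + \aleph_0$ already has its union in $K_\lambda$, so ordinary chain local character of $\is$ applies). Your preliminary $i^*$ step via singleton local character is unnecessary but harmless.

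For part (2), the paper takes a different, more direct route that avoids exactly the obstacle you flag. Rather than a contradiction argument, the paper verifies weak chain local character constructively: given a $K_\lambda$-chain $\seq{M_i : i < \lambda^+}$ and $A$ of size $\lambda$, write $A = \bigcup_{j < \cf{\lambda}} A_j$ with $|A_j| < \lambda$, apply part (1) to each $A_j$ to obtain $i_j < \lambda^+$ with $\nfs{M_{i_j}}{A_j}{M_{\lambda^+}}{N}$, set $i := \sup_{j} i_j < \lambda^+$, and conclude $\nfs{M_i}{A}{M_{\lambda^+}}{N}$ by the left $(<\cf{\lambda})$-witness property of $\is$. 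The key point is that $M_i \in K_\lambda$, so the $\is'$-nonforking statement unwinds (via the definition of $\Kupp{\is}$ with $M_0 = M_i$) to an $\is$-level statement, and the witness property of $\is$ itself applies directly; no transfer of the witness property to $\is'$ is needed.

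Your contradiction route, by contrast, requires either the left witness property at the $\is'$-level (to extract a single small $A^i$ witnessing $\is'$-forking over a possibly large $M_i \in K'$) or a mechanism to cope with the sub-tuples $A^i$ varying with $i$. You supply neither, and the first does not follow easily from the $\is$-level property: for each small $A_0$ the definition of $\is'$-nonforking produces a different base model $M_0^{A_0} \in K_\lambda$, and there is no obvious way to amalgamate these into a single witness. The paper's decomposition sidesteps this by fixing the resolution of $A$ once, applying part (1) piece by piece, and invoking the witness property only after taking the supremum over a base that already lies in $K_\lambda$ --- exactly the ``global'' invocation you gestured at in your last sentence.
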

\begin{proof} \
  \begin{enumerate}
    \item As in the proof of Lemma \ref{up-indep-prop}.(\ref{77-b}) Note that weak chain local character holds for free because $\alpha < \lambda$ and $\clc{\alpha} (\is) = \alpha^+ + \aleph_0$ by assumption. 
    \item By the proof of Lemma \ref{up-indep-prop}.(\ref{77-b}) again, it is enough to see that $\is$ has weak chain local character: Let $\seq{M_i : i < \lambda^+}$ be increasing in $K$ and let $M_{\lambda^+} := \bigcup_{i < \lambda^+} M_i$. Let $p \in \gS^\lambda (M_{\lambda^+})$. We will show that there exists $i < \lambda^+$ such that $p$ does not fork over $M_i$. Say $p = \gtp (\ba / M_{\lambda^+}; N)$ and let $A := \text{ran} (\ba)$. Write $A = \bigcup_{j < \cf{\lambda}} A_j$ with $\seq{A_j : i < \cf{\lambda}}$ increasing continuous and $|A_j| < \lambda$. By the first part, for each $j < \cf{\lambda}$ there exists $i_j < \lambda^+$ such that $\nfs{M_{i_j}}{A_j}{M_{\lambda^+}}{N}$. Let $i := \sup_{j < \cf{\lambda}} i_j$. We claim that $\nfs{M_i}{A}{M_{\lambda^+}}{N}$. By the $(<\cf{\lambda})$-witness property and the definition of $\is'$ (here we use that $M_i \in K$), it is enough to show this for all $B \subseteq A$ of size less than $\cf{\lambda}$. But any such $B$ is contained in an $A_j$, and so the result follows from base monotonicity.
  \end{enumerate}
\end{proof}

\begin{lem}\label{indep-amalgam-transfer}
 Assume $\is'$ has existence. Then $\is'$ has independent amalgamation.
\end{lem}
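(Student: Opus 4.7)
The plan is to invoke Proposition~\ref{indep-props}.(\ref{indep-props-7}), which yields independent amalgamation from existence together with extension (applicable since $\alpha_{\is'} = \lambda^+ > \lambda$). Existence for $\is'$ transfers to $\s' := \pre(\is')$ by Proposition~\ref{cl-basics}, so it is provided by the standing hypothesis. The substantive task is therefore to establish extension for $\is'$; by Proposition~\ref{cl-basics}.(\ref{cl-basics-ext}) combined with uniqueness of $\is'$ (from Lemma~\ref{easy-transfer}), this reduces to showing that the pre-frame $\s'$ has extension.

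To establish extension for $\s'$, I would lift the extension property of $\s$ using tameness of $K'$ and uniqueness of $\s'$. Given $p \in \gS^{<\lambda^+}(M; N)$ not $\s'$-forking over $M_0 \lea M$ and an enlargement $M \lea M'$ with all models in $K'$, I would first fix a witness $M_0^\ast \lea M_0$ in $K$ for the non-$\is$-forking of (appropriate restrictions of) $p$. I would then resolve $M'$ as a $\lea$-increasing continuous chain $\seq{M'^i : i < \mu}$ of models in $K$ above $M$, using that $K'$ is an AEC with $\LS = \lambda$. Proceeding by induction, I build $\seq{q^i, N^i : i < \mu}$ with $q^i \in \gS^{<\lambda^+}(M'^i; N^i)$ extending the previous piece and not $\s$-forking over $M_0^\ast$: successor steps use extension of $\s$ (applicable for types of length up to $\lambda$), limit steps use full model-continuity of $\is$, and coherence across the chain is ensured by uniqueness of $\s$ together with $\lambda$-tameness of $K'$. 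The union $q := \bigcup_i q^i$ realized in $N^\mu := \bigcup_i N^i \in K'$ is the desired extension of $p$ to $M'$, not $\s'$-forking over $M_0^\ast$ and hence not over $M_0$ by base monotonicity.

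Once extension for $\s'$ is in hand, Proposition~\ref{indep-props}.(\ref{indep-props-7}) applied to $\s'$ delivers independent amalgamation for $\s'$. Since independent amalgamation involves models on both sides and $\nf_{\s'}$ is literally the restriction of $\nf_{\is'}$ to such quadruples, this is exactly independent amalgamation for $\is'$, as desired.

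The main obstacle is the chain construction used to lift extension from $\s$ to $\s'$, specifically because the base model $M_0$ can live in $K'_{>\lambda}$ while the non-forking is only witnessed in $K$. One must choose $M_0^\ast \lea M_0$ in $K$ carefully so that non-$\s$-forking over $M_0^\ast$ is preserved along the resolution of $M'$ (and simultaneously of $M$), and one must use uniqueness of $\s'$ at each stage to ensure that the partial types glue into a single type of length up to $\lambda$. Handling types that are long (potentially of length $\lambda$) over a large base is precisely the delicate point where tameness and full model-continuity do the essential work.
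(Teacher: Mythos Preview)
Your approach works in principle but routes through extension, whereas the paper proves independent amalgamation directly via the same directed-system argument you sketch. Concretely, the paper resolves $M^0 \lea M^\ell$ ($\ell = 1,2$) simultaneously into chains in $K_\lambda$, independently amalgamates at each stage using $\is$, takes the direct limit, and applies full model continuity; extension is then \emph{derived} from independent amalgamation afterward (Theorem~\ref{main-thm-up}), using the converse direction of Proposition~\ref{indep-props}.(\ref{indep-props-7}) to the one you invoke. So you and the paper carry out the same core computation, but you wrap it in a detour: you prove extension (which is at least as hard as independent amalgamation here) only to immediately specialize back down to independent amalgamation.

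One slip in your sketch: you cannot ``resolve $M'$ as a chain of models in $K$ above $M$'' when $\|M\| > \lambda$, since $K = K_\lambda$. You would first need to drop to a witness $M_0^\ast \in K_\lambda$ via local character (which does follow from the existence hypothesis together with transitivity and the definition of $\is'$), then resolve $M'$ above $M_0^\ast$ and use uniqueness to recover agreement with $p$ on all of $M$. You allude to resolving $M$ simultaneously in your final paragraph, but that is not what your main construction actually does; once this is fixed, your argument and the paper's are essentially the same.
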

\begin{proof}
  As in, for example, the proof of \cite[Theorem 5.3]{ext-frame-jml}, using full model continuity.
\end{proof}

Putting everything together, we obtain:

\begin{thm}\label{main-thm-up}
  If $K$ is $(<\cf{\lambda})$-tame and short for types of length less than $\lambda^+$, then $\is'$ is a fully pre-good $(\le \lambda, \ge \lambda)$-independence relation.
\end{thm}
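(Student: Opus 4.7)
The goal is to verify that $\pre(\is')$ is a fully good $(\le \lambda, \ge \lambda)$-frame. Lemma \ref{easy-transfer} already supplies base monotonicity, transitivity, uniqueness, disjointness, full model continuity, and the requisite structure on $K'$: joint embedding, no maximal models, and stability in all cardinals; the last item, via Fact \ref{op-facts}, ensures that $K'$ has no order property. What remains to verify is existence, independent amalgamation, extension, symmetry, the left $\lambda$-witness property, and the local character cardinals at length $\lambda$.

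My plan is to split the argument by type length. First, I would work entirely with $(\is')^{<\lambda}$. Lemma \ref{lc-lem-2}.(\ref{lc-2}) gives $\clc{\alpha_0}(\is') \le |\alpha_0|^+ + \aleph_0$ for every $\alpha_0 < \lambda$, hence by Proposition \ref{indep-props}.(\ref{indep-props-exi}), existence. Lemma \ref{indep-amalgam-transfer} then promotes this to independent amalgamation, which, combined with transitivity through Proposition \ref{indep-props}.(\ref{indep-props-7}), yields extension. Uniqueness, together with existence, extension, and the no order property, triggers Proposition \ref{indep-props}.(\ref{indep-props-sym}) (in the local form of Remark \ref{indep-props-sym-rmk}) to produce full symmetry. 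At this point $(\is')^{<\lambda}$ is a fully good $(<\lambda, \ge \lambda)$-independence relation.

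The second phase is the bootstrap to length exactly $\lambda$. With full symmetry of $(\is')^{<\lambda}$ and the hypothesis of $(<\cf{\lambda})$-tameness and shortness for types of length less than $\lambda^+ = \cf{\lambda} + \lambda^+$, Lemma \ref{cont-lem}.(3) yields the left $(<\cf{\lambda})$-witness property for $\is'$, which in particular gives the left $\lambda$-witness property. This is the key step: each new witness condition on a length-$\lambda$ tuple decomposes, via the $\Kupp$-definition of $\is'$, into witness conditions of length below $\cf{\lambda}$, which are already controlled by the goodness of $(\is')^{<\lambda}$. Lemma \ref{lc-lem-2}.(\ref{lc-3}) now supplies $\clc{\lambda}(\is') \le \aleph_0$, and the same witness property lifts existence, extension, and symmetry to length $\lambda$, since any putative counterexample is already a counterexample at length $< \cf{\lambda}$. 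Finally, $\slc{\alpha_0}(\is') \le \lambda^+$ for all $\alpha_0 \le \lambda$ follows directly by unwinding Definition \ref{is-up-def}: the $K$-sized witness of existence $\nf(M, A, M, N)$ is simultaneously an $\slc$-witness.

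The hard part is precisely this interlocking dependency at length $\lambda$: long-type local character needs the left witness property; the witness property needs symmetry; symmetry needs existence and extension; and each of those, in turn, needs local character. The resolution is to proceed entirely at lengths strictly below $\lambda$ first, where Lemma \ref{lc-lem-2}.(\ref{lc-2}) is unconditional, and then exploit $(<\cf{\lambda})$-tameness and shortness to lift each axiom to length exactly $\lambda$, one at a time.
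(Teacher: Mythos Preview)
Your two-phase bootstrap works harder than necessary and creates a gap that the paper's argument avoids entirely. The interlocking dependency you describe is illusory: Lemma \ref{lc-lem-2}.(\ref{lc-3}) asks for the left $(<\cf{\lambda})$-witness property of $\is$, not of $\is'$. Since $\is$ is \emph{by hypothesis} a fully good $(\le\lambda,\lambda)$-independence relation, it already has symmetry, extension, and uniqueness, so Lemma \ref{cont-lem} applies directly to $\is$ and yields the witness property immediately from the tameness/shortness assumption. Both parts of Lemma \ref{lc-lem-2} then fire at once, giving $\clc{\alpha}(\is') = |\alpha|^+ + \aleph_0$ for every $\alpha\le\lambda$; existence of $\is'$ follows, then $\slc{\alpha}(\is')=\lambda^+$ via the definition of $\is'$ and transitivity in $\is$, then independent amalgamation (Lemma \ref{indep-amalgam-transfer}) and extension (Proposition \ref{indep-props}.(\ref{indep-props-7})). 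Symmetry is obtained afterwards from Proposition \ref{indep-props}.(\ref{indep-props-sym}), and the left $\lambda$-witness property is vacuous because the left-hand side of a $(\le\lambda)$-relation already has size at most $\lambda$. This is the paper's route: a single straight line, no bootstrap.

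Your detour also has a concrete gap beyond being longer. In phase 1 you invoke Lemma \ref{indep-amalgam-transfer}, but that lemma's hypothesis is existence of $\is'$, and you have only established existence for $(\is')^{<\lambda}$. Independent amalgamation is inherently a statement at length $\ge\lambda$ (the left-hand side is a model in $K_{\ge\lambda}$), so it cannot live inside $(\is')^{<\lambda}$ at all, and the cardinal constraint $\alpha>\lambda_{\is}$ in Proposition \ref{indep-props}.(\ref{indep-props-7}) fails for the restricted relation. You would have to reopen the proof of Lemma \ref{indep-amalgam-transfer} and argue that it really only uses independent amalgamation of $\is$ plus full model continuity. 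That is plausible, but it is extra work that evaporates once you notice Lemma \ref{cont-lem} applies to $\is$ itself.
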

\begin{proof}
  We want to show that $\s'$ is fully good. The basic properties are proven in Lemma \ref{easy-transfer}. By Lemma \ref{cont-lem}, $\is$ has the left $(<\cf{\lambda})$-witness property. Thus by Lemma \ref{lc-lem-2}, for any $\alpha < \lambda^+$, $\clc{\alpha} (\is') = |\alpha|^+ + \aleph_0$. In particular, $\is'$ has existence, and thus by the definition of $\is'$ and transitivity in $\is$, $\slc{\alpha} (\is') = \lambda^+ = |\alpha|^+ + \lambda^+$. Finally by Lemma \ref{indep-amalgam-transfer}, $\is'$ has independent amalgamation and so by Proposition \ref{indep-props}.(\ref{indep-props-7}), $\is'$ has extension.
\end{proof}

\section{Extending the left hand side}\label{long-transfer-sec}

We now enlarge the left hand side of the independence relation built in the previous section. 

\begin{hypothesis}\label{long-transfer-hyp} \
  \begin{enumerate}
    \item $\is = (K, \nf)$ is a fully good $(\le \lambda, \ge \lambda)$-independence relation.
    \item $K$ is fully $\lambda$-tame and short.
  \end{enumerate}
\end{hypothesis}

\begin{defin}\index{elongation of an independence relation}\index{$\islong$|see {elongation of an independence relation}}
  Define $\islong = (K, \islongp{\nf})$ by setting $\islongp{\nf} (M_0, A, B, N)$ if and only if for all $A_0 \subseteq A$ of size less than $\lambda^+$, $\nfs{M_0}{A_0}{B}{N}$.
\end{defin}

\begin{remark}
The idea is the same as for \cite[Definition 4.3]{tame-frames-revisited-v5}: we extend the frame to have longer types. The difference is that $\islong$ is type-full.
\end{remark}
\begin{remark}
  We could also have defined extension to types of length less than $\theta$ for $\theta$ a cardinal or $\infty$ but this complicates the notation and we have no use for it here.
\end{remark}

\begin{notation}\index{$\is'$}
  Write $\is' := \islong$. We abuse notation and also write $\nf$ for $\islongp{\nf}$.
\end{notation}

\begin{lem}\label{basic-props} \
  \begin{enumerate}
    \item $\is'$ is a $(<\infty, \ge \lambda)$-independence relation.
    \item $K$ has joint embedding, no maximal models, and is stable in all cardinals.
    \item $\is'$ has base monotonicity, transitivity, disjointness, existence, symmetry, the left and right $\lambda$-witness properties, and uniqueness.
  \end{enumerate}
\end{lem}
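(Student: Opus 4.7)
Parts (1) and (2) are essentially free. For (2), the conclusion is immediate since $\is$ is already a good $(\le \lambda, \ge \lambda)$-independence relation, and good independence relations satisfy stability everywhere in $\F_{\is}$ together with joint embedding and no maximal models (Definition \ref{goodness-def}). For (1), invariance, monotonicity, and normality for $\is'$ all transfer directly from $\is$ using the definition $\islongp{\nf}(M_0, A, B, N) \iff$ for all $A_0 \subseteq A$ with $|A_0| < \lambda^+$, $\nfs{M_0}{A_0}{B}{N}$ holds in $\is$. For part (3), base monotonicity, transitivity, disjointness, and existence are likewise routine: each reduces to the corresponding property of $\is$ after restricting to a subset $A_0 \subseteq A$ of size $\le \lambda$.

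The interesting work is in the witness properties, symmetry, and uniqueness. The left $\lambda$-witness property is immediate from the very definition of $\is'$: if for every $A_0 \subseteq A$ of size $\le \lambda$ we have $\nfs{M_0}{A_0}{B}{N}$ in $\is'$, then in particular taking $A_0' = A_0$ inside the definition gives $\nfs{M_0}{A_0}{B}{N}$ in $\is$, which is exactly what is needed for $\nfs{M_0}{A}{B}{N}$ in $\is'$. For the right $\lambda$-witness property of $\is'$, I first observe that $\is$ itself has the right $\lambda$-witness property: by the fully $\lambda$-tame and short hypothesis and the fact that $\is$ has uniqueness and extension, Lemma \ref{cont-lem} (with $\kappa = \lambda^+$) applies. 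Then for $\is'$, given $\nfs{M_0}{A}{B_0}{N}$ in $\is'$ for every $B_0 \subseteq B$ of size $\le \lambda$, I fix $A_0 \subseteq A$ of size $\le \lambda$, obtain $\nfs{M_0}{A_0}{B_0}{N}$ in $\is$ by monotonicity, and then apply the right $\lambda$-witness property of $\is$ to conclude $\nfs{M_0}{A_0}{B}{N}$ in $\is$. This gives $\nfs{M_0}{A}{B}{N}$ in $\is'$.

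For symmetry, suppose $\nfs{M_0}{A}{B}{N}$ in $\is'$. Given any $B_0 \subseteq B$ and $A_0 \subseteq A$, I must show $\nfs{M_0}{B_0}{A_0}{N}$ in $\is'$. Fix $B_0' \subseteq B_0$ of size $\le \lambda$; by the right $\lambda$-witness property of $\is$, it suffices to show $\nfs{M_0}{B_0'}{A_0'}{N}$ in $\is$ for every $A_0' \subseteq A_0$ of size $\le \lambda$. For such $A_0'$, the definition of $\is'$ gives $\nfs{M_0}{A_0'}{B}{N}$ in $\is$, hence by monotonicity $\nfs{M_0}{A_0'}{B_0'}{N}$ in $\is$, hence by symmetry of $\is$ (both sides now of size $\le \lambda$) $\nfs{M_0}{B_0'}{A_0'}{N}$ in $\is$, as required.

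Finally, for uniqueness of $\pre(\is')$, let $q_\ell = \gtp(\ba_\ell/M; N_\ell)$ both not $\is'$-fork over $M_0$ and agree over $M_0$; I claim $q_1 = q_2$. For any $I \subseteq \ell(q_1)$ with $|I| \le \lambda$, the restriction $q_\ell^I$ is a type of length $\le \lambda$ over $M$; by the definition of $\is'$, $q_\ell^I$ does not $\is$-fork over $M_0$, and $q_1^I \rest M_0 = q_2^I \rest M_0$ since these are sub-restrictions of $q_1 \rest M_0 = q_2 \rest M_0$. Uniqueness for $\pre(\is)$ then yields $q_1^I = q_2^I$. Since $K$ is fully $\lambda$-short, this equality for all small $I$ forces $q_1 = q_2$. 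The main subtlety lies here: I had to pass from uniqueness of the restriction types (a statement about $\is$) to equality of the full long types, and this is exactly where the fully $\lambda$-short hypothesis of Hypothesis \ref{long-transfer-hyp} is essential.
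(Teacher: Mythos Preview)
Your proof is correct and follows the same approach as the paper's (very terse) proof: the paper simply says the basic properties are ``straightforward,'' that the right $\lambda$-witness property for $\is$ comes from Lemma \ref{cont-lem}, that ``uniqueness is by the shortness hypothesis,'' and that ``symmetry follows easily from the witness properties.'' You have filled in exactly these steps.

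One small point: you explicitly prove uniqueness only for $\pre(\is')$ (types over a model $M$), whereas the lemma asserts uniqueness for $\is'$ itself (types over sets $B$ with $|M_0| \subseteq B \subseteq |M|$). Your argument extends without change: for each $I$ of size $\le \lambda$, the restriction $q_\ell^I$ does not $\is$-fork over $M_0$ by definition of $\is'$, and since $\is$ is good it has uniqueness over sets, giving $q_1^I = q_2^I$; shortness then gives $q_1 = q_2$. So the only adjustment is to invoke uniqueness of $\is$ (not just $\pre(\is)$) and to allow the domain to be $B$ rather than $M$.
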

\begin{proof} \
  \begin{enumerate}
    \item Straightforward.
    \item Because $\is$ is good.
    \item Base monotonicity, transitivity, disjointness, existence, and the left and right $\lambda$-witness property are straightforward (recall that $\is$ has the right $\lambda$-witness property by Lemma \ref{cont-lem}). Uniqueness is by the shortness hypothesis. Symmetry follows easily from the witness properties.
  \end{enumerate}
\end{proof}
\begin{lem}\label{full-mod-cont}
  Assume there exists a regular $\kappa \le \lambda$ such that $\is$ has the left $(<\kappa)$-model-witness property. Then $\is'$ has full model continuity.
\end{lem}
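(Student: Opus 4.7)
The plan is to reduce full model continuity of $\is'$ to that of $\is$, following the template of Lemma \ref{lc-cont} and using the left $(<\kappa)$-model-witness hypothesis in a key way. Fix a chain $\seq{M_i^\ell : i \le \delta}_{\ell \le 3}$ satisfying the hypotheses of full model continuity for $\is'$. By the definition of $\is'$ and its left monotonicity, it suffices to show that for every fixed $A \subseteq |M_\delta^1|$ with $|A| \le \lambda$, we have $\nfs{M_\delta^0}{A}{M_\delta^2}{M_\delta^3}$ in $\is$. Passing to a cofinal subsequence of $\delta$ preserves the hypotheses (via ambient monotonicity) and the conclusion, so I will assume without loss of generality that $\delta = \cf{\delta}$.

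The strategy is to construct increasing continuous sequences $\seq{N_j^0, N_j^1 : j \in [j_0, \delta]}$ in $K_\lambda$ with $N_j^\ell \lea M_j^\ell$ for $\ell = 0,1$, $N_j^0 \lea N_j^1$, $A \subseteq N_\delta^1$, $\|N_\delta^1\| = \lambda$, and $\nfs{N_j^0}{N_j^1}{M_j^2}{M_j^3}$ in $\is$ at every $j$. Once constructed, full model continuity of $\is$ (hypothesis: $\is$ is fully good, which applies since $\|N_\delta^1\| \le \lambda$) yields $\nfs{N_\delta^0}{N_\delta^1}{M_\delta^2}{M_\delta^3}$ in $\is$. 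Base monotonicity (using $N_\delta^0 \lea M_\delta^0 \lea M_\delta^2$) gives $\nfs{M_\delta^0}{N_\delta^1}{M_\delta^2}{M_\delta^3}$, and left monotonicity (using $A \subseteq N_\delta^1$) finishes the proof.

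The construction splits on $\cf{\delta}$. If $\cf{\delta} \le \lambda$, set $j_0 = 0$ and run along all of $\delta$; the $N_j^1$'s absorb $A \cap |M_j^1|$ gradually, and since $|\delta| \le \lambda$ the union $N_\delta^1$ remains in $K_\lambda$. At successor steps apply Lemma \ref{lc-monot} to the $\is$-nonforking statement derived from $\is'$-nonforking at $j+1$, with input set $N_j^1 \cup N_j^0 \cup (A \cap |M_{j+1}^1|)$, producing $N_{j+1}^\ell$ extending $N_j^\ell$ as required; at limit stages take unions and invoke (inductively) full model continuity of $\is$. If $\cf{\delta} > \lambda$, choose $j_0 < \delta$ with $A \subseteq |M_{j_0}^1|$, fix $N^1 \lea M_{j_0}^1$ in $K_\lambda$ with $A \subseteq N^1$ (so that by coherence $N^1 \lea M_j^1$ for all $j \ge j_0$), and keep $N_j^1 := N^1$ constant. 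At each $j$ we have $\nfs{M_j^0}{N^1}{M_j^2}{M_j^3}$ in $\is$ from $\is'$-nonforking and monotonicity, and we construct $N_j^0 \lea M_j^0$ in $K_\lambda$ with $N_j^0 \lea N^1$ witnessing this nonforking.

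The main obstacle is the second case: producing $N_j^0$'s that simultaneously lie in $M_j^0$ and in $N^1$. This is what the left $(<\kappa)$-model-witness hypothesis is for: I will run, at each outer step, a Lemma \ref{lc-monot}-style internal induction of length $\kappa$, using the local character $\slc{\lambda}(\is) = \lambda^+$ and base monotonicity to build an increasing continuous $\kappa$-chain $\seq{P_i : i \le \kappa}$ of candidate bases inside $M_j^0$, each satisfying nonforking on a $(<\kappa)$-sized subset of $N^1$; then left $(<\kappa)$-model-witness combines these into the full $\is$-nonforking over $P_\kappa$. With care, this can be threaded through the outer chain to produce coherent $N_j^0$'s with $N_j^0 \lea N^1$. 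At outer limit stages we take unions and appeal to full model continuity of $\is$ applied to the sub-chain built so far (which has $N^1$ as its fixed left model, so $\|N_\delta^1\| = \lambda$ holds automatically). The bookkeeping between the inner $\kappa$-chains and the outer $\delta$-chain is the delicate part.
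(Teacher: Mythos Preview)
Your reduction to showing $\nfs{M_\delta^0}{A}{M_\delta^2}{M_\delta^3}$ for each $A \subseteq |M_\delta^1|$ of size at most $\lambda$ is correct, and your treatment of the case $\delta \le \lambda$ via Lemma \ref{lc-monot} plus full model continuity of $\is$ is exactly the paper's argument (its Case 2).

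The gap is in your handling of $\delta > \lambda$. Your plan requires $N_j^0 \lea M_j^0$ and $N_j^0 \lea N^1$ simultaneously, which forces $|N_j^0| \subseteq |M_j^0| \cap |N^1|$; nothing guarantees this intersection even supports a model in $K_\lambda$, let alone one over which $N^1$ is free from $M_j^2$. The left $(<\kappa)$-model-witness property does not help here: it lets you verify $\nfs{P}{N^1}{M_j^2}{M_j^3}$ by checking small subsets of the \emph{left argument} $N^1$, but it gives you no leverage in shrinking the \emph{base} $P$ into $N^1$. Lemma \ref{lc-monot} will only hand you a base inside $M_j^0$ together with a \emph{new} left model $N^{1,j} \supseteq N^1$; if you let the $N^{1,j}$'s grow along a chain of length $\delta > \lambda$, their union has size $>\lambda$ and full model continuity of $\is$ no longer applies. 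So your ``internal $\kappa$-induction'' sketch does not close the loop.

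The paper handles $\delta > \lambda$ in one line, with no chain construction and no use of the witness hypothesis. Since $\is$ is good, $\clc_{\lambda} (\is) = \lambda^+$; as $\delta$ is regular and $\ge \lambda^+$, there is $i < \delta$ with $\nfs{M_i^2}{A}{M_\delta^2}{N}$ (here $N := M_\delta^3$). Taking $i$ large enough that also $A \subseteq |M_i^1|$, the hypothesis and monotonicity give $\nfs{M_i^0}{A}{M_i^2}{N}$; right transitivity yields $\nfs{M_i^0}{A}{M_\delta^2}{N}$, and base monotonicity finishes. The $(<\kappa)$-model-witness property is only needed in the case $\delta \le \lambda$, through Lemma \ref{lc-monot}.
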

\begin{proof}
Let $\seq{M_i^\ell : i \le \delta}$, $\ell \le 3$ be increasing continuous in $K$ such that $M_i^0 \lea M_i^\ell \lea M_i^3$, $\ell = 1,2$, and $\nfs{M_i^0}{M_i^1}{M_i^2}{M_i^3}$. Without loss of generality, $\delta$ is regular. Let $N := M_\delta^3$. We want to show that $\nfs{M_\delta^0}{M_\delta^1}{M_\delta^2}{N}$. Let $A \subseteq |M_\delta^1|$ have size less than $\lambda^+$. Write $\mu := |A|$. By monotonicity, assume without loss of generality that $\lambda + \kappa \le \mu$. We show that $\nfs{M_\delta^0}{A}{M_\delta^2}{N}$, which is enough by definition of $\is'$. We consider two cases.

      \begin{itemize}

        \item \underline{Case 1: $\delta > \mu$}: By local character in $\is$ there exists $i < \delta$ such that $\nfs{M_i^2}{A}{M_\delta^2}{N}$. By right transitivity, $\nfs{M_i^0}{A}{M_\delta^2}{N}$, so by base monotonicity, $\nfs{M_\delta^0}{A}{M_\delta^2}{N}$. 
        \item \underline{Case 2: $\delta \le \mu$}: For $i \le \delta$, let $A_i := A \cap |M_i^1|$. Build $\seq{N_i : i \le \delta}$, $\seq{N_i^0 : i \le \delta}$ increasing continuous in $K_{\le \mu}$ such that for all $i < \delta$:
          \begin{enumerate}
            \item $A_i \subseteq |N_i|$.
            \item $N_i \lea M_i^1$, $A \subseteq |N_i|$.
            \item $N_i^0 \lea M_i^0$, $N_i^0 \lea N_i$.
            \item $\nfs{N_i^0}{N_i}{M_i^2}{N}$.
          \end{enumerate}

          \paragraph{\underline{This is possible}} 

          Fix $i \le \delta$ and assume $N_j, N_j^0$ have already been constructed for $j < i$. If $i$ is limit, take unions. Otherwise, recall that we are assuming $\nfs{M_i^0}{M_i^1}{M_i^2}{N}$. By Lemma \ref{lc-monot} (with $A_i \cup \bigcup_{j < i} |N_j|$ standing for $A$ there, this is where we use the $(<\kappa)$-model-witness property), we can find $N_i^0 \lea M_i^0$ and $N_i \lea M_i^1$ in $K_{\le \mu}$ such that $N_i^0 \lea N_i$, $\nfs{N_i^0}{N_i}{M_i^2}{N}$, $A_i \subseteq |N_i|$, $N_j \lea N_i$ for all $j < i$, and $N_j^0 \lea N_i^0$ for all $j < i$. Thus they are as desired.

          \paragraph{\underline{This is enough}}

          Note that $A_\delta = A$, so $A \subseteq |N_\delta|$. By full model continuity in $\is$, $\nfs{N_\delta^0}{N_\delta}{M_\delta^2}{N}$. By monotonicity, $\nfs{M_\delta^0}{A}{M_\delta^2}{N}$, as desired.
      \end{itemize}
\end{proof}
\begin{lem}\label{lc-long}
  Assume there exists a regular $\kappa \le \lambda$ such that $\is$ has the left $(<\kappa)$-model-witness property. Then for all cardinals $\mu$:
  \begin{enumerate}
    \item\label{lc-long-1} $\slc{\mu} (\is') = \lambda^+ + \mu^+$.
    \item\label{lc-long-2} $\clc{\mu} (\is') = \aleph_0 + \mu^+$.
  \end{enumerate}
\end{lem}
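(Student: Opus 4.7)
The plan is to reduce both parts to results established earlier: part (1) to Lemma \ref{lc-cont} applied to $\is'$ itself, and part (2) to part (1) combined with Proposition \ref{indep-props}.(\ref{indep-props-lc}). The key observation that makes the reduction work is that $\is'$ agrees with $\is$ on types of length at most $\lambda$: if $|A| \le \lambda$ then, taking $A_0 = A$ and using monotonicity of $\is$ on all $A_0 \subseteq A$ (all of which have size $<\lambda^+$), one sees $\nfs{M_0}{A}{B}{N}^{\text{long}} \Leftrightarrow \nfs{M_0}{A}{B}{N}$.

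For (1), I would verify the four hypotheses of Lemma \ref{lc-cont} for $\is'$. That $K$ is an AEC with $\LS(K) = \lambda$ is given; base monotonicity and transitivity for $\is'$ come from Lemma \ref{basic-props}, and full model continuity from Lemma \ref{full-mod-cont} (which is where the $(<\kappa)$-model-witness hypothesis gets used). For the left $(<\kappa)$-model-witness property for $\is'$: given $M \lea M' \lea N$ and $B \subseteq |N|$ with $\nfs{M}{B_0}{B}{N}^{\text{long}}$ for all $B_0 \subseteq |M'|$ of size $<\kappa$, I need $\nfs{M}{M'}{B}{N}^{\text{long}}$; by definition this means showing $\nfs{M}{A}{B}{N}$ for each $A \subseteq |M'|$ of size $\le \lambda$, and this is obtained by enclosing $A \cup |M|$ in some $M'' \lea M'$ with $\|M''\| \le \lambda$ via the Löwenheim--Skolem axiom, noting $\nfs{M}{B_0}{B}{N}^{\text{long}} = \nfs{M}{B_0}{B}{N}$ for $|B_0| < \kappa \le \lambda$, and applying the left $(<\kappa)$-model-witness property of $\is$. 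Finally, for $\mu \le \lambda$, the agreement of $\is$ and $\is'$ on short types gives $\slc{\mu}(\is') = \slc{\mu}(\is) = \mu^+ + \lambda^+ = \lambda^+$ by goodness of $\is$. Lemma \ref{lc-cont} then yields $\slc{\mu}(\is') = \lambda^+ + \mu^+$ for every cardinal $\mu$.

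For (2), I split on the size of $\mu$. If $\mu \le \lambda$, the agreement of $\is'$ and $\is$ on types of length $\mu$ (together with the fact that chain-local-character witnesses for $\is$ are witnesses for $\is'$ and conversely) gives $\clc{\mu}(\is') = \clc{\mu}(\is) = \mu^+ + \aleph_0$ by goodness of $\is$. If $\mu > \lambda$, then by Proposition \ref{indep-props}.(\ref{indep-props-lc}) and part (1),
\[
\clc{\mu}(\is') \le \slc{\mu}(\is') = \lambda^+ + \mu^+ = \mu^+,
\]
while by the very definition of $\clc{\mu}$ the value is at least $\mu^+ + \aleph_0 = \mu^+$, giving equality.

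The main subtlety is really the verification that $\is'$ inherits the left $(<\kappa)$-model-witness property from $\is$; everything else is either recorded in earlier lemmas or is an immediate consequence of the definition of $\islong$. Once that reduction is in place, parts (1) and (2) are essentially applications of Lemma \ref{lc-cont} and Proposition \ref{indep-props}.(\ref{indep-props-lc}) respectively.
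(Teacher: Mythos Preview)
Your overall strategy is exactly the paper's: invoke Lemma~\ref{full-mod-cont} to get full model continuity of $\is'$, then apply Lemma~\ref{lc-cont} for part~(\ref{lc-long-1}), and for part~(\ref{lc-long-2}) split into $\mu\le\lambda$ (where $\is'$ agrees with $\is$) and $\mu>\lambda$ (where Proposition~\ref{indep-props}.(\ref{indep-props-lc}) together with part~(\ref{lc-long-1}) suffices). The paper's proof is two lines and does not spell out the verification of the hypotheses of Lemma~\ref{lc-cont}; you go further and try to check them, which is the right instinct.

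There is, however, a gap in your verification that $\is'$ inherits the left $(<\kappa)$-model-witness property. You write that one ``encloses $A\cup|M|$ in some $M''\lea M'$ with $\|M''\|\le\lambda$.'' But $M\in K$ lives in $K_{\ge\lambda}$, and in the application inside Lemma~\ref{lc-cont} the base $M$ (there $N_i^0$) can have size strictly greater than $\lambda$; since you also need $M\lea M''$, you would get $\|M''\|\ge\|M\|>\lambda$, so no such $M''$ exists. What is really needed to pass from the hypothesis on size $<\kappa$ subsets of $|M'|$ to arbitrary $A\subseteq|M'|$ with $|A|\le\lambda$ is the left $(<\kappa)$-\emph{witness} property of $\is$ (for sets, not just models): once one has that, $\is'$ inherits the left $(<\kappa)$-witness property trivially from the definition of $\islong$, and in particular the model-witness version. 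The paper is silent on this point; in its actual application (Theorem~\ref{main-thm-long}) the extra $(<\kappa)$-tameness hypothesis together with Lemma~\ref{cont-lem} yields the full left $(<\kappa)$-witness property of $\is$, so the issue does not arise there. Under the bare hypothesis ``$\is$ has the left $(<\kappa)$-\emph{model}-witness property'' stated in Lemma~\ref{lc-long}, your enclosure argument does not go through as written.
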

\begin{proof}
  By Lemma \ref{full-mod-cont}, $\is'$ has full model continuity. By Lemma \ref{lc-cont}, (\ref{lc-long-1}) holds. For (\ref{lc-long-2}), if $\mu \le \lambda$, this holds because $\is$ is good and if $\mu > \lambda$, this follows from Proposition \ref{indep-props}.(\ref{indep-props-lc}) and (\ref{lc-long-1}).
\end{proof}

We now turn to proving extension. The proof is significantly more complicated than in the previous section. We attempt to explain why and how our proof goes. Of course, it suffices to show independent amalgamation (Proposition \ref{indep-props}.(\ref{indep-props-7})). We work by induction on the size of the models but land in trouble when all models have the same size. Suppose for example that we want to amalgamate $M^0 \lea M^\ell$, $\ell = 1,2$ that are all in $K_{\lambda^+}$. If $M^1$ (or, by symmetry, $M^2$) had smaller size, we could use local character to assume without loss of generality that $M^0$ is in $K_\lambda$ and then imitate the usual directed system argument (as in for example the proof of \cite[Theorem 5.3]{ext-frame-jml}).

Here however it seems we have to take at least two resolutions at once so we fix $\seq{M_i^\ell : i < \lambda^+}$, $\ell = 0, 1$, satisfying the usual conditions. Letting $p := \gtp (M^1 / M^0; M^1)$ and its resolution $p_i := \gtp (M_i^1 / M_i^0; M^1)$, it is natural to build $\seq{q_i : i < \lambda^+}$ such that $q_i$ is the nonforking extension of $p_i$ to $M^2$. If everything works, we can take the direct limit of the $q_i$s and get the desired nonforking extension of $p$. However with what we have said so far it is not clear that $q_{i + 1}$ is even an extension of $q_i$! In the usual argument, this is the case since both $p_i$ and $p_{i + 1}$ do not fork over the same domain but we cannot expect it here. Thus we require in addition that $\nfs{M_i^0}{M_i^1}{M^0}{M^1}$ and this turns out to be enough for successor steps. To achieve this extra requirement, we use Lemma \ref{lc-monot}. Unfortunately, we also do not know how to go through limit steps without making one extra locality hypothesis:

\begin{defin}[Type-locality]\label{type-loc-def}\index{type-locality}\index{type-local|see {type-locality}}\index{densely type-local|see {type-locality}} \
  \begin{enumerate}
    \item Let $\delta$ be a limit ordinal, and let \index{$\bar{p}$} $\bar{p} := \seq{p_i : i < \delta}$ be an increasing chain of Galois types, where for $i < \delta$, $p_i \in \gS^{\alpha_i} (M)$ and $\seq{\alpha_i : i \le \delta}$ are increasing continuous. We say $\bar{p}$ is \emph{type-local} if whenever $p, q \in \gS^{\alpha_\delta} (M)$ are such that $p^{\alpha_i} = q^{\alpha_i} = p_i$ for all $i < \delta$, then $p = q$.
    \item We say $K$ is \emph{type-local} if every $\bar{p}$ as above is type-local.
    \item We say $K$ is \emph{densely type-local above $\lambda$} if for every $\lambda_0 > \lambda$, $M \in K_{\lambda_0}$, $p \in \gS^{\lambda_0} (M)$, there exists $\seq{N_i : i \le \delta}$ such that:

      \begin{enumerate}
        \item $\delta = \cf{\lambda_0}$.
        \item For all $i < \delta$, $N_i \in K_{<\lambda_0}$.
        \item $\seq{N_i : i \le \delta}$ is increasing continuous.
        \item $N_\delta \gea M$ is in $K_{\lambda_0}$.
        \item Letting $q_i := \gtp (N_i / M; N_\delta)$ (seen as a member of $\gS^{\alpha_i} (M)$, where of course $\seq{\alpha_i : i \le \delta}$ are increasing continuous), we have that $q_\delta$ extends $p$ and $\seq{q_j : j < i}$ is type-local for all limit $i \le \delta$.
      \end{enumerate}
      
      We say $K$ is \emph{densely type-local} if it is densely type-local above $\lambda$ for some $\lambda$.
  \end{enumerate}
\end{defin}

Intuitively, the relationship between type-locality and locality (see \cite[Definition 11.4]{baldwinbook09}) is the same as the relationship between type-shortness and tameness (in the later, we look at \emph{domain} of types, in the former we look at \emph{length} of types). We suspect that dense type-locality should hold in our context, see the discussion in Section \ref{main-thm-sec} for more. The following lemma says that increasing the elements in the resolution of the type preserves type-locality.

\begin{lem}\label{type-loc-monot}
  Let $\delta$ be a limit ordinal. Assume $\bar{p} := \seq{p_i : i < \delta}$ is an increasing chain of Galois types, $p_i \in \gS^{\alpha_i} (M)$ and $\seq{\alpha_i : i \le \delta}$ are increasing continuous. Assume $\bar{p}$ is type-local and assume $p_\delta \in \gS^{\alpha_\delta} (M)$ is such that $p^{\alpha_i} = p_i$ for all $i < \delta$. Say $p = \gtp (\ba_\delta / M; N)$ and let $\ba_i := \ba_\delta \rest \alpha_i$ (so $p_i = \gtp (\ba_i / M; N)$). 

  Assume $\seq{\bb_i : i \le \delta}$ are increasing continuous sequences such that $\ba_\delta = \bb_\delta$ and $\ba_i$ is an initial segment of $\bb_i$ for all $i < \delta$. Let $q_i := \gtp (\bb_i / M; N)$. Then $\bar{q} := \seq{q_i : i < \delta}$ is type-local.
\end{lem}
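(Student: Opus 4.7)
The plan is to reduce type-locality of $\bar{q}$ directly to type-locality of the given chain $\bar{p}$, exploiting the fact that type restriction commutes with further restriction and that the $\ba_i$'s sit as initial segments inside the $\bb_i$'s. Write $\beta_i := \ell(\bb_i)$. By increasing continuity of $\seq{\bb_i : i \le \delta}$ the sequence $\seq{\beta_i : i \le \delta}$ is increasing continuous with $\beta_\delta = \ell(\bb_\delta) = \ell(\ba_\delta) = \alpha_\delta$, and since $\ba_i$ is an initial segment of $\bb_i$ we have $\alpha_i \le \beta_i \le \alpha_\delta$. Moreover, for $i \le j < \delta$, $\bb_i$ is an initial segment of $\bb_j$, so $q_j^{\beta_i} = q_i$, confirming that $\bar{q}$ is indeed an increasing chain of the form required in Definition \ref{type-loc-def}.

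Now suppose $q, q' \in \gS^{\beta_\delta}(M) = \gS^{\alpha_\delta}(M)$ satisfy $q^{\beta_i} = (q')^{\beta_i} = q_i$ for all $i < \delta$. The key step is to observe that restriction is transitive: for any $i < \delta$, since $\alpha_i \le \beta_i$,
\[
q^{\alpha_i} = (q^{\beta_i})^{\alpha_i} = q_i^{\alpha_i}.
\]
But $\ba_i$ is the initial segment of $\bb_i$ of length $\alpha_i$, so $q_i^{\alpha_i} = \gtp(\ba_i / M; N) = p_i$. Hence $q^{\alpha_i} = p_i$, and analogously $(q')^{\alpha_i} = p_i$, for every $i < \delta$.

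Both $q$ and $q'$ therefore lie in $\gS^{\alpha_\delta}(M)$ and agree with every $p_i$ on the prescribed restriction. Applying type-locality of $\bar{p}$ yields $q = q'$, which is exactly type-locality of $\bar{q}$.

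There is no real obstacle: the argument is a one-line manipulation of restrictions, and the only thing to be careful about is matching the two notions of ``initial segment'' — the one relating $\ba_i$ to $\bb_i$ (at each stage $i$) and the one relating $\bb_i$ to $\bb_\delta$ (along the chain). The lemma thus serves as a convenient bookkeeping tool: once a single resolution witnesses type-locality, any enlargement of the resolution that still ends in the same final sequence witnesses it as well, which is what will be needed when applying dense type-locality in conjunction with the extension arguments of the previous sections.
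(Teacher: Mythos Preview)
Your proof is correct and follows the same approach as the paper: restrict each $q^{\beta_i}$ further to length $\alpha_i$ to recover $p_i$, then invoke type-locality of $\bar{p}$. The paper's version is a bit terser (it shows any such $q$ must equal the fixed $p_\delta$, hence any two agree), but the content is identical.
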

\begin{proof}
  Say $\bb_i$ is of type $\beta_i$. So $\seq{\beta_i : i \le \delta}$ is increasing continuous and $\alpha_\delta = \beta_\delta$.

  If $q \in \gS^{\beta_\delta} (M)$ is such that $q^{\beta_i} = q_i$ for all $i < \delta$, then $q^{\alpha_i} = (q_i)^{\alpha_i} = p_i$ for all $i < \delta$ so by type-locality of $\bar{p}$, $p = q$, as desired.
\end{proof}

Before proving Lemma \ref{ext-all}, let us make precise what was meant above by ``direct limit'' of a chain of types. It is known that (under some set-theoretic hypotheses) there exists AECs where some chains of Galois types do not have an upper bound, see \cite[Theorem 3.3]{non-locality}. However a \emph{coherent} chain of types (see below) always has an upper bound. We adapt Definition 5.1 in \cite{ext-frame-jml} (which is implicit already in \cite[Claim 0.32.2]{sh576} or \cite[Lemma 2.12]{tamenessthree}) to our purpose.

\begin{defin}\index{coherent chain of types}
  Let $\delta$ be an ordinal. An increasing chain of types $\seq{p_i : i < \delta}$ is said to be \emph{coherent} if there exists a sequence $\seq{(\ba_i, M_i, N_i) : i < \delta}$ and maps $f_{i, j}: N_i \rightarrow N_j$, $i \le j < \delta$, so that for all $i \le j \le k < \delta$:

  \begin{enumerate}
    \item $f_{j, k} \circ f_{i, j} = f_{i,k}$.
    \item $\gtp (\ba_i / M_i ; N_i) = p_i$.
    \item $\seq{M_i : i < \delta}$ and $\seq{N_i : i < \delta}$ are increasing.
    \item $M_i \lea N_i$, $\ba_i \in \fct{<\infty}{N_i}$.
    \item $f_{i, j}$ fixes $M_i$.
    \item $f_{i, j} (\ba_i)$ is an initial segment of $\ba_j$.
  \end{enumerate}

  We call the sequence and maps above a \emph{witnessing sequence} to the coherence of the $p_i$'s.
\end{defin}

Given a witnessing sequence $\seq{(\ba_i, M_i, N_i) :i  < \delta}$ with maps $f_{i, j} : N_i \rightarrow N_j$, we can let $N_\delta$ be the direct limit of the system $\seq{N_i, f_{i, j} : i \le j < \delta}$, $M_\delta := \bigcup_{i < \delta} M_i$, and $\ba_\delta := \bigcup_{i < \delta} f_{i, \delta} (\ba_i)$ (where $f_{i, \delta}: N_i \rightarrow N_\delta$ is the canonical embedding). Then $p := \gtp (\ba / M_\delta; N_\delta)$ extends each $p_i$. Note that $p$ depends on the witness but we sometimes abuse language and talk about ``the'' direct limit (where really some witnessing sequence is fixed in the background).

Finally, note that full model continuity also applies to coherent sequences. More precisely:

\begin{prop}\label{coher-seq-continuity}

  Assume $\is$ has full model continuity. Let $\seq{(\ba_i, M_i, N_i) : i < \delta}$, $\seq{f_{i, j} : N_i \rightarrow N_j, i \le j < \delta}$ be witnesses to the coherence of $p_i := \gtp (\ba_i / M_i; N_i)$. Assume that for each $i < \delta$, $\ba_i$ enumerates a model $M_i'$ and that $\seq{M_i^0 : i < \delta}$ are increasing such that $M_i^0 \lea M_i$, $M_i^0 \lea M_i'$, and $p_i$ does not fork over $M_i^0$. Let $p$ be the direct limit of the $p_i$s (according to the witnessing sequence). Then $p$ does not fork over $M_\delta^0 := \bigcup_{i < \delta} M_i^0$.
\end{prop}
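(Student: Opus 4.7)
The plan is to convert the coherent chain into a genuine increasing continuous chain inside the direct limit $N_\delta$, and then apply the full model continuity hypothesis directly. First I would realize $N_\delta$ as the direct limit of the system $\seq{N_i, f_{i,j} : i \le j < \delta}$, obtaining canonical $K$-embeddings $f_{i, \delta} : N_i \to N_\delta$ with $f_{i, \delta} = f_{j, \delta} \circ f_{i, j}$ for $i \le j < \delta$. Since each $f_{i, j}$ fixes $M_i$ and the $M_i$ are increasing with compatible identifications, I may arrange $N_\delta$ so that $f_{i, \delta}$ fixes $M_i$ pointwise; then $M_\delta = \bigcup_{i < \delta} M_i \lea N_\delta$ is just the inclusion, and likewise $M_i^0 \lea N_\delta$ for each $i < \delta$.

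Set $M_i'' := f_{i, \delta}[M_i']$ and $\bar M := \bigcup_{i < \delta} M_i''$. Since $\ba_i$ enumerates $M_i'$ and $f_{i, j}(\ba_i)$ is an initial segment of $\ba_j$ (which enumerates $M_j'$), the coherence axiom in $K$ gives $f_{i, j}[M_i'] \lea M_j'$ for $i \le j < \delta$; applying $f_{j, \delta}$ yields $M_i'' \lea M_j'' \lea N_\delta$, and $\seq{M_i'' : i < \delta}$ is continuous at limits since direct limits commute with the relevant unions. Thus $\seq{M_i^0 : i < \delta}$, $\seq{M_i'' : i < \delta}$, and $\seq{M_i : i < \delta}$ are three increasing continuous chains inside $N_\delta$, with $M_i^0 \lea M_i''$ (because $M_i^0 \lea M_i'$ and $f_{i, \delta}$ fixes $M_i^0$) and $M_i^0 \lea M_i$. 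Moreover, $\ba_\delta = \bigcup_{i < \delta} f_{i, \delta}(\ba_i)$ enumerates $\bar M$, so $p = \gtp(\ba_\delta / M_\delta; N_\delta)$ is exactly the type $\gtp(\bar M / M_\delta; N_\delta)$.

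For each $i < \delta$, the assumption that $p_i$ does not fork over $M_i^0$ reads $\nfs{M_i^0}{M_i'}{M_i}{N_i}$; invariance of $\nf$ under the $K$-embedding $f_{i, \delta}$ (which fixes $M_i^0$ and $M_i$) gives $\nfs{M_i^0}{M_i''}{M_i}{N_\delta}$. Full model continuity of $\is$, applied to the three increasing continuous chains above (with $N_\delta$ as the common ambient model), then yields $\nfs{M_\delta^0}{\bar M}{M_\delta}{N_\delta}$, which is precisely the statement that $p$ does not fork over $M_\delta^0$. The main potential obstacle is purely bookkeeping: verifying that the direct-limit maps $f_{i, \delta}$ really can be taken to fix $M_i$ and that $\bar M$ is the union of the $M_i''$ as $\lea$-substructures of $N_\delta$; both are standard consequences of the direct-limit construction together with the coherence conditions already imposed on the witnessing sequence.
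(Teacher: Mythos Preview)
Your proposal is correct and is precisely an expansion of the paper's one-line proof, which reads in full: ``Use full model continuity inside the direct limit.'' You have spelled out what this means---transport everything via $f_{i,\delta}$ into $N_\delta$, obtain increasing chains $\seq{M_i^0}$, $\seq{M_i''}$, $\seq{M_i}$ there, and apply full model continuity with the constant ambient chain $N_\delta$---and that is exactly the intended argument.
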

\begin{proof}
  Use full model continuity inside the direct limit.
\end{proof}

\begin{lem}\label{ext-all}
  Assume $K$ is densely type-local above $\lambda$, and assume there exists a regular $\kappa \le \lambda$ such that $K$ is fully $(<\kappa)$-tame. Then $\is'$ has extension.
\end{lem}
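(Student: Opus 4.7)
The plan is to apply Proposition \ref{indep-props}.(\ref{indep-props-7}): since $\is'$ has transitivity by Lemma \ref{basic-props}, it suffices to establish independent amalgamation for $\is'$. Given $M^0 \lea M^\ell$ in $K$, $\ell = 1, 2$, I argue by induction on $\mu := \max(\|M^0\|, \|M^1\|, \|M^2\|)$. The base case $\mu \le \lambda$ is direct from extension for $\is$. For $\mu > \lambda$, a standard directed-system argument --- using the induction hypothesis at successors, uniqueness (Lemma \ref{basic-props}) to align embeddings, and full model continuity from Lemma \ref{full-mod-cont} (applicable because $(<\kappa)$-tameness together with Lemma \ref{cont-lem} yields the left $(<\kappa)$-model-witness property for $\is$) at limits --- reduces the problem to the single outstanding case $\|M^0\| = \|M^1\| = \|M^2\| = \mu$.

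For this case I would follow the blueprint sketched just before the statement. Let $\delta := \cf{\mu}$. First, apply dense type-locality above $\lambda$ to $p := \gtp(M^1/M^0; M^1)$ to obtain a resolution $\seq{M_i^1 : i \le \delta}$ of $M^1$ over $M^0$ by models in $K_{<\mu}$ whose associated chain of types $\seq{\gtp(M_i^1 / M^0; M^1) : i \le \delta}$ is type-local at every limit. Resolve $M^0 = \bigcup_{i<\delta} M_i^0$ similarly. Then build a coherent increasing continuous sequence $\seq{(\bar{M}_i^0, \bar{M}_i^1, N_i, f_i) : i \le \delta}$ with $M_i^\ell \lea \bar{M}_i^\ell \lea M^\ell$ (all of size $<\mu$), $\bar{M}_i^0 \lea \bar{M}_i^1$, $M^2 \lea N_i$, and $f_i : \bar{M}_i^1 \xrightarrow[\bar{M}_i^0]{} N_i$ such that (i) $\gtp(\bar{M}_i^1/M^0; M^1)$ does not $\is$-fork over $\bar{M}_i^0$ --- a condition obtainable via Lemma \ref{lc-monot} applied to the trivially independent configuration $\nfs{M^0}{M^1}{M^0}{M^1}$ (holding by right existence) --- and (ii) $\gtp(f_i[\bar{M}_i^1]/M^2; N_i)$ is the unique nonforking extension over $M^2$ of its restriction to $\bar{M}_i^0$, supplied by the induction hypothesis as $\|\bar{M}_i^1\| < \mu$. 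Condition (i) combined with uniqueness of nonforking extensions forces compatibility of the $f_i$ across successor stages, making those steps routine.

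The principal obstacle is the limit at stage $\delta$. Taking the coherent direct limit produces $f^* : \bar{M}_\delta^1 \to N_\delta$ with $M^1 \lea \bar{M}_\delta^1$, $M^2 \lea N_\delta$, and (via Proposition \ref{coher-seq-continuity} and full model continuity) $\gtp(f^*[\bar{M}_\delta^1]/M^2; N_\delta)$ not $\is$-forking over $M^0$. The delicate point is confirming $\gtp(f^*[\bar{M}_\delta^1]/M^0; N_\delta) = p$: its restrictions to the $\bar{M}_i^0$'s match the corresponding restrictions of $p$ by construction, but without type-locality another type could share those restrictions and we would instead extend some spurious type. Dense type-locality supplies exactly the resolution of $M^1$ that rules this out, forcing the limit type to equal $p$ and delivering the desired amalgamation of $M^1$ and $M^2$ over $M^0$. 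This limit step is where dense type-locality and $(<\kappa)$-tameness together come to bear, and it is the only genuine obstacle in the argument.
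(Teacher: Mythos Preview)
Your proposal is correct and follows essentially the same strategy as the paper: reduce to independent amalgamation, induct on size, handle unequal sizes by a directed-system argument, and for the equal-size case use dense type-locality to obtain a type-local resolution of $M^1$, Lemma \ref{lc-monot} to arrange $\nfs{\bar M_i^0}{\bar M_i^1}{M^0}{M^1}$, uniqueness for successors, and full model continuity plus type-locality at limits. Two small points of precision: (a) the type-locality argument is needed at \emph{every} intermediate limit $i\le\delta$ to verify that the direct-limit type $q_i$ still restricts correctly (so $\delta$ is not the \emph{only} obstacle --- it is the same obstacle repeated), and what must be matched are the \emph{length}-restrictions $(q_i\rest \bar M_i^0)^{\alpha_j}$ rather than domain-restrictions; (b) since the $\bar M_i^1$'s enlarge the models supplied by dense type-locality, you need Lemma \ref{type-loc-monot} to know the enlarged chain is still type-local.
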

\begin{proof}
  By Lemma \ref{cont-lem} and symmetry, $\is$ has the left $(<\kappa)$-model-witness property. By Lemmas \ref{full-mod-cont} and \ref{lc-long}, $\is'$ has full model continuity and the local character properties. Let $\lambda_0 \ge \lambda$ be a cardinal. We prove by induction on $\lambda_0$ that $\is'$ has extension for base models in $K_{\lambda_0}$. By Proposition \ref{indep-props}.(\ref{indep-props-7}), it is enough to prove independent amalgamation.

  Let $M^0 \lea M^\ell$, $\ell = 1,2$ be in $K$ with $\|M^0\| = \lambda_0$. We want to find $q \in \gS^{\lambda_0} (M^2)$ a nonforking extension of $p := \gtp (M^1 / M^0; M^1)$. Let $\lambda_\ell := \|M^\ell\|$ for $\ell = 1, 2$. 
 
  Assume we know the result when $\lambda_0 = \lambda_1 = \lambda_2$. Then we can work by induction on $(\lambda_1, \lambda_2)$: if they are both $\lambda_0$, the result holds by assumption. If not, we can assume by symmetry that $\lambda_1 \le \lambda_2$, find an increasing continuous resolution of $M^2$, $\seq{M_i^2 \in K_{<\lambda_2} : i < \lambda_2}$ and do a directed system argument as in \cite[Theorem 5.3]{ext-frame-jml} (using full model continuity and the induction hypothesis).

Now assume that $\lambda_0 = \lambda_1 = \lambda_2$. If $\lambda_0 = \lambda$, we get the result by extension in $\is$, so assume $\lambda_0 > \lambda$. Let $\delta := \cf{\lambda_0}$. By dense type-locality, we can assume (extending $M^1$ if necessary) that there exists $\seq{N_i : i \le \delta}$ an increasing continuous resolution of $M^1$ with $N_i \in K_{<\lambda_0}$ for $i < \delta$ so that $\seq{\gtp (N_j / M^0; M^1) : j < i}$ is type-local for all limit $i \le \delta$.

\paragraph{\textbf{Step 1}} Fix increasing continuous $\seq{M_i^\ell : i \le \delta}$ for $\ell < 2$ such that for all $i < \delta$, $\ell < 2$:

\begin{enumerate}
  \item $M^\ell = M_\delta^\ell$.
  \item $M_i^\ell \in K_{<\lambda_0}$.
  \item $N_i \lea M_i^1$.
  \item $M_i^0 \lea M_i^1$.
  \item $\nfs{M_i^0}{M_i^1}{M^0}{M^1}$.
\end{enumerate}

This is possible by repeated applications of Lemma \ref{lc-monot} (as in the proof of Lemma \ref{full-mod-cont}), starting with $\nfs{M^0}{M^1}{M^0}{M^1}$ which holds by existence.

\paragraph{\textbf{Step 2}}

Fix enumerations of $M_i^1$ of order type $\alpha_i$ such that $\seq{\alpha_i : i \le \delta}$ is increasing continuous, $\alpha_\delta = \lambda_0$ and $i < j$ implies that $M_i'$ appears as the initial segment up to $\alpha_i$ of the enumeration of $M_j'$. For $i \le \delta$, let $p_i := \gtp (M_i^1 / M_i^0; M^1)$ (seen as an element of $\gS^{\alpha_i} (M_i^0)$). We want to find $q \in \gS^{\lambda_0} (M^2)$ extending $p = p_\delta$ and not forking over $M^0$. Note that since for all $j < \delta$, $N_j \lea M_j^1$, we have by Lemma \ref{type-loc-monot} that $\seq{\gtp (M_j^1 / M^0; M^1) : j < i}$ is type-local for all limit $i \le \delta$. 

Build an increasing, coherent $\seq{q_i : i \le \delta}$ such that for all $i \le \delta$,

\begin{enumerate}
  \item $q_i \in \gS^{\alpha_i} (M^2)$.
  \item $q_i \rest M_i^0 = p_i$.
  \item $q_i$ does not fork over $M_i^0$.
\end{enumerate}

This is enough: then $q_\delta$ is an extension of $p = p_\delta$ that does not fork over $M_\delta^0 = M^0$.

This is possible: We work by induction on $i \le \delta$. While we do not make it explicit, the sequence witnessing the coherence is also built inductively in the natural way (see also \cite[Proposition 5.2]{ext-frame-jml}): at base and successor steps, we use the definition of Galois types. At limit steps, we take direct limits.

Now fix $i \le \delta$ and assume everything has been defined for $j < i$.

\begin{itemize}
  \item \textbf{Base step:} When $i = 0$, let $q_0 \in \gS^{\alpha_0} (M^2)$ be the nonforking extension of $p_0$ to $M_0^2$ (exists by extension below $\lambda_0$).
  \item \textbf{Successor step:} When $i = j + 1$, $j < \delta$, let $q_i$ be the nonforking extension (of length $\alpha_i$) of $p_i$ to $M^2$. We have to check that $q_i$ indeed extends $q_j$ (i.e.\ $q_i^{\alpha_j} = q_j$). Note that $q_j \rest M^0$ does not fork over $M_j^0$ so by step 1 and uniqueness, $q_j \rest M^0 = \gtp (M_j^1 / M^0; M^1)$. In particular, $q_j \rest M_i^0 = \gtp (M_j^1 / M_i^0; M^1)$. Since $q_i$ extends $p_i$, $q_i \rest M_i^0 = \gtp (M_i^1 / M_i^0; M^1)$ so $q_i^{\alpha_j} \rest M_i^0 = \gtp (M_j^1 / M_i^0; M^1) = q_j \rest M_i^0$. By base monotonicity, $q_j$ does not fork over $M_i^0$ so by uniqueness $q_i^{\alpha_j} = q_j$. A picture is below.

      \[
  \xymatrix{ & p_i \ar[r] & q_i\\
    & p_j \ar[u] \ar[r] & q_j \rest M_i^0 \ar[lu] \ar[r] & q_j \ar@{.>}[lu] 
  }
  \]
  \item \textbf{Limit step:} Assume $i$ is limit. Let $q_i$ be the direct limit of the coherent sequence $\seq{q_j : j < i}$. Note that $q_i \in \gS^{\alpha_i} (M^2)$ and by Proposition \ref{coher-seq-continuity}, $q_i$ does not fork over $M_i^0$. It remains to see $q_i \rest M_i^0 = p_i$.

    For $j < i$, let $p_j' \in \gS^{\alpha_j} (M_i^0)$ be the nonforking extension of $p_j$ to $M_i^0$. By step 1, $p_j' = \gtp (M_j^1 / M_i^0; M^1)$. Thus $\seq{p_j' : j < i}$ is type-local. By an argument similar to the successor step above, we have that for all $j < i$, $p_i^{\alpha_j} = p_j'$. Moreover, for all $j < i$, $q_i^{\alpha_j} \rest M_j^0 = p_j$ and $q_j$ does not fork over $M_j^0$ so by uniqueness, $q_i^{\alpha_j} \rest M_i^0 = (q_i \rest M_i^0)^{\alpha_j} = p_j'$. By type-locality, it follows that $q_i \rest M_i^0 = p_i$, as desired.
\end{itemize} 

\end{proof}

Putting everything together, we get:

\begin{thm}\label{main-thm-long}
  If:

  \begin{enumerate}
    \item For some regular $\kappa \le \lambda$, $K$ is fully $(<\kappa)$-tame.
    \item $K$ is densely type-local above $\lambda$.
  \end{enumerate}
  
  Then $\is'$ is a fully good $(<\infty, \ge \lambda)$-independence relation.
\end{thm}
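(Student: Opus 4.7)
The plan is to assemble the proof from the lemmas already established in this section, which were precisely designed to handle the individual properties of a fully good independence relation. The key observation is that the hypothesis of Lemma \ref{cont-lem} applies to $\is$: since $\is$ is fully good, $\pre(\is)$ has uniqueness and (by Proposition \ref{cl-basics} applied to extension) $\is$ has extension; moreover $K$ is fully $(<\kappa)$-tame and short. Since $\is$ has symmetry, the third bullet of Lemma \ref{cont-lem} (applied with the roles of left and right swapped via the dual) gives that $\is$ has the left $(<\kappa)$-model-witness property. This is the technical ingredient that all subsequent lemmas in this section require.

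With the left $(<\kappa)$-model-witness property in hand, I would first invoke Lemma \ref{basic-props} to obtain the basic properties: $\is'$ is a $(<\infty, \ge \lambda)$-independence relation, $K$ has joint embedding, no maximal models, is stable in every cardinal, and $\is'$ enjoys base monotonicity, transitivity, disjointness, existence, symmetry, uniqueness, and the left and right $\lambda$-witness properties. Then Lemma \ref{full-mod-cont} delivers full model continuity for $\is'$, which upgrades ``good'' to ``fully good'' once the other axioms are verified.

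Next, Lemma \ref{lc-long} provides the exact locality cardinals required by Definition \ref{goodness-def}: for every cardinal $\mu$, we have $\slc{\mu}(\is') = \lambda^+ + \mu^+$ and $\clc{\mu}(\is') = \aleph_0 + \mu^+$, matching $|\alpha_0|^+ + \lambda^+$ and $|\alpha_0|^+ + \aleph_0$ respectively when $\alpha_0 = \mu$. Finally, the dense type-locality hypothesis plugs into Lemma \ref{ext-all}, yielding the extension property for $\is'$; this is the one genuinely subtle step, but it has already been carried out by the coherent-direct-limit construction at arbitrary resolution length. The combination of extension with uniqueness and base monotonicity now completes all the axioms of a fully good $(<\infty, \ge \lambda)$-independence relation.

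The main obstacle, verifying extension via Lemma \ref{ext-all}, has already been surmounted in the preceding lemma, so what remains is purely bookkeeping: checking that $\LS(K) \le \lambda$ (which holds since $\is$ is an independence relation in $[\lambda, \infty)$ and $K$ is an AEC) and that the witness and model-witness hypotheses of the intermediate lemmas are met by the full $(<\kappa)$-tameness and shortness assumption. No further ideas are required — the theorem is essentially the concatenation of Lemmas \ref{basic-props}, \ref{full-mod-cont}, \ref{lc-long}, and \ref{ext-all}, each of which applies under the stated hypotheses of the theorem.
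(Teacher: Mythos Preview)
Your proposal is correct and follows essentially the same route as the paper: assemble Lemmas \ref{basic-props}, \ref{full-mod-cont}, \ref{lc-long}, and \ref{ext-all}, after first using Lemma \ref{cont-lem} together with symmetry to obtain the left $(<\kappa)$-model-witness property for $\is$.

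One minor imprecision: you claim $K$ is fully $(<\kappa)$-tame \emph{and short}, but the theorem only hypothesizes fully $(<\kappa)$-tame (shortness is only at level $\lambda$ from Hypothesis \ref{long-transfer-hyp}), so the third part of Lemma \ref{cont-lem} does not directly apply. The paper's actual reasoning is to use the \emph{first} part of Lemma \ref{cont-lem} (which needs only $(<\kappa)$-tameness for types of length $\le \lambda$) to get the \emph{right} $(<\kappa)$-model-witness property, and then invoke symmetry of $\is$ to flip this into the left $(<\kappa)$-model-witness property. This yields exactly the hypothesis needed for Lemmas \ref{full-mod-cont}, \ref{lc-long}, and \ref{ext-all}, and the rest of your argument goes through unchanged.
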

\begin{proof}
  Lemma \ref{basic-props} gives most of the properties of a good independence relation. By Lemma \ref{cont-lem} and symmetry, $\is$ has the left $(<\kappa)$-model-witness property. By Lemma \ref{full-mod-cont}, $\is'$ has full model continuity. By Lemma \ref{lc-long}, it has the local character properties. By Lemma \ref{ext-all}, $\is'$ has extension.
\end{proof}

We suspect that dense type-locality is not necessary, at least when $\is$ comes from our construction (see the proof of Theorem \ref{main-thm}). For example, by the proof below, it would be enough to see that $\pre(\is_{\mu}^{\le 1})$ is weakly successful for all $\mu \ge \lambda$. We delay a full investigation to a future work. For now, here is what we can say without dense type-locality:

\begin{thm}\label{main-thm-no-loc}
  Assume that for some regular $\kappa \le \lambda$, $K$ is fully $(<\kappa)$-tame. Then:

  \begin{enumerate}
    \item $\is'$ is a fully good independence relation, except perhaps for the extension property. Moreover, it has the right $\lambda$-witness property.
    \item Assume that\footnote{If for example $\is'$ is constructed as in the proof of Theorem \ref{main-thm}, this will be the case.} for all $\mu \ge \lambda$, $(\is')_{\ge \mu}$ satisfies Hypothesis \ref{ss-hyp-2}. Then $\is'$ has the extension property when the base is saturated.
  \end{enumerate}
\end{thm}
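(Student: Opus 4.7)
This follows the proof of Theorem \ref{main-thm-long} line by line, simply omitting the step that invokes Lemma \ref{ext-all}. Explicitly, Lemma \ref{basic-props} supplies base monotonicity, transitivity, disjointness, existence, symmetry, uniqueness, and both the left and right $\lambda$-witness properties (the right one is inherited directly from the definition of $\is'$ together with the right $\lambda$-witness property of $\is$ furnished by Lemma \ref{cont-lem}). The fully $(<\kappa)$-tameness hypothesis combined with Lemma \ref{cont-lem} and symmetry gives $\is$ the left $(<\kappa)$-model-witness property; Lemma \ref{full-mod-cont} then yields full model continuity for $\is'$, and Lemma \ref{lc-long} delivers the local character cardinals $\slc{\mu}(\is') = \lambda^+ + \mu^+$ and $\clc{\mu}(\is') = \aleph_0 + \mu^+$. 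These are exactly the ingredients of a fully good independence relation, except for extension, which is what is being set aside.

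\textbf{Plan for part (2).} Fix a saturated $M^0$ of size $\mu_0 \ge \lambda$, an extension $M^0 \lea M^2$, and a type $p \in \gS^{<\infty}(M^0)$; by independent amalgamation (Proposition \ref{indep-props}.(\ref{indep-props-7})) it suffices to find a nonforking extension of $\gtp(M^1/M^0; M^1)$ to $M^2$ for an arbitrary $M^0 \lea M^1$. The idea is to bootstrap the full machinery of Sections \ref{domin-sec}--\ref{up-transfer-sec} at every level $\mu \ge \lambda$. Namely, by hypothesis $(\is')_{\ge \mu}$ satisfies Hypothesis \ref{ss-hyp-2} for some witnessing $\lambda_\mu > \mu$, so Theorem \ref{succ-thm} produces an $\omega$-successful good $\lambda_\mu$-frame on $\Ksatp{\lambda_\mu}_{\lambda_\mu}$; Theorem \ref{good-long-frame}.(2) upgrades this (passing to the third successor) to a \emph{fully} good $(\le \lambda_\mu^{+3}, \lambda_\mu^{+3})$-independence relation; and Theorem \ref{main-thm-up} then enlarges the right-hand side to obtain a fully good $(\le \lambda_\mu^{+3}, \ge \lambda_\mu^{+3})$-independence relation $\is^\ast_\mu$. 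By canonicity (Theorem \ref{good-canon} together with Theorem \ref{canon-weakly-good}), $\is^\ast_\mu$ coincides with $\is'$ restricted to types of length $\le \lambda_\mu^{+3}$ over sufficiently saturated models, so its extension property transfers to $\is'$ in that range.

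To handle types of arbitrary length, I would rerun the proof of Lemma \ref{ext-all} inside $\is'$, but with the crucial change that the troublesome dense type-locality hypothesis is replaced by the type-locality one gets \emph{for free} from a saturated resolution assembled out of uniqueness triples (equivalently, NF-triples in the sense of Fact \ref{nf-existence}). Concretely, when $\lambda_0 = \lambda_1 = \lambda_2 = \mu_0$ and we need to resolve $M^1$, we use that (because the full machinery applies at level $\mu_0$ and the base model is saturated) we can build the resolution $\seq{N_i : i \le \delta}$ so that each $\gtp(N_{i+1}/N_i)$ is represented by an NF-triple coming from the weakly successful frame, and the long transitivity and full model continuity of $\NF$ (Fact \ref{nf-existence} and Fact \ref{continuity-models}, valid since the frame is $\omega$-successful) force the resulting chain $\seq{\gtp(N_j/M^0; M^1) : j < i}$ to be type-local at limits. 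The successor and base steps of Lemma \ref{ext-all} go through unchanged, and Proposition \ref{coher-seq-continuity} applied to the NF-witnessed coherent chain gives the limit step. For smaller-length types or mixed sizes the usual directed system argument (as in Theorem \ref{main-thm-up}) works.

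\textbf{Main obstacle.} The genuine difficulty is the limit step of the inner inductive construction when all three models have the same saturated size $\mu_0 > \lambda$: it is precisely here that dense type-locality was used in Lemma \ref{ext-all}. The plan above finesses this by arranging the resolution to be an NF-resolution of saturated models, so that type-locality along the chain is a consequence of uniqueness for NF (Fact \ref{nf-existence}.(\ref{nf-def-uq}), phrased via Lemma \ref{uq-equiv}) rather than an extra hypothesis. Making this airtight requires verifying that the canonicity match between $\is^\ast_\mu$ and $\is'$ really passes through the NF-relation uniformly in $\mu$, and that the saturated resolution of $M^1$ interacts correctly with the left $(<\kappa)$-model-witness property used in Lemma \ref{full-mod-cont}. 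These bookkeeping issues are the place where the proof could go wrong, but all the needed pieces (the NF machinery, full model continuity, canonicity, and the chain local character of Lemma \ref{lc-long}) are already in place.
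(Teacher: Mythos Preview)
Your treatment of part (1) is correct and matches the paper's proof exactly.

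For part (2), your approach is substantially more complicated than the paper's, and the hardest step you identify (deriving type-locality from an NF-resolution) is unnecessary. The paper's argument is this: fix a saturated base $M^0$ of size $\mu \ge \lambda$. Since $(\is')_{\ge \mu}$ satisfies Hypothesis \ref{ss-hyp-2}, Theorem \ref{weakly-successful} and Theorem \ref{good-long-frame} yield a good $(\le \mu, \mu)$-independence relation $\is''$ on $\Ksatp{\mu}_\mu$ directly (no passage to a third successor is needed, since $\Ksatp{\mu}_\mu$ is already categorical by uniqueness of saturated models). By canonicity (the witness properties plus the arguments of \cite{bgkv-v3-toappear}), $\is'' = (\is')^{\le \mu} \rest \Ksatp{\mu}_\mu$. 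Now revisit the proof of Lemma \ref{ext-all}: the only place dense type-locality was invoked is the equal-size case $\lambda_0 = \lambda_1 = \lambda_2 > \lambda$. But here $\lambda_0 = \mu$ is fixed, so that case is simply $\lambda_1 = \lambda_2 = \mu$, and extension in $\is''$ (after enlarging $M^1, M^2$ to saturated models of size $\mu$) handles it outright. The remaining mixed-size cases are dealt with by the directed system argument and full model continuity exactly as in Lemma \ref{ext-all}, with no locality hypothesis needed.

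In short, you try to \emph{rescue} the problematic limit step of Lemma \ref{ext-all} by manufacturing type-locality from NF, whereas the paper observes that for a saturated base the problematic step never arises: it is absorbed by the extension property of the good $(\le \mu, \mu)$-frame at that single level. Your claim that long transitivity and full model continuity of $\NF$ force type-locality of the chain is not obviously correct (NF-uniqueness controls amalgams, not limits of increasing type-sequences), and you rightly flag it as the place your argument could fail. The paper sidesteps the issue entirely.
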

\begin{proof}
  The first part has been observed in the proof of Theorem \ref{main-thm-long} (see also Lemma \ref{basic-props}). To see the second part, let $\mu \ge \lambda$. By Theorem \ref{weakly-successful} and Theorem \ref{good-long-frame}, there exists a good $(\le \mu, \mu)$-independence relation $\is''$ with underlying class $\Ksatp{\mu}_\mu$. Using the witness properties and the arguments of \cite{bgkv-v3-toappear} we have that $(\is')^{\le \mu} \rest \Ksatp{\mu}_\mu = \is''$. By the proof of Lemma \ref{ext-all}, $\is'$ has extension when the base model is in $\Ksatp{\mu}_\mu$. 
\end{proof}

\section{The main theorems}\label{main-thm-sec}

Recall (Definition \ref{goodness-def-2}) that an AEC $K$ is fully good if there is a fully good independence relation with underlying class $K$. Intuitively, a fully good independence relation is one that satisfies all the basic properties of forking in a superstable first-order theory. We are finally ready to show that densely type-local fully tame and short superstable classes are fully good, at least on a class of sufficiently saturated models\footnote{The number 7 in (\ref{main-thm-1}) is possibly the largest natural number ever used in a statement about abstract elementary classes!}.

\begin{thm}\label{main-thm}
  Let $K$ be a fully $(<\kappa)$-tame and short AEC with amalgamation. Assume that $K$ is densely type-local above $\kappa$.

  \begin{enumerate}
    \item\label{main-thm-1} If $K$ is $\mu$-superstable, $\kappa = \beth_{\kappa} > \mu$, and $\lambda := \left(\mu^{<\kappap}\right)^{+7}$, then $\Ksatp{\lambda}$ is fully good.
    \item\label{main-thm-2} If $K$ is $\kappa$-strongly $\mu$-superstable and $\lambda := \left(\mu^{<\kappap}\right)^{+6}$, then $\Ksat$ is fully good.
    \item If $\kappa = \beth_{\kappa} > \LS (K)$, and $K$ is categorical in a $\mu > \lambda_0 := \left(\kappa^{<\kappap}\right)^{+5}$, then $K_{\ge \lambda}$ is fully good, where $\lambda := \min (\mu, \hanf{\lambda_0})$.
  \end{enumerate}
\end{thm}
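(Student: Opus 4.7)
The plan is to prove part (2) first and reduce parts (1) and (3) to it. Part (1) reduces via Theorem \ref{ss-strong-equiv}, which upgrades $\mu$-superstability (under $\kappa = \beth_\kappa > \mu$) to $\kappa$-strong $\mu'$-superstability with $\mu' := (2^{<\kappap})^+$; the short computation $(\mu')^{<\kappap} = (\mu^{<\kappap})^+$ (valid because $\kappa$ is a strong limit) then matches the $+7$ bound of (1) to the $+6$ bound of (2). Part (3) reduces via Theorem \ref{categ-frame}, which supplies both $\kappa$-strong $\kappa$-superstability and the saturation of the categoricity model.

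For part (2), I would begin by setting $\mu^\ast := (\mu^{<\kappap})^{+2}$ and $\lambda_1 := (\mu^{<\kappap})^{+3}$; one checks $\lambda_1 = \lambda_1^{<\kappap}$ since $\lambda_1$ is a regular successor above $2^{<\kappap}$. Fact \ref{ss-hyp-2-prop} then establishes Hypothesis \ref{ss-hyp-2} for $K^\ast := \Ksatp{\mu^\ast}$ equipped with coheir, and Theorem \ref{succ-thm} hands back an $\omega$-successful type-full good $\lambda_1$-frame on $\Ksatp{\lambda_1}_{\lambda_1}$. Three iterations of the successor construction produce a successful good $(\lambda_1^{+3})$-frame arising as the third successor of an $\omega$-successful frame, so Theorem \ref{good-long-frame}(2) delivers a fully good $(\le \lambda, \lambda)$-independence relation with $\lambda := \lambda_1^{+3} = (\mu^{<\kappap})^{+6}$. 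Theorem \ref{main-thm-up} (whose hypotheses follow from full $\lambda$-tameness and shortness) extends the right-hand side to $\Ksat$, and finally Theorem \ref{main-thm-long} uses full $(<\kappa)$-tameness and dense type-locality above $\kappa \le \lambda$ to extend the type-length to $\infty$, producing the sought fully good $(<\infty, \ge \lambda)$-independence relation on $\Ksat$.

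For part (3), applying part (2) with $\mu := \kappa$ yields a fully good independence relation on $\Ksatp{\lambda_0^+}$, where $\lambda_0^+ = (\kappa^{<\kappap})^{+6}$. The remaining task is to show $K_{\ge \lambda} = (\Ksatp{\lambda_0^+})_{\ge \lambda}$. By Theorem \ref{categ-frame} the categoricity model of size $\mu$ is saturated, hence $\lambda_0^+$-saturated because $\mu > \lambda_0$. Fact \ref{categ-facts}.(\ref{sat-transfer}) then transfers this saturation to every model of size $\ge \chi$ with $\chi = \min(\mu, \sup_{\mu_0 < \lambda_0^+} \hanf{\mu_0}) = \min(\mu, \hanf{\lambda_0}) = \lambda$; hence every $M \in K_{\ge \lambda}$ is $\lambda_0^+$-saturated, and the fully good independence relation from part (2) restricts to one on $K_{\ge \lambda}$.

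The principal obstacle is delicate cardinal bookkeeping. Each stage of the part (2) construction contributes a known number of successors to the $+6$ bound: the $+2$ shift to satisfy Fact \ref{ss-hyp-2-prop}, the $+1$ to land on $\lambda_1$ where Theorem \ref{succ-thm} delivers the $\omega$-successful frame, and the $+3$ successors feeding into Theorem \ref{good-long-frame}(2); part (1) then absorbs one additional successor from Theorem \ref{ss-strong-equiv}. In part (3), the $+5$ in $\lambda_0$ is calibrated precisely so that the transfer level $\hanf{\lambda_0}$ matches the saturation level $\lambda_0^+$ demanded by part (2). Secondarily but essentially, Theorem \ref{main-thm-long} needs dense type-locality (rather than merely full tameness and shortness) to obtain the extension property for arbitrarily long types --- without this input one would have to settle for the weaker Theorem \ref{main-thm-no-loc}.
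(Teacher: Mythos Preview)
Your proposal is correct and follows essentially the same route as the paper: prove part~(2) via Fact~\ref{ss-hyp-2-prop} $\to$ Theorem~\ref{succ-thm} $\to$ Theorem~\ref{good-long-frame} $\to$ Theorem~\ref{main-thm-up} $\to$ Theorem~\ref{main-thm-long}, reduce part~(1) to it via Theorem~\ref{ss-strong-equiv}, and reduce part~(3) to it via Theorem~\ref{categ-frame} together with Fact~\ref{categ-facts}.(\ref{sat-transfer}). Your explicit cardinal arithmetic (in particular the verification that $(\mu')^{<\kappap} = (\mu^{<\kappap})^+$ and that $\sup_{\mu_0 < \lambda_0^+} \hanf{\mu_0} = \hanf{\lambda_0}$) fills in bookkeeping the paper leaves implicit.
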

\begin{proof}
  Given what has been proven already, the proofs are short. However to help the reader reflect on all the ground that was covered, we start by giving a summary in plain language of what the main steps in the construction are. Assume for example that $K$ is categorical in a high-enough cardinal $\mu > \kappa = \beth_\kappa > \LS (\K)$. By the results of Section \ref{sec-ss} (using results in \cite{bg-v9}, which ultimately rely on \cite{shvi635}), we get that $K$ is $\kappa$-strongly $\kappa$-superstable (note that, as opposed to \cite{sh394}, nothing is assumed about the cofinality of $\mu$). Thus coheir induces a good $(\le 1, \lambda)$-frame $\s$ with underlying class $\K_{\lambda}$, for $\lambda$ a high-enough cardinal. Moreover, coheir (seen as a global independence relation) has the properties in Hypothesis \ref{ss-hyp-2}. Thus from the material of Section \ref{domin-sec}, we conclude that the good frame is well-behaved: it is $\omega$-successful.

  By Section \ref{long-frame-sec}, this means that $\s$ can be extended to a good $(\le \lambda, \lambda)$-frame $\s'$ (so forking is defined not only for types of length one but for all types of length at most $\lambda$). With slightly more hypotheses on $\lambda$, we even can even make $\s'$ a \emph{fully} good $(\le \lambda, \lambda)$-frame, and by the ``minimal closure'' trick, into a fully good $(\le \lambda, \lambda)$ \emph{independence relation} $\is$. By Section \ref{up-transfer-sec}, $\is$ can be extended further to a fully good $(\le \lambda, \ge \lambda)$-independence relation $\is'$ (that is, forking is not only defined over models of size $\lambda$, but over models of all sizes at least $\lambda$). Finally, by Section \ref{long-transfer-sec}, we can extend $\is'$ to types of \emph{any} length (not just length at most $\lambda$), hence getting the desired global independence relation (a fully good $(<\infty, \ge \lambda)$-independence relation).

  Now on to the actual proofs:
  
  \begin{enumerate}
    \item By Theorem \ref{ss-strong-equiv} and Proposition \ref{ss-monot}, $K$ is $\kappa$-strongly $(2^{<\kappap})^+$-superstable. Now apply (\ref{main-thm-2}).
    \item By Fact \ref{ss-hyp-2-prop}, Hypothesis \ref{ss-hyp-2} holds for $\mu' := \left(\mu^{<\kappap}\right)^{+2}$, $\lambda$ there standing for $(\mu')^+$ here, and $K' := \Ksatp{\mu'}$. By Theorem \ref{succ-thm}, there is an $\omega$-successful type-full good $(\mu')^+$-frame $\s$ on $\Ksatp{(\mu')^+}$. By Theorem \ref{good-long-frame}, $\s^{+3}$ induces a fully good $(\le \lambda, \lambda)$-independence relation $\is$ on $\Ksatp{(\mu')^{+4}} = \Ksatp{\lambda}$. By Theorem \ref{main-thm-up}, $\is' := \cl(\pre (\is_{\ge \lambda}))$ is a fully good $(\le \lambda, \ge \lambda)$-independence relation on $\Ksat$. By Theorem \ref{main-thm-long}, $\islongp{(\is')}$ is a fully good $(<\infty, \ge \lambda)$-independence relation on $\Ksat$. Thus $\Ksat$ is fully good.
    \item\label{main-thm-3} By Theorem \ref{categ-frame}, $K$ is $\kappa$-strongly $\kappa$-superstable. By (\ref{main-thm-2}), $\Ksatp{\lambda_0^+}$ is fully good. By Fact \ref{categ-facts}.(\ref{sat-transfer}), all the models in $K_{\ge \lambda}$ are $\lambda_0^+$-saturated, hence $\Ksatp{\lambda_0^+}_{\ge \lambda} = K_{\ge \lambda}$ is fully good.
  \end{enumerate}
\end{proof}

We now discuss the necessity of the hypotheses of the above theorem. It is easy to see that a fully good AEC is $\ssp$. Moreover, the existence of a relation $\nf$ with disjointness and independent amalgamation directly implies disjoint amalgamation. An interesting question is whether there is a general framework in which to study independence without assuming amalgamation, but this is out of the scope of this paper. To justify full tameness and shortness, one can ask:

\begin{question}
  Let $K$ be a fully good AEC. Is $K$ fully tame and short?
\end{question} 

If the answer is positive, we believe the proof to be nontrivial. We suspect however that the shortness hypothesis of our main theorem can be weakened to a condition that easily holds in all fully good classes. In fact, we propose the following:

\begin{defin}\index{diagonally tame}\index{diagonally $(<\kappa)$-tame|see {diagonally tame}}
  An AEC $K$ is \emph{diagonally $(<\kappa)$-tame} if for any $\kappa' \ge \kappa$, $K$ is $(<\kappa')$-tame for types of length less than $\kappa'$. $K$ is \emph{diagonally $\kappa$-tame} if it is diagonally $(<\kappa^+)$-tame. $K$ is \emph{diagonally tame} if it is diagonally $(<\kappa)$-tame for some $\kappa$.
\end{defin}

It is easy to check that if $\is$ is a good $(<\infty, \ge \lambda)$-independence relation, then $K_{\is}$ is diagonally $\lambda$-tame. Thus we suspect the answer to the following should be positive:

\begin{question}
  In Theorem \ref{main-thm}, can ``fully $(<\kappa)$-tame and short'' be replaced by ``diagonally $(<\kappa)$-tame?
\end{question}

Finally, we believe the dense type-locality hypothesis can be removed\footnote{In fact, the result was initially announced without this hypothesis but Will Boney found a mistake in our proof of Lemma \ref{ext-all}. This is the only place where type-locality is used}. Indeed, chapter III of \cite{shelahaecbook} has several results on getting models ``generated'' by independent sequences. Since independent sequences exhibit a lot of finite character (see also \cite{tame-frames-revisited-v5}), we suspect the answer to the following should be positive. 

\begin{question}
  Is dense type-locality needed in Theorem \ref{main-thm}?
\end{question}

The construction also gives a more localized independence relation if we do not assume dense type-locality. Note that we can replace categoricity by superstability or strong superstability as in the proof of Theorem \ref{main-thm}.

\begin{thm}\label{good-frame-succ}
  Let $K$ be a fully $(<\kappa)$-tame and short AEC with amalgamation. Let $\lambda$, $\mu$ be cardinals such that:

  $$
  \LS (K) < \kappa = \beth_\kappa < \lambda = \beth_\lambda \le \mu
  $$

  Assume further that $\cf{\lambda} \ge \kappa$. If $K$ is categorical in $\mu$, then:

  \begin{enumerate}
    \item There exists an $\omega$-successful type-full good $\lambda$-frame $\s$ with $K_{\s} = K_\lambda$. Furthermore, the frame is induced by $(<\kappa)$-coheir: $\s = \pre (\isch{\kappa} (K)_\lambda^{\le 1})$.
    \item $K_\lambda$ is $(\le \lambda, \lambda)$-good.
    \item $\Ksatp{\lambda^{+3}}$ is fully $(\le \lambda^{+3})$-good.
    \item $\Ksatp{\lambda^{+3}}$ is fully good, except it may not have extension. Moreover it has extension over saturated models.
    \item Let $\is := \isch{\kappa} (\Ksatp{\lambda^{+4}})$. Then $\is$ is fully good, except it may not have extension.  Moreover it has extension over saturated models.
  \end{enumerate}
\end{thm}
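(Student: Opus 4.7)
The proof chains together most of the machinery developed in the paper, so I sketch it stage by stage.

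\textbf{Stage 1 (superstability and the $\omega$-successful frame; (1)).} Since $\kappa = \beth_\kappa > \LS(K)$ and $K$ is categorical in $\mu > \kappa$, Theorem \ref{categ-frame} gives that $K$ is $\kappa$-strongly $\kappa$-$\ssp$, stable in every $\nu \ge \hanf{\LS(K)}$, and---by Fact \ref{categ-facts}.(\ref{sat-transfer}) together with $\lambda = \beth_\lambda$---every model in $K_{\ge \lambda}$ is saturated, so $K_\lambda = \Ksat_\lambda$. The arithmetic $\lambda = \beth_\lambda$ with $\cf \lambda \ge \kappa$ yields $(\lambda^{+n})^{<\kappap} = \lambda^{+n}$ for all $n < \omega$. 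Set $\is_0 := \isch{\kappa}(K)$. Then Fact \ref{bv-sat-fact} ensures $\Ksatp{\lambda^{+n}}$ is an AEC with $\LS = \lambda^{+n}$; Fact \ref{coheir-syn} combined with the above arithmetic gives $\slc{\lambda^{+n}}(\is_0) = \lambda^{+(n+1)}$; and Theorem \ref{superstab-frame}.(\ref{frame-cond-c}) provides the underlying good $(\ge \lambda)$-frame. Hypothesis \ref{ss-hyp-2} is therefore met. Theorem \ref{succ-thm} delivers an $\omega$-successful type-full good $\lambda$-frame $\s$ on $K_\lambda$, and Theorem \ref{superstab-frame}.(\ref{frame-cond-b}) identifies it with $(<\kappa)$-coheir, proving (1).

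\textbf{Stage 2 (fully good at $\lambda^{+3}$; (2) and (3)).} Since $K$ is categorical in $\lambda$ (the unique model there is saturated), Theorem \ref{good-long-frame}.(1) upgrades $\s$ to a good $(\le \lambda, \lambda)$-frame, giving (2). Taking successors, $\s^{+3}$ is an $\omega$-successful good $\lambda^{+3}$-frame on $\Ksatp{\lambda^{+3}}_{\lambda^{+3}}$ whose underlying class is categorical in $\lambda^{+3}$. Theorem \ref{good-long-frame}.(2) (applied with $\lambda_0 = \lambda$) then yields that the NF-extension of $\s^{+3}$ is a fully good $(\le \lambda^{+3}, \lambda^{+3})$-frame; via minimal closure this becomes a fully good $(\le \lambda^{+3}, \lambda^{+3})$-independence relation $\is_1$ on $\Ksatp{\lambda^{+3}}_{\lambda^{+3}}$. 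Full $(<\kappa)$-tameness and shortness descend to $\Ksatp{\lambda^{+3}}$, so Theorem \ref{main-thm-up} extends $\is_1$ to a fully good $(\le \lambda^{+3}, \ge \lambda^{+3})$-independence relation $\is_2$ on $\Ksatp{\lambda^{+3}}$. This is (3).

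\textbf{Stage 3 (lengthening types and identifying with coheir; (4) and (5)).} Applying Theorem \ref{main-thm-no-loc}.(1) to $\is_2$, using full $(<\kappa)$-tameness, gives that $\islongp{\is_2}$ is a fully good $(<\infty, \ge \lambda^{+3})$-independence relation on $\Ksatp{\lambda^{+3}}$ modulo extension, with the right $\lambda^{+3}$-witness property. Fact \ref{ss-hyp-2-prop} re-verifies Hypothesis \ref{ss-hyp-2} uniformly in $\nu \ge \lambda^{+3}$, so Theorem \ref{main-thm-no-loc}.(2) restores extension over saturated base models, proving (4). For (5), observe that $\is_2$ satisfies the hypotheses of Theorem \ref{canon-coheir} with parameter $\lambda^{+3}$ in place of $\lambda$: uniqueness of $\pre(\is_2)$, base monotonicity, both witness properties, and $\slc{<\lambda^{+4}}(\is_2) = \lambda^{+4}$. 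The canonicity lemma identifies $\is_2 \rest \Ksatp{\lambda^{+4}}$ with $\isch{\kappa}(K)^{<\lambda^{+4}} \rest \Ksatp{\lambda^{+4}}$. Elongating both sides to $\islongp{(\,\cdot\,)}$---which for coheir is automatic from the $(<\kappa)$-witness property of Fact \ref{coheir-syn}---shows that $\is := \isch{\kappa}(\Ksatp{\lambda^{+4}})$ coincides with $\islongp{\is_2} \rest \Ksatp{\lambda^{+4}}$, and so inherits full goodness modulo extension together with extension over saturated base models.

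The principal technical obstacle is precisely the identification in Stage 3: keeping the witness properties and cardinal arithmetic compatible so that Theorem \ref{canon-coheir} applies at the parameter $\lambda^{+3}$ and so that the elongation operator $\islongp{(\,\cdot\,)}$ commutes cleanly with restriction to $\Ksatp{\lambda^{+4}}$. Everything else is essentially an assembly of the results built in the previous sections.
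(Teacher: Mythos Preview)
Your proof is correct and follows essentially the same route as the paper's: invoke Theorem \ref{categ-frame} for strong superstability and categoricity in $\lambda$, verify Hypothesis \ref{ss-hyp-2} (the paper packages this via Fact \ref{ss-hyp-2-prop}), apply Theorem \ref{succ-thm} for (1), Theorem \ref{good-long-frame} and Theorem \ref{main-thm-up} for (2)--(3), Theorem \ref{main-thm-no-loc} for (4), and Theorem \ref{canon-coheir} at the parameter $\lambda^{+3}$ together with the left $\lambda^{+3}$-witness property for (5). Two minor remarks: in Stage 1 you write $\slc{\lambda^{+n}}(\is_0)$ where Hypothesis \ref{ss-hyp-2} asks for $\clc{\lambda^{+n}}$, but this is harmless since $\clc \le \slc$; and in Stage 3 your appeal to Fact \ref{ss-hyp-2-prop} to re-verify Hypothesis \ref{ss-hyp-2} for $(\islongp{\is_2})_{\ge \nu}$ is a slight mis-citation---that fact is stated for coheir, whereas here the verification should be read off directly from the properties of $\islongp{\is_2}$ already established (Lemmas \ref{basic-props}, \ref{full-mod-cont}, \ref{lc-long}) together with Fact \ref{bv-sat-fact}---but the substance is the same and matches the footnote to Theorem \ref{main-thm-no-loc}.(2).
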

\begin{proof}
  By cardinal arithmetic, $\lambda = \lambda^{<\kappap}$. By Fact \ref{ss-hyp-2-prop} and Theorem \ref{succ-thm}, there is an $\omega$-successful type-full good $\lambda$-frame $\s$ with $K_{\s} = \Ksatp{\lambda}_\lambda$. Now (by Theorem \ref{categ-frame} if $\mu > \lambda$), $K$ is categorical in $\lambda$. Thus $\Ksatp{\lambda} = K_{\ge \lambda}$. Theorem \ref{good-long-frame} and Theorem \ref{main-thm-up} give the next two parts. Theorem \ref{main-thm-no-loc} gives the fourth part. For the fifth part, let $\is'$ witness the fourth part. We use Theorem \ref{canon-coheir} with $\alpha$, $\lambda, \is$ there standing for $\lambda^{+4}$, $\lambda^{+3}$, $(\is')^{<\lambda^{+4}}$ here. We obtain that $(\is')^{<\lambda^{+4}} \rest \Ksatp{\lambda^{+4}} = \is^{<\lambda^{+4}}$. Since both $\is$ and $\is'$ have the left $\lambda^{+3}$-witness property, $\is' \rest \Ksatp{\lambda^{+4}} = \is$, as desired.
\end{proof}

\section{Applications}\label{examples-sec}

We give three contexts in which the construction of a global independence relation can be carried out. To simplify the statement of the results, we adopt the following convention:

\begin{notation}\index{high-enough}
  When we say ``For any high-enough cardinal $\lambda$'', this should be replaced by ``There exists an infinite cardinal $\lambda_0$ such that for all $\lambda \ge \lambda_0$''.
\end{notation}

\subsection{Fully $(<\aleph_0)$-tame and short AECs}\index{fully $(<\aleph_0)$-tame and short}

\begin{lem}\label{fully-tame-short-local}
  Let $\K$ be a fully $(<\aleph_0)$-tame and short AEC with amalgamation. Then $\K$ is type-local. 
\end{lem}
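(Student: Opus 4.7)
The plan is to use the full $(<\aleph_0)$-shortness directly: two types over $M$ of length $\alpha_\delta$ agree if and only if all their restrictions to finite subsets of $\alpha_\delta$ agree, so I need only propagate agreement on the initial segments $p_i$ down to agreement on every finite subset of coordinates.

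More precisely, fix a limit ordinal $\delta$, an increasing continuous sequence of ordinals $\seq{\alpha_i : i \le \delta}$, and an increasing chain $\bar{p} = \seq{p_i : i < \delta}$ of Galois types over $M$ with $p_i \in \gS^{\alpha_i}(M)$. Suppose $p, q \in \gS^{\alpha_\delta}(M)$ satisfy $p^{\alpha_i} = q^{\alpha_i} = p_i$ for every $i < \delta$. I want to conclude $p = q$. By fully $(<\aleph_0)$-shortness, it suffices to check that $p^I = q^I$ for every finite $I \subseteq \alpha_\delta$.

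Now fix such a finite $I$. Continuity of $\seq{\alpha_i : i \le \delta}$ at $\delta$ gives $\alpha_\delta = \sup_{i < \delta} \alpha_i$, so because $I$ is finite there exists $i < \delta$ with $I \subseteq \alpha_i$ (viewing $\alpha_i$ as the set of its predecessors). Then restriction factors as $p^I = (p^{\alpha_i})^I = p_i^I = (q^{\alpha_i})^I = q^I$, using the hypothesis $p^{\alpha_i} = p_i = q^{\alpha_i}$. This completes the verification and shows $p = q$.

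There is essentially no obstacle here: the argument uses only fully $(<\aleph_0)$-shortness (tameness plays no role in this particular reduction) together with the cofinality of the $\alpha_i$'s in $\alpha_\delta$. The only thing to be mindful of is the notational point that $p^{\alpha_i}$ means the restriction to the first $\alpha_i$ coordinates, so that further restricting to $I \subseteq \alpha_i$ yields the same type as restricting $p$ directly to $I$; this is immediate from the definition of the operation $p \mapsto p^I$.
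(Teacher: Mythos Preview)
Your proof is correct and follows exactly the approach the paper intends: the paper simply records the proof as ``Straightforward since types are determined by finite restrictions of their length,'' and you have spelled out precisely this argument.
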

\begin{proof}
  Straightforward since types are determined by finite restrictions of their length.
\end{proof}

Note that the framework of fully $(<\aleph_0)$-tame and short AEC with amalgamation generalizes homogeneous model theory. It is more general since we are not assuming that all sets are amalgamation bases, nor that we are working in a class of models of a first-order theory omitting a set of types. As a result, we do not have the weak compactness of homogeneous model theory, so Corollary \ref{cor-aleph0-short} is (to the best of our knowledge) new.

\begin{cor}\label{cor-aleph0-short}
  Let $\K$ be a fully $(<\aleph_0)$-tame and short AEC with amalgamation. Assume that $\K$ is $\LS (\K)$-superstable. For any high-enough cardinal $\lambda$, $\Ksatp{\lambda}$ is fully good.
\end{cor}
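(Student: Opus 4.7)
The plan is to apply Theorem \ref{main-thm}(\ref{main-thm-1}) after verifying dense type-locality. Since $K$ is fully $(<\aleph_0)$-tame and short, it is trivially fully $(<\kappa)$-tame and short for every infinite $\kappa$. In particular, we are free to pick $\kappa$ to be a beth fixed point, e.g.\ the least $\kappa = \beth_\kappa$ with $\kappa > \LS(K)$. Setting $\mu := \LS(K)$, the superstability hypothesis and the inequality $\kappa > \mu$ are both satisfied, so the only missing ingredient for Theorem \ref{main-thm}(\ref{main-thm-1}) is dense type-locality above $\kappa$.

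I would derive dense type-locality directly from Lemma \ref{fully-tame-short-local}, which already hands us the (much stronger) fact that $K$ is type-local. Given $\lambda_0 > \kappa$, $M \in K_{\lambda_0}$, and $p \in \gS^{\lambda_0}(M)$, first find $N \gea M$ in $K_{\lambda_0}$ realizing $p$ via an enumeration $\bar{b}$ of $|N|$ whose initial segment realizes $p$. Using the Löwenheim-Skolem-Tarski axiom, choose an increasing continuous resolution $\seq{N_i : i \le \delta}$ of $N$ with $\delta = \cf{\lambda_0}$ and $\|N_i\| < \lambda_0$ for $i < \delta$, arranged so that the initial segment of $\bar{b}$ of length $\|N_i\|$ enumerates $N_i$. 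Then $q_\delta := \gtp(N_\delta/M;N_\delta)$ extends $p$, and the type-locality clause required by Definition \ref{type-loc-def} is automatic since \emph{every} increasing chain of Galois types over $M$ is type-local by Lemma \ref{fully-tame-short-local}.

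Having verified all hypotheses, Theorem \ref{main-thm}(\ref{main-thm-1}) yields that $\Ksatp{\lambda^\ast}$ is fully good for $\lambda^\ast := (\mu^{<\kappap})^{+7}$. To upgrade ``for a specific $\lambda^\ast$'' to ``for any high-enough $\lambda$'', I would observe that the construction goes through with $\mu$ replaced by any $\mu' \ge \mu$, using monotonicity of superstability (Proposition \ref{ss-monot}); alternatively, the fully good independence relation on $\Ksatp{\lambda^\ast}$ restricts to a fully good independence relation on the sub-AEC $\Ksatp{\lambda}$ for any $\lambda \ge \lambda^\ast$, once one checks that $\LS(\Ksatp{\lambda}) = \lambda$ (which follows from Fact \ref{bv-sat-fact} together with Theorem \ref{ss-strong-equiv}) and that the local character and extension thresholds scale correctly under the restriction (base monotonicity and stability in $\lambda$ handle this).

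The main obstacle is cosmetic rather than substantive: the bookkeeping to deduce ``for any high-enough $\lambda$'' from a single application of Theorem \ref{main-thm}(\ref{main-thm-1}). Verifying dense type-locality is essentially free thanks to Lemma \ref{fully-tame-short-local}, and the choice of $\kappa$ as a beth fixed point above $\LS(K)$ imposes no real constraint since full $(<\aleph_0)$-tameness and shortness are preserved upward in $\kappa$.
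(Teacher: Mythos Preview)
Your proposal is correct and follows exactly the paper's approach: derive type-locality from Lemma~\ref{fully-tame-short-local}, observe that this gives dense type-locality (since the type-locality clause in Definition~\ref{type-loc-def} then holds for \emph{any} resolution), and apply Theorem~\ref{main-thm}. The paper's proof is simply these two sentences; your added detail on the resolution construction and on upgrading to ``for all high-enough $\lambda$'' is reasonable, though the specific claim that one can arrange $\bar b \rest \|N_i\|$ to enumerate $N_i$ directly via LST needs a short back-and-forth argument rather than a single application --- but this is immaterial since, as you correctly note, type-locality makes \emph{any} resolution work.
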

\begin{proof}
  By Lemma \ref{fully-tame-short-local}, $K$ is type-local. Now apply Theorem \ref{main-thm}.
\end{proof}

\subsection{Fully tame and short eventually categorical AECs}\index{eventually categorical}\index{categorical on a tail}

An AEC is \emph{eventually categorical} (or \emph{categorical on a tail}) if it is categorical in any high-enough cardinal. Note that Theorem \ref{shelah-categ-cor} gives conditions under which this follows from categoricity in a single cardinal. In this context, we can also construct a global independence relation. To the best of our knowledge, this is new.

\begin{cor}
  Let $\K$ be a fully tame and short AEC with amalgamation. If $\K$ is eventually categorical, then for any high-enough cardinal $\lambda$, $\K_{\ge \lambda}$ is fully good. 
\end{cor}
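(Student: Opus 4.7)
The plan is to invoke Theorem \ref{good-frame-succ}.(5) and then to upgrade its ``extension over saturated base models'' conclusion to full extension by observing that under eventual categoricity every sufficiently large model is automatically saturated. In other words, eventual categoricity will serve as a substitute for the dense type-locality hypothesis of Theorem \ref{main-thm}, which is the only missing ingredient.

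First I would fix $\kappa > \LS (\K)$ with $\kappa = \beth_\kappa$ such that $\K$ is fully $(<\kappa)$-tame and short (possible since $\beth$-fixed points are cofinal and $\K$ is fully tame and short). Let $\mu_0$ witness eventual categoricity, so that $\K$ is categorical in every $\mu \ge \mu_0$. Then choose $\lambda$ with $\lambda = \beth_\lambda$, $\cf{\lambda} \ge \kappa$, and $\lambda > \max(\kappa, \mu_0)$ (for instance take $\lambda$ to be the limit of the first $\kappa^+$ $\beth$-fixed points above $\max(\kappa, \mu_0)$). Since $\K$ is categorical in $\lambda^{+4}$, the hypotheses of Theorem \ref{good-frame-succ} hold with $\mu := \lambda^{+4}$, and its part (5) delivers an independence relation $\is := \isch{\kappa}(\Ksatp{\lambda^{+4}})$ which is fully good on $\Ksatp{\lambda^{+4}}$ except possibly for extension, and which has extension whenever the base model is saturated.

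The key observation is then that $\K_{\ge \lambda^{+4}} = \Ksatp{\lambda^{+4}}$. Indeed, for each $\lambda' \ge \lambda^{+4}$ the class $\K$ is categorical in $\lambda'$ (by eventual categoricity) and $\lambda' > \kappa = \beth_\kappa$, so Theorem \ref{categ-frame}.(3) applied with $\lambda'$ in place of $\lambda$ shows that the unique model of $\K$ of size $\lambda'$ is saturated. Hence every model of $\K_{\ge \lambda^{+4}}$ is saturated, in particular $\lambda^{+4}$-saturated. Because every base model in $\K_{\ge \lambda^{+4}}$ is now automatically saturated, the restricted extension property of $\is$ becomes unrestricted extension, so $\is$ is fully good. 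Setting $\lambda_\ast := \lambda^{+4}$, we conclude that $\K_{\ge \lambda_\ast}$ is fully good. There is no real obstacle beyond the machinery already built: the only subtle point is that one needs categoricity at \emph{every} cardinal $\lambda' \ge \lambda^{+4}$ (not merely a single one) in order to force all models of $\K_{\ge \lambda^{+4}}$ to be saturated, and this is exactly what eventual categoricity provides.
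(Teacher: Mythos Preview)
The proposal is correct and follows essentially the same route as the paper: apply Theorem \ref{good-frame-succ} to obtain a fully good independence relation except possibly for extension (with extension over saturated bases), then use eventual categoricity to conclude that every sufficiently large model is saturated, so that extension holds outright. The paper's proof is terser but the argument is the same; your explicit invocation of Theorem \ref{categ-frame}.(\ref{categ-frame-3}) at each cardinal above $\lambda^{+4}$ makes precise what the paper summarizes as ``any model is saturated by categoricity.''
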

\begin{proof}
  Let $\kappa$ be such that $\K$ is fully $(<\kappa)$-tame and short and let $\lambda_0$ be such that $\K$ is categorical in any $\lambda \ge \lambda_0$. We can make $\kappa$ bigger if necessary and replace $\K$ by $\K_{\ge \mu}$ for an appropriate $\mu$ to assume without loss of generality that $\LS (\K) < \kappa = \beth_\kappa$, and $\K$ is categorical in all $\lambda \ge \kappa$. We now apply Theorem \ref{good-frame-succ} to obtain that for any high-enough $\lambda$, $\K_{\ge \lambda}$ is fully good, except it may only have extension over saturated models. However any model is saturated by categoricity, so $\K_{\ge \lambda}$ is fully good.
\end{proof}

\subsection{Large cardinals}

Categoricity together with a large cardinal axiom implies that coheir is a well-behaved global independence relation. This was observed in \cite{makkaishelah} when the AEC is a class of models of an $\Ll_{\kappa, \omega}$ theory ($\kappa$ a strongly compact cardinal), and in \cite{bg-v9} for any AEC. Here we can improve on these results by proving that in this framework coheir is \emph{fully good}. In particular, it has full model continuity and $\clc{\alpha} (\is) = \alpha^+ +  \aleph_0$. Full model continuity is not discussed in \cite{makkaishelah, bg-v9}, and $\clc{\alpha} (\is) = \alpha^+ + \aleph_0$ is only proven when $\alpha < \kappa$ or $\alpha = \alpha^{<\kappa}$ (see \cite[Theorem 8.2.(3)]{bg-v9}). Further, the proof uses the large cardinal axiom whereas we use it only to prove that coheir has the extension property.

\begin{cor}
  Let $\K$ be an AEC and let $\kappa > \LS (\K)$ be a strongly compact cardinal. For any high-enough cardinal $\lambda > \kappa$, if $K$ is categorical in $\lambda$ then $\K_{\ge \lambda}$ is fully good as witnessed by coheir (that is, $\isch{\kappa} (\K_{\ge \lambda})$ is fully good).
\end{cor}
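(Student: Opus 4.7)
The strategy is to apply Theorem \ref{good-frame-succ} to extract an almost fully good coheir-independence relation on a class of saturated models, then invoke the Boney--Grossberg theorem that strong compactness of $\kappa$ supplies the missing extension property for $\isch{\kappa}$, and finally use categoricity-driven saturation transfer to identify the resulting class of saturated models with all of $\K_{\ge \lambda}$. The conceptual point is that the dense type-locality hypothesis of Theorem \ref{main-thm} is needed only to secure the extension property (via Lemma \ref{ext-all}), and that hypothesis can be traded for the strong compactness of $\kappa$.

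\textbf{Setup.} First I would invoke Fact \ref{boney-lc}: since $\kappa > \LS(\K)$ is strongly compact, $\K$ is fully $(<\kappa)$-tame and short; and once $\lambda$ is high enough that $\lambda > \beth_{\kappa+1}$, $\K_{\ge \kappa}$ has amalgamation. Strong compactness forces $\kappa$ to be strongly inaccessible, so $\kappa = \beth_\kappa$ automatically. Applying Theorem \ref{categ-frame} at the categoricity cardinal $\lambda > \kappa$ yields that $\K$ is $\kappa$-strongly $\kappa$-superstable, stable in all cardinals above $\hanf{\LS(\K)}$, and that the model of size $\lambda$ is saturated.

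\textbf{Construction of $\is$.} Choose an auxiliary cardinal $\lambda' < \lambda$ with $\lambda' = \beth_{\lambda'} > \kappa$ and $\cf{\lambda'} \ge \kappa$ (such a $\lambda'$ exists once $\lambda$ is large). Theorem \ref{good-frame-succ} applied with this $\lambda'$ (and the categoricity cardinal $\lambda$ playing the role of $\mu$ there) yields that the relation $\is := \isch{\kappa}(\Ksatp{(\lambda')^{+4}})$ satisfies every axiom of a fully good $(<\infty, \ge (\lambda')^{+4})$-independence relation \emph{except possibly extension}, with extension holding whenever the base is saturated. By \cite[Theorem 8.2.(1)]{bg-v9}, strong compactness of $\kappa$ gives coheir the full extension property (over arbitrary bases); this applies to the restriction $\is$ as well, so $\is$ is in fact fully good.

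\textbf{Identification with $\isch{\kappa}(\K_{\ge \lambda})$.} Since the model of size $\lambda$ is saturated, it is in particular $(\lambda')^{+4}$-saturated (once $(\lambda')^{+4} \le \lambda$). By Fact \ref{categ-facts}.(\ref{sat-transfer}), every model in $\K_{\ge \chi}$ is $(\lambda')^{+4}$-saturated, where $\chi := \min(\lambda, \sup_{\mu_0 < (\lambda')^{+4}} \hanf{\mu_0})$. Taking $\lambda$ high enough that $\lambda \ge \chi$ (note the sup depends only on $\lambda'$, which is fixed below $\lambda$) gives $\Ksatp{(\lambda')^{+4}}_{\ge \lambda} = \K_{\ge \lambda}$. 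Because coheir is defined by the same local satisfiability condition regardless of the ambient class (provided every model in it is $\kappa$-saturated, which by the same saturation-transfer argument holds throughout $\K_{\ge \kappa}$), we obtain $\is \rest \K_{\ge \lambda} = \isch{\kappa}(\K_{\ge \lambda})$. The latter therefore inherits full goodness, as required.

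\textbf{Main obstacle.} The technical work is almost entirely cardinal arithmetic bookkeeping: one must specify a threshold $\lambda_0$ below which the ``high enough'' hypothesis kicks in, and verify that for every $\lambda \ge \lambda_0$ one can simultaneously arrange amalgamation above $\kappa$, an auxiliary $\lambda'$ with the arithmetic demanded by Theorem \ref{good-frame-succ}, and the saturation transfer reaching down to $\lambda$ itself. The conceptual heart of the argument — and what makes the corollary possible at all — is that the extension axiom, which is the sole obstruction in Theorem \ref{main-thm-no-loc} and the reason Theorem \ref{main-thm} requires dense type-locality, is handed to us for free by the strong compactness of $\kappa$ via \cite{bg-v9}.
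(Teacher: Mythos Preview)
Your proposal is correct and follows essentially the same route as the paper's proof: invoke Fact \ref{boney-lc} for tameness, shortness, and amalgamation; apply the last part of Theorem \ref{good-frame-succ} at an auxiliary cardinal below $\lambda$ to get that coheir on sufficiently saturated models is fully good except possibly for extension; supply extension via \cite[Theorem 8.2.(1)]{bg-v9} using strong compactness; and then use the saturation of the categoricity model to conclude $\K_{\ge \lambda} \subseteq \Ksatp{(\lambda')^{+4}}$. The paper's version is terser (it simply cites Theorem \ref{categ-frame} for saturation and omits the explicit choice of the auxiliary cardinal and the explicit saturation-transfer step via Fact \ref{categ-facts}.(\ref{sat-transfer})), but your more detailed cardinal bookkeeping is entirely in line with what is implicitly needed.
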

\begin{proof}
  By Fact \ref{boney-lc}, $\K$ is fully $(<\kappa)$-tame and short and $\K_{\ge \kappa}$ has amalgamation. By the last part of Theorem \ref{good-frame-succ}, there exists $\mu < \lambda$ such that $\is := \isch{\kappa} (\Ksatp{\mu^{+4}})$ is fully good, except perhaps for the extension property. By \cite[Theorem 8.2.(1)]{bg-v9} (using that $\kappa$ is strongly compact) $\is$ also has extension, hence it is fully good. Now the model of size $\lambda$ is saturated (Theorem \ref{categ-frame}), so $\K_{\ge \lambda} \subseteq \Ksatp{\mu^{+4}}$. Hence $\isch{\kappa} (\K_{\ge \lambda})$ is also fully good.
\end{proof}
\begin{remark}
  We can replace the categoricity hypothesis by amalgamation and $\kappa$-superstability. Moreover instead of asking for $\kappa$ to be a large cardinal, it is enough to assume that $\K$ has amalgamation, is fully $(<\kappa)$-tame and short, $\LS (\K) < \kappa = \beth_\kappa$, and coheir has the extension property (as in hypothesis (3) of \cite[Theorem 5.1]{bg-v9}).
\end{remark}

\printindex

\bibliographystyle{amsalpha}
\bibliography{independence-aecs}

\end{document}